\newcommand{\CC}{{\rm\bf C}}
\newcommand{\RR}{{\rm\bf R}}
\newcommand{\QQ}{{\rm\bf Q}}
\newcommand{\ZZ}{{\rm\bf Z}}
\newcommand{\FF}{{\rm\bf F}}
\newcommand{\Adeles}{{\rm\bf A}}
\newcommand{\OO}{\mathcal {O}}
\DeclareMathOperator{\FS}{\mathrm{FS}}
\DeclareMathOperator{\Spec}{\mathrm{Spec}}
\DeclareMathOperator{\GL}{\mathrm{GL}}
\DeclareMathOperator{\SL}{\mathrm {SL}}
\DeclareMathOperator{\Oo}{\mathrm {O}}
\DeclareMathOperator{\U}{\mathrm {U}}
\DeclareMathOperator{\SO}{\mathrm {SO}}
\DeclareMathOperator{\SU}{\mathrm {SU}}
\DeclareMathOperator{\Inn}{\mathrm {Inn}}
\DeclareMathOperator{\Aut}{\mathrm {Aut}}
\DeclareMathOperator{\End}{\mathrm {End}}
\DeclareMathOperator{\Hom}{\mathrm {Hom}}
\DeclareMathOperator{\Ext}{\mathrm {Ext}}
\DeclareMathOperator{\Gal}{\mathrm {Gal}}
\DeclareMathOperator{\ind}{\mathrm{Ind}}
\DeclareMathOperator{\pro}{\mathrm{Pro}}
\DeclareMathOperator{\res}{\mathrm{Res}}
\DeclareMathOperator{\Lie}{\mathrm{Lie}}
\newcommand{\liea}{{\mathfrak {a}}}
\newcommand{\lieb}{{\mathfrak {b}}}
\newcommand{\liec}{{\mathfrak {c}}}
\newcommand{\lied}{{\mathfrak {d}}}
\newcommand{\lieg}{{\mathfrak {g}}}
\newcommand{\lieh}{{\mathfrak {h}}}
\newcommand{\liek}{{\mathfrak {k}}}
\newcommand{\liel}{{\mathfrak {l}}}
\newcommand{\lien}{{\mathfrak {n}}}
\newcommand{\liep}{{\mathfrak {p}}}
\newcommand{\lieq}{{\mathfrak {q}}}
\newcommand{\lieu}{{\mathfrak {u}}}
\newcommand{\liez}{{\mathfrak {z}}}
\theoremstyle{Theorem}
\newtheorem{introconjecture}{Conjecture}
\newtheorem{introtheorem}[introconjecture]{Theorem}
\theoremstyle{plain}
\newtheorem{theorem}{Theorem}[section]
\newtheorem{lemma}[theorem]{Lemma}
\newtheorem{corollary}[theorem]{Corollary}
\newtheorem{proposition}[theorem]{Proposition}
\newtheorem{conjecture}[theorem]{Conjecture}
\theoremstyle{remark}
\newtheorem{remark}[theorem]{Remark}
\newtheorem{definition}[theorem]{Definition}
\begin{document}

\title{Rational Structures on\\Automorphic Representations}
\author{Fabian Januszewski}
\date{\today}

\maketitle

\begin{abstract}
This paper proves the existence of global rational structures on spaces of cusp forms of general reductive groups. We identify cases where the constructed rational structures are optimal, which includes the case of $\GL(n)$. As an application, we deduce the existence of a natural set of periods attached to cuspidal automorphic representations of $\GL(n)$. This has consequences for the arithmetic of special values of $L$-functions that we discuss in \cite{januszewski2015pre,januszewski2015pre2}.

In the course of proving our results, we lay the foundations for a general theory of Harish-Chandra modules over arbitrary fields of characteristic $0$. In this context, a rational character theory, translation functors and an equivariant theory of cohomological induction are developed. We also study descent problems for Harish-Chandra modules in quadratic extensions, where we obtain a complete theory over number fields.
\end{abstract}

{\tableofcontents}

\addcontentsline{toc}{section}{Introduction}
\section*{Introduction}

By Langlands' philosophy and generalized Taniyama-Shimura type modularity conjectures, we expect a close relationship between motives over number fields and certain automorphic representations. To each motive $M$ one may, assuming Grothendieck's Standard Conjectures, attach a field of coeffients $E$ in a natural way, which is a number field. This field plays a fundamental role in arithmetic questions: The coefficients of the $L$-function attached to $M$ all lie in $E$. A famous conjecture of Deligne \cite{deligne1979} predicts that the special values of this $L$-function --- a priori complex numbers --- also lie in $E$.

If we believe that we may attach to $M$ an automorphic representation $\Pi_M$ of $\GL(n)$, it is not obvious how the inherently transcendental object $\Pi_M$ reflects these rationality properties. In \cite{clozel1990}, Clozel went a long way to single out the class of (regular) algebraic automorphic representations of $\GL(n)$ which should correspond to absolutely irreducible motives of rank $n$ (cf.\ loc.\ cit.\ Conjecture 4.16). Clozel went on to show that the finite part of a regular algebraic representation $\Pi$ of $\GL(n)$ is defined over a number field $\QQ(\Pi)$ (cf.\ loc.\ cit.\ Th\'eor\`eme 3.13). This implies that the coefficients of the standard $L$-function of $\Pi$ all lie in $\QQ(\Pi)$. In these cases, Well known results on special values of automorphic $L$-functions show the validity of the automorphic analog of Deligne's Conjecture with $\QQ(\Pi)$ replacing $E$.

An essential feature of Clozel's rational structure is that it is optimal: It exists over the field of rationality of the finite part of $\Pi$ (cf.\ Proposition 3.1 of loc.\ cit.). In his treatment Clozel circumvents the problem of rationality of the archimedean part of $\Pi$ by studying an ad hoc action of the Galois group on the infinity type. This is legitimate since a well known result of Matsushima relates the occurence of an automorphic representation in Betti cohomology to its type at infinity, and Clozel's rational structure originates in the cohomology of arithmetic groups.

Yet the existence of global rational structures on automorphic representations remains an open problem for $\GL(n)$, even more so for more general reductive groups. Until now this was a major obstacle in the study of arithmetic properties of automorphic representations and their $L$-functions, especially when it comes to the finer structure of cohomologically defined periods, as working in the finite part of the representation turns out to be not enough. The raison d'\^etre of this article is to settle this problem once and for all for arbitrary reductive groups.

In the case of $\GL(n)$ our results are optimal. For general reductive groups the situation is more involved. We also discuss global rational structures for not necessarily cohomological representations of groups of Hermitian types, where we allow discrete and non-degenerate limits of discrete series at infinity.

\addcontentsline{toc}{subsection}{Results for the general linear group}
\subsection*{Results for the general linear group}

The global formulation of our main result in the case of $\GL(n)$ is the following (cf.\ Theorems \ref{thm:globalglnrationality} and \ref{thm:globalqrationality}). Fix a number field $F/\QQ$ and a character $\omega: Z(\lieg)\to\CC$ of the center of the universal enveloping algebra of the Lie algebra of $G:=\res_{F/\QQ}\GL_n$, which we assume \lq{}regular and $C$-algebraic\rq{} in the sense that $\omega$ occurs as infinitesimal character of an absolutely irreducible rational representation $M$ of $G$. Furthermore, $M$ is assumed to be essentially conjugate self-dual over $\QQ$, i.e.\ $M^{\vee,\rm c}\cong M\otimes\eta$ for some $\QQ$-rational character $\eta$ of $G$. Consider the space
$$
L_0^2(\GL_n(F)Z(\RR)^0\backslash{}\GL_n(\Adeles_F);\omega)
$$
of automorphic cusp forms which lie in the closure of the smooth forms transforming under $Z(\lieg)\otimes\RR$ according to $\omega$ (with respect to the derived action of $G(\RR)$). The character $\omega$ may be considered as defined over its field of rationality $\QQ(\omega)$, a number field, which agrees with the field of rationality of $M$. Consider the space
\begin{equation}
L_0^2(\GL_n(F)Z(\RR)^0\backslash{}\GL_n(\Adeles_F);\res_{\QQ(\omega)/\QQ}\omega)
\;=\;
\label{eq:l2cuspforms}
\end{equation}
$$
\quad\bigoplus_{\tau:\QQ(\omega)\to\CC}
L_0^2(\GL_n(F)Z(\RR)^0\backslash{}\GL_n(\Adeles_F);\omega^\tau),
$$
where $\tau$ on the right hand side runs through the embeddings of $\QQ(\omega)$ into $\CC$. We may think of the left hand side as a space of vector valued cusp forms.

Assume further given a $\QQ$-rational model $K\subseteq G$ of a maximal compact subgroup of $G(\RR)$, which is semi-admissible in the sense of section \ref{sec:admissiblegroups} and quasi-split over an imaginary quadratic extension of $\QQ$. Such a $\QQ$-rational model exists whenever $F$ is totally real or a CM field (cf.\ section \ref{sec:explicitmodels}). Otherwise we may replace $\QQ$ with a finite extension $\QQ_K/\QQ$  where $K$ has a model and depart from there. For simplicity we discuss only the case $\QQ_K=\QQ$ here. Write $\lieg$ for the $\QQ$-Lie algebra of $G$.

\begin{introtheorem}\label{thm:introglobal}
As a representation of $\GL_n(\Adeles_F)$, the space \eqref{eq:l2cuspforms} is defined over $\QQ$, i.e.\ there is a basis of the space \eqref{eq:l2cuspforms} which generates a $\QQ$-subspace
\begin{equation}
L_0^2(\GL_n(F)Z(\RR)^0\backslash{}\GL_n(\Adeles_F);\res_{\QQ(\omega)/\QQ}\omega)_{\QQ},
\label{eq:l2cuspformsqbar}
\end{equation}
stable under the natural action of $(\lieg,K)\times\GL_n(\Adeles_F^{(\infty)})$. This rational structure has the following properties:
\begin{itemize}
\item[(a)] The complexification
$$
L_0^2(\GL_n(F)Z(\RR)^0\backslash{}\GL_n(\Adeles_F);\res_{\QQ(\omega)/\QQ}\omega)_{\QQ}\otimes\CC
$$
is naturally identified with the subspace of smooth $K$-finite vectors in \eqref{eq:l2cuspforms}.
\item[(b)] To each irreducible subquotient $\Pi$ of the space \eqref{eq:l2cuspforms} corresponds a unique irreducible subquotient $\Pi_{\overline{\QQ}}$ of the $\overline{\QQ}$-rational structure induced by \eqref{eq:l2cuspformsqbar}, and vice versa.
\item[(c)] The module $\Pi_{\overline{\QQ}}$ is defined over its field of rationality $\QQ(\Pi_{\overline{\QQ}})$, which agrees with Clozel's field of rationality $\QQ(\Pi)$ and is a number field.
\item[(d)] As an abstract $\QQ(\Pi)$-rational structure on $\Pi$, $\Pi_{\QQ(\Pi)}\subseteq\Pi$ from (b) is unique up to complex homotheties.
\item[(e)] Taking $(\lieg,K)$-cohomology sends the $\QQ$-structure \eqref{eq:l2cuspformsqbar} to the natural $\QQ$-structure on cuspidal cohomology
\begin{equation}
H_{\rm cusp}^{\bullet}(\GL_n(F)\backslash{}\GL_n(\Adeles_F)/K_\infty;\res_{\QQ(\omega)/\QQ}M^\vee).
\label{eq:hcusp}
\end{equation}
\end{itemize}
\end{introtheorem}


For a more precise statement, we refer to Theorem \ref{thm:globalqrationality} in the text.

The notion of $(\lieg,K)$-cohomology in (e) is a rational variant of the classical one, which is introduced in section \ref{sec:homologyandcohomology}. In particular, the action of $\Aut(\CC/\QQ)$ induced on complex $K$-finite cusp forms by \eqref{eq:l2cuspformsqbar} is compatible with the action of $\Aut(\CC/\QQ)$ on cuspidal cohomology in (e).

The normalization of the rational structure we obtain on \eqref{eq:l2cuspforms} depends on the degree in cohomology chosen in (e) (and odd finite order twists, cf.\ Theorem \ref{thm:globalqrationality}). Since in a degree $q$ lying in the interior of the cuspidal range (i.e.\ $t_0<q<q_0$ where $t_0$ and $q_0$ denote the minimal and maximal degrees where $\Pi$ contributes to cohomology), the multiplicity of an irreducible $\Pi$ in cuspidal cohomology of degree $q$ is an integer $m > 1$, we need to make precise the sense of the normalization we have in mind. Such a normalization may be obtained via appropriate normalization of period matrices.

More concretely Theorem \ref{thm:introglobal} implies the existence of a natural set of periods attached to $\Pi$ (cf.\ Theorem \ref{thm:periods}):

\begin{introtheorem}\label{thm:introperiods}
For each irreducible cuspidal regular algebraic $\Pi$ occuring in \eqref{eq:hcusp} and each $t_0\leq q\leq q_0$ in the cuspidal range and each rational structure $\iota:\Pi_{\QQ(\Pi)}\to\Pi$ on $\Pi$ there is a period matrix $\Omega_q(\Pi,\iota)\in\GL_{m_q}(\CC)$ with the following properties:
\begin{itemize}
\item[(a)] $\Omega_q(\Pi,\iota)$ is the transformation matrix transforming the rational structure $H^q(\lieg,K_\infty; \iota)$ on $(\lieg,K)$-cohomology into the natural $\QQ(\Pi)$-structure on cuspidal cohomology.
\item[(b)] The double coset $\GL_{m_q}(\QQ(\Pi))\Omega_q(\Pi,\iota)\GL_{m_q}(\QQ(\Pi))$ depends only on the pair $(\Pi,\iota)$ and the degree $q$.
\item[(c)] For each $c\in\CC^\times$ we have the relation
$$
\Omega_q(\Pi,c\cdot\iota)\;=\;c\cdot\Omega_q(\Pi,\iota).
$$
\item[(d)] The ratio
$$
\frac{1}
{\Omega_{t_0}(\Pi,\iota)}\cdot
\Omega_q(\Pi,\iota)\;\in\;\GL_{m_q}(\CC)
$$
is independent of $\iota$.
\end{itemize}
\end{introtheorem}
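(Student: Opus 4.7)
The plan is to extract the period matrices by applying rational $(\lieg, K)$-cohomology to the canonical rational structures provided by Theorem \ref{thm:introglobal}, and to read off (i)--(iv) from functoriality together with the uniqueness (up to complex homotheties) of the $\QQ(\Pi)$-structure on $\Pi$.

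More concretely, I would fix a $\QQ(\Pi)$-rational structure $\iota:\Pi_{\QQ(\Pi)}\hookrightarrow\Pi$ as in Theorem \ref{thm:introglobal}(d--f). Since $M$, and hence $M^\vee$, is defined over $\QQ(\omega)\subseteq\QQ(\Pi)$, the rational $(\lieg,K)$-cohomology $H^q(\lieg, K; \Pi_{\QQ(\Pi)}\otimes M^\vee_{\QQ(\Pi)})$ is an honest $\QQ(\Pi)$-vector space, and by the faithfulness of extension of scalars (Theorem \ref{thm:introglobal}(c)) its complexification identifies canonically with $H^q(\lieg, K; \Pi\otimes M^\vee)$, which by Matsushima's formula has dimension $m_q$. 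On the other hand Theorem \ref{thm:introglobal}(i) equips this same complex vector space with a second $\QQ(\Pi)$-structure, obtained by restricting the natural $\QQ$-structure on cuspidal cohomology \eqref{eq:hcusp} to the $\Pi$-isotypic component and extending scalars to $\QQ(\Pi)$. Choosing $\QQ(\Pi)$-bases on each side, one defines $\Omega_q(\Pi, \iota)\in\GL_{m_q}(\CC)$ as the associated change-of-basis matrix. Statement (i) then holds by construction, and (ii) is immediate since altering the chosen $\QQ(\Pi)$-basis on either side multiplies $\Omega_q(\Pi,\iota)$ from the corresponding side by an element of $\GL_{m_q}(\QQ(\Pi))$.

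Statement (iii) will follow from the $\CC$-linearity of $(\lieg,K)$-cohomology in its coefficients: replacing $\iota$ by $c\cdot\iota$ scales every cohomology class in the image by $c$ while leaving the cuspidal $\QQ(\Pi)$-structure untouched, and hence multiplies $\Omega_q(\Pi,\iota)$ entrywise by $c$. Statement (iv) is then a direct consequence: by Theorem \ref{thm:introglobal}(f) any two $\QQ(\Pi)$-rational structures on $\Pi$ differ by some $c\in\CC^\times$, and (iii) gives
$$
\Omega_{t_0}(\Pi, c\iota)^{-1}\Omega_q(\Pi, c\iota) \;=\; (c\,\Omega_{t_0}(\Pi,\iota))^{-1}(c\,\Omega_q(\Pi,\iota)) \;=\; \Omega_{t_0}(\Pi,\iota)^{-1}\Omega_q(\Pi,\iota),
$$
so the ratio depends only on the pair $(\Pi,q)$.

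The main obstacle is the identification tacitly used in the construction, namely that rational $(\lieg, K)$-cohomology commutes with extension of scalars from $\QQ(\Pi)$ to $\CC$, and that the resulting complex cohomology of $\Pi_{\QQ(\Pi)}\otimes M^\vee_{\QQ(\Pi)}$ is recognized, as a subspace of cuspidal cohomology, in the natural $\QQ(\Pi)$-structure produced by Theorem \ref{thm:introglobal}(i). This compatibility is not formal; it must be extracted from the rational Harish-Chandra module theory set up earlier in the paper, in particular from the behaviour of the equivariant cohomological induction functors over $\QQ(\Pi)$ and from the rational variant of $(\lieg,K)$-cohomology introduced in the section on homology and cohomology. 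Once this is secured, the remaining claims reduce to linear algebra and naturality of cohomology.
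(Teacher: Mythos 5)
Your construction is exactly the one the paper uses in its proof of Theorem \ref{thm:periods}: apply the Matsushima isomorphism $\iota_q: H^q(\lieg,K_\infty;\Pi\otimes M^\vee)\to H^q_{\rm cusp}[\Pi_{(\infty)}]$, give both sides $\QQ(\Pi)$-structures (one coming from $\iota$ via the Homological Base Change Theorem / Proposition \ref{prop:equicohomology}, the other intrinsic to cuspidal cohomology), and define $\Omega_q(\Pi,\iota)$ as the matrix of $\iota_q$ relative to rational bases, after which (i)--(iv) follow by the same linear-algebra and homothety arguments you give. The technical compatibility you flag at the end is indeed the content of Proposition \ref{prop:equicohomology} together with Theorem \ref{thm:globalqrationality}(i), so the proposal is correct and takes the paper's route.
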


In \cite{januszewski2015pre} and \cite{januszewski2015pre2} the author applies the rational theory developed here to prove period relations for special values of automorphic $L$-functions outside the context of Shimura varieties. This puts the extremal periods $\Omega_{t_0}(\Pi,\iota)$ and $\Omega_{q_0}(\Pi,\iota)$ in Theorem \ref{thm:introperiods} into the context of Deligne's Conjecture \cite{deligne1979}.

\addcontentsline{toc}{subsection}{Results for reductive groups}
\subsection*{Results for reductive groups}

For a general connected linear reductive group $G$ over a number field $F$, Buzzard and Gee \cite{buzzardgee2011} generalized Clozel's notion of algebraicity for $\GL(n)$ to $G$ in order to formulate corresponding conjectures about the existence of Galois representations attached to automorphic representations $\Pi$ of $G(\Adeles_F)$. Buzzard and Gee introduce in fact {\em two} notions of algebraicity for $\Pi$: $C$-algebraicity and $L$-algebraicity, where the first notion generalizes that of a {\em cohomological} representation, and the second notion conjecturally corresponds to those representations which admit Galois representations. Buzzard and Gee show that while these two notions on $G$ do in general not agree, passing from $G$ to a suitable covering group $\widetilde{G}$ enables them to twist $C$-algebraic representations into $L$-algebraic representations and vice versa (cf.\ section 5.2 of loc.\ cit.).

By definition, $C$- and $L$-algebraicity are local properties of the archimedean component $\Pi_\infty$ of an automorphic representation $\Pi$, and a consequence of the existence of motivic Galois representations attached to $\Pi$ would be that the Satake parameters of unramified components $\Pi_v$ at finite places $v$ are algebraic and generate a finite extension $E$ of $\QQ$. A representation with the latter properties is called $L$-arithmetic in loc.\ cit., and the corresponding notion of $C$-arithmeticity means that the unramified representations $\Pi_v$ at finite places $v$ are, as Hecke modules, defined over a number field $E$.

Let $G$ be a connected linear reductive group over $\QQ$. The case of a general number field reduces to this case via restriction of scalars. Shin and Templier showed in \cite{shintemplier2014} that for a cohomological cuspidal automorphic representation $\Pi$ of $G(\Adeles)$ the field of rationality $\QQ(\Pi)$ of $\Pi^{(\infty)}$ is a number field. However they did not construct a rational structure on the latter space defined over a number field. The authors of loc.\ cit.\ exhibit only a $\overline{\QQ}$-rational structure. Such a $\Pi$ is always $C$-algebraic in the sense of \cite{buzzardgee2011}. In particular, $\Pi$ is expected to be $C$-arithmetic, cf.\ Conjecture 3.1.6 of loc.\ cit.

We show that this is indeed the case.

\begin{introtheorem}\label{introthm:finitedefinition}
Let $\Pi$ be any irreducible cohomological cuspidal automorphic representation of $G(\Adeles)$. Then
\begin{itemize}
\item[(a)] The finite part $\Pi_{(\infty)}$ is defined over a finite extension $\QQ(\Pi)^{\rm rat}/\QQ(\Pi)$.
\item[(b)] Any $\QQ(\Pi)^{\rm rat}$-rational structure on $\Pi_{(\infty)}$ is unique up to complex homotheties.
\end{itemize}
\end{introtheorem}


It is not clear if $\QQ(\Pi)^{\rm rat}=\QQ(\Pi)$, because multiplicity one is known to fail in general. We show that the degree of the field of definition over $\QQ(\Pi)$ divides the multiplicity and is therefore bounded (cf.\ Theorem \ref{thm:finitedefinition}, (c)). Corollaries \ref{cor:cuspidalrationaldecomposition} and \ref{cor:qbarcuspidalcohomology} show the existence of a rational decomposition of cuspidal cohomology into irreducibles. 

To state the global extension of Theorem \ref{introthm:finitedefinition}, assume that $K\subseteq G$ is a model of a maximal compact subgroup of $G(\RR)$ defined over a number field $\QQ_K'$, over which every $K(\RR)$-conjugacy class of $\theta$-stable parabolic subalgebras of $\lieg_\CC$ admits a $\QQ_K'$-rational representative.

We prove (cf.\ Theorem \ref{thm:gglobalrational}),

\begin{introtheorem}\label{introthm:gglobalrational}
If $\Pi$ is an irreducible cohomological cuspidal automorphic representation of $G(\Adeles)$ contributing to cohomology with coefficients in an absolutely irreducible rational $G$-module $M$, then the $(\lieg,K)\times G(\Adeles^{(\infty)})$-module $\Pi^{(K)}$ of $K$-finite vectors in $\Pi$ admits a model over the number field $\QQ_K'\QQ_M(\Pi)^{\rm rat}$.
\end{introtheorem}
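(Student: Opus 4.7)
The idea is to produce independently a rational model for the archimedean Harish--Chandra module $\Pi_\infty^{(K)}$ and for the finite part $\Pi_{(\infty)}$, and then to glue them along their common occurrence inside cuspidal cohomology with coefficients in $M$. The finite part is already handled by Theorem \ref{introthm:finitedefinition}, which supplies a model of $\Pi_{(\infty)}$ over $\QQ_M(\Pi)^{\rm rat}$, so the real work concerns the archimedean factor and the gluing.

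For the archimedean part I would argue as follows. Since $\Pi$ is cohomological with coefficient system $M$, the theorem of Vogan--Zuckerman realizes $\Pi_\infty^{(K)}$ as a cohomologically induced module $A_\lieq(\lambda)$ attached to a $\theta$-stable parabolic subalgebra $\lieq \subseteq \lieg_\CC$ and a one-dimensional $(\lieq,K_\lieq)$-module $\CC_\lambda$ whose parameter is forced by the infinitesimal character of $M^\vee$. By hypothesis on $K$, the $K(\RR)$-conjugacy class of $\lieq$ contains a representative defined over $\QQ_K'$; replacing $\lieq$ by such a representative does not alter the isomorphism class of $A_\lieq(\lambda)$. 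Since $M$ is defined over $\QQ_M$, the character $\lambda$ descends to a $\QQ_K'\QQ_M$-rational character of the corresponding rational Levi. Feeding $\CC_\lambda$ into the equivariant cohomological induction functor built in the body of the paper then produces a $(\lieg,K)$-module over $\QQ_K'\QQ_M$ whose base change to $\CC$ recovers $\Pi_\infty^{(K)}$. This gives the required rational model on the archimedean factor.

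To glue, I would use Matsushima's isomorphism to identify the $\Pi$-isotypic component of cuspidal cohomology with
\[
\Pi_{(\infty)} \otimes H^{\bullet}(\lieg,K; \Pi_\infty^{(K)} \otimes M),
\]
and appeal to the rational version of $(\lieg,K)$-cohomology introduced earlier (which enters the statement of Theorem \ref{thm:introglobal}(i)). The coefficient $M$ is defined over $\QQ_M$, so singular cohomology equips the left-hand side with a natural $\QQ_M$-structure; combined with the rational structure of the finite part from Theorem \ref{introthm:finitedefinition} and the just-constructed model of $\Pi_\infty^{(K)}$ over $\QQ_K'\QQ_M$, both tensor factors on the right are defined over subfields of $\QQ_K'\QQ_M(\Pi)^{\rm rat}$. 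The uniqueness clause in Theorem \ref{introthm:finitedefinition}(b) then forces the two rational structures to match up, up to a scalar that can be absorbed into the identification, giving a $\QQ_K'\QQ_M(\Pi)^{\rm rat}$-model of $\Pi^{(K)} = \Pi_\infty^{(K)} \otimes \Pi_{(\infty)}$ as a $(\lieg,K)\times G(\Adeles^{(\infty)})$-module.

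The main obstacle is the gluing step when $\Pi$ occurs with multiplicity greater than one in cohomology: then the decomposition of the isotypic component as a tensor product is only canonical up to an action of $\GL_m$ on the space of realizations, and one must show that this $\GL_m$ does not cause the compositum of the two fields of definition to grow. This is where the irreducibility and uniqueness-up-to-homothety statements of Theorem \ref{introthm:finitedefinition}(b),(d) are crucial: they let me transport Galois descent from the finite factor to the tensor product. A secondary technical point is checking that the rational cohomological induction functor commutes with the chosen $\QQ_K'$-rational representative of $\lieq$ well enough to produce a genuine $(\lieg,K)$-module rather than only a $(\lieg,K\cap L)$-module; this should follow from the equivariance built into the construction, combined with the semi-admissibility of $K$.
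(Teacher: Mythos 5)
Your identification of the two essential ingredients matches the paper exactly: Vogan--Zuckerman realizes $\Pi_\infty^{(K)}$ as a cohomologically induced standard module $A_\lieq(\lambda)_\CC$, the rational theory of cohomological induction (Proposition \ref{prop:standardmodules}, built on the equivariant Zuckerman functor of Theorem \ref{thm:equizuckerman}) produces a model of the archimedean factor over $\QQ_K'\QQ_M$, and Theorem \ref{introthm:finitedefinition} produces a model of $\Pi_{(\infty)}$ over $\QQ_M(\Pi)^{\rm rat}$. That is the whole of the paper's argument.

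The third step in your plan --- gluing the two models via Matsushima's isomorphism and worrying about the $\GL_m$-ambiguity when $\Pi$ has higher multiplicity in cohomology --- is not needed and slightly misreads what the theorem asserts. Since $\Pi^{(K)} \cong \Pi_\infty^{(K)}\otimes\Pi_{(\infty)}$ with $(\lieg,K)$ acting on the first factor and $G(\Adeles^{(\infty)})$ on the second, once $V_1$ is an $E_1$-model of $\Pi_\infty^{(K)}$ and $V_2$ an $E_2$-model of $\Pi_{(\infty)}$, the tensor product $(V_1\otimes_{E_1}E)\otimes_E(V_2\otimes_{E_2}E)$ over the compositum $E = E_1E_2$ is automatically a model of $\Pi^{(K)}$ as a $(\lieg,K)\times G(\Adeles^{(\infty)})$-module; there is nothing to match up. The compatibility with the rational structure on cuspidal cohomology and the issue of multiplicity $m>1$ are genuine subtleties, but they belong to the finer Theorem \ref{thm:introglobal} for $\GL(n)$ and to the construction of period matrices, not to the existence statement of Theorem \ref{introthm:gglobalrational}. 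Likewise your ``secondary technical point'' about producing a $(\lieg,K)$-module rather than a $(\lieg,L\cap K)$-module is already built into Theorem \ref{thm:equizuckerman}: the equivariant Zuckerman functor is by construction the right adjoint to the forgetful functor $(\lieg,K)\to(\lieg,L\cap K)$ over any base field, so no semi-admissibility is needed at this stage. In short, your proof is correct but carries an unnecessary gluing layer; the paper's proof is the first two paragraphs of yours, full stop.
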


Under suitable hypotheses on $K(\RR)$ and the model $K$, the field $\QQ_K'$ may be replaced by a smaller field, which yields optimal fields of definition, cf.\ Theorem \ref{thm:realstandardmodules} in the text.

Unlike in the case of $\GL(n)$, we do not obtain a global $\QQ$-structure on natural spaces of cusp forms as in Theorem \ref{thm:introglobal}, due to the lack of control over the field of definition of the finite part. However, we do prove the existence of a $\overline{\QQ}$-rational structure on spaces of cusp forms in the general case (cf.\ Theorem \ref{thm:l2qbarstructure}).

For every factorizable automorphic representation $\Pi$ of $G$, our framework allows us to define for every automorphism $\sigma\in\Aut(\CC/\QQ_K)$ a twisted representation $\Pi^\sigma$. We conjecture that this operation preserves automorphy of cuspidal representations (cf.\ Conjecture \ref{conj:automorphictwisting}). We also formulate the stronger Conjecture \ref{conj:twistingmultiplicities} which predicts that twisting with $\sigma$ preserves multiplicities. Assuming the latter, we determine the field of rationality of the given spaces of cusp forms in Proposition \ref{prop:twistingmultiplicities}.

For $G=\res_{F/\QQ}\GL(n)$, Conjecture \ref{conj:twistingmultiplicities} is a Theorem due to Clozel \cite{clozel1990}, and for discrete and non-degenerate limits of discrete series representations of groups of Hermitian type Blasius, Harris and Ramakrishnan provide in \cite{blasiusharrisramakrishnan1994} more evidence for our Conjectures.

We did not attempt to formulate an even stronger Conjecture which would predict the existence of a rational structure on spaces of cusp forms defined over a {\em number field}. Such a statement would be far from present techniques, whereas Conjectures \ref{conj:automorphictwisting} and \ref{conj:twistingmultiplicities} may be within reach of the Arthur-Selberg trace formula.

In general, the size of the field of rationality is related to a suitable Galois action on the corresponding $L$-packets, which suggests the existence of a {\em rational} Langlands classification, generalizing \cite{langlands1973}. Such a rational classification could reveal fundamental arithmetic patterns in the representation theory of reductive groups, local and global. As already hinted in the previous sections certain rationality patterns reflect motivic arithmetic structure as demonstrated in \cite{januszewski2015pre,januszewski2015pre2}.

\addcontentsline{toc}{subsection}{Results for groups of Hermitian type}
\subsection*{Results for groups of Hermitian type}

For a group $G$ of Hermitian type satisfying Deligne's axioms \cite{deligne1979b} Blasius, Harris and Ramakrishnan showed in \cite{blasiusharrisramakrishnan1994} the existence of a rational structure on the finite part of cuspidal representations $\Pi$ of $G(\Adeles)$ whose infinity component is either a discrete series or a non-degenerate limit of discrete series representation. We globalize this result (cf.\ Theorem \ref{thm:hermitianrationality}).

\begin{introtheorem}\label{introthm:hermitianrationality}
Let $\Pi$ be an irreducible cuspidal automorphic representation of $G(\Adeles)$ with infinity component $\Pi_\infty$. Assume that $\Pi_\infty$ belongs to the discrete series, or is a non-degenerate limit of discrete series. Then the $(\lieg,K)\times G(\Adeles^{(\infty)})$-module $\Pi^{(K)}$ admits a model over a number field $F$.
\end{introtheorem}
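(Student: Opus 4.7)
The plan is to obtain the model of $\Pi^{(K)}$ by tensoring a rational structure on the finite part (supplied by Blasius--Harris--Ramakrishnan) with a rational structure on the archimedean Harish--Chandra module (supplied by the rational cohomological induction set up earlier in the paper).

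First, an application of the main result of \cite{blasiusharrisramakrishnan1994} yields a model of the finite part $\Pi_{(\infty)}$ over a number field $F_f/\QQ$. Their construction goes through coherent cohomology of the Shimura variety attached to $G$, using that discrete and non-degenerate limits of discrete series contribute to holomorphic sections of automorphic vector bundles on the canonical model, which are defined over the reflex field and its finite extensions.

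Second, by Vogan--Zuckerman the archimedean Harish--Chandra module is isomorphic to $\mathcal{R}_{\lieb}^{S}(\lambda)$ for a suitable $\theta$-stable Borel subalgebra $\lieb \subseteq \lieg_{\CC}$ and an algebraic weight $\lambda$ in the weakly good range, which is exactly the regime corresponding to non-degeneracy of limits. Because $G$ satisfies Deligne's axioms, the Harish--Chandra decomposition $\lieg_{\CC} = \liek_{\CC} \oplus \liep^{+} \oplus \liep^{-}$ is defined over a number field $F_{\infty}'$ tied to the reflex datum, and all relevant $\theta$-stable Borels carrying (limits of) holomorphic discrete series descend to $F_{\infty}'$. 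The algebraic character $\lambda$ is itself defined over a number field. Feeding these data into the equivariant rational cohomological induction functor developed in the paper produces a model of $\Pi_{\infty}^{(K)}$ as a $(\lieg,K)$-module over a number field $F_{\infty}$. The abstract module factors as
$$
\Pi^{(K)} \;\cong\; \Pi_{\infty}^{(K)} \otimes_{\CC} \Pi_{(\infty)}
$$
by irreducibility of both tensor factors together with Schur's lemma, and so the tensor product of the two rational structures over $F := F_{\infty}\cdot F_{f}$ furnishes a model of $\Pi^{(K)}$ over the number field $F$.

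The main obstacle is the second step. In the strictly good range, where $\Pi_{\infty}$ is a discrete series, the rational cohomological induction functor preserves irreducibility and unitarity, and the argument is essentially automatic. For a non-degenerate limit of discrete series the weight $\lambda$ lies on a wall, and one must verify that the rational functor still produces the correct irreducible quotient, and that the required $\theta$-stable Borel is actually available over a number field. Deligne's axioms make the Harish--Chandra splitting explicit and rational over the reflex field, so the remaining obstruction is confined to identifying $\mathcal{R}_{\lieb}^{S}(\lambda)$ with $\Pi_{\infty}^{(K)}$ on the rational level in the limit case, which is where the Frobenius--Schur calculus for Harish--Chandra modules over a field of characteristic zero, introduced earlier in the paper, enters as the essential tool.
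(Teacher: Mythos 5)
Your overall strategy matches the paper's: rationalize the finite part $\Pi_{(\infty)}$ via Blasius--Harris--Ramakrishnan (the paper cites their Theorem 3.2.2), recognize $\Pi_\infty^{(K)}$ as a cohomologically induced standard module $A_\lieq(\lambda_\beta)$ for a $\theta$-stable Borel defined over a number field, produce a rational model via the equivariant Zuckerman functor, and then tensor the two rational structures. That is exactly how Theorem \ref{thm:hermitianrationality} is proved.

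Two points nevertheless need correction. First, you declare the Frobenius--Schur calculus to be ``the essential tool'' for settling the archimedean rationalization in the limit-of-discrete-series case. It is not needed for the statement under review. The paper proves the theorem by appealing to Proposition \ref{prop:standardmodules}, which yields a model over a number field $F_0(w(\lambda),\lambda+\rho(\lieu))$ via rational translation functors applied to $A_\lieq(0)$, uniformly for $\lambda$ in the weakly good range and without any Frobenius--Schur input; the irreducibility in the limit case follows from the classical criteria, which transfer verbatim. The Frobenius--Schur machinery of Section \ref{sec:frobeniusschur} intervenes only in the sharper Theorem \ref{thm:realstandardmodules}, which gives optimal descent all the way to $F_0=F$, and the paper flags this as a separate refinement rather than a prerequisite. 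Second, you justify the rationality of the $\theta$-stable Borel by appealing to the reflex field of the Shimura datum, claiming the Borels carrying holomorphic (limits of) discrete series descend to a field ``tied to the reflex datum.'' This is neither how the paper argues nor necessary: the paper fixes a semi-admissible model $K$ over a number field $\QQ_K$ and observes that the relevant $\theta$-stable Borel is defined over a finite extension $\QQ_K'/\QQ_K$ purely because a parabolic subalgebra is finite-dimensional data, with no reference to the reflex field at the archimedean place. The reflex field is needed only in the non-archimedean step inherited from BHR. Beyond these two points the argument is sound, and the tensor-product descent at the end via Schur's Lemma and Proposition \ref{prop:uniquemodels} is the correct conclusion.
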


Again Theorem \ref{thm:realstandardmodules} provides a list of cases where we find optimal rational structures at infinity over the fields of rationaliy.

\addcontentsline{toc}{subsection}{Methods of proof and outline of the paper}
\subsection*{Methods of proof and outline of the paper}

In order to construct the global rational structures in Theorems \ref{introthm:finitedefinition}, \ref{introthm:gglobalrational} and \ref{introthm:hermitianrationality}, we are naturally led to define rational structures on the infinite-dimensional archimedean part $\Pi_\infty$ of each automorphic representation $\Pi$ under consideration. Our main local result is that $\Pi_\infty$ is always defined over a number field, which in many cases agrees with the field of rationality of $\Pi_\infty$ (cf.\ Proposition \ref{prop:standardmodules}). A substantial part of our work follows our pursuit of proving optimal rationality results (cf.\ Theorem \ref{thm:realstandardmodules}).

In order to have a useful notion of rationality for $G(\RR)$-representations, we need to set up an appropriate theory of $(\lieg,K)$-modules over general fields of characteristic zero.

In the first four sections we lay the foundation for such a theory. After introducing the abstract notion of a {\em pair} in this setting we introduce the appropriate categories of modules for pairs. In that context we emphasize that the elementary Proposition \ref{prop:hombasechange} has fundamental consequences for all aspects of the theory. We set up the necessary homological machinery which allows us to define and study related rationality questions. Our first main result is the Homological Base Change Theorem (cf.\ Theorem \ref{thm:derivedbasechange}), which has many important consequences in the sequel.

A fundamental question in the theory of $(\lieg,K)$-modules over arbitrary fields $F/\QQ$ is which properties are {\em geometric}, in the sense that a property holds for a module $X$ over $F$ if and only if it holds over one resp.\ over all extensions $F'/F$. Section \ref{sec:geometric} is dedicated to the study of geometric properties. We show that admissibility and finite length are both geometric properties. As we are dealing with infinite-dimensional modules and possibly infinite field extensions these are non-trivial facts.

In order to construct models of modules we define rational Zuckerman functors and show that they commute with base change, and in particular are Galois equivariant. We also sketch a rational algebraic character theory, generalizing the algebraic characters defined by the author in \cite{januszewski2011}, which is Galois equivariant as well (cf.\ Theorem \ref{thm:characters} and Proposition \ref{prop:cqgallois}). This implies the existence of an equivariant global character theory for automorphic representations.

This setup yields rational models of Harish-Chandra modules of interest over $\overline{\QQ}$ resp.\ number fields where the corresponding $\theta$-stable parabolics and their characters are defined, since in our principal applications to automorphic representations the modules of interest are cohomologically induced standard modules $A_\lieq(\lambda)$.

To obtain optimal rationality results for $A_\lieq(\lambda)$, we depart from a standard module $A_\lieq(0)$ with trivial infinitesimal character occuring in the same coherent family as $A_\lieq(\lambda)$. Then $A_\lieq(0)$ is defined over an imaginary quadratic extension $\QQ_K'/\QQ_K$, and we descend it to $\QQ_K$ whenever this is possible. We refer to Theorem \ref{thm:realstandardmodules} for a non-exhaustive list of cases where we can guarantee models over $\QQ_K$. Finally, we apply rational translation functors to obtain suitable models of $A_\lieq(\lambda)$ for general $\lambda$ (supposed to be rational in a suitable sense).

This descent problem leads us to a natural generalization of the classical notion of Frobenius-Schur indicator \cite{frobeniusschur1906} to $(\lieg,K)$-modules over arbitrary fields. We pursue this in section \ref{sec:frobeniusschur} where we show that the general descent problem in quadratic extensions over number fields satisfies a local-global principle (cf.\ Theorem \ref{thm:localglobal}).

For infinitesimally unitary representations such as $A_\lieq(0)$, the obstruction for descent at the ramified archimedean places is controlled by the symmetry properties of invariant bilinear forms, as is the case classically. This is established in Theorem \ref{thm:fsunitary}. An important consequence is that the indicator for one ramified archimedean place determines the indicators at all archimedean places.

For elementary reasons the indicator for $A_\lieq(0)$ at a ramified archimedean prime is determined by the indicator of the bottom layer. More generally, we show in Proposition \ref{prop:minimalktypedescent} that this reduction always works, locally and globally, regardless of any unitarity assumptions. This then allows us to exploit properties of the chosen model $K$ of the maximal compact group to lay hand on the local obstructions for descent at non-archimedean places.

Section \ref{sec:admissiblegroups} identifies the relevant natural classes of groups for this purpose. We say that a model $K$ is $F'/F$-admissible for a quadratic extension $F'/F$ if the only obstruction to descent for arbitrary absolutely irreducibles is the field of rationality. Not all compact groups admit admissible models. Therefore, we also introduce the class of $F'/F$-semi-admissible models consisting of models $K$ where the global obstruction is controlled by a single archimedean place. Propositions \ref{prop:necessaryadmissiblegroups}, \ref{prop:unconditionaladmissiblegroups} and \ref{prop:unitarygroups} give necessary and sufficient criteria for the existence of admissible models.

For example we show that every model of $\U(n)$ is admissible, whereas $\SU(n)$ admits admissible models only when $n$ is odd. For even $n$ there are $\QQ(\sqrt{-1})/\QQ$-admissible models of $\SU(n)$. In general it seems to be a delicate question if semi-admissible models exist, because we usually want to realize $K$ as a subgroup of a given group $G$ over $\QQ$.

In order to define rational translation functors, we first investigate the Galois action on infinitesimal characters (cf.\ Propositions \ref{prop:centerrationality} and \ref{prop:hcgalois}). It is no surprise that in terms of Harish-Chandra's parametrization of characters by weights, this action specializes to the one studied by Borel and Tits in \cite{boreltits1965} in the finite-dimensional case. This then allows us to define rational translation functors accordingly. As their definition is straightforward, we only treat them implicitly in the proof of Proposition \ref{prop:standardmodules}.

As already indicated, we do not develop a full blown theory of $(\lieg,K)$-modules over any field here, as this would certainly yield to a monograph of size comparable to \cite{book_knappvogan1995}. Therefore we do not discuss rational Hecke algebras, and omit the treatment of Bernstein functors and hard duality. These topics are taken up in \cite{januszewski2015pre,januszewski2015pre2} in order to study the rationality properties of certain zeta integrals, which has applications to Deligne's Conjecture on special values of $L$-functions \cite{deligne1979}.

\addcontentsline{toc}{subsection}{Related work}
\subsection*{Related work}

The beginnings of a rational theory of Harish-Chandra modules had already been sketched in the author's manuscript \cite{januszewskipreprint}. Independently Michael Harris sketched a variant of Beilinson-Bernstein Localization over $\QQ$ in the context of Shimura varieties and hinted at applications to periods of automorphic representations in \cite{harris2012}. Harris emphasizes the Galois-equivariant viewpoint, i.e.\ he considered modules over $\overline{\QQ}$ together with a Galois action, and Beilinson-Bernstein Localization already puts strict finiteness conditions on the modules under consideration. Our results show that this is not a serious loss, since finite length modules satisfy those conditions automatically (cf.\ Theorem \ref{thm:finitelength}).

Harris obtains another proof of Theorem \ref{introthm:hermitianrationality} in \cite{harris2012}. However, we remark that the exposition in loc.\ cit.\ is very terse. For example, the existence of a rational Beilinson-Bernstein correspondence, which is the key ingredient to the proof of Theorem \ref{introthm:hermitianrationality} in loc.\ cit., is claimed but not proved. Furthermore, certain rationality statements in loc.\ cit.\ are too optimistic. We convinced ourselves that such a theory indeed exists and has the claimed properties. Meanwhile, Michael Harris is working on an erratum for loc.\ cit.


Further motivation for our work comes from related work of Michael Harris and his joint work with Harald Grobner, as well as work of G\"unter Harder and A.\ Raghuram. In their joint work \cite{grobnerharris2013}, Harris and Grobner observed a weaker variant of Theorem \ref{thm:glnrationality} for $\GL(n,F\otimes_\QQ\RR)$ for imaginary quadratic fields $F$, which is an essential ingredient in their work.

In their recent investigation of special values of Rankin-Selberg $L$-functions, Harder and Raghuram implicitly compared rational structures on Harish-Chandra modules in \cite{harderraghuram2014preprint}. In this context Harder observed in \cite{harder2014preprint}, independently from us, that cohomological modules for $\GL(n,\RR)$ are defined over $\QQ$, which is a special case of Theorem \ref{thm:glnrationality}. Harder even went further and gave models over $\ZZ$. He first gives an ad hoc construction of models for $\GL(2,\RR)$-modules and then invokes an explicit algebraic variant of parabolic induction in order to produce models for $\GL(n,\RR)$-modules without reference to ambient categories of modules.

Harder pursues rationality and integrality properties of intertwining operators, which yield applications in \cite{harderraghuram2014preprint}. In this context questions of integrality of inverses of intertwining operators boil down to combinatorial identities \cite{harder2010}, which have been taken up by Don Zagier in \cite{zagier2010} in a special but non-trivial case.


Frobenius-Schur indicators classifying invariant bilinear forms had been previously studied by Prasad and Ramakrishnan \cite{prasadramakrishnan2012} and by Adams \cite{adams2014}. Adams' results allow us to expand the list in Theorem \ref{thm:realstandardmodules}. In their theses, Adams' students Robert McLean and Ran Cui studied Frobenius-Schur indicators for self-dual irreducible Langlands quotients of the principal series and the real-quaternionic indicator respectively \cite{mcleanthesis,cuithesis,cuipreprint2016}. Their results fit into our framework and provide even more optimal cases along the line of Theorem \ref{thm:realstandardmodules}.

\addcontentsline{toc}{subsection}{Acknowledgements}
{\bf Acknowledgements.}
The author thanks Binyong Sun for his hospitality and fruitful discussions, Jacques Tilouine for pointing out that the field of definition of Harish-Chandra modules is related to the $L$-packet, and G\"unter Harder for sharing and explaining his work in \cite{harder2014preprint}. The author thanks Jeff Adams for sharing Robert McLean's and Ran Cui's work. The author thanks Claus-G\"unther Schmidt and Anton Deitmar for their comments and remarks on a preliminary version of this paper. Last but not least the author thanks the referee for helpful remarks and the suggestion to include the general reductive case.

\addcontentsline{toc}{section}{Notation and Conventions}
\section*{Notation and Conventions}

Throughout the paper all fields we consider are of characteristic $0$. The reader may well assume that all fields are contained in $\CC$ in light of an appropriate Lefschetz Principle (see also \cite{lepowsky1976}). We denote fields by $F$, $F'$, ..., linear algebraic groups by $B,G,K$, ..., their Lie algebras by the corresponding gothic letters $\lieb,\lieg,\liek$, ..., and by $G'$, resp.\ $\lieg'$ their base change in an extension $F'/F$. Reductive linear algebraic groups are not assumed to be connected, but their connected components are assumed to be geometrically connected. We follow the customary convention that a rational representation of an algebraic group $G$ is a homomorphism of algebraic groups $\rho:G\to \Aut(V)$ for a (finite-dimensional) vector space $V$. Such a representation is said to be $F$-rational if all these data are given over a field $F$. $U(\lieg)$ denotes the universal enveloping algebra of a Lie algebra $\lieg$ over $F$ and $Z(\lieg)$ its center. Note that $Z(\lieg)$ is defined over $F$, cf.\ Proposition \ref{prop:centerrationality}. For an algebraic group or a real Lie group $G$, the superscript $\cdot^0$ denotes the connected component of the identity, and $\pi_0(G)=G/G^0$.

The derived group of a group $G$ is denoted by ${\mathscr D}(G)$, and the adjoint group of a reductive group is $G^{\rm ad}$. We extend the notion of isogeny to compact Lie groups in the obvious way.

We say that a connected reductive group $G$ over a number field or non-archimedean local field $F$ has good reduction at a place $v$ of $F$ if there is a group scheme $\mathcal G$ over the valuation ring $\OO_v\subseteq F$ of $v$ which admits $G$ as generic fiber and whose special fiber is smooth and connected.

If $F$ is a number field, we write $\Adeles_F$ for the ring of ad\`eles of $F$. We set $\Adeles:=\Adeles_\QQ$ for notational simplicity. We say that an archimedean prime $v$ of $F$ ramifies in an extension $F'/F$ if the extension of the corresponding completions $F_v'/F_v$ is non-trivial (here $v$ always denotes the unique extension of $v$ to $F'$). We say that $F'/F$ ramifies at infinity if there at least one archimedean place of $F$ ramifies in $F'/F$.

If $F'/F$ is a quadratic extension of fields, we say that a quadratic extension $L'/L$ {\em dominates} $F'/F$, if $F\subseteq L$, $F'\subseteq L'$, $L'=F'L$ and $F=L\cap F'$.

\section{Rational Pairs and Modules}

Let $F$ be a field of characteristic $0$. A {\em pair} $(\liea,B)$ over $F$, or equivalently an $F$-rational pair $(\liea,B)$, consists of a reductive linear algebraic group $B$ over $F$ (not necessarily connected), and a Lie algebra $\liea$ over $F$, and the following additional data:
\begin{itemize}
\item[(i)] A Lie algebra monomorphism
$$
\iota_B:\quad \lieb:=\Lie(B)\;\to\;\liea.
$$
\item[(ii)] An action of $B$ on $\liea$, whose differential is the action of $\lieb$ on $\liea$.
\end{itemize}

We consider the category $\mathcal C_{\rm fd}(B)$ of finite-dimensional rational $B$-modules over $F$. This is an $F$-linear tensor category, with a natural forgetful functor
$$
\mathcal F^B:\quad\mathcal C_{\rm fd}(B)\to \mathcal C_{\rm fd}(1),
$$
which turns $(\mathcal C_{\rm fd}(B),\mathcal F^B)$ into a Tannakian category, with Tannakian dual canonically isomorphic to $B$.

We remark that by (i) and (ii), $\liea$ is a Lie algebra object in $\mathcal C_{\rm fd}(B)$, i.e.\ we have a commutative diagram
$$
\begin{CD}
\liea\otimes_F\liea @>[\cdot,\cdot]>>\liea\\
@A\Delta{}AA @AAA\\
\liea@>>> 0
\end{CD}
$$
in $\mathcal C_{\rm fd}(B)$, and a similar one reflecting the Jacobi identity.

The category of finite-dimensional $(\liea,B)$-modules is defined as the category $\mathcal C_{\rm fd}(\liea,B)$ of $\liea$-module objects in $\mathcal C_{\rm fd}(B)$ with the property that the action of $\lieb\subseteq\liea$ coincides with the derivative of the action of $B$. Considering the category $\mathcal C(B)$ of ind-objects in $\mathcal C_{\rm fd}(B)$, we define mutatis mutandis the category of $(\liea,B)$-modules $\mathcal C(\liea,B)$ as the category of $\liea$-module objects in $\mathcal C(B)$, again with the action of $\lieb\subseteq\liea$ being the derivative of the action of $B$. All these categories are equipped with natural forgetful faithful exact functors
$$
\mathcal C(\liea,B)\to\mathcal C(1)
$$
into the category of $F$-vector spaces.

We call an $(\liea,B)$-module $X$ {\em admissible}, if for each finite-dimensional $B$-module $V$
\begin{equation}
\dim_F\Hom_B(V,X)\;<\;\infty.
\label{eq:admissibledefinition}
\end{equation}

\subsection{Base change}

Let $F'/F$ be a field extension. Then every pair $(\liea,B)$ over $F$ gives rise to a pair
$$
(\liea',B')\;:=\;
(\liea,B)\otimes_F F'\;:=\;
(\liea\otimes_F F',B\times_{\Spec F} \Spec F')
$$
over $F'$. Similarly every $(\liea,B)$-module $X$ gives rise to an $(\liea',B')$-module $X\otimes_F F'$, and this construction extends to an exact faithful functor
$$
-\otimes_FF':\quad \mathcal C(\liea,B)\to\mathcal C(\liea',B'),
$$
which is left adjoint to the exact faithful forgetful functor
$$
\cdot|_F:\quad \mathcal C(\liea',B')\to\mathcal C(\liea,B).
$$
The latter also admits an exact right adjoint
$$
-\otimes_F^BF':\quad \mathcal C(\liea,B)\to\mathcal C(\liea',B'),\quad X\mapsto\Hom_{F}(F',X)_{B-{\rm finite}},
$$
where the $F'$-vector space structure of $\Hom_{F}(F',X)$ is inherited from the domain.
\begin{proposition}\label{prop:hombasechange}
For any $(\liea,B)$-modules $X,Y$ we have a canonical monomorphism
\begin{equation}
\Hom_{(\liea,B)}(X,Y)
\otimes_F F'\;\to\;
\Hom_{(\liea',B')}(X\otimes_F F',Y\otimes_F F').
\label{eq:hombasechangemono}
\end{equation}
which extends to a natural isomorphism
\begin{equation}
\Hom_{(\liea,B)}(X,Y)\otimes_{F}^1F'
\;\cong\;
\Hom_{(\liea',B')}(X\otimes_F F',Y\otimes_F^B F').
\label{eq:hombasechangeiso}
\end{equation}
The map \eqref{eq:hombasechangemono} is an isomorphism in the following cases:
\begin{enumerate}
  \item[(i)] $F'/F$ is finite,
  \item[(ii)] $X$ is finitely generated,
  \item[(iii)] $$\dim_{F'}\Hom_{(\liea',B')}(X\otimes_F F',Y\otimes_F F')=1.$$
\end{enumerate}
\end{proposition}

\begin{proof}
  The existence of the monomorphism \eqref{eq:hombasechangemono} is clear, and the existence of the canonical isomorphism \eqref{eq:hombasechangeiso} is a straighforward application of the adjointness relations. For $F'/F$ finite, the two functors $-\otimes_FF'$ and $-\otimes_F^BF'$ agree, this shows statement (i). The proof of statement (ii) is standard.


For (iii) we observe that under the given hypothesis, $\Hom_{(\liea',B')}(X\otimes_F F',Y\otimes_F^B F')\neq 0$, hence $\Hom_{(\liea,B)}(X,Y)\neq 0$ by \eqref{eq:hombasechangeiso}, which implies that \eqref{eq:hombasechangemono} must be an isomorphism.
\end{proof}

We will generalize Proposition \ref{prop:hombasechange} to arbitrary Ext groups in section \ref{sec:homology} (cf.\ Corollary \ref{cor:extbasechange}).

\subsection{Restriction of scalars}\label{sec:restrictionofscalars}

If $F'/F$ is finite, and if $(\liea',B')$ is a pair over $F'$, we may define the restriction of scalars
$$
(\liea'',B'')\;:=\;
\res_{F'/F} (\liea',B')\;:=\;
(\res_{F'/F}\liea',\res_{F'/F}B'),
$$
where on the right hand side $\res_{F'/F}$ denotes restriction of scalars \`a la Weil \cite{book_weil1961}. We have similarly a functor
$$
\res_{F'/F}:\quad \mathcal C(\liea',B')\to\mathcal C(\liea'',B''),
$$
naively given by sending an $(\liea',B')$-module $X'$ to $X'$ considered as an $F$-vector space $\res_{F'/F}X'$. We have the straightforward
\begin{proposition}
The functor $\res_{F'/F}$ is fully faithful.
\end{proposition}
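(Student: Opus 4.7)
The plan is to reduce full faithfulness of $\res_{F'/F}$ to a base-change calculation, combining Proposition \ref{prop:hombasechange} with the decomposition of the Weil restriction after extending scalars to a separable closure of $F$. Faithfulness is immediate: on morphisms $\res_{F'/F}$ regards an $F'$-linear $(\liea',B')$-equivariant map as the same underlying $F$-linear map, now viewed as an $(\liea'',B'')$-equivariant morphism via the tautological identifications $\liea'' = \res_{F'/F}\liea'$ and $B''(R) = B'(R\otimes_F F')$. Since the assignment on underlying maps of sets is injective, so is the induced map on $\Hom$ spaces.

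For fullness, given an $F$-linear $(\liea'',B'')$-equivariant $f : \res_{F'/F}X' \to \res_{F'/F}Y'$, I would extend scalars from $F$ to $F'$. The canonical ring decomposition $F'\otimes_F F' \cong \prod_\sigma F'$, indexed by the $F$-embeddings $\sigma : F' \hookrightarrow \overline{F}$, induces isomorphisms of pairs and of modules over $F'$,
$$
(\liea'',B'') \otimes_F F' \;\cong\; \prod_\sigma (\liea',B')^\sigma,
\qquad
\res_{F'/F}X' \otimes_F F' \;\cong\; \bigoplus_\sigma (X')^\sigma,
$$
in which the $\sigma$-th factor of the product pair acts on the $\sigma$-summand and trivially on the others. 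Proposition \ref{prop:hombasechange} then identifies
$$
\Hom_{(\liea'',B'')}(\res X', \res Y') \otimes_F F' \;\cong\; \Hom_{\prod_\sigma (\liea',B')^\sigma}\!\Bigl(\bigoplus_\sigma (X')^\sigma,\; \bigoplus_\sigma (Y')^\sigma\Bigr),
$$
and this latter $\Hom$ decomposes along the product structure as $\bigoplus_\sigma \Hom_{(\liea',B')^\sigma}((X')^\sigma,(Y')^\sigma)$, which is canonically $\Hom_{(\liea',B')}(X',Y') \otimes_F F'$. Faithfully flat descent along $F'/F$ together with Galois equivariance then identifies the $F$-rational subspaces on the two sides, producing the desired natural isomorphism
$\Hom_{(\liea',B')}(X',Y') \xrightarrow{\sim} \Hom_{(\liea'',B'')}(\res X', \res Y')$.

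The main technical point is the decomposition of $\Hom$ in the product pair category: one must verify that a morphism between summands attached to distinct factors of $\prod_\sigma (\liea',B')^\sigma$ is forced to vanish. This is the Tannakian observation that a morphism of modules for a product pair splits along the product, and its verification here rests on the fact that the $\sigma$-factor of the group acts nontrivially only on the $\sigma$-summand. Once this diagonal decomposition is established, matching the Galois actions on both sides through the identification yields fullness. The faithfulness of the base-change functor $(-)\otimes_F F'$ (another application of Proposition \ref{prop:hombasechange}) then allows the descent step to conclude.
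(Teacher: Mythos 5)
Your proof takes a genuinely different — and more self-contained — route than the paper's. The paper's proof is a one-liner: it cites the well-known full faithfulness of Weil restriction on categories $\mathcal C_{\rm fd}(B')\to\mathcal C_{\rm fd}(B'')$ of finite-dimensional rational representations, extends to ind-objects, and observes that the Lie algebra object is carried to the Lie algebra object. You instead \emph{prove} the underlying fact directly at the level of $(\liea,B)$-modules, by base-changing, invoking Proposition \ref{prop:hombasechange}, decomposing the Weil restriction as a product pair, and descending. That is a legitimate and instructive alternative that avoids a black-box appeal; it is also closer to how one would verify the ``well-known'' fact from scratch.

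There is one technical slip to repair. The ring decomposition $F'\otimes_F F'\cong\prod_\sigma F'$, with $\sigma$ ranging over the $F$-embeddings $F'\hookrightarrow\overline F$, holds only when $F'/F$ is Galois. For a general finite (separable) extension, $F'\otimes_F F'$ is a product of fields that may be proper extensions of $F'$, so the $\sigma$-summands $(X')^\sigma$ are not all $F'$-modules and the identification $\bigoplus_\sigma\Hom_{(\liea',B')^\sigma}((X')^\sigma,(Y')^\sigma)\cong\Hom_{(\liea',B')}(X',Y')\otimes_F F'$ breaks down as stated. The standard fix is to extend scalars not to $F'$ itself but to a Galois closure $L$ of $F'/F$ (or to $\overline F$): then $F'\otimes_F L\cong\prod_{\sigma:F'\hookrightarrow L}L$ cleanly, $(\liea'',B'')\otimes_F L\cong\prod_\sigma(\liea',B')^\sigma_L$, and your argument proceeds verbatim over $L$. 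The final descent step (an injective $F$-linear map that becomes surjective after a faithfully flat base change is surjective) is then applied to the natural transformation $\Hom_{(\liea',B')}(X',Y')\to\Hom_{(\liea'',B'')}(\res X',\res Y')$, after checking — as you indicate — that its base change agrees with the composite of your identifications. With that adjustment the proof is correct.
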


\begin{proof}
It is well known that
$$
\res_{F'/F}:\quad \mathcal C_{\rm fd}(B')\to\mathcal C_{\rm fd}(B'')
$$
is fully faithful. This naturally extends to the categories of ind-objects, and as the image of $\liea'$ under $\res_{F'/F}$ is $\liea''$, the claim follows.
\end{proof}

In the sequel, depending on the context, we also consider $\res_{F'/F}$ also as a functor
$$
\res_{F'/F}:\quad \mathcal C_{\rm fd}(\liea',B')\to\mathcal C_{\rm fd}(\liea,B),
$$
i.e.\ we implicitly compose the restriction of scalars with the forgetful functor along the unit map $(\liea,B)\to(\liea'',B'')$ of the adjunction.


\subsection{Associated pairs}

We depart from an $F$-rational pair $(\liea,B)$. Let $\sigma:F\to\CC$ be an embedding and denote
$$
(\liea^\sigma,B^\sigma)\;:=\;(\liea,B)\otimes_{F,\sigma}\CC
$$
the corresponding base change. Then we consider $B^\sigma(\CC)$ as a real Lie group and fix a maximal compact subgroup $K^\sigma$. It is unique up to conjugation by an element of $B^\sigma(\CC)$. The inclusion
$$
K^\sigma\;\to\;B^\sigma(\CC)
$$
induces an equivalence
\begin{equation}
\mathcal C_{\rm fd}(B\otimes_{F,\sigma}\CC)\;\to\;
\mathcal C_{\rm fd}(K^\sigma)
\label{eq:BKequivalence}
\end{equation}
of the categories of finite-dimensional representations. Then $(\liea^\sigma,K^\sigma)$ constitutes a classical pair that we call {\em associated} to $(\liea,B)$ (with respect to $\sigma$).

For associated pairs we have
\begin{proposition}\label{prop:classicaliso}
The category $\mathcal C(\liea^\sigma,B^\sigma)$ of $\CC$-rational modules is naturally equivalent to the category $\mathcal C(\liea^\sigma,K^\sigma)$ of classical $(\liea^\sigma,K^\sigma)$-modules. This equivalence induces an equivalence of the corresponding categories of finite-dimensional modules.
\end{proposition}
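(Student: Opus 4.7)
The plan is to transport the equivalence \eqref{eq:BKequivalence} through the formation of ind-objects and then through the formation of module objects over the Lie algebra $\liea^\sigma$. The classical input underlying \eqref{eq:BKequivalence}, namely Weyl's unitary trick, already asserts more than stated: restriction from $B^\sigma(\CC)$ to $K^\sigma$ is an equivalence of \emph{tensor} categories of finite-dimensional representations, because the restriction functor obviously preserves tensor products and duals.

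First I would extend \eqref{eq:BKequivalence} to an equivalence $\mathcal C(B^\sigma)\to\mathcal C(K^\sigma)$ of categories of ind-objects. This is formal: any equivalence between essentially small categories lifts canonically to an equivalence of their ind-completions, and since the original equivalence is tensorial, so is its extension.

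Next I would observe that $\liea^\sigma$ is a Lie algebra object in $\mathcal C_{\rm fd}(B^\sigma)$, equipped with the embedding $\iota_{B^\sigma}\colon\lieb^\sigma\hookrightarrow\liea^\sigma$. Under the equivalence, $\liea^\sigma$ becomes the same underlying complex vector space with the restricted $K^\sigma$-action, and its Lie bracket, being a morphism in $\mathcal C_{\rm fd}(B^\sigma)$, corresponds to a $K^\sigma$-equivariant map, hence a Lie algebra object structure in $\mathcal C_{\rm fd}(K^\sigma)$. The crucial identification is $\lieb^\sigma=\Lie(B^\sigma)=\liek^\sigma\otimes_\RR\CC$: the compatibility axiom (ii), requiring that the $B^\sigma$-action on a module differentiate to the prescribed $\lieb^\sigma$-action, translates verbatim into the requirement that the $K^\sigma$-action differentiate to the $\liek^\sigma$-action and extend $\CC$-linearly to $\lieb^\sigma$, which is exactly the classical definition of a $(\liea^\sigma,K^\sigma)$-module.

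Given these ingredients, the equivalence of module categories follows from a general fact: a tensor equivalence $\Phi\colon\mathcal A\to\mathcal A'$ of cocomplete tensor categories induces, for any Lie algebra object $L$ in $\mathcal A$ with image $L'=\Phi(L)$, an equivalence between the categories of $L$-module objects in $\mathcal A$ and $L'$-module objects in $\mathcal A'$, because the defining diagrams transport through $\Phi$. Imposing the subalgebra compatibility on both sides cuts out the subcategories $\mathcal C(\liea^\sigma,B^\sigma)$ and $\mathcal C(\liea^\sigma,K^\sigma)$ coherently. Since the equivalence $\Phi$ commutes with the forgetful functors to $\CC$-vector spaces, it preserves the underlying dimension, so it restricts to an equivalence of the corresponding full subcategories of finite-dimensional modules. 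The only real work is bookkeeping for the compatibility of the $\lieb^\sigma$-action with the group action, and this is automatic once one notes that $\Phi$ is tensorial and that $\liek^\sigma\otimes_\RR\CC=\lieb^\sigma$.
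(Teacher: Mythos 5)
Your proof is correct and follows essentially the same route as the paper's: reduce to the case $\liea=\lieb$ via the equivalence \eqref{eq:BKequivalence} (the paper's ``both categories agree'' step), extend to ind-objects, and then transport the Lie algebra object $\liea^\sigma$ and its category of module objects through the equivalence, checking that the compatibility axiom for the $\lieb^\sigma$- vs.\ differentiated $K^\sigma$-action matches on both sides. You make the tensor-categorical bookkeeping explicit where the paper leaves it implicit, which is a virtue, but the underlying argument is the same.
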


\begin{proof}
First observe that in the case $\liea=\lieb$ both categories agree thanks to the equivalence \eqref{eq:BKequivalence}, and are given by the category of rational $B^\sigma$-modules, which is the same as locally finite $K^\sigma$-modules. The general case reduces to this case by the observation that the corresponding Lie algebra objects $\liea$ in both categories are mapped onto each other (up to isomorphism), whence the resulting categories of $(\liea,B)$- resp.\ $(\liea^\sigma,K^\sigma)$-module objects are equivalent.
\end{proof}

\subsection{Reductive pairs}\label{sec:reductivepairs}

A pair $(\lieg,K)$ over $F$ is {\em reductive} if $\lieg$ is reductive, $\liek$ is the space of fixed points under an $F$-linear involution $\theta$ of $\lieg$, and we are given a non-degenerate invariant bilinear form $\langle\cdot,\cdot\rangle:\lieg\to F$, where invariance is understood with respect to the adjoint actions of $\lieg$ and $K$ on $\lieg$.

On the categories of reductive pairs and their modules we have natural base change and restriction of scalars functors as before. Recall that restriction of scalars on pairs is only defined for finite extensions.

By the classification of reductive algebraic groups we know that each reductive pair $(\lieg,K)$ that corresponds to a linear reductive real Lie group $G$, has an $F$-rational model $(\lieg_F,K_F)$ over a {\em number field} $F\subseteq\CC$, i.e.\ $F/\QQ$ is finite. However in general such a model is not unique, and the resulting notion of rationality for modules depends on the choice of model.

We call a parabolic subalgebra $\lieq\subseteq\lieg$, defined over $F$, {\em $F$-germane} if it has a Levi decomposition $\liel+\lieu$ over $F$ with a $\theta$-stable Levi factor $\liel$. Write $\tilde{L}\cap K$ for the maximal subgroup of $K$ whose Lie algebra is contained in $\liel$. We assume it to be defined over $F$ and set
$$
L\cap K\;:=\;N_{\tilde{L}\cap K}(\lieq)\cap N_{\tilde{L}\cap K}(\theta\lieq).
$$
Then $(\liel,L\cap K)$ is another reductive pair over $F$. In the sequel we implicitly assume Levi factors of $F$-germane parabolic subalgebras to be always of this specific form, except when explicitly stated otherwise (in which case we pass to a subgroup of finite index of $L\cap K$).

\subsection{Galois actions}\label{sec:galoisactions}

Let $F'/F$ be a not necessarily finite Galois extension with Galois group $\Gal(F'/F)$. If $(\liea,B)$ is a pair over $F$ with base change $(\liea',B')$ to $F'$, $(\liec,D)\subseteq (\liea_{F'},B_{F'})$ a subpair over $F'$ and $\tau\in\Gal(F'/F)$, we have the Galois twists
$$
\liec^\tau\;:=\;\tau(\liec)\;\subseteq\;\liea'=\liea\otimes_F F'
$$
and similarly the subgroup
$$
D^\tau\;:=\;\tau(D)\;\subseteq\;B'\;=\;B\otimes_F F'
$$
Then we have the subpair
$$
(\liec,D)^\tau\;:=\;(\liec^\tau,D^\tau)\;\subseteq\;(\liea',B').
$$
We remark that mutatis mutandis $\tau$ acts on the universal enveloping algebra $U(\liea')$.

\begin{proposition}\label{prop:centerrationality}
Let $F'$ be an algebraic closure of $F$, $(\liea,B)$ a pair with base change $(\liea',B')$ to $F'$. Write $Z(\liea')$ for the center of the universal enveloping algebra $U(\liea')$. Then $Z(\liea')$ is defined over $F$, i.e.
$$
Z(\liea')\;=\;Z(\liea)\otimes_F F'.
$$
\end{proposition}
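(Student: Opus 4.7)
The plan is to prove both inclusions of $Z(\liea') = Z(\liea) \otimes_F F'$ as subspaces of $U(\liea') = U(\liea) \otimes_F F'$; the latter identification itself follows from the universal property of the enveloping algebra, which gives that $U$ commutes with base change.

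For the inclusion $Z(\liea) \otimes_F F' \subseteq Z(\liea')$, I would argue that an element of $U(\liea')$ commuting with $\liea$ automatically commutes with $\liea' = \liea \otimes_F F'$ by $F'$-linearity of the bracket, and then commutes with all of $U(\liea')$ via the Leibniz rule. Since any $z \in Z(\liea) \otimes_F F'$ commutes with $\liea$, this gives the inclusion immediately.

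For the reverse inclusion, I would fix an $F$-basis $(e_i)_{i \in I}$ of $F'$ over $F$ and write an arbitrary $z \in Z(\liea')$ as a finite sum $z = \sum_i u_i \otimes e_i$ with $u_i \in U(\liea)$. Centrality of $z$ implies in particular that $[x,z] = 0$ for every $x \in \liea \subseteq \liea'$. Expanding yields $\sum_i [x,u_i] \otimes e_i = 0$, and since $[x,u_i] \in U(\liea)$ and the $e_i$ are $F$-linearly independent, each $[x,u_i]$ vanishes. Hence every $u_i$ commutes with all of $\liea$, which by the Leibniz rule means $u_i \in Z(\liea)$, so that $z \in Z(\liea) \otimes_F F'$.

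No serious obstacle is expected here: conceptually, the argument reduces to the fact that intersections of $F$-vector subspaces of a common ambient space commute with flat base change, applied to the kernels of the $F$-linear endomorphisms $\ad(x): U(\liea) \to U(\liea)$ as $x$ ranges over $\liea$. The only point worth emphasizing is the reduction from centrality with respect to all of $\liea'$ to centrality with respect to $\liea$, which is precisely where one uses that $\liea$ spans $\liea'$ over $F'$.
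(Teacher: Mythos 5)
Your proof is correct, and it takes a genuinely different route from the paper's. The paper argues by Galois descent: it observes that $Z(\liea')$ is stable under the natural action of $\Gal(F'/F)$ on $U(\liea')=U(\liea)\otimes_F F'$ and concludes via descent for $F'$-subspaces that $Z(\liea')$ is defined over $F$, leaving the identification of the resulting $F$-form with $Z(\liea)$ implicit. Your direct argument proves both inclusions explicitly by expanding in an $F$-basis of $F'$, and it is both more elementary (no Galois theory is needed) and more general (it applies verbatim to an arbitrary field extension $F'/F$, whereas the Galois-descent route needs $F'/F$ at least normal). Your closing observation is also a correct conceptual summary: $Z(\liea)$ is the simultaneous kernel of the $F$-linear maps $\ad(x)$ for $x\in\liea$, and forming such a joint kernel commutes with the flat base change $-\otimes_F F'$ because for a field extension the canonical map $\bigl(\prod_j M_j\bigr)\otimes_F F'\to\prod_j\bigl(M_j\otimes_F F'\bigr)$ is injective. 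Either approach is sound; yours is self-contained and yields the identification $Z(\liea')=Z(\liea)\otimes_F F'$ directly rather than only abstract descendedness.
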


\begin{proof}
An element $a\in U(\liea')$ lies in $Z(\liea')$ if and only if one (hence all) its Galois twists $a^\tau$ lie in $Z(\liea')$ as well, $\tau\in\Gal(F'/F)$. Hence by Galois descent for vector subspaces of $U(\liea')$, $Z(\liea')$ is defined over $F$.
\end{proof}

Let $F'/F$ be a Galois extension as before and let $X$ be an $(\liea',B')$-module. Then for any $\tau\in\Gal(F'/F)$ we have on
$$
X^\tau\;:=\;X\otimes_{F',\tau} F'
$$
a natural $F'$-linear action of $\liea$, induced by the action of $\liea$ on $X$. This action extends uniquely to an action of
$$
\liea'=\liea\otimes_{F} F'.
$$
Similarly we have a unique action of $B'$ on $X^\tau$, extending the natural $F'$-linear action of $B$ on $X$. If $X^\tau$ is defined over $F$, this action coincides with the usual Galois-twisted action.

We remark that unless $X$ is defined over $F$, $X^\tau$ is only defined up to unique isomorphism. To be more precise, the twisted module $X^\tau$ comes with a natural $\sigma$-linear isomorphim
$$
\iota_\tau:\quad X\;\to\;X^\tau.
$$
Then $(X^\tau,\iota_\tau)$ is unique up to unique isomorphism.

In their investigation of rationality questions of rational representations of reductive groups, Borel and Tits introduced in \cite[Section 6]{boreltits1965} the following Galois action on weights. Let $\lieg$ be a reductive Lie algebra defined over $F$, $F'/F$ a Galois extension, over which the base change $\lieg'$ of $\lieg$ to $F'$ splits, and $\lieh\subseteq\lieg'$ a split Cartan subalgebra. Denote by $\Delta(\lieg',\lieh)$ the set of roots and fix a positive system $\Delta^+\subseteq\Delta(\lieg',\lieh)$, giving rise to a Borel subalgebra $\lieb=\lieh+\lien$ with nilpotent radical $\lien$. We write $W(\lieg,\lieh)$ for the corresponding Weyl group and $X(\lieh)=\lieh^*$ for the space of linear characters of $\lieh$, and $\rho\in X(\lieh)$ for the half sum of positive roots. Then for every $\sigma\in F'$ we find a unique inner automorphism $\alpha\in\Inn(\lieg)\subseteq\Aut(\lieg)$ sending $\lieh^\tau$ to $\lieh$ and $\lieb^\tau$ to $\lieb$. Then a weight $\lambda\in X(\lieh)$ is sent via $\tau$ to ${}_\Delta\tau(\lambda)$, which is characterized by the property
\begin{equation}
{}_\Delta\tau(\lambda)(\alpha(h^\tau))\;=\;\lambda(h)^\tau,\quad h\in\lieh.
\label{eq:deltagaloisaction}
\end{equation}
This action is well defined, i.e.\ independent of the choice of $\alpha$, and sends dominant weights to dominant weights. More concretely, if $\lambda\in X(\lieh)$ is the highest weight of an irreducible finite-dimensional $\lieg'$-module $V(\lambda)$, then $V(\lambda)^\tau$ is irreducible of highest weight ${}_\Delta\tau(\lambda)$. Observe that ${}_\Delta\tau(\rho)=\rho$.

We remark that for $h\in U(\lieh)^{W(\lieg',\lieh)}$, also
$$
\alpha(h^\tau)\;\in\;U(\lieh)^{W(\lieg',\lieh)},
$$
and this element is independent of the choice of $\alpha$. Indeed, if $\alpha'\in\Inn(\lieg')$ sends $\lieh^\tau$ to $\lieh$ and $\lien^\tau$ to $\lien$, then the element
$$
\alpha'\circ\alpha^{-1}\;\in\;N(\lieh)
$$
sends the positive system $\Delta^+$ to $\Delta^+$, thus lies in the centralizer of $\lieh$.

Recall that a $\lieg'$-moduile $V$ is called {\em quasi-simple} if the center $Z(\lieg')$ of the universal enveloping algebra acts on $V$ via scalars.

\begin{proposition}\label{prop:hcgalois}
If $V$ is a quasi-simple $\lieg'$-module on which $Z(\lieg')$ acts via the infinitesimal character $\lambda+\rho$, then $V^\tau$ has infinitesimal character
$$
{}_\Delta\tau(\lambda+\rho)\;=\;
{}_\Delta\tau(\lambda)+\rho.
$$
\end{proposition}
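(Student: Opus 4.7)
The plan is to reduce the statement to an identity involving the unshifted Harish-Chandra projection, after first establishing its equivariance under the compensating semilinear automorphism $\alpha\circ\tau$. By Proposition~\ref{prop:centerrationality} we have $Z(\lieg') = Z(\lieg)\otimes_F F'$, so by $F'$-linearity it suffices to identify the scalar by which each $z\in Z(\lieg)$ acts on $V^\tau$. Unwinding the definition of $V^\tau = V\otimes_{F',\tau}F'$, the $F$-linear action of $\lieg$ on $V$ is transported unchanged to $V^\tau$ while the $F'$-scalars are twisted by $\tau$, so $z$ acts on $V^\tau$ by the scalar $\tau(\chi_V(z))$, where $\chi_V(z)\in F'$ denotes the scalar by which $z$ acts on $V$.

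Next, I would translate this into the Harish-Chandra language. Let $\gamma_0\colon Z(\lieg')\to U(\lieh)$ denote the unshifted projection associated to the triangular decomposition $U(\lieg') = U(\lieh)\oplus(\lien^-U(\lieg')+U(\lieg')\lien)$; then the hypothesis on $V$ amounts to $\chi_V(z) = \lambda(\gamma_0(z))$, where $\lambda$ is extended multiplicatively to an algebra homomorphism $U(\lieh)\to F'$. Multiplicative extension of \eqref{eq:deltagaloisaction} yields the more general identity ${}_\Delta\tau(\mu)(\alpha(p^\tau)) = \tau(\mu(p))$ for all $p\in U(\lieh)$ and $\mu\in\lieh^*$. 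Specializing $p=\gamma_0(z)$ and $\mu=\lambda$, the target equality $\tau(\chi_V(z)) = {}_\Delta\tau(\lambda)(\gamma_0(z))$ reduces to the invariance $\alpha(\gamma_0(z)^\tau) = \gamma_0(z)$ in $U(\lieh)$.

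The main obstacle is this last invariance. The key observation is that the $F$-semilinear automorphism $\alpha\circ\tau$ of $U(\lieg')$ preserves $\lieh$ and $\lieb$ by construction, so the induced permutation of the roots of $(\lieg',\lieh)$ preserves $\Delta^+$ and thereby also $\lien^-$; consequently $\alpha\tau$ preserves the triangular decomposition and commutes with the projection $\gamma_0$. Since $\tau$ fixes $z\in Z(\lieg)\subseteq U(\lieg)$ and any inner automorphism acts trivially on the center of $U(\lieg')$, we have $(\alpha\tau)(z) = z$, which upon applying $\gamma_0$ yields $\gamma_0(z) = (\alpha\tau)(\gamma_0(z)) = \alpha(\gamma_0(z)^\tau)$, as required. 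Finally, ${}_\Delta\tau(\rho) = \rho$ lets one rewrite ${}_\Delta\tau(\lambda) = {}_\Delta\tau(\lambda+\rho) - \rho$, identifying the infinitesimal character of $V^\tau$ as ${}_\Delta\tau(\lambda+\rho)$, as claimed.
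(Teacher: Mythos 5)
Your proof is correct and follows essentially the same route as the paper: both arguments reduce the claim to the equivariance of the Harish-Chandra projection under the $\tau$-semilinear automorphism $\alpha\circ\tau$ (which preserves $\lieh$, $\lien$, $\lien^-$ and hence the triangular decomposition), combined with the observation that a central $F$-rational element is fixed by $\alpha\circ\tau$. The only cosmetic difference is that you work with the unshifted projection $\gamma_0=p_{\lien}$ and reinstate the $\rho$-shift at the end via ${}_\Delta\tau(\rho)=\rho$, whereas the paper works with the shifted map $\gamma=\rho_{\lien}\circ p_{\lien}$ directly and checks the commutation for $p_{\lien}$ and $\rho_{\lien}$ separately.
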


\begin{proof}
In light of Proposition \ref{prop:centerrationality}, the Galois action on infinitesimal characters
$$
\eta:\quad Z(\lieg')\;\to\;F',
$$
is given by
$$
\eta^\tau\;=\;\tau\circ \eta\circ\tau^{-1}.
$$
The same formula applies to the action of $\lieg$ on $V^\tau$. Recall the definition of the Harish-Chandra map
$$
\gamma:Z(\lieg')\to U(\lieh)^{W(\lieg',\lieh)}.
$$
It is given by the composition of the projection
$$
p_\lien:\quad
U(\lieg')=U(\lieh)\oplus(\lien^- U(\lieg')+U(\lieg')\lien)
\;\to\; U(\lieh)
$$
with the algebra map
$$
\rho_{\lien}:\quad U(\lieh)\to U(\lieh),
$$
which is induced by the map
$$
\lieh\to \lieh,\quad h\mapsto h-\rho(h)\cdot 1_{U(\lieh)}.
$$
Let $\tau\in\Gal(F'/F)$ and where $\alpha\in\Inn(\lieg)$ as before. Then
\begin{equation}
\alpha(\lien^\tau)\;=\;\lien,
\label{eq:lientau}
\end{equation}
and the same formula holds for $\lien^-$.

For any $g\in U(\lieg')$ with decomposition
$$
g\;=\;p_{\lien}(g)+r_{\lien}(g),\quad r_{\lien}(g)\in(\lien^- U(\lieg')+U(\lieg')\lien),
$$
we deduce the decomposition
$$
\alpha(g^\tau)\;=\;\alpha(p_{\lien}(g)^\tau)+
\alpha(r_{\lien}(g)^\tau).
$$
Hence, by \eqref{eq:lientau},
$$
p_{\lien}(\alpha(g^\tau))\;=\;\alpha(p_{\lien}(g)^\tau).
$$
Similarly we deduce from relation \eqref{eq:deltagaloisaction}, the relation
$$
\rho(\alpha(h^\tau))\;=\;{}_\Delta\tau(\rho)(\alpha(h^\tau))\;=\;\rho(h)^\tau,
$$
and
$$
\alpha(1_{U(\lieh)}^\tau)\;=\;1_{U(\alpha(\lieh^\tau))}\;=\;1_{U(\lieh)},
$$
that for any $h\in\lieh$,
$$
\rho_\lien(\alpha(h^\tau))\;=\;\alpha(h^\tau)-\rho(h)^\tau\cdot 1_{U(\lieh)}.
$$
In conclusion we obtain for any $z\in Z(\lieg)$,
\begin{equation}
\gamma(\alpha(z^\tau))\;=\;\alpha(p_{\lien}(z)^\tau)-\rho(h)^\tau\cdot 1_{U(\lieh)}.
\label{eq:gammaalphatau}
\end{equation}
Let $\eta:Z(\lieg')\to F'$ be the infinitesimal character parametrized by $\lambda\in\lieh^*\!/W(\lieg',\lieh)$. Applying $\lambda$ to the identity \eqref{eq:gammaalphatau} proves the claim.
\end{proof}

\section{Homological Base Change Theorems}\label{sec:homology}

Let $(\liea,B)$ be any $F$-rational pair. Since $B$ is reductive, the category $\mathcal C_{\rm fd}(B)$ of finite-dimensional rational representations of $B$ is semisimple, hence all objects in $\mathcal C_{\rm fd}(B)$ are injective and projective, and the same remains valid in the ind-category $\mathcal C(B)$.

The forgetful functor $\mathcal F_{B}^{\liea,B}$ sending $(\liea,B)$-modules to $B$-modules has a left adjoint
\begin{equation}
\ind_{B}^{\liea,B}:\;\;\;M\;\mapsto\; U(\liea)\otimes_{U(\lieb)}M
\label{eq:ind}
\end{equation}
and a right adjoint
\begin{equation}
\pro_{B}^{\liea,B}:\;\;\;M\;\mapsto\; \Hom_{\lieb}(U(\liea),M)_{B-\text{finite}}.
\label{eq:pro}
\end{equation}
Then $\ind_{B}^{\liea,B}$ sends projectives to projectives and $\pro_{B}^{\liea,B}$ sends injectives to injectives and both functors commute with base change. As all $B$-modules are injective and projective, we see that $\mathcal C(\liea,B)$ has enough injectives and enough projectives. Therefore standard methods from homological algebra apply.

\subsection{The General Homological Base Change Theorem}

We have the fundamental
\begin{theorem}[Homological Base Change]\label{thm:derivedbasechange}
Let $(\liea,B)$ and $(\liec,D)$ be two pairs over $F$, and let
$$
\mathcal F:\quad
\mathcal C(\liea,B)\to\mathcal C(\liec,D)
$$
be a left (resp.\ right) exact functor. Consider for a homomorphism of fields $\tau:F\to F'$ another left (resp.\ right) exact functor
$$
\mathcal F':\quad
\mathcal C(\liea\otimes_{F,\tau} F',B\otimes_{F,\tau} F')\to\mathcal C(\liec\otimes_{F,\tau} F',D\otimes_{F,\tau} F')
$$
which extends $\mathcal F$, i.e.\ there is a natural isomorphism
$$
\iota:\quad
\mathcal F(-)\otimes_{F,\tau} F'\;\to\;\mathcal F'\circ(-\otimes_{F,\tau} F').
$$
Then in the right exact case, the natural isomorphism $\iota$ always extends naturally to a natural isomorphism
$$
(L_q\mathcal F)(-)\otimes_{F,\tau} F'\;\to\;(L_q\mathcal F')\circ(-\otimes_{F,\tau} F')
$$
for all $q$, which are compatible with the associated long exact sequences. In the left exact case, if $F'/F$ is finite, or if $X$ admits an admissible standard resolution, then we have a natural isomorphism
$$
(R^q\mathcal F)(X)\otimes_{F,\tau} F'\;\to\;(R^q\mathcal F')\circ(X\otimes_{F,\tau} F')
$$
in all degrees.
\end{theorem}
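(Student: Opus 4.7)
The plan is to reduce everything to an explicit resolution argument, exploiting that base change along $\tau$ is already exact on the underlying category of $F$-vector spaces and hence commutes with cohomology. I would treat the right exact/projective case, the left exact/injective case being formally dual. Given $X\in\mathcal{C}(\liea,B)$, the strategy is to choose a projective resolution $P_\bullet\to X$, observe that $P_\bullet\otimes_{F,\tau}F'\to X\otimes_{F,\tau}F'$ is again a projective resolution in $\mathcal{C}(\liea',B')$, and then compute
\[
(L_q\mathcal{F})(X)\otimes_{F,\tau}F'
\;=\;H_q(\mathcal{F}(P_\bullet))\otimes_{F,\tau}F'
\;=\;H_q(\mathcal{F}(P_\bullet)\otimes_{F,\tau}F')
\;\stackrel{\iota}{\cong}\;H_q(\mathcal{F}'(P_\bullet\otimes_{F,\tau}F'))
\;=\;(L_q\mathcal{F}')(X\otimes_{F,\tau}F'),
\]
where the second equality uses exactness of the functor $-\otimes_{F,\tau}F'$ on $F$-vector spaces and the third uses the hypothesized natural isomorphism $\iota$ applied termwise.

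The step that actually needs justification, and which I expect to be the main obstacle, is that base change preserves projectives in the category $\mathcal{C}(\liea,B)$. Here I would use the explicit description recalled in the paragraph introducing \eqref{eq:ind}: every projective $(\liea,B)$-module is a direct summand of $\ind_{B}^{\liea,B}(V)$ for some $V\in\mathcal{C}(B)$, and every such $V$ is projective (and injective) since $B$ is reductive. The key computation is then
\[
\ind_{B}^{\liea,B}(V)\otimes_{F,\tau}F'
\;=\;\bigl(U(\liea)\otimes_{U(\lieb)}V\bigr)\otimes_{F,\tau}F'
\;\cong\;U(\liea')\otimes_{U(\lieb')}(V\otimes_{F,\tau}F')
\;=\;\ind_{B'}^{\liea',B'}(V\otimes_{F,\tau}F'),
\]
which shows that $\ind$ commutes with base change, and since $V\otimes_{F,\tau}F'$ is a rational $B'$-module over a reductive group and therefore projective, the right hand side is projective. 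Taking direct summands (compatible with base change because $-\otimes_{F,\tau}F'$ is additive) yields preservation of arbitrary projectives. The injective case is handled symmetrically via $\pro_B^{\liea,B}$ and its analogous commutation with base change, which was noted in the same paragraph.

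With preservation of projectives (resp.\ injectives) in hand, the display above gives the desired isomorphism on each derived functor. Naturality in $X$ follows from the fact that a morphism $X\to Y$ can be lifted to a chain map $P_\bullet\to Q_\bullet$ of projective resolutions, and the whole construction is functorial in this chain map; independence of the chosen resolution is the usual homotopy argument, which survives base change because homotopies are $F$-linear and $-\otimes_{F,\tau}F'$ sends $F$-linear homotopies to $F'$-linear ones. Finally, to check compatibility with long exact sequences attached to a short exact sequence $0\to X'\to X\to X''\to 0$, one picks a compatible short exact sequence of projective resolutions (the horseshoe lemma), which base changes to a compatible short exact sequence of projective resolutions over $F'$; the snake lemma then produces the connecting maps on both sides, and the isomorphism constructed above intertwines them by inspection. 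This completes the plan; everything beyond the projective-preservation step is standard derived-functor bookkeeping.
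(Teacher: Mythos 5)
Your proposal is correct and follows essentially the same route as the paper: the paper observes directly that resolutions built inductively from the standard injectives and projectives produced by $\pro_B^{\liea,B}$ and $\ind_B^{\liea,B}$ commute with base change, while you make the intermediate step explicit — that base change preserves all projectives (resp.\ injectives) because they are summands of the standard ones and $\ind$/$\pro$ commute with $-\otimes_{F,\tau}F'$ — and then run the standard derived-functor bookkeeping. Both arguments rest on the same two facts: exactness of $-\otimes_{F,\tau}F'$ and the base-change compatibility of the $\ind$/$\pro$ constructions \eqref{eq:ind} and \eqref{eq:pro}.
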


\begin{proof}
  The right (resp.\ left) derived functors of $\mathcal F$ and $\mathcal F'$ may be computed via resolutions computed inductively with standard injectives (resp.\ projectives). Since the contruction of resolutions by standard projectives via the functors \eqref{eq:ind} and \eqref{eq:pro} always commutes with base change to $F'$ along $\tau$, the first claim follows. The construction of standard injectives commutes with base change along $\tau$ whenever $F'/F$ is finite or $X$ is admissible, which shows the second claim.
\end{proof}

\begin{corollary}\label{cor:extbasechange}
  Assume that for two $(\liea,B)$-modules $X$ and $Y$ one of the following conditions is satisfied:
  \begin{itemize}
    \item[(i)] $F'/F$ is finite,
    \item[(ii)] $X$ is finitely generated and $Y$ admits an admissible injective standard resolution.
  \end{itemize}
Then, writing $(\liea',B')$ for the base change of $(\liea,B)$ in $F'/F$, we have in every degree $q$ natural isomorphisms
$$
\Ext^q_{\liea,B}(X,Y)\otimes_F F'\;\cong\;
\Ext^q_{\liea',B'}(X\otimes_F F',Y\otimes_F F').
$$
In particular, a short exact sequence
$$
\begin{CD}
0@>>> X @>>> Z @>>> Y @>>> 0
\end{CD}
$$
of $(\liea,B)$-modules splits over $F$ if and only if the short exact sequence
$$
\begin{CD}
0@>>> X\otimes_F F' @>>> Z\otimes_F F' @>>> Y\otimes_F F' @>>> 0
\end{CD}
$$
of $(\liea',B')$-modules splits over $F'$.
\end{corollary}

\begin{proof}
  Strictly speaking Corollary \ref{cor:extbasechange} is not an immediate consequence of Theorem \ref{thm:derivedbasechange}, since we are dealing with a bifunctor here. However, to prove the claim, we may compute $\Ext_{\liea,B}^q(-,-)$ as the left derived functor of $\Hom_{\liea,B}(-,-)$ in the second argument. Using a standard resolution, which commutes by hypothesis (i) or (ii) with base change, and recalling that $\Hom_{\liea,B}(-,-)$ commutes with base change by Proposition \ref{prop:hombasechange} under the given choices of hypotheses as well, the claim follows. By the universality of $\delta$-functors we eventually obtain binatural isomorphisms. Those isomorphisms are independent of the previous choices.

The last statement of the Corollary follows from Baer's classical interpretation of the $\Ext^1$ group as the group of extensions.
\end{proof}

\subsection{Equivariant homology and cohomology}\label{sec:homologyandcohomology}

As before let $(\liea,B)$ be a pair over $F$. Assume that $(\liec,D)$ is a subpair which is normalized by another subpair $(\widetilde{\liea},\tilde{B})$ of $(\liea,B)$ with the property that
\begin{equation}
\liea\;=\;\widetilde{\liea}+\liec.
\label{eq:algsum}
\end{equation}

We obtain a functor
$$
H^0(\liec,D;-):\quad\mathcal C(\liea,B)\to \mathcal C(\widetilde{\liea},\tilde{B}),
$$
$$
X\quad\mapsto\quad
X^{\liec,D},
$$
sending a module to its $(\liec,D)$-invariant subspace, on which $(\widetilde{\liea},\tilde{B})$ acts naturally.

$H^0(\liec,D;-)$ is left exact and the higher right derived functors
$$
H^q(\liec,D;-):=R^qH^0(\liec,D;-):\;\;\;\mathcal C(\liea,B)\to \mathcal C(\widetilde{\liea},\tilde{B})
$$
are the {\em $F$-rational $\liec,D$-cohomology} and may, thanks to \eqref{eq:algsum}, be computed via the usual standard complex
\begin{equation}
\Hom_{D}(\wedge^{\bullet}\liec/\lied,X)
\label{eq:standardcohomologycomplex}
\end{equation}
of $(\widetilde{\liea},\tilde{B})$-modules.

Dually we may define and explicitly compute {\em $F$-rational $\liec,D$-homology} as the left derived functors of the coinvariant functor
$$
H_0(\liec,D;-):\quad\mathcal C(\liea,B)\to \mathcal C(\widetilde{\liea},\tilde{B}),
$$
$$
X\quad\mapsto\quad
X_{\liec,D}.
$$
Again it may be computed via the usual standard complex
\begin{equation}
\left(\wedge^{\bullet}(\liec/\lied)\otimes_F X\right)^D
\label{eq:standardhomologycomplex}
\end{equation}
of $(\widetilde{\liea},\tilde{B})$-modules.

These homology and cohomology theories satisfy the usual Poincar\'e duality relations, K\"unneth formalism, and give rise to $F$-rational Hochschild-Serre spectral sequences \cite{hochschildserre1953}.

\begin{proposition}\label{prop:equicohomology}
For any $F$-rational $(\liea,B)$-module $M$ the cohmology $H^q(\liec,D; X)$ is $F$-rational and for any map $\tau:F\to F'$ of fields we have a natural isomorphism
$$
H^q(\liec,D;X)\otimes_{F,\tau} F'\;\to\;
H^q(\liec\otimes_{F,\tau}F',D\otimes_{F,\tau}F';X\otimes_{F,\tau} F')
$$
of $(\liea,B)\otimes_{F,\tau} F'$-modules. The same statement is true for $\liec,D$-homology, and the duality maps and Hochschild-Serre spectral sequences respect the rational structure.
\end{proposition}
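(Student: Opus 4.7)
The plan is to deduce all statements from Theorem \ref{thm:derivedbasechange}, together with the explicit standard complexes \eqref{eq:standardcohomologycomplex} and \eqref{eq:standardhomologycomplex}. First I verify that the invariants functor
$$
H^0(\liec,D;-):\quad\mathcal C(\liea,B)\to\mathcal C(\widetilde{\liea},\tilde B)
$$
commutes with base change along $\tau:F\to F'$. Identifying $X^{\liec,D}=\Hom_{(\liec,D)}(F,X)$, where $F$ denotes the trivial module, Proposition \ref{prop:hombasechange} yields a natural isomorphism
$$
H^0(\liec,D;X)\otimes_{F,\tau}F'\;\cong\;H^0(\liec\otimes_{F,\tau}F',D\otimes_{F,\tau}F';\,X\otimes_{F,\tau}F'),
$$
which is equivariant for the pair $(\widetilde{\liea},\tilde B)\otimes_{F,\tau}F'$ because the residual action is inherited functorially. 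Applying Theorem \ref{thm:derivedbasechange} to $H^0(\liec,D;-)$ and its base-changed counterpart then produces the claimed natural isomorphism in every degree, compatibly with the associated long exact sequences.

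Dually, the right-exact coinvariants functor $H_0(\liec,D;-)$ commutes with base change because $-\otimes_{F,\tau}F'$ is exact and preserves the defining quotient $X\mapsto X/(\liec X)$ after taking $D$-coinvariants (for which the argument for Proposition \ref{prop:hombasechange} applies verbatim, as coinvariants is again a linear operation). A second application of Theorem \ref{thm:derivedbasechange}, this time to a right-exact functor and its left derived functors, gives the corresponding base change statement for $H_q(\liec,D;-)$.

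For Poincar\'e duality, the pairing between \eqref{eq:standardcohomologycomplex} and \eqref{eq:standardhomologycomplex} (twisted by the top exterior power of $\liec/\lied$) is assembled from $\Hom$, tensor products, exterior powers and (co)invariants, all of which commute with $-\otimes_{F,\tau}F'$. Hence the pairing is defined over $F$ on the level of complexes, and the duality isomorphism on cohomology is itself compatible with the base change isomorphisms constructed above. For the Hochschild-Serre spectral sequence associated to a decomposition of $(\liec,D)$ into a normal subpair and a quotient, I would realize it as the Grothendieck spectral sequence of a composition of two invariants functors. Each factor commutes with base change by the first step, and the standard injective resolutions built via $\pro_B^{\liea,B}$ as in \eqref{eq:pro} remain adapted to the inner functor after base change; consequently the associated filtered double complex, and therefore every page of the spectral sequence, is defined over $F$ and base-changes naturally to $F'$.

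The main obstacle is not conceptual but rather bookkeeping: one must check that none of the choices entering the construction of the Hochschild-Serre spectral sequence (choice of Cartan--Eilenberg resolution, choice of filtration) introduces irrationality. This is avoided throughout by working exclusively with the functorial standard injectives produced by $\pro_B^{\liea,B}$, which are manifestly preserved by $-\otimes_{F,\tau}F'$ and hence yield resolutions over $F$ whose base change is again a resolution by standard injectives over $F'$. Once this is observed, the naturality and $F$-rationality assertions follow without further computation.
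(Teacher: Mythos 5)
Your argument is correct and takes essentially the same route as the paper: the paper's proof is a one-line remark that the claim is immediate from the $F$- and $F'$-rational standard complexes \eqref{eq:standardcohomologycomplex}, \eqref{eq:standardhomologycomplex} and from Theorem \ref{thm:derivedbasechange}, and you have simply unpacked that remark into explicit steps (checking $H^0$ and $H_0$ commute with base change via Proposition \ref{prop:hombasechange} and then derived functors, reading off duality from the standard complexes, and realizing Hochschild--Serre via Grothendieck spectral sequences with the functorial standard injectives).
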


\begin{proof}
This is obvious from the $F$-resp.\ $F'$-rational standard complexes computing cohomology and homology, and also a consequence of Theorem \ref{thm:derivedbasechange}
\end{proof}

\subsection{Equivariant cohomological induction}\label{sec:equizuckerman}

To define $F$-rational cohomological induction, we adapt Zuckerman's original construction as in \cite[Chaper 6]{book_vogan1981}. Assume we are given an $F$-rational reductive subgroup $C\subseteq B$. We start with an $(\liea,C)$-module $M$ and set
$$
\tilde{\Gamma}_0(M)\;:=\;
\;\{m\in M\mid\dim_F U(\lieb)\cdot m\;<\;\infty\}.
$$
and
$$
\Gamma_0(M)\;:=\;
\;\{m\in \tilde{\Gamma}_0(M)\mid \text{the $\lieb$-representation}\; U(\lieb)\cdot m\;\text{lifts to}\;B^0\}.
$$
As in the analytic case the obstruction for a lift to exist is the (algebraic) fundamental group of $B^0$. In particular there is no rationality obstruction, as a representation $N$ of $\lieb$ lifts to $B^0$ if and only if it does so after base change to one (and hence any) extension of $F$. It is easy to see that $\Gamma_0(M)$ is an $(\liea,B^0)$-module.

We define the space of {\em $B^0$-finite vectors} in $M$ as
$$
\Gamma_1(M)\;:=\;
$$
$$
\;\{m\in \Gamma_0(M)\mid \text{the actions of $C$ (on $M$) and $C\cap B^0\subseteq B^0$  agree on $m$}\}.
$$
This is an $(\liea,C\cdot B^0)$-module.
Finally the space of {\em $B$-finite vectors} in $M$ is
$$
\Gamma(M)\;:=\;
\;\pro_{\liea,C\cdot B^0}^{\liea,B}(\Gamma_1(M)).
$$
Remark that this is not a subspace of $M$ in general. By Frobenius reciprocity it comes with a natural map $\Gamma(M)\to M$.

The functor $\Gamma$ is a right adjoint to the forgetful functor along
$$
i:\quad(\liea,C)\;\to\;(\liea,B)
$$
and hence sends injectives to injectives. We obtain the higher Zuckerman functors as the right derived functors
$$
\Gamma^q\;:=\;R^q\Gamma:\;\;\;
\mathcal C(\liea,C)\to \mathcal C(\liea,B).
$$
As in the classical case we can show
\begin{proposition}\label{prop:find}
In every degree $q$ we have a commutative square
$$
\begin{CD}
\mathcal C(\liea,C)@>\Gamma^q>>\mathcal C(\liea,B)\\
@V\mathcal F_{\lieb,C}^{\liea,C}VV
@VV\mathcal F_{\lieb,B}^{\liea,B}V\\
\mathcal C(\lieb,C)@>\Gamma^q>>\mathcal C(\lieb,B)\\
\end{CD}
$$
\end{proposition}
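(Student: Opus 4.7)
The plan is to establish the identity in degree zero by unpacking Zuckerman's construction one step at a time, and then to promote it to all $q$ by showing that the forgetful functor $\mathcal F_{\lieb,C}^{\liea,C}$ preserves injective objects. The key structural point is that, with the sole exception of the final production step, every piece of the construction of $\Gamma$ is defined by conditions referring only to the $\lieb$-, $B^0$- and $C$-actions; the $\liea$-action is essentially passive.

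For $q=0$, I would inspect each of the successive stages $\tilde{\Gamma}_0(M)$, $\Gamma_0(M)$ and $\Gamma_1(M)$ and verify directly that each is cut out by conditions that involve only the $\lieb$-, $B^0$- and $C$-actions on $M$, so that forgetting the $\liea$-action commutes with each of these three stages. For the last stage $\Gamma(M) = \pro_{\liea, C \cdot B^0}^{\liea, B}(\Gamma_1(M))$, the crucial observation is that $C \cdot B^0$ has finite index in $B$, since $B^0$ does. Hence the production functor is, as a construction of underlying vector spaces with $B$-action, just coinduction across the finite coset space $(C \cdot B^0)\backslash B$, and both $\pro_{\liea, C \cdot B^0}^{\liea, B}(\Gamma_1(M))$ and its $\lieb$-analogue $\pro_{\lieb, C \cdot B^0}^{\lieb, B}(\mathcal F_{\lieb,C \cdot B^0}^{\liea, C \cdot B^0}\Gamma_1(M))$ are canonically identified with that same underlying coinduced space, carrying the same $(\lieb, B)$-structure; the extra $\liea$-action is layered on afterwards and drops out under $\mathcal F_{\lieb,B}^{\liea,B}$.

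To propagate this to $q \geq 1$, I would observe that the forgetful functor $\mathcal F_{\lieb,C}^{\liea,C}$ admits the left adjoint $\ind_{\lieb,C}^{\liea,C}(N) := U(\liea) \otimes_{U(\lieb)} N$, which is exact because $U(\liea)$ is free as a right $U(\lieb)$-module by the Poincar\'e--Birkhoff--Witt theorem. Since a right adjoint to an exact functor preserves injectives, $\mathcal F_{\lieb,C}^{\liea,C}$ preserves injective objects. Applying $\mathcal F_{\lieb,C}^{\liea,C}$ to an injective resolution $M \to I^\bullet$ in $\mathcal C(\liea, C)$ thus produces an injective resolution of $\mathcal F_{\lieb,C}^{\liea,C}(M)$ in $\mathcal C(\lieb, C)$. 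Combining the degree-zero identity applied to $I^\bullet$ with the exactness of $\mathcal F_{\lieb,B}^{\liea,B}$ on the target side, and taking cohomology, yields the claimed commutativity in every degree $q$.

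The main obstacle is the bookkeeping in the $\pro$ step at degree zero: one must carefully identify the two production functors, with and without the ambient $\liea$-action, as the same underlying $(\lieb, B)$-module. This is only a mild obstacle thanks to the finite-index property of $C \cdot B^0$ in $B$, which reduces the production to a finite direct sum indexed by coset representatives in which the $\liea$-structure plays no combinatorial role. Once that identification is in place, the passage to higher $q$ is formal.
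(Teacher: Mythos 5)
Your proof is correct and follows essentially the same route as the paper: both establish the degree-zero case by inspecting the explicit construction of $\Gamma$ (you spell out each of the stages $\tilde\Gamma_0,\Gamma_0,\Gamma_1,\pro$ in more detail), and both promote to higher $q$ by noting that $\mathcal F_{\lieb,C}^{\liea,C}$ has the exact left adjoint $\ind_{\lieb,C}^{\liea,C}$ (exact by PBW) and therefore preserves injectives, while $\mathcal F_{\lieb,B}^{\liea,B}$ is exact. The only cosmetic difference is that the paper packages the higher-degree argument via degeneration of the two Grothendieck spectral sequences, whereas you compute directly with an injective resolution $M\to I^\bullet$ and commute cohomology past the exact target-side forgetful functor; the two presentations are equivalent.
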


Strictly speaking the commutativity only holds up to natural isomorphism. The natural isomorphisms are unique as it turns out that $\Gamma(-)$ is in both cases the right adjoint of the classical forgetful functor along the corresponding inclusion of pairs. We will not go into this as the commutativity may also easily be deduced via base change from the classical setting. However, for the sake of readability, we decided to ignore such higher categorical aspects in the sequel.

\begin{proof}
For $q=0$ the commutativity is obvious from the explicit construction of the functor $\Gamma$. For $q>0$ this follows from the standard argument that the forgetful functors have an exact left adjoint given by induction along the Lie algebras and hence carry injectives to injectives. Furthermore they are exact, which means that the Grothendieck spectral sequences for the two compositions both degenerate. Therefore the edge morphisms of said spectral sequence yield the commutativity, this proves the claim.
\end{proof}

\begin{theorem}\label{thm:equizuckerman}
  Let $M$ be an $F$-rational $(\liea,C)$-module and $\tau:F\to F'$ a map of fields. Assume that either $F'/F$ is finite or that $M$ admits an admissible injective standard resolution. Then in every degree $q$ we have a natural isomorphism
$$
\Gamma^q(X)\otimes_{F,\tau} F'\;\to\;
\Gamma^q(X\otimes_{F,\tau} F')
$$
of $(\liea,B)\otimes_{F,\tau} F'$-modules. Furthermore these isomorphisms are functorial in $\liea,B,C$, and respect the long exact sequences associated to $\Gamma$.
\end{theorem}

\begin{proof}
By the Homological Base Change Theorem \ref{thm:derivedbasechange}, it suffices to show that the rational Zuckerman functor $\Gamma$ commutes with base change. This follows easily from the above construction of $\Gamma$.
\end{proof}

\begin{remark}
  By Theorem \ref{thm:equizuckerman} the functors $\Gamma^q$ satisfy the usual properties, i.e.\ they vanish for $q > \dim_F\lieb/\liec$, we have a Hochschild-Serre spectral sequence for $B$-types, for parabolic cohomological induction the effect on infinitesimal characters is the same as in the classical setting, etc.
\end{remark}

\section{Geometric Properties of $(\lieg,K)$-modules}\label{sec:geometric}

A natural question to ask is which properties of Harish-Chandra modules are {\em geometric}, in the sense that the property of an $(\liea,B)$-module $M$ holds over $F$, if and only if it holds over one (and hence any) extension $F'$ of $F$. We will see in this section that many classical properties, i.e.\ admissibility, $Z(\lieg)$-finiteness, and finite length are geometric properties. In order to control finite length in extensions we need to appeal to Quillen's generalization of Dixmier's variant of Schur's Lemma.

\subsection{Schur's Lemma and fields of definition}

Let again $F'/F$ be an extension and the pair $(\liea',B')$ be the extension of scalars to $F'$ of a pair $(\liea,B)$ over $F$. We say that an $(\liea',B')$-module $X'$ over $F'$ is {\em defined over $F$}, if there is an $(\liea,B)$-module $X$ satisfying
\begin{equation}
F'\otimes_{F}X\;\cong\;X'
\label{eq:model}
\end{equation}

We say that the pair $(\liea,B)$ {\em satisfies condition (Q)}, if we find a finite subgroup $B_0\subseteq B$ such that the map
\begin{equation}
B_0\;\to\;\pi_0(B\otimes_F\bar{F})
\label{eq:conditionQ}
\end{equation}
is surjective, where $\bar{F}$ denotes an algebraic closure of $F$.

For example condition (Q) is satisfied if $(\liea,B)$ is a reductive pair coming from a connected reductive linear algebraic group $G$.

For later use we first remark the following generalization of Schur's Lemma.
\begin{proposition}\label{prop:schur}
If $(\liea,B)$ satisfies condition (Q) and if $X$ is an irreducible $(\liea,B)$-module, then $\End_{\liea,B}(X)$ is an algebraic division algebra over $F$.
\end{proposition}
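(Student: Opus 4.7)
The plan is to combine standard Schur with Quillen's generalization of Dixmier's variant of Schur's Lemma, which asserts that for any countably dimensional $F$-algebra $A$, the endomorphism ring of a simple $A$-module is algebraic over $F$. The role of condition (Q) will be to present $X$ as a simple module over such an $A$. First, irreducibility of $X$ implies via the usual argument (kernel and image of a nonzero endomorphism are submodules) that $D := \End_{\liea,B}(X)$ is a division algebra over $F$; this step does not use condition (Q). It remains to show that every element of $D$ is algebraic over $F$.

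To this end I would construct a countably dimensional $F$-algebra $A$ such that $X$ is simple over $A$ and $D = \End_A(X)$. Let $B_0 \subseteq B$ be a finite subgroup (scheme) as furnished by condition (Q), and form the smash product
$$
A \;:=\; U(\liea) \,\#\, F[B_0],
$$
where $B_0$ acts on $U(\liea)$ via the adjoint action inherited from $B$ (and $F[B_0]$ is interpreted either as the group algebra, when $B_0$ is constant, or as the dual of the Hopf algebra of functions on $B_0$, in general). Since $\liea$ is finite-dimensional $U(\liea)$ has countable dimension over $F$, and $F[B_0]$ is finite-dimensional, so $A$ is countably dimensional. Every $(\liea,B)$-module carries a canonical $A$-module structure by restriction of the $B$-action to $B_0$.

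The crucial step is to verify that $A$-submodules of $X$ coincide with $(\liea,B)$-submodules, and likewise for endomorphisms. An $A$-stable subspace $Y \subseteq X$ is $\liea$-stable, hence $\lieb$-stable, and $B_0$-stable. In characteristic $0$ a $\lieb$-stable subspace of a rational $B^0$-module is automatically $B^0$-stable by integration of the Lie algebra action within each finite-dimensional $B^0$-subrepresentation; combined with $B_0$-stability, condition (Q) then upgrades this to full $B$-stability, first after base change to $\bar{F}$ and then over $F$ by Galois descent. The same argument applies to $F$-linear maps. Hence $X$ is simple over $A$ and $D = \End_A(X)$, so Quillen's Lemma forces $D$ to be algebraic over $F$.

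The main obstacle is precisely this equivalence between $A$-stability and $(\liea,B)$-stability, which rests on the characteristic $0$ integration principle together with a careful use of condition (Q). A minor technical point is that $B_0$ may be only \'etale (not constant) over $F$, in which case the cleanest route is to carry out the submodule argument over $\bar{F}$, where $B_0$ becomes constant, and then descend via Galois using Proposition \ref{prop:hombasechange}.
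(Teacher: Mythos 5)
Your proposal is correct and follows essentially the same route as the paper: reduce to a module over the convolution (smash) algebra $U(\liea)*B_0$, which is countably dimensional, and invoke Quillen's lemma. The paper's proof is terser—it simply asserts that $X$ remains irreducible over $U(\liea)*B_0$—whereas you spell out why $\liea$-stability (hence $\lieb$-stability, hence $B^0$-stability by the connectedness/integration argument) together with $B_0$-stability and condition (Q) gives $B$-stability; this is exactly the content being implicitly used in the paper.
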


An algebraic division algebra over $F$ is a division algebra $A$ over $F$, all of whose elements are algebraic over $F$, i.e.\ for each $a\in A$, $F(a)/F$ is a (necessarily finite) algebraic extension.

\begin{proof}
It suffices to remark that an irreducible $(\liea,B)$-module $X$ remains irreducible after replacing $B$ by an appropriate finite subgroup $B_0$ satisfying \eqref{eq:conditionQ}. Therefore $X$ is an irreducible module over the convolution algebra $U(\liea)*B_0$. Quillen's result in \cite{quillen1969} applies to this case and proves the claim.
\end{proof}

\begin{corollary}\label{cor:finitecenter}
Let $(\lieg,K)$ be a reductive pair over $F$ satisfying condition (Q). If $X$ is an irreducible $(\lieg,K)$-module, then $Z(\lieg)$ acts on $X$ via a finite-dimensional quotient. In particular $X$ is $Z(\lieg)$-finite, and so is every $(\lieg,K)$-module of finite length.
\end{corollary}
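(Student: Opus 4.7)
The plan is to reduce both assertions to Proposition~\ref{prop:schur} combined with the finite generation of $Z(\lieg)$ as an $F$-algebra. First I would observe that every $z\in Z(\lieg)$ acts on any $(\lieg,K)$-module as a $(\lieg,K)$-endomorphism: it commutes with $U(\lieg)$ by definition, and is fixed by the $K$-action on $U(\lieg)$ since on $K^0$ this action is the adjoint action, hence inner on $U(\lieg)$, and $(\lieg,K)$-modules are $K^0$-locally finite. Applied to an irreducible $X$, this yields an $F$-algebra homomorphism
$$
\phi:\quad Z(\lieg)\;\to\;D\;:=\;\End_{\lieg,K}(X),
$$
and Proposition~\ref{prop:schur} tells us that $D$ is an algebraic division algebra over $F$.

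The heart of the argument is then to promote elementwise algebraicity of $\image\phi$ to finite-dimensionality over $F$. By Proposition~\ref{prop:centerrationality} together with the classical Harish-Chandra isomorphism applied to $Z(\lieg\otimes_F\bar F)$ and faithfully flat descent, $Z(\lieg)$ is a finitely generated commutative $F$-algebra. Choosing $F$-algebra generators $z_1,\dots,z_r$, the subalgebra $\image\phi=F[\phi(z_1),\dots,\phi(z_r)]\subseteq D$ is commutative and generated over $F$ by finitely many algebraic elements. A straightforward induction — adjoining one algebraic element at a time to a commutative $F$-subalgebra of the division algebra $D$ preserves integrality of the domain and enlarges it by only finitely many dimensions — shows that $\image\phi$ is finite-dimensional over $F$. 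Equivalently, $\ker\phi$ is a cofinite-dimensional ideal of $Z(\lieg)$, which is the first assertion.

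For the finite length statement, I would take any composition series $0=X_0\subsetneq X_1\subsetneq\cdots\subsetneq X_n=X$ with irreducible subquotients $X_i/X_{i-1}$. The first part supplies cofinite-dimensional ideals $\mathfrak{I}_i\subseteq Z(\lieg)$ annihilating each $X_i/X_{i-1}$, and the product $\mathfrak{I}_1\cdots\mathfrak{I}_n$ then annihilates $X$ itself. Since $Z(\lieg)$ is noetherian, this product still has finite codimension, so $Z(\lieg)$ acts on $X$ through a finite-dimensional quotient, which in particular implies $Z(\lieg)$-finiteness.

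The one subtle step I expect is the passage from elementwise algebraicity to finite-dimensionality of $\image\phi$: this conclusion would fail for an arbitrary $F$-subalgebra of algebraic elements in $D$, and relies essentially on the finite generation of $Z(\lieg)$ as an $F$-algebra, which itself rests on the rationality statement of Proposition~\ref{prop:centerrationality}. Everything else is formal.
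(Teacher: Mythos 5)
Your proof is correct and follows essentially the same route as the paper: reduce to Proposition~\ref{prop:schur} via the algebra map $Z(\lieg)\to\End_{\lieg,K}(X)$, then use finiteness properties of $Z(\lieg)$ (established through Proposition~\ref{prop:centerrationality}, the Harish--Chandra isomorphism, and descent) to conclude the image is finite-dimensional. If anything you are slightly more careful than the paper's proof, which invokes noetherianity of $Z(\lieg)$ where, as you rightly emphasize, what is really needed and used is finite generation of $Z(\lieg)$ as an $F$-algebra (so that the image is generated by finitely many elements each algebraic over $F$), and your explicit handling of the finite-length case via products of cofinite annihilator ideals spells out what the paper leaves to the reader.
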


\begin{proof}
Fix an extension $F'/F$ over which $\lieg$ splits. Over this extension the center $Z(\lieg)\otimes_FF'$ of $U(\lieg\otimes_F F')$ (cf.\ Proposition \ref{prop:centerrationality}) is noetherian, hence $Z(\lieg)$ is noetherian. Therefore its image in $\End_{\lieg,K}(X)$ is finitely generated, and thus finite-dimensional by Proposition \ref{prop:schur}. Therefore $Z(\lieg)$ acts on $X$ via a finite-dimensional quotient and the corollary follows.
\end{proof}

\begin{definition}
We call an $(\liea,B)$-module $X$ absolutely irreducible if it is irreducible over {\em every} extension $F'/F$.
\end{definition}

\begin{corollary}\label{cor:schur}
If $X$ is an absolutely irreducible $(\liea,B)$-module over $F$, $(\liea,B)$ satisfying condition (Q), then $\End_{\liea,B}(X)=F$.
\end{corollary}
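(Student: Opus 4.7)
The plan is to combine Proposition~\ref{prop:schur} with the base change isomorphism of Proposition~\ref{prop:hombasechange}, and then use the standard fact that every algebraic division algebra over an algebraically closed field is trivial.

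More precisely, first I would set $D := \End_{\liea,B}(X)$ and fix an algebraic closure $\bar{F}/F$. Since $X$ is absolutely irreducible, $X':=X\otimes_F\bar{F}$ is an irreducible $(\liea',B')$-module, where $(\liea',B')$ denotes the base change of $(\liea,B)$ to $\bar{F}$. Condition (Q) for $(\liea,B)$ transfers to $(\liea',B')$: the finite subgroup $B_0\subseteq B$ witnessing (Q) base-changes to a finite subgroup of $B'$ whose image still surjects onto $\pi_0(B\otimes_F\bar{F})$. Hence Proposition~\ref{prop:schur} applies over $\bar{F}$ and gives that $D':=\End_{\liea',B'}(X')$ is an algebraic division algebra over $\bar{F}$.

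Next, by Proposition~\ref{prop:hombasechange} applied with $X=Y$, there is a natural isomorphism
$$
D\otimes_F\bar{F}\;\cong\;D'.
$$
Now any algebraic division algebra $D'$ over the algebraically closed field $\bar{F}$ must equal $\bar{F}$: for any $d\in D'$ the commutative subalgebra $\bar{F}[d]$ is a field finite over $\bar{F}$, hence equals $\bar{F}$, so $d\in\bar{F}$.

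Combining the two isomorphisms gives $D\otimes_F\bar{F}\cong\bar{F}$, so
$$
\dim_F D\;=\;\dim_{\bar{F}}(D\otimes_F\bar{F})\;=\;1,
$$
and the inclusion $F\hookrightarrow D$ is an equality, as claimed. The only delicate point is the transfer of condition~(Q) to $\bar{F}$, but this is automatic from the definition of (Q) as phrased in terms of $\pi_0(B\otimes_F\bar{F})$; the rest is a short application of the already established base change formalism.
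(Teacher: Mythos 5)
Your proof is correct and follows essentially the same route the paper takes: apply Proposition \ref{prop:schur} over the algebraic closure to deduce that the endomorphism ring is one-dimensional there (since an algebraic division algebra over an algebraically closed field is trivial), then use the base change isomorphism of Proposition \ref{prop:hombasechange} to descend. The paper's own proof is just a more compressed version of exactly this argument; your extra remark on why condition (Q) is preserved under base change is a harmless additional check.
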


\begin{proof}
If $F$ is algebraically closed or equal to $\CC$, we have
$$
\dim_F\End_{\liea,B}(X)\;=\;1
$$
by Proposition \ref{prop:schur}. Therefore Proposition \ref{prop:hombasechange} implies the claim for arbitrary $F$.
\end{proof}

In general, for a Galois extension $F'/F$, models of $(\liea',B')$-modules $X'$ over $F$ need not be unique. The existence of non-isomorphic models is equivalent to the existence of non-trival $1$-cocycles of $\Gal(F'/F)$ with coefficients in the group $\Aut_{(\liea',B')}(X')$. At least for absolutely irreducible modules satisfying Schur's Lemma, Hilbert's Satz 90 guarantees the uniqueness.

\begin{proposition}\label{prop:uniquemodels}
Let $F'/F$ be a Galois extension $F$, assume that $X'$ is an $(\liea',B')$-module satisfying
\begin{equation}
\End_{\liea',B'}(X')\;=\;F',
\label{eq:schurcondition}
\end{equation}
and $X$ is a model of $X'$ over $F$. Then $X$ is unique up to isomorphism.
\end{proposition}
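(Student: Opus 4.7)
The plan is to use Proposition \ref{prop:hombasechange} to reduce the uniqueness question to the observation that a one-dimensional Hom space known to contain an isomorphism consists entirely of $F'$-multiples of that isomorphism. This is essentially Hilbert's Satz~90 in disguise, as indicated in the paragraph preceding the statement.

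More precisely, suppose $X_1$ and $X_2$ are two models of $X'$ over $F$. First I would apply Proposition \ref{prop:hombasechange} to obtain the natural isomorphism
$$
\Hom_{(\liea,B)}(X_1,X_2)\otimes_F F'\;\cong\;\Hom_{(\liea',B')}(X_1\otimes_F F',\,X_2\otimes_F F').
$$
Any choice of $F'$-rational isomorphisms $\alpha_i\colon X_i\otimes_F F'\to X'$ identifies the right hand side with $\End_{(\liea',B')}(X')$, which is $F'$ by the hypothesis \eqref{eq:schurcondition}. Hence $V:=\Hom_{(\liea,B)}(X_1,X_2)$ is one-dimensional over $F$.

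Next, I would pick any nonzero $\phi\in V$ and consider its image $\phi\otimes_F 1_{F'}$ in $V\otimes_F F'$. Since $V\otimes_F F'$ is one-dimensional over $F'$ and already contains the isomorphism $\alpha_2^{-1}\circ\alpha_1$, every nonzero element of $V\otimes_F F'$ is an $(F')^{\times}$-multiple of that isomorphism and therefore itself an isomorphism. Faithful flatness of $F'/F$ then forces $\phi$ to be an isomorphism of $(\liea,B)$-modules over $F$, yielding $X_1\cong X_2$.

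There is no real obstacle here; the content is carried entirely by the Hom base change of Proposition \ref{prop:hombasechange} together with the Schur-type hypothesis \eqref{eq:schurcondition}. The same argument can be reformulated cohomologically, as hinted in the paper: the cocycle
$$
c_\tau\;:=\;(\alpha_2^{-1}\alpha_1)^{-1}\cdot(\alpha_2^{-1}\alpha_1)^{\tau},\qquad \tau\in\Gal(F'/F),
$$
takes values in $\Aut_{(\liea',B')}(X')=(F')^{\times}$, and its class in $H^1(\Gal(F'/F),(F')^{\times})$ vanishes by Hilbert's Satz~90, providing the descent datum which trivialises the difference between the two models. If one wishes to spell this out for infinite Galois extensions, one observes that after fixing $F$-bases of a chain of generating finite-dimensional subspaces of $X_1$ and $X_2$, the cocycle is continuous for the Krull topology, so that continuous Galois cohomology suffices; this subtlety is altogether invisible in the Hom-theoretic formulation above, which I would therefore present as the proof.
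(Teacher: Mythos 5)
Your argument is correct, and your main (Hom-theoretic) version is a genuinely different route from the paper's. The paper's proof is a one-line reference to Borel--Tits, which, as the paragraph preceding the proposition makes explicit, is the Galois cohomological argument you sketch second: the ambiguity between two models is a $1$-cocycle of $\Gal(F'/F)$ with values in $\Aut_{(\liea',B')}(X')=(F')^\times$, and Hilbert's Satz~90 kills it. Your primary argument instead feeds Proposition~\ref{prop:hombasechange} directly: $\Hom_{(\liea,B)}(X_1,X_2)$ is one-dimensional over $F$, its base change to $F'$ contains the isomorphism $\alpha_2^{-1}\circ\alpha_1$, hence every nonzero vector in it is an isomorphism, and faithful flatness of $F'/F$ then shows any nonzero $\phi\in\Hom_{(\liea,B)}(X_1,X_2)$ is itself an isomorphism. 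This buys you two things beyond the Borel--Tits citation. First, your Hom-theoretic proof never uses that $F'/F$ is Galois; it works verbatim for an arbitrary field extension, which is a genuine strengthening of the stated proposition. Second, it is insensitive to whether $F'/F$ is finite or infinite, so the continuity issue you correctly flag for the cocycle version simply does not arise. The cohomological phrasing is what the paper leans on, and it has the virtue of making clear exactly why the Schur hypothesis \eqref{eq:schurcondition} is the right one (it forces the cocycle into $(F')^\times$, the group Hilbert~90 applies to), but for this particular uniqueness claim the direct Hom computation is both shorter and more general.
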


\begin{proof}
The proof proceeds as in \cite[p. 741]{boreltits1965}.
\end{proof}

\subsection{Geometric properties}

Admissibility is a {\em geometric} property in the following sense.
\begin{proposition}\label{prop:admissible}
Let $X$ be an $(\liea,B)$-module. Then the following are equivalent:
\begin{itemize}
\item[(i)] $X$ is an admissible $(\liea,B)$-module.
\item[(ii)] For some extension $F'/F$, $X\otimes_F F'$ is an admissible $(\liea,B)\otimes_F F'$-module.
\item[(iii)] For every extension $F'/F$, $X\otimes_F F'$ is an admissible $(\liea,B)\otimes_F F'$-module.
\end{itemize}
\end{proposition}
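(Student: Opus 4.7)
The equivalence of (ii) and (iii) with (i) will follow from the base change statement for $\Hom_B$, combined with the reductivity of $B$ and Tannakian-type reconstruction.

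The plan is to first observe that the argument of Proposition \ref{prop:hombasechange}, applied with $\liea = \lieb$, yields the base change identity
\[
\Hom_B(V,X) \otimes_F F' \;\cong\; \Hom_{B'}(V \otimes_F F',\, X \otimes_F F')
\]
for any $B$-modules $V$ and $X$. With this in hand, the implication (iii) $\Rightarrow$ (ii) is trivial, while (ii) $\Rightarrow$ (i) is immediate: for any finite-dimensional $B$-module $V$ defined over $F$, the module $V \otimes_F F'$ is a finite-dimensional $B'$-module, so
\[
\dim_F \Hom_B(V,X) \;=\; \dim_{F'}\!\bigl(\Hom_B(V,X)\otimes_F F'\bigr) \;=\; \dim_{F'} \Hom_{B'}(V\otimes_F F',\, X\otimes_F F'),
\]
which is finite by hypothesis.

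The remaining and essentially only substantive direction is (i) $\Rightarrow$ (iii). Here the issue is that a finite-dimensional $B'$-module $V'$ need not be defined over $F$. The plan is to bound $V'$ inside the base change of an $F$-rational module. Fix a faithful finite-dimensional rational representation $V_0$ of $B$ over $F$; by Tannakian reconstruction, every finite-dimensional rational representation of $B$ is a subquotient of a finite direct sum of mixed tensor powers $T^{a,b}(V_0) = V_0^{\otimes a} \otimes (V_0^{\vee})^{\otimes b}$. The same reconstruction applied to the faithful $B'$-module $V_0 \otimes_F F'$ shows that any finite-dimensional rational $B'$-module $V'$ is a subquotient of $W \otimes_F F'$ for some finite-dimensional $B$-module $W$ over $F$ (a finite sum of $T^{a,b}(V_0)$). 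Since $B$, and hence $B'$, is reductive, $V'$ is in fact a direct summand of $W \otimes_F F'$. Therefore
\[
\Hom_{B'}(V',\, X\otimes_F F') \;\hookrightarrow\; \Hom_{B'}(W\otimes_F F',\, X\otimes_F F') \;\cong\; \Hom_B(W,X)\otimes_F F',
\]
the last space being finite-dimensional over $F'$ by admissibility of $X$. This yields admissibility of $X \otimes_F F'$.

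The main obstacle, and the only non-formal input, is the step of realising an arbitrary finite-dimensional $B'$-module as a summand of the base change of an $F$-rational one; this is where one genuinely uses that $B$ is reductive (for the summand statement) together with the existence of a faithful $F$-rational representation (to generate all representations via tensor constructions). Everything else is formal from the base change isomorphism for $\Hom_B$.
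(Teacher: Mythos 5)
Your proof is correct, and the two formal directions ((iii)~$\Rightarrow$~(ii) and (ii)~$\Rightarrow$~(i)) coincide with the paper's: both deduce (ii)~$\Rightarrow$~(i) directly from Proposition~\ref{prop:hombasechange}. The substantive direction (i)~$\Rightarrow$~(iii) is handled by a genuinely different mechanism. The paper first reduces to the case of a \emph{finite} extension $F'/F$, using the structural fact that there is a finite subextension $F''/F$ over which every finite-dimensional rational $B'$-module is already defined (a splitting-field statement from the theory of reductive groups); it then applies restriction of scalars, $\Hom_{B'}(V,X\otimes_F F')=\Hom_B(\res_{F'/F}V,X)$, and observes that $\res_{F'/F}V$ is finite-dimensional over $F$. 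You instead avoid the reduction to finite extensions altogether: Tannakian reconstruction from a faithful $F$-rational $V_0$ realizes an arbitrary finite-dimensional $B'$-module $V'$ as a subquotient --- hence, by semisimplicity of $\mathcal C_{\rm fd}(B')$, a direct summand --- of $W\otimes_F F'$ for an $F$-rational $W$, and then $\Hom_{B'}(V',X\otimes_F F')\hookrightarrow\Hom_{B'}(W\otimes_F F',X\otimes_F F')\cong\Hom_B(W,X)\otimes_F F'$ is finite-dimensional. Both routes are sound; your argument replaces one non-trivial structural input (existence of a finite field of definition for the whole category $\mathcal C_{\rm fd}(B')$) with another (generation of $\mathcal C_{\rm fd}(B')$ by mixed tensor powers of a faithful module), and has the minor stylistic advantage of working uniformly in $F'$ without an intermediate reduction, while the paper's version fits more naturally with the restriction-of-scalars machinery it has already set up in section~\ref{sec:restrictionofscalars}.
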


\begin{proof}
Let us show that (i) implies (iii). Let $F'/F$ be any extension. Then by the theory of reductive algebraic groups we know that there is a finite subextension $F''/F$ with the property that the scalar extension functor $-\otimes_{F''}F'$ induces a faithful essentially surjective functor
$$
\mathcal C_{\rm fd}(B'')\;\to\;
\mathcal C_{\rm fd}(B'),
$$
i.e.\ all finite-dimensional representations of $B$ which are defined over $F'$ are already defined over $F''$. This observation naturally extends to the ind-categories
$$
\mathcal C(B'')\;\to\;
\mathcal C(B').
$$
In light of Proposition \ref{prop:hombasechange} this reduces our considerations to the cases where $F'/F$ is finite-dimensional.

Assume (i), and let $V$ be any finite-dimensional $B'$-module over a finite extension $F'/F$. Then
$$
\Hom_{B'}(V,X\otimes_F F')\;=\;
\Hom_{B}(\res_{F'/F}V,X).
$$
Since $\res_{F'/F}V$ is finite-dimensional over $F$, (iii) follows.

Since (iii) implies (ii), we are left to show that (ii) implies (i). So assume that $X\otimes_F F'$ is an admissible $(\liea',B')$-module for an extension $F'/F$. Let $V$ be a finite-dimensional $B$-module. Then by Proposition \ref{prop:hombasechange} we have
$$
\dim_F(V,X)\;=\;\dim_{F'}(V\otimes_FF',X\otimes_F F')\;<\;\infty.
$$
This concludes the proof.
\end{proof}

\begin{theorem}\label{thm:finitelength}
Let $(\lieg,K)$ be a reductive pair over $F$ satisfying condition (Q), and let $X$ be an $F$-rational $(\lieg,K)$-module. Then the following are equivalent:
\begin{itemize}
\item[(i)] $X$ is of finite length.
\item[(ii)] $X$ is $Z(\lieg)$-finite and admissible.
\end{itemize}
\end{theorem}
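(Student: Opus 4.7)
The plan is to reduce both implications to the corresponding classical statements over $\CC$, leveraging the geometric nature of admissibility (Proposition~\ref{prop:admissible}) together with the Galois-equivariance of the center of the universal enveloping algebra (Proposition~\ref{prop:centerrationality}).

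For the direction (i)$\Rightarrow$(ii), I would first argue that both target properties are closed under extensions, so that it suffices to treat the case of an irreducible $X$. Admissibility is stable under extensions because a short exact sequence of $(\lieg,K)$-modules gives, for every $K$-type $\mu$, a short exact sequence of $\mu$-isotypic components; and $Z(\lieg)$-finiteness is stable because if $J_1,\ldots,J_n$ are the finite-codimensional annihilators in $Z(\lieg)$ of the composition factors (which exist by Corollary~\ref{cor:finitecenter}), then the product $J_1\cdots J_n$ annihilates $X$ and is itself of finite codimension. Given an irreducible $X$, I would pick a finite-dimensional $K$-invariant subspace $0\neq V\subseteq X$ (which exists since $X$ is by definition an ind-$K$-module); irreducibility then forces $X=U(\lieg)\cdot V$, so $X$ is finitely generated over $U(\lieg)$. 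After base change, $X\otimes_F\CC$ remains finitely generated over $U(\lieg\otimes_F\CC)$ and is $Z(\lieg\otimes_F\CC)$-finite, using Proposition~\ref{prop:centerrationality} together with Corollary~\ref{cor:finitecenter}. The classical Harish-Chandra admissibility theorem then yields admissibility of $X\otimes_F\CC$ over $\CC$, and Proposition~\ref{prop:admissible} transports admissibility back to $F$.

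For the direction (ii)$\Rightarrow$(i), I would base change $X$ to $\CC$. Proposition~\ref{prop:admissible} ensures $X\otimes_F\CC$ is admissible, and Proposition~\ref{prop:centerrationality} ensures it is still $Z(\lieg\otimes_F\CC)$-finite. The classical Harish-Chandra finiteness theorem over $\CC$ then gives that $X\otimes_F\CC$ has finite length. Since the base-change functor $-\otimes_F\CC$ is exact and faithful, any strict ascending chain of $(\lieg,K)$-submodules of $X$ would yield a strict ascending chain in $X\otimes_F\CC$, contradicting finite length there; hence $X$ itself has finite length over $F$.

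The main subtlety is organizing the classical input cleanly: one needs both that a finitely generated, $Z(\lieg)$-finite $(\lieg,K)$-module over $\CC$ is automatically admissible, and that an admissible, $Z(\lieg)$-finite module over $\CC$ has finite length. Both are standard consequences of Harish-Chandra's theory, ultimately resting on the finiteness of $U(\lieg)^{K_\CC}$ as a module over an appropriate commutative subalgebra containing $Z(\lieg)$. The virtue of the approach is that no rational analogue of these arguments is required: the base-change machinery already established, together with faithful flatness, suffices to descend everything from $\CC$ back to the field $F$.
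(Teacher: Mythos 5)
Your proof is correct and follows essentially the same strategy as the paper: reduce to $\CC$ using the geometric nature of admissibility (Proposition~\ref{prop:admissible}) and $Z(\lieg)$-finiteness (via Proposition~\ref{prop:centerrationality}), invoke the classical Harish-Chandra results, and descend. You supply two steps the paper leaves implicit — that the reduction of (i)$\Rightarrow$(ii) to irreducible $X$ is legitimate because admissibility and $Z(\lieg)$-finiteness are closed under extensions, and that finite length descends along the faithful exact functor $-\otimes_F\CC$ — both of which are correct and worth making explicit.
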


\begin{proof}
That (ii) implies (i) reduces to the case $F=\CC$ by Proposition \ref{prop:admissible}, since $Z(\lieg)$-finiteness is a geometric property as well. By Proposition \ref{prop:classicaliso} the case $F=\CC$ follows from the classical case, where the statement is well known.

The implication (i) to (ii) reduces to the classical case as follows. Let $X$ be an irreducible $(\lieg,K)$-module over $F$. Then $X$ is $Z(\lieg)$-finite by Corollary \ref{cor:finitecenter}, and also finitely generated. The latter two properties are stable under base change, and to prove the admissibility of $X$, we may by Proposition \ref{prop:admissible} assume that $F=\CC$, in which case, by Proposition \ref{prop:classicaliso}, the result is well known (cf.\ \cite{lepowsky1973}).
\end{proof}

\begin{corollary}\label{cor:finitelength}
Let $X$ be an $F$-rational $(\lieg,K)$-module for a reductive pair $(\lieg,K)$ satisfying condition (Q). Then the following are equivalent:
\begin{itemize}
\item[(i)] $X$ is of finite length.
\item[(ii)] For some extension $F'/F$, $X\otimes_F F'$ is of finite length.
\item[(iii)] For every extension $F'/F$, $X\otimes_F F'$ is of finite length.
\end{itemize}
\end{corollary}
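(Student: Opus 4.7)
The plan is to reduce Corollary \ref{cor:finitelength} to Theorem \ref{thm:finitelength}, which characterizes finite length as the conjunction of admissibility and $Z(\lieg)$-finiteness. Both of these two properties turn out to be geometric, and the corollary will follow by combining them.

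First I would verify that if $(\lieg,K)$ satisfies condition (Q) over $F$, then the base change $(\lieg',K')$ over any extension $F'/F$ still satisfies condition (Q): any finite subgroup $K_0\subseteq K$ surjecting onto $\pi_0(K\otimes_F\bar{F})$ continues to surject onto $\pi_0(K\otimes_F\bar{F'})$, since connected components are geometric. Hence Theorem \ref{thm:finitelength} applies uniformly to $X$ and to all $X\otimes_F F'$, reducing the question to showing that admissibility and $Z(\lieg)$-finiteness descend and ascend under arbitrary field extensions.

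Admissibility is already known to be geometric by Proposition \ref{prop:admissible}. For $Z(\lieg)$-finiteness I would argue as follows: by Proposition \ref{prop:centerrationality}, the center satisfies $Z(\lieg)\otimes_F F'\cong Z(\lieg')$, so the image of $Z(\lieg')$ in $\End_{F'}(X\otimes_F F')=\End_F(X)\otimes_F F'$ is obtained from the image of $Z(\lieg)$ in $\End_F(X)$ by base change to $F'$. Since a vector subspace of $\End_F(X)$ is finite-dimensional over $F$ if and only if its scalar extension to $F'$ is finite-dimensional over $F'$, $Z(\lieg)$-finiteness holds over $F$ if and only if it holds after extending scalars to some (equivalently every) $F'$.

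Putting these pieces together, I obtain the chain of equivalences (i) $\Leftrightarrow$ (ii) $\Leftrightarrow$ (iii): the implication (i) $\Rightarrow$ (iii) invokes Theorem \ref{thm:finitelength} over $F$ to extract admissibility and $Z(\lieg)$-finiteness, then propagates both geometric properties to every $F'$, and reassembles finite length by a second application of Theorem \ref{thm:finitelength} over $F'$; the implication (iii) $\Rightarrow$ (ii) is trivial; and (ii) $\Rightarrow$ (i) is the same argument run in reverse, descending admissibility and $Z(\lieg)$-finiteness from $F'$ back to $F$. No genuine obstacle arises here: the corollary is really a bookkeeping consequence of Theorem \ref{thm:finitelength} combined with the two geometricity statements, the only thing to be careful about being the invariance of condition (Q) under base change so that Theorem \ref{thm:finitelength} is applicable on both sides.
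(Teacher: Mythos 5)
Your proof is correct and follows essentially the same route as the paper, which likewise reduces Corollary \ref{cor:finitelength} to Theorem \ref{thm:finitelength} combined with Proposition \ref{prop:admissible} and the observation that $Z(\lieg)$-finiteness is geometric (the paper dismisses the latter as ``obvious''; you spell it out via Proposition \ref{prop:centerrationality}, and also helpfully note that condition (Q) persists under base change). One small imprecision: for infinite-dimensional $X$ one only has an embedding $\End_F(X)\otimes_F F'\hookrightarrow\End_{F'}(X\otimes_F F')$ rather than equality, but this injectivity is all your dimension-comparison argument actually needs.
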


\begin{proof}
By Theorem \ref{thm:finitelength} and Proposition \ref{prop:admissible} it suffices to observe that $Z(\lieg)$-finiteness is a geometric property as well, which is obvious.
\end{proof}

\subsection{$\lieu$-cohomology and constructible parabolic subalgebras}

\begin{proposition}\label{prop:lieuprop}
If $\lieq$ is a $\theta$-stable $F$-germane parabolic subalgebra of a reductive $F$-rational pair $(\lieg,K)$, with Levi decomposition $\lieq=\liel+\lieu$, then for all degrees $q$, the functors
$$
H^q(\lieu; -)\quad\text{and}\quad
H_q(\lieu; -),
$$
preserve admissibility, $Z(\lieg)$-finiteness and if $(\lieg,K)$ satisfies condition (Q), then also finite length.
\end{proposition}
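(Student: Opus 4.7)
The strategy is to reduce each preservation statement to the classical complex-analytic setting by base change, exploiting that all three properties are geometric in the sense of section~\ref{sec:geometric}. First I would verify that the functors $H^q(\lieu;-)$ and $H_q(\lieu;-)$ commute with arbitrary extensions $F'/F$. This is immediate from the standard Koszul complexes $\Hom_F(\wedge^\bullet\lieu, X)$ and $\wedge^\bullet\lieu\otimes_F X$ that compute them, and it is also an instance of the Homological Base Change Theorem~\ref{thm:derivedbasechange} applied to the functors $(-)^\lieu$ and $(-)_\lieu$.

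For admissibility, Proposition~\ref{prop:admissible} reduces the claim to $F=\CC$, where Proposition~\ref{prop:classicaliso} identifies $\mathcal{C}(\lieg,K)$ with the category of classical $(\lieg,K)$-modules. There preservation of admissibility by $\lieu$-(co)homology is a standard fact, obtained for example from a $(L\cap K)$-type filtration argument applied to the Koszul complex (or directly from the Hecht-Schmid theory). For $Z(\lieg)$-finiteness of the output, I would appeal to the classical Casselman-Osborne theorem: over $\CC$ the $Z(\liel)$-action on $H^q(\lieu;X)$ (and on $H_q(\lieu;X)$) factors through the Harish-Chandra homomorphism $Z(\lieg)\to Z(\liel)$, so that $Z(\lieg)$-finiteness of $X$ yields $Z(\liel)$-finiteness of the output. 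Since $Z(\lieg)$-finiteness is manifestly stable under base change, verification over $\CC$ is enough.

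Finally, for finite length under condition (Q): Theorem~\ref{thm:finitelength} identifies finite length with the conjunction of admissibility and $Z(\lieg)$-finiteness, so combining the two preceding steps we see that $H^q(\lieu;X)$ and $H_q(\lieu;X)$ are admissible and $Z(\liel)$-finite over $(\liel,L\cap K)$, hence of finite length as $(\liel,L\cap K)$-modules once Theorem~\ref{thm:finitelength} is applied to the Levi pair. The only structural point that requires verification is that $(\liel,L\cap K)$ inherits condition (Q) from $(\lieg,K)$: a finite subgroup $K_0\subseteq K$ surjecting onto $\pi_0(K\otimes_F\overline{F})$ can be cut down to a suitable finite subgroup of $L\cap K$, using the $F$-rational Levi decomposition of $\lieq$ and the normalization of $L\cap K$ fixed in section~\ref{sec:reductivepairs} (where $L\cap K$ is defined as the intersection of two normalizers inside $\tilde{L}\cap K$). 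I do not expect any deep obstacle here; the only mildly delicate bookkeeping is exactly this verification of condition (Q) for the Levi. Everything else is either formal base change or an appeal to classical Hecht-Schmid and Casselman-Osborne on the complex side.
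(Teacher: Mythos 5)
Your proposal is correct and follows the same overall architecture as the paper: establish preservation of admissibility and $Z(\lieg)$-finiteness, then conclude finite length by Theorem~\ref{thm:finitelength}. The one variation worth noting is how you get the first two preservation statements. The paper simply says they follow ``mutatis mutandis as in the classical case,'' i.e.\ one reruns the Casselman--Osborne and $(L\cap K)$-type-filtration arguments directly over $F$; you instead reduce to $F=\CC$ by base change (Proposition~\ref{prop:admissible}, Theorem~\ref{thm:derivedbasechange}/Proposition~\ref{prop:equicohomology}) and then invoke the classical theorems over $\CC$. Both are valid; the base-change route is a cleaner use of the machinery already built (geometric properties plus Homological Base Change), whereas the paper's formulation avoids depending on the reduction but leaves more to the reader. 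You also do something the paper skips entirely: you flag that condition~(Q) must hold for the Levi pair $(\liel,L\cap K)$ before Theorem~\ref{thm:finitelength} can be applied there. That is the one genuinely non-formal point in the whole argument, and the paper's proof is silent on it. Your instinct that it poses no deep obstacle is right — one can arrange the finite subgroup inside $L\cap K$ using the $F$-rational Levi decomposition and the specific definition of $L\cap K$ as an intersection of normalizers in section~\ref{sec:reductivepairs} — but it deserves exactly the one sentence of justification you gave it rather than being passed over.
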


\begin{proof}
The preservation of $Z(\lieg)$-finiteness is proven as in the classical case, which holds in fact for any parabolic subalgebra. The preservation of admissibility also follows mutatis mutandis as in the classical case. With Theorem \ref{thm:finitelength} we conclude that $\lieu$-(co)homology preserves finite length.
\end{proof}

Assume that the field $F$ has a real place, i.e.\ we have
$$
\Hom(F,\RR)\;\neq\;0,
$$
and that $(\lieg,K)$ gives rise to a classical reductive pair $(\lieg_\RR,K_\RR)$ after extension of scalars along an embedding $\iota_\RR:F\to\RR$. Let $F'/F$ be an extension, and $(\lieg',K')$ the usual extension of scalars in the extension $F'/F$. We call an $F'$-germane parabolic subalgebra $\lieq'\subseteq\lieg'$ {\em $F$-constructible}, if there exists a sequence of $F'$-germane parabolic subalgebras
\begin{equation}
\lieq'=\lieq_0'\subseteq\lieq_1'\subseteq\cdots\subseteq\lieq_l'=\lieg'
\label{eq:constructiblesequence}
\end{equation}
with the following property: Inside the Levi pair $(\liel_{i+1}',L_{i+1}'\cap K')$ of the associated parabolic $\lieq_{i+1}'$ the parabolic $\lieq_i'\cap\liel_{i+1}'$ is $\theta$-stable or defined over $F$. This notion generalizes the notion of constructibility introduced in \cite{januszewski2011}. In the case $F=\RR$ and $F'=\CC$ the two notions coincide.

The motivation for this notion is
\begin{proposition}\label{prop:constructiblehomology}
Assume that $(\lieg,K)$ is a reductive pair over a field $F\subseteq\RR$ satisfying condition (Q) and that $\lieq'\subseteq\lieg'$ is $F'$-constructible with Levi decomposition $\lieq'=\liel'+\lieu'$. Then for all degrees $q$, the functors
$$
H^q(\lieu'; -)\quad\text{and}\quad
H_q(\lieu'; -),
$$
preserve finite length.
\end{proposition}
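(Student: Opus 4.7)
The plan is to induct on the length $l$ of an $F$-constructible sequence $\lieq' = \lieq_0' \subseteq \lieq_1' \subseteq \cdots \subseteq \lieq_l' = \lieg'$ witnessing the constructibility of $\lieq'$. The base case $l = 0$ gives $\lieq' = \lieg'$ and $\lieu' = 0$, so both functors are the identity and trivially preserve finite length.

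For the inductive step, write $\lieu_i'$ for the nilpotent radical of $\lieq_i'$. Then $\lieu_1' \subseteq \lieu_0' = \lieu'$ is an ideal and the quotient $\lieu_0'/\lieu_1'$ is canonically identified with the nilpotent radical of the parabolic $\lieq_0' \cap \liel_1'$ inside the Levi $\liel_1'$. The rational Hochschild--Serre spectral sequence supplied by Proposition \ref{prop:equicohomology} associated to the short exact sequence $0 \to \lieu_1' \to \lieu_0' \to \lieu_0'/\lieu_1' \to 0$ then yields, for any finite length $(\lieg',K')$-module $X$,
$$
E_2^{p,q} \;=\; H^p\!\left(\lieu_0'/\lieu_1';\, H^q(\lieu_1'; X)\right) \;\Longrightarrow\; H^{p+q}(\lieu_0'; X),
$$
as a spectral sequence of $(\liel_0', L_0' \cap K')$-modules. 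Applying the inductive hypothesis to the shorter constructible sequence $\lieq_1' \subseteq \cdots \subseteq \lieq_l'$ of length $l - 1$ (inside the pair $(\liel_1', L_1' \cap K')$, in which condition (Q) is inherited from $(\lieg,K)$ as it only concerns the component group and the germane Levi structure), we conclude that $H^q(\lieu_1'; X)$ has finite length as an $(\liel_1', L_1' \cap K')$-module.

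It then suffices to show that the outer functor $H^p(\lieu_0'/\lieu_1'; -)$ preserves finite length on $(\liel_1', L_1' \cap K')$-modules: only finitely many $E_2^{p,q}$ are non-zero, each has finite length, and a standard filtration argument on the abutment then gives the claim. By construction the parabolic $\lieq_0' \cap \liel_1'$ of $\liel_1'$ is either $\theta$-stable or defined over $F$. In the $\theta$-stable case, Proposition \ref{prop:lieuprop} applied to the Levi pair $(\liel_1', L_1' \cap K')$ directly yields the required preservation of finite length. In the $F$-defined case, $\lieu_0'/\lieu_1'$ is the scalar extension of an $F$-rational nilpotent Lie subalgebra; combining the Homological Base Change Theorem \ref{thm:derivedbasechange} with the geometric character of finite length (Corollary \ref{cor:finitelength}), we may extend scalars along a real embedding of $F$ and then to $\CC$, where the conclusion reduces to the classical theorem that $\lieu$-cohomology with respect to a real parabolic of a complex reductive Lie algebra preserves finite length of Harish-Chandra modules. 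The homology statement follows by the exact dual argument using the Hochschild--Serre spectral sequence for $\lieu$-homology.

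The main technical delicacy lies in the $F$-defined case: one must verify that checking finite length after scalar extension to $\CC$ is legitimate even though the $(\liel_1', L_1' \cap K')$-module $H^q(\lieu_1'; X)$ itself need not descend to $F$. This is handled entirely by Corollary \ref{cor:finitelength}, which renders finite length insensitive to the ambient field, so that only the nilpotent subalgebra whose cohomology we are computing needs to be $F$-rational; the homological base change theorem then ensures the cohomology computed over $\CC$ is simply the scalar extension of that computed over $F'$. A secondary, routine verification is that condition (Q) propagates to each Levi pair appearing in the sequence, which follows from the description of $L_i' \cap K'$ as a normalizer subgroup of $K'$ in section \ref{sec:reductivepairs}.
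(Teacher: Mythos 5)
Your proof is correct and follows essentially the same route as the paper's: peel off one step of the constructible sequence via the Hochschild--Serre spectral sequence, handle the $\theta$-stable case by Proposition \ref{prop:lieuprop}, and handle the $F$-defined case by using the geometric nature of finite length (Corollary \ref{cor:finitelength}) to reduce, after base change along $\iota_\RR$, to the classical real-parabolic result. You merely make the inductive bookkeeping explicit where the paper compresses it into a single sentence, and you correctly flag the subtle point that the module $H^q(\lieu_1';X)$ need not descend to $F$ but that this does not matter because finite length is insensitive to scalar extension.

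One small inaccuracy of attribution: the Hochschild--Serre spectral sequence is not ``supplied by'' Proposition \ref{prop:equicohomology} — that proposition only asserts that the spectral sequence is compatible with base change; its existence is noted separately in section \ref{sec:homologyandcohomology}. This is harmless but worth correcting.
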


\begin{proof}
Assume we are given a sequence as in \eqref{eq:constructiblesequence} satisfying the defining property of $F'$-constructibility. By the Hochschild-Serre spectral sequence for $\lieu'$-(co)homology it is enough to show that finite length is preserved in the case where $\lieq'$ is $\theta$-stable or defined over $F$. In the $\theta$-stable case the claim follows from Proposition \ref{prop:lieuprop}. The other case reduces by Corollary \ref{cor:finitelength} to the case of a real parabolic $\lieq'$ over $F=\RR$ after base change along $\iota_\RR$, where the statemement is well known.
\end{proof}

\subsection{Restrictions of irreducibles}

\begin{proposition}\label{prop:galoisrestriction}
Assume $F'/F$ to be a finite Galois extension. Let $X'$ be an irreducible $(\liea',B')$-module, then as an $(\liea,B)$-module, $\res_{F'/F}X'$ decomposes into a finite direct sum of irreducible $(\liea,B)$-modules. The number of summands is bounded by the degree $[F':F]$.
\end{proposition}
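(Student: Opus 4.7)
My plan is to descend from the extended module $\res_{F'/F}X' \otimes_F F'$, for which the structure is transparent via the Galois decomposition, down to $\res_{F'/F}X'$ using Proposition \ref{prop:hombasechange} and Corollary \ref{cor:extbasechange}. The central observation is the following: for a finite Galois extension $F'/F$ the natural algebra isomorphism
$$
F' \otimes_F F' \;\cong\; \prod_{\tau \in \Gal(F'/F)} F'
$$
yields, for every $(\liea',B')$-module $X'$, a canonical isomorphism of $(\liea',B')$-modules
$$
\res_{F'/F} X' \otimes_F F' \;\cong\; \bigoplus_{\tau \in \Gal(F'/F)} (X')^\tau,
$$
where on the right each summand carries the Galois-twisted action described in Section \ref{sec:galoisactions}. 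Since Galois twisting is an autoequivalence of $\mathcal C(\liea',B')$ (with inverse given by $\tau^{-1}$), each $(X')^\tau$ is irreducible. Hence the right-hand side is a semisimple $(\liea',B')$-module of length exactly $[F':F]$.

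From this I will extract two separate pieces of information about $Y := \res_{F'/F}X'$. First, finite length with the correct bound: any strictly ascending chain of $(\liea,B)$-submodules of $Y$ gives, by faithfully flat base change along $F'/F$, a strictly ascending chain of $(\liea',B')$-submodules of $Y \otimes_F F' \cong \bigoplus_\tau (X')^\tau$, which has length $[F':F]$. Therefore the length of $Y$ as an $(\liea,B)$-module is at most $[F':F]$.

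Second, semisimplicity of $Y$: given any $(\liea,B)$-submodule $Z \subseteq Y$, consider the short exact sequence $0 \to Z \to Y \to Y/Z \to 0$. After base change to $F'$ the sequence becomes a short exact sequence of submodules and quotients of the semisimple module $\bigoplus_\tau (X')^\tau$, hence splits, so its class in $\Ext^1_{\liea',B'}((Y/Z) \otimes_F F', Z \otimes_F F')$ vanishes. By Corollary \ref{cor:extbasechange},
$$
\Ext^1_{\liea,B}(Y/Z, Z) \otimes_F F' \;\cong\; \Ext^1_{\liea',B'}\bigl((Y/Z) \otimes_F F', Z \otimes_F F'\bigr),
$$
and faithfulness of $-\otimes_F F'$ forces the extension class over $F$ to vanish as well. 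Thus every $(\liea,B)$-submodule of $Y$ admits a complement, i.e.\ $Y$ is semisimple.

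Combining these two facts, $Y = \res_{F'/F}X'$ is a finite direct sum of irreducible $(\liea,B)$-modules with at most $[F':F]$ summands. The only real subtlety is the Galois decomposition $\res_{F'/F}X' \otimes_F F' \cong \bigoplus_\tau (X')^\tau$; once that is in place the remainder is a formal descent argument making essential use of the homological base change results of Section \ref{sec:homology}.
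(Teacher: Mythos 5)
Your proof is correct and follows essentially the same route as the paper: both rest on the Galois decomposition $\res_{F'/F}X'\otimes_F F'\cong\bigoplus_{\tau\in\Gal(F'/F)}(X')^\tau$ together with Corollary~\ref{cor:extbasechange} to force splitting of short exact sequences over $F$. Your explicit length bound via strictly ascending chains replaces the paper's inductive argument through the subsets $\Xi\subseteq\Gal(F'/F)$, but this is only a cosmetic difference in presentation.
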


\begin{proof}
We write $X$ for the $(\liea,B)$-module $\res_{F'/F}X'$. Consider the $(\liea',B')$-module $X'':=X\otimes_F F'$. The identity
$$
F'\otimes_F F'\;=\;
\bigoplus_{\sigma\in\Gal(F'/F)} F'\otimes_{F',\sigma} F'
$$
shows that $X''$ decomposes into a finite direct sum
\begin{equation}
X''\;=\;
X'\otimes_{F'}F'\otimes_F F'\;=\;
\bigoplus_{\sigma\in\Gal(F'/F)} X'\otimes_{F',\sigma} F',
\label{eq:Xdoubleprimesum}
\end{equation}
of irreducible $(\liea',B')$-modules
$$
F_\sigma'\;:=\;X'\otimes_{F',\sigma} F'.
$$
Therefore, by Corollary \ref{cor:extbasechange}, each short exact sequence
$$
\begin{CD}
0@>>> Y @>>> X @>>> Z @>>> 0
\end{CD}
$$
of $(\liea,B)$-modules splits, as it must split over $F'$ by \eqref{eq:Xdoubleprimesum}, and we have isomorphisms
$$
Y\otimes_F F'\;=\;\bigoplus_{\sigma\in\Xi} F_\sigma'
$$
and
$$
Z\otimes_F F'\;=\;\bigoplus_{\sigma\not\in\Xi} F_\sigma'
$$
for some unique subset $\Xi\subseteq\Gal(F'/F)$. Inductively we conclude that $X$ decomposes into a finite sum of irreducible $(\liea,B)$-modules as claimed.
\end{proof}

\begin{corollary}\label{cor:restriction}
Assume $F'/F$ to be a finite extension. Let $X'$ be an absolutely irreducible $(\liea',B')$-module. Then, as an $(\liea,B)$-module, $\res_{F'/F}X'$ decomposes into a finite direct sum of irreducible $(\liea,B)$-modules.
\end{corollary}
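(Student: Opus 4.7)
The natural strategy is to reduce to the Galois case already handled by Proposition \ref{prop:galoisrestriction}. Since $F'/F$ is finite and of characteristic zero, hence separable, it admits a Galois closure $F''/F$. Let $(\liea'',B'')$ denote the base change of $(\liea,B)$ to $F''$, and set
$$
X''\;:=\;X'\otimes_{F'}F''.
$$
Since $X'$ is absolutely irreducible by hypothesis, $X''$ is an irreducible $(\liea'',B'')$-module. Proposition \ref{prop:galoisrestriction} applied to the finite Galois extension $F''/F$ then asserts that $\res_{F''/F}X''$ decomposes as a finite direct sum of irreducible $(\liea,B)$-modules.

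The second step is to identify this restriction with a power of $\res_{F'/F}X'$. Transitivity of restriction of scalars gives
$$
\res_{F''/F}X''\;=\;\res_{F'/F}\bigl(\res_{F''/F'}X''\bigr),
$$
and since $F''\cong (F')^{[F'':F']}$ as an $F'$-vector space, the underlying $(\liea',B')$-module of $X''=X'\otimes_{F'}F''$ is isomorphic to $(X')^{[F'':F']}$. Hence
$$
\res_{F''/F}X''\;\cong\;\bigl(\res_{F'/F}X'\bigr)^{[F'':F']}
$$
as $(\liea,B)$-modules. Combining this with the previous step, $(\res_{F'/F}X')^{[F'':F']}$ is a finite direct sum of irreducible $(\liea,B)$-modules.

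Finally, since $\res_{F'/F}X'$ is a direct summand of its $[F'':F']$-fold power, and any summand of a semisimple module of finite length is itself semisimple of finite length, $\res_{F'/F}X'$ decomposes as a finite direct sum of irreducible $(\liea,B)$-modules. The whole argument is routine once the Galois closure has been introduced; there is no substantive obstacle beyond verifying that the identification of $\res_{F''/F'}X''$ with $(X')^{[F'':F']}$ respects the $(\liea',B')$-action, which is immediate from the construction of restriction of scalars.
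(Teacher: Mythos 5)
Your proof is correct and follows essentially the same strategy as the paper: reduce to the Galois case by passing to the normal hull $F''/F$ and invoking Proposition \ref{prop:galoisrestriction}. The paper's proof is terser — it simply says one may ``replace $F'$ by its normal hull'' — and your version usefully spells out the implicit point that $\res_{F'/F}X'$ is a direct summand of $\res_{F''/F}(X'\otimes_{F'}F'')\cong(\res_{F'/F}X')^{[F'':F']}$, so semisimplicity and finite length descend.
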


\begin{proof}
Since $X'$ is absolutely irreducible, we may replace $F'$ without loss of generality by its normal hull over $F$. Then the claim follows from Proposition \ref{prop:galoisrestriction}.
\end{proof}

\section{Algebraic Characters}\label{sec:algebraiccharacters}

In the context of the theory developed here, it is possible to construct an abstract theory of algebraic characters over any field $F$ of characteristic $0$ following the axiomatic treatment given in \cite{januszewski2011}. Our results about geometric properties of modules may be used to produce non-trivial instances of this theory. To give a concrete example, we sketch the case of finite length modules here. The case of discretely decomposables discussed in \cite{januszewski2011} generalizes to arbitrary base fields of characteristic $0$ along the same lines as well.

\subsection{Rational algebraic characters}

We depart from a reductive pair $(\lieg,K)$ over a field $F$ and fix an $F'$-constructible parabolic subalgebra $\lieq'\subseteq\lieg'$ over an extension $F'/F$ with Levi decomposition $\lieq'=\liel'+\lieu'$. By Proposition \ref{prop:constructiblehomology} we have for each $q\in\ZZ$ a well defined functor
$$
H^q(\lieu',-):\quad
\mathcal C_{\rm fl}(\lieg',K')\to
\mathcal C_{\rm fl}(\liel',L'\cap K')
$$
on the corresponding categories of finite length modules over $F'$. We remark that by Corollary \ref{cor:finitelength} the categories of finite length modules are essentially small. Hence, by the long exact sequence of cohomology, the Euler characteristic of these functors gives rise to a group homomorphism
$$
H_{\lieq'}:\quad
K_{\rm fl}(\lieg',K')\to
K_{\rm fl}(\liel',L'\cap K'),
$$
$$
[X]\;\mapsto\;\sum_{q\in\ZZ}(-1)^q[H^q(\lieu',X)]
$$
of the corresponding Grothendieck groups of the abelian categories $\mathcal C_{\rm fl}(\lieg',K')$ resp.\ $\mathcal C_{\rm fl}(\liel',L'\cap K')$. Here the bracket $[\cdot]$ denotes the class associated to a module. We define the {\em Weyl denominator relative to $\lieq'$} as
$$
W_{\lieq'}\;:=\;H_{\lieq'}({\bf 1}_{\lieg',K'}),
$$
where ${\bf 1}_{\lieg',K'}$ denotes the trivial $(\lieg',K')$-module.

We know that the category $\mathcal C_{\rm fl}(\liel',L'\cap K')$ of finite length modules is closed under tensor products with finite-dimensional representations, again by Corollary \ref{cor:finitelength}, as this is well known over $\CC$, cf. \cite{kostant1975}. In particular $K_{\rm fl}(\liel',L'\cap K')$ is naturally a module over the commutative ring $K_{\rm fd}(\liel',L'\cap K')$ of finite-dimensional modules, the (scalar)multiplication stemming from the tensor product. The relative Weyl denominator $W_{\lieq'}$ lies in the latter ring and we may therefore consider the module-theoretic localization
$$
C_{\lieq'}(\liel',L'\cap K')\;:=\;K_{\rm fl}(\liel',L'\cap K')[W_{\lieq'}^{-1}].
$$
The $\lieq'$-character map is by definition the map
$$
c_\lieq:\quad
K_{\rm fl}(\lieg',K')\to C_{\lieq'}(\liel',L'\cap K'),
$$
$$
[X]\;\mapsto\;\frac{H_{\lieq'}(X)}{W_{\lieq'}}.
$$
The generalization of Theorem 1.4 of loc.\ cit.\ in our context is
\begin{theorem}\label{thm:characters}
The map $c_{\lieq'}$ has the following properties:
\begin{itemize}
\item[(a)] The map $c_{\lieq'}$ is {\em additive}, i.e.\ for all $X,Y\in K_{\rm fl}(\lieg',K')$ we have
$$
c_{\lieq'}(X+Y)\;=\;
c_{\lieq'}(X)+c_{\lieq'}(Y).
$$
\item[(b)] The map $c_{\lieq'}$ is {\em multiplicative}, i.e.\ for all $X\in K_{\rm fl}(\lieg',K')$ and $Y\in K_{\rm fd}(\lieg',K')$ we have
$$
c_{\lieq'}(X\cdot Y)\;=\;
c_{\lieq'}(X)\cdot c_{\lieq'}(Y),
$$
and
$$
c_{\lieq'}({\bf1}_{\lieg',K'})\;=\;{\bf1}_{\liel,L\cap K'}.
$$
\item[(c)] If $\lieq$ is $\theta$-stable then $c_{\lieq'}$ {\em respects admissible duals}, i.e.\ for all $X\in K_{\rm fl}(\lieg',K')$ we have
$$
c_{\lieq'}(X^\vee)\;=\;
c_{\lieq'}(X)^\vee,
$$
where $X^\vee$ denotes the $K$-finite dual.
\item[(d)] If for $X\in K_{\rm fl}(\lieg',K')$ its restriction lies in $K_{\rm fl}(\liel',L'\cap K')$, then we have the formal identity
$$
c_{\lieq'}(X)\;=\;[X]
$$
in $C_{\lieq'}(\liel',L'\cap K')$.
\end{itemize}
\end{theorem}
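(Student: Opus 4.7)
The plan is to adapt the classical proof \cite[Theorem 1.4]{januszewski2011} to the rational framework built up so far. Proposition \ref{prop:constructiblehomology} guarantees that $H^q(\lieu',-)$ maps finite length $(\lieg',K')$-modules to finite length $(\liel',L'\cap K')$-modules, so $H_{\lieq'}$ and $c_{\lieq'}$ are well-defined in the stated localised Grothendieck group. I would treat the four assertions in the order (i), (iv), (ii), (iii).

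Part (i) is immediate from the long exact cohomology sequence: additivity of $H_{\lieq'}$ on short exact sequences descends to a homomorphism on $K_{\rm fl}(\lieg',K')$, and localisation at the unit $W_{\lieq'}$ preserves additivity. For part (iv), I would argue directly from the standard complex \eqref{eq:standardcohomologycomplex}. When the class $[X]$ already lies in $K_{\rm fl}(\liel',L'\cap K')$, each term $\wedge^q\lieu'^{*}\otimes X$ of the Koszul complex is of finite length as $(\liel',L'\cap K')$-module, so the usual Euler-characteristic-of-the-complex equals the Euler-characteristic-of-the-cohomology in the Grothendieck group, giving $H_{\lieq'}(X)=[X]\cdot\sum_q(-1)^q[\wedge^q\lieu'^{*}]$. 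Specialising this identity to $X=\mathbf{1}_{\lieg',K'}$ identifies the second factor as $W_{\lieq'}$, whence $c_{\lieq'}(X)=[X]$.

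Part (ii) combines (iv) with the tensor identity $H^q(\lieu';X\otimes Y)\cong H^q(\lieu';X)\otimes Y|_{\liel'}$, valid for arbitrary $X$ and finite-dimensional $(\lieg',K')$-module $Y$; this is proved by producing an $\liel'$-equivariant self-isomorphism of the Koszul complex that untwists the diagonal $\lieu'$-action on $X\otimes Y$ onto the first factor, a construction that works verbatim over any characteristic zero field $F'$ and uses only the finite-dimensionality of $Y$. Passing to Euler characteristics gives $H_{\lieq'}(X\otimes Y)=H_{\lieq'}(X)\cdot[Y|_{\liel'}]$, and (iv) rewrites $[Y|_{\liel'}]=H_{\lieq'}(Y)/W_{\lieq'}$; dividing by $W_{\lieq'}^{2}$ in the localisation yields multiplicativity, and evaluating at $X=\mathbf{1}$ shows that $c_{\lieq'}(\mathbf{1}_{\lieg',K'})=\mathbf{1}_{\liel',L'\cap K'}$.

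The main obstacle will be (iii). Under the $\theta$-stability hypothesis the strategy is to establish a rational Poincar\'e-type duality
$$
H^q(\lieu';X^\vee)\;\cong\;H^{d-q}(\lieu';X)^\vee\otimes\wedge^{d}\lieu',\qquad d:=\dim_{F'}\lieu',
$$
so that the Euler characteristic of $X^\vee$ is the $F'$-dual of $H_{\lieq'}(X)$ twisted by the modular character $\wedge^{d}\lieu'$; specialising to $X=\mathbf{1}$ shows that $W_{\lieq'}$ transforms in the same way under $(-)^\vee$, so the twists cancel after inverting $W_{\lieq'}$, giving $c_{\lieq'}(X^\vee)=c_{\lieq'}(X)^\vee$. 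Setting up this duality over an arbitrary base field is the technical hard part: I would either construct it directly on the Koszul complex, using the $\theta$-stable structure to identify $\lieu'$ with $(\theta\lieu')^{*}$ via the invariant bilinear form on $\lieg$ and then invoke the standard Koszul pairing, or reduce to the classical complex case by applying the Homological Base Change Theorem \ref{thm:derivedbasechange} combined with Proposition \ref{prop:classicaliso}, after first verifying that the $K'$-finite dual on finite length modules commutes with base change, which follows from Corollary \ref{cor:finitelength} and Proposition \ref{prop:admissible}.
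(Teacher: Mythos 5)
Your parts (i) and (iv) are essentially the paper's argument, which defers to the loc.\ cit.\ proof; the standard-complex Euler-characteristic computation you sketch for (iv) is the right content. However, your part (ii) contains a genuine error in the intermediate step. The claimed degree-by-degree isomorphism
$$
H^q(\lieu';X\otimes Y)\;\cong\;H^q(\lieu';X)\otimes Y|_{\liel'}
$$
is false whenever $\lieu'$ acts non-trivially on $Y$: already for $\lieg'=\liesl_2$, $\lieu'=\lien$ the nilradical of a Borel, $X=\mathbf{1}$ and $Y$ the standard $2$-dimensional module, the left-hand side is one-dimensional in each degree (Kostant) while the right-hand side is two-dimensional. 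There is no $\liel'$-equivariant ``untwisting'' isomorphism of the Koszul complex. What does hold is the identity at the level of Euler characteristics, $H_{\lieq'}(X\cdot Y)=H_{\lieq'}(X)\cdot[Y|_{\liel'}]$, obtained by filtering $Y$ by $\lieu'$-stable subspaces with $\lieu'$-trivial graded pieces and using the long exact sequence. That suffices for (ii) once you combine it with (iv), so the gap is repairable, but the proof as written would not compile.

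For (iii), your first alternative misses the technical crux. The Koszul pairing on the standard complex relates $H^q(\lieu';X)$ to the \emph{full} linear dual $X^*$ (equivalently the $(L'\cap K')$-finite dual), not to the $K'$-finite dual $X^\vee$, and these differ for infinite-dimensional $X$. The actual content of the paper's proof, following Proposition 1.2 of \cite{januszewski2011}, is to show that the natural map
$$
H^q(\lieu';X^\vee)\;\to\;H^q(\lieu';X^*)
$$
is an isomorphism for finite-length $X$ (equivalently, that the Euler characteristic of the quotient $X^*/X^\vee$ vanishes), and only then does the Poincar\'e pairing produce your displayed duality. Invoking ``the standard Koszul pairing'' via the $\theta$-stable identification of $\lieu'$ with $(\theta\lieu')^*$ lands you on $X^*$, not $X^\vee$, and does not close this gap. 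Your second alternative, reducing to the classical complex case via the Homological Base Change Theorem and the base-change compatibility of $K'$-finite duals, is a plausible and genuinely different route from the paper (which instead re-runs the argument of Proposition 1.2 of \cite{januszewski2011} directly over $F'$); it could be made to work, but you would still need to exhibit the natural map over $F'$ whose base change to $\CC$ is the classical comparison map, which again forces you to confront the $X^\vee$ versus $X^*$ distinction explicitly rather than avoid it.
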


\begin{proof}
The additivity is clear. The multiplicativity is proven mutatis mutandis as in Theorem 1.4 in \cite{januszewski2011}. This also applies to (d).

To prove (c), we recall the notion of an $(\lieu'$-admissible) pair of categories having duality. In our situation we consider the pair of categories of finite length modules for $(\lieg',K')$ and $(\liel',L'\cap K')$ respectively. By definition this pair has duality if for each finite length $(\lieg',K')$-module $X$ in each degree $q$ the three modules
$$
H^q(\lieu'; X)^\vee,\;H^q(\lieu'; X^*),\;H^q(\lieu'; X^*/X^\vee),
$$
all lie in $\mathcal C_{\rm fl}(\liel',L'\cap K')$, where the superscripts $\cdot^\vee$ and $\cdot^*$ denote the $K'$- resp.\ $(L'\cap K')$-finite duals, and furthermore the Euler characteristic
$$
\sum_{q}(-1)^q[H^q(\lieu';X^*/X^\vee)]\;=\;0
$$
vanishes. Once this is established in our situation, the proof of the analogous statement of Theorem 1.4 in loc.\ cit.\ goes through word by word.

To see that the two categories of finite length modules form a pair with duality, it suffices in our context to see that the natural map
$$
H^q(\lieu'; X^\vee)\to H^q(\lieu'; X^*)
$$
is an isomorphism, which is equivalent to the vanishing of $H^q(\lieu'; X^*/X^\vee)$. This may be proved as in Proposition 1.2 in loc.\ cit., which is stated for admissible modules, but the proof given in loc.\ cit.\ works mutatis mutandis also in the finite length case for arbitrary $F'$.
\end{proof}

As in Proposition 1.6 in \cite{januszewski2011} we see that our algebraic characters over $F'$ are transitive, and similarly we see as in Proposition 1.7 of loc.\ cit.\ that our characters are compatible with restrictions, which generalizes statement (iv) of Theorem \ref{thm:characters}. The compatibility of $F'$-rational characters with translation functors as discussed in section 2 of loc.\ cit.\ remains valid as well.

Given a map of subfields $\tau:F'\to F''$ of $\CC$, which induces the identity on $F$, we may consider the exact base change functors
$$
\mathcal C_{?}(\lieg',K')\;\to\;\mathcal C_{?}(\lieg'',K''),
$$
for $?\in\{\rm fd,\rm fl\}$, and similarly
$$
\mathcal C_{?}(\liel',L'\cap K')\;\to\;\mathcal C_{?}(\liel'',L''\cap K''),
$$
where double prime denotes the base change to $F''$ along $\tau$. These induce maps
$$
-\otimes_{F',\tau}F'':\quad
K_{\rm ?}(\lieg',K')\to
K_{\rm ?}(\lieg'',K''),
$$
on the level of Grothendieck groups, and mutatis mutandis for the modules over the Levi factor of $\lieq'$. These maps are additive, multiplicative, and respect duals in the sense of Theorem \ref{thm:characters}.

Now $\lieq$ itself gives rise to a parabolic subalgebra $\lieq''\subseteq\lieg''$, which is easily seen to be constructible, and we have the relation
$$
W_{\lieq'}\otimes_{F',\tau}F''\;=\;W_{\lieq''}.
$$
As we have already seen in Proposition \ref{prop:equicohomology}, by the very definition of $\lieu$- resp.\ $\lieu''$-cohomology via standard complexes (or the Homological Base Change Theorem), we see that
$$
(-\otimes_{F',\tau}F'')\circ H_{\lieq'}\;=\;H_{\lieq''}(-\otimes_{F',\tau}F'').
$$
Since base change commutes with tensor products, we just proved
\begin{proposition}\label{prop:cqgallois}
For each constructible subalgebra $\lieq'\subseteq\lieg'$ and each $F$-linear map of fields $\tau:F'\to F''\subseteq\CC$, $\lieq''=\lieq'\otimes_{F',\tau}F''$ is constructible again and we have the commuting square
$$
\begin{CD}
K_{\rm fl}(\lieg',K')@>-\otimes_{F',\tau}F''>> K_{\rm fl}(\lieg'',K'')\\
@Vc_{\lieq'}VV @VVc_{\lieq''}V\\
C_{\lieq'}(\liel',L'\cap K')@>>-\otimes_{F',\tau}F''> C_{\lieq''}(\liel'',K'')
\end{CD}
$$
\end{proposition}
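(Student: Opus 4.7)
My plan is to unpack the definition of $c_{\lieq'}$ and show that each of its constituents commutes with base change along $\tau$. The statement splits into two parts: the constructibility of $\lieq''$, and the commutativity of the square.

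For the constructibility of $\lieq''=\lieq'\otimes_{F',\tau}F''$, I would take a witnessing sequence $\lieq'=\lieq_0'\subseteq\cdots\subseteq\lieq_l'=\lieg'$ of $F'$-germane parabolic subalgebras for $\lieq'$ and simply apply $-\otimes_{F',\tau}F''$ termwise. Being $F'$-germane is preserved because a $\theta$-stable Levi decomposition base changes to a $\theta$-stable Levi decomposition over $F''$; at each step the subalgebra $\lieq_i'\cap\liel_{i+1}'$ is either $\theta$-stable (a property visibly preserved under any base change) or defined over $F$, in which case it remains defined over $F$ after base change since $\tau$ is $F$-linear. Hence the resulting sequence witnesses the $F''$-constructibility of $\lieq''$.

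The commutativity of the square will follow once I check that each of the three ingredients in $c_{\lieq'}(\cdot)=H_{\lieq'}(\cdot)/W_{\lieq'}$ intertwines with the base change. By Corollary \ref{cor:finitelength}, $-\otimes_{F',\tau}F''$ preserves finite length and, being exact, induces well-defined additive maps on the Grothendieck groups. By Proposition \ref{prop:equicohomology} there is in each degree a natural isomorphism $H^q(\lieu';X)\otimes_{F',\tau}F''\cong H^q(\lieu'';X\otimes_{F',\tau}F'')$, and taking alternating sums gives $(-\otimes_{F',\tau}F'')\circ H_{\lieq'}=H_{\lieq''}\circ(-\otimes_{F',\tau}F'')$ on $K_{\rm fl}(\lieg',K')$. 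Applied to $[{\bf 1}_{\lieg',K'}]$ this specializes to $W_{\lieq'}\otimes_{F',\tau}F''=W_{\lieq''}$. Since base change is moreover a unital ring homomorphism on the Grothendieck rings of finite-dimensional modules and compatible with the $K_{\rm fd}$-module structure on $K_{\rm fl}$, it descends by the universal property of localization to a unique map $C_{\lieq'}(\liel',L'\cap K')\to C_{\lieq''}(\liel'',L''\cap K'')$, and the commutativity of the asserted square is immediate from the numerator/denominator identities just established.

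The only mildly delicate point, and the place I would expect to have to pause, is the verification that the base change of the standard cohomology complex $\Hom_{L'\cap K'}(\wedge^\bullet\lieu'/(\lieu'\cap(\liel'\cap\liek')),X)$ realizes the standard complex over $F''$ as an $(\liel'',L''\cap K'')$-module equivariantly for the Levi action, so that the isomorphism of Proposition \ref{prop:equicohomology} actually lives in $\mathcal C_{\rm fl}(\liel'',L''\cap K'')$ and not merely in the forgotten category. Once this equivariance is in hand, which is exactly what Proposition \ref{prop:equicohomology} encodes, the remainder of the argument is formal manipulation with Grothendieck groups and the localization construction.
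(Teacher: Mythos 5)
Your proposal follows essentially the same route as the paper: both reduce the commutativity to the natural isomorphism of Proposition~\ref{prop:equicohomology} (equivalently the Homological Base Change Theorem) to obtain $(-\otimes_{F',\tau}F'')\circ H_{\lieq'} = H_{\lieq''}\circ(-\otimes_{F',\tau}F'')$ and hence $W_{\lieq'}\otimes_{F',\tau}F''=W_{\lieq''}$, and then invoke compatibility of base change with tensor products to descend to the localizations. You are somewhat more explicit than the paper about why $\lieq''$ remains constructible (the paper merely notes this is ``easily seen'') and about the Levi-equivariance of the cohomology isomorphism, but the latter is already guaranteed by the statement of Proposition~\ref{prop:equicohomology}, so these are elaborations rather than genuinely different steps.
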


If $G$ is a reductive group over a number field $F$, the distribution characters of admissible $G(\Adeles_F^{(\infty)})$-modules preserve rationality and commute with base change, as Hecke operators act via operators of finite rank. Thus for factorizable automorphic representations of $G(\Adeles_F)$, this obserservation together with Proposition \ref{prop:cqgallois} implies the existence of algebraic global characters which preserve rationality and commute with base change.

\section{Frobenius-Schur Indicators}\label{sec:frobeniusschur}

In this section we introduce and study Frobenius-Schur indicators for $(\liea,B)$-modules over arbitrary fields and discuss their relation to the descent problem in quadratic extensions $F'/F$. Contrary to common terminology, we define the indicator as obstruction to descent, regardless of existence or non-existence of invariant bilinear forms. Our choice in Definition \ref{def:indicator} below is a notational convenience and simplifies the formulation of many statements. Conceptually it would be preferrable to define the indicator as an element in the Brauer group of $F$ as in the proofs of Propositions \ref{prop:necessaryadmissiblegroups} and \ref{prop:unconditionaladmissiblegroups}.

The main results of section \ref{sec:fsgeneralnotion} are a local global principle for quadratic descent (cf.\ Theorem \ref{thm:localglobal}) and Proposition \ref{prop:minimalktypedescent}, which reduces the descent problem for $(\liea,B)$-modules to descent of suitable $B$-types.

In section \ref{sec:fsunitary} we show that the infinitesimally unitary case over the complex numbers, the indicator classifies irreducibles and their invariant bilinear complex forms accordingly. Here the situation turns out to be completely analogous to the classical case treated by Frobenius and Schur in \cite{frobeniusschur1906}.

Adams determined classical Frobenius-Schur indicators classifying invariant bilinear forms on complex representations of real reductive groups with the property that all irreducible representations are self-dual in \cite{adams2014}. By our results in section \ref{sec:fsunitary} Adams' results fit into our framework and allow us to extend the list of cases in which we obtain optimal results (cf.\ the proof of Theorem \ref{thm:realstandardmodules}).

\subsection{The general notion}\label{sec:fsgeneralnotion}

In this section we let $(\liea,B)$ denote a pair over a field $F$ of characteristic $0$ and we fix a quadratic extension $F'/F$.

We write $(\liea',B')$ for the base change of $(\liea,B)$ to $F'$. Let $X$ be an irreducible $(\liea',B')$-module and denote by $Y:=\res_{F'/F}X$ its restriction of scalars to $F$, which we consider as an $(\liea,B)$-module over $F$. We write $\tau:F'\to F'$ for the non-trivial automorphism of $F'/F$ and set
$$
\overline{X}\;:=\;X\otimes_{F',\tau}F'.
$$
We remark that the action of $(\liea_{F'},B_{F'})$ on $\overline{X}$ depends on the real form $(\liea,B)$ (cf.\ section \ref{sec:galoisactions}).

We have the decomposition
\begin{equation}
Y'\;:=\;Y\otimes_FF'\;=\;X\oplus\overline{X},
\label{eq:xplusxbar}
\end{equation}
as $(\liea',B')$-module.

The module $Y$ is either irreducible or, by Proposition \ref{prop:galoisrestriction}, decomposes into a direct sum
\begin{equation}
Y\;\cong\;Y_1\oplus Y_2
\label{eq:x1plusx2}
\end{equation}
of two irreducible $(\liea,B)$-modules, where this decomposition may be assumed compatible with the decomposition \eqref{eq:xplusxbar}.

\begin{proposition}\label{prop:fsreal}
An irreducible $(\liea',B')$-module $X$ is defined over $F$ if and only if $Y=\res_{F'/F}X$ is reducible. In this case $Y$ decomposes into two isomorphic copies of the same module, i.e.\ $Y_1\cong Y_2$ in \eqref{eq:x1plusx2}, and $X\cong\overline{X}$. If furthermore $X$ is absolutely irreducible and $(\liea,B)$ satisfies condition (Q), then $Y_i$ is absolutely irreducible, $\End_{\liea,B}(Y_i)=F$, $i=\{1,2\}$, and $\End_{\liea,B}(Y)=F^{2\times 2}$.
\end{proposition}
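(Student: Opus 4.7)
The plan is to establish the biconditional and its structural refinements in three stages, using Proposition~\ref{prop:galoisrestriction} and Corollary~\ref{cor:extbasechange} to produce direct-sum decompositions over $F$.

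For the forward implication, I would start from any model $Z$ of $X$ over $F$, so that $Z\otimes_F F'\cong X$. Picking an $F$-basis $\{1,\alpha\}$ of $F'$ yields
$$
Y\;=\;\res_{F'/F}(Z\otimes_F F')\;\cong\;Z\oplus Z
$$
as $(\liea,B)$-modules, which is visibly reducible. Moreover $Z$ is itself irreducible, since otherwise $X=Z\otimes_F F'$ would split nontrivially. Comparing $Z\oplus Z$ with the decomposition \eqref{eq:x1plusx2} supplied by Proposition~\ref{prop:galoisrestriction} via Krull--Schmidt then gives $Y_1\cong Y_2\cong Z$, and the canonical identification $F'\otimes_{F',\tau}F'\cong F'$ yields $\overline X=(Z\otimes_F F')\otimes_{F',\tau}F'\cong Z\otimes_F F'=X$.

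For the converse, I would assume $Y$ reducible and apply Proposition~\ref{prop:galoisrestriction} to write $Y=Y_1\oplus Y_2$ with each $Y_i$ an irreducible $(\liea,B)$-module. Base change to $F'$ then gives
$$
Y_1'\oplus Y_2'\;=\;X\oplus\overline X,
$$
so each $Y_i'$ is a nonzero proper direct summand of $X\oplus\overline X$. When $X\not\cong\overline X$ the only such summands are $X$ and $\overline X$; when $X\cong\overline X$, one has $X\oplus\overline X\cong X^{\oplus 2}$ and Krull--Schmidt on this length-two semisimple module again forces $Y_1'\cong X$. Either way $Y_1$ is a model of $X$ or of $\overline X$ over $F$, and since $\overline{\overline X}=X$, $X$ itself is defined over $F$. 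The additional claims $Y_1\cong Y_2$ and $X\cong\overline X$ then follow by feeding this model back into the forward direction.

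For the furthermore statement I would set $Z:=Y_1$, so $Z\otimes_F F'\cong X$. To verify $Z$ is absolutely irreducible, for an arbitrary extension $L/F$ the ring $L\otimes_F F'$ is either the field $LF'$ or splits as $L\times L$; in the split case $Z\otimes_F L\cong X\otimes_{F'}L$ is directly irreducible, while in the field case
$$
Z\otimes_F LF'\;\cong\;X\otimes_{F'}LF'
$$
is irreducible by absolute irreducibility of $X$, so any nontrivial decomposition of $Z\otimes_F L$ would survive the faithful further extension to $LF'$, a contradiction. Condition~(Q) combined with Corollary~\ref{cor:schur} then gives $\End_{\liea,B}(Y_i)=F$, and the $2\times 2$ block decomposition of $\End(Z\oplus Z)$, all of whose entries equal $\Hom_{\liea,B}(Z,Z)=F$, yields $\End_{\liea,B}(Y)=F^{2\times 2}$. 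The main subtlety throughout is the case $X\cong\overline X$, where $X\oplus\overline X\cong X^{\oplus 2}$ admits a continuous family of complementary decompositions; one must then rely on length-two Krull--Schmidt rather than uniqueness of isotypic summands to pin down $Y_1'$ up to isomorphism.
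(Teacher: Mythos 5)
Your proof is correct and closely tracks the paper's, with one notable difference in the converse direction. The paper produces the isomorphism $Y_1\otimes_F F'\cong X$ directly from the counit of the restriction/base-change adjunction: the induced map $Y_1\otimes_F F'\to X$ is nonzero hence surjective since $X$ is irreducible, and injective since $Y_1\otimes_F F'$ is a nontrivial submodule of the length-two semisimple module $X\oplus\overline X$. You instead base-change the decomposition $Y=Y_1\oplus Y_2$, observe $Y_1'\oplus Y_2'\cong X\oplus\overline X$, and invoke Krull--Schmidt to pin down $Y_1'\cong X$ or $Y_1'\cong\overline X$, then handle the second case via $\overline{\overline X}\cong X$. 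The adjunction route sidesteps the case split on whether $X\cong\overline X$ that you correctly flag as the subtle point; both arguments are valid and amount to the same length count on $X\oplus\overline X$. You also supply two details the paper states without proof: the verification that $Z=Y_1$ stays irreducible under arbitrary base change (via the dichotomy on whether $L\otimes_F F'$ is a field or splits), and the derivation of $\overline X\cong X$ from the existence of a model over $F$.
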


\begin{proof}
Assume $X$ is defined over $F$, i.e.\ there exists an $(\liea,B)$-module $X_0$ with
$$
X\;\cong\;X_0\otimes_FF'.
$$
Then, as $(\liea,B)$-modules, we have
$$
Y\;=\;X_0\otimes_F\res_{F'/F}F',
$$
which clearly decomposes into two copies of the irreducible module $X_0$.

If $X$ is defined over $F$ and absolutely irreducible, $Y_i\cong X_0$ is absolutely irreducible as well. By Corollary \ref{cor:schur} we know $\End_{\liea,B}(X_0)=F$, which implies $\End_{\liea,B}(Y)=F^{2\times 2}$, and the claim about endomorphism rings follows.

Assume conversely that $Y$ is reducible. We claim that $Y_1$ is a model of $X$ over $F$. Indeed, consider the map
\begin{equation}
Y_1\otimes_FF'\;\to\;X
\label{eq:xitox}
\end{equation}
of $(\liea',B')$-modules induced by the inclusion
$$
Y_1\to Y
$$
by the adjointness relation of restriction of scalars. One the one hand, the map \eqref{eq:xitox} is non-zero, and therefore an epimorphism, $X$ being irreducible. On the other hand $Y_1\otimes_F F'$ is a non-trivial submodule of \eqref{eq:xplusxbar}, hence irreducible as well. Therefore \eqref{eq:xitox} is an isomorphism and the claim follows.
\end{proof}

\begin{proposition}\label{prop:fscomplexquaternionic}
Assume that $(\liea,B)$ satisfies condition (Q). Let $X$ be an absolutely irreducible $(\liea',B')$-module and assume that $Y=\res_{F'/F}X$ is an irreducible $(\liea,B)$-module. Then either
\begin{itemize}
\item[(i)] $X\cong\overline{X}$ and $\End_{\liea,B}(Y)$ is a quaternion division algebra over $F$, or
\item[(ii)] $\End_{\liea,B}(Y)=F'$ otherwise.
\end{itemize}
\end{proposition}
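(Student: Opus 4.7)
\textbf{Proof plan for Proposition \ref{prop:fscomplexquaternionic}.}

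The plan is to analyze the endomorphism ring $A := \End_{\liea,B}(Y)$ after base change to $F'$. Since $Y$ is irreducible and $(\liea,B)$ satisfies condition (Q), Proposition \ref{prop:schur} shows that $A$ is an algebraic division algebra over $F$. By Proposition \ref{prop:hombasechange} we have
$$
A \otimes_F F' \;\cong\; \End_{\liea',B'}(Y \otimes_F F') \;=\; \End_{\liea',B'}(X \oplus \overline{X}),
$$
using the decomposition \eqref{eq:xplusxbar}. Since $X$ (hence also $\overline{X}$) is absolutely irreducible, Corollary \ref{cor:schur} gives $\End_{\liea',B'}(X) = \End_{\liea',B'}(\overline{X}) = F'$, and Schur's lemma determines the off-diagonal Hom-spaces according to whether or not $X \cong \overline{X}$.

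In case (ii), when $X \not\cong \overline{X}$, we obtain $\End_{\liea',B'}(X \oplus \overline{X}) = F' \oplus F'$, so $\dim_F A = 2$. The $F'$-module structure on $X$ makes $F'$ act by $(\liea,B)$-endomorphisms of $Y = \res_{F'/F} X$, giving an embedding $F' \hookrightarrow A$. For dimension reasons this embedding is an isomorphism, yielding $A = F'$.

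In case (i), when $X \cong \overline{X}$, Schur's lemma gives $\Hom_{\liea',B'}(X,\overline{X}) \cong F'$ and we obtain $\End_{\liea',B'}(X \oplus \overline{X}) \cong M_2(F')$, so $\dim_F A = 4$. Again $F' \hookrightarrow A$ via the natural $F'$-action on $Y$. Let $Z$ denote the center of the division algebra $A$; then $A$ is central simple over $Z$, so $\dim_Z A$ is a square and $\dim_F A = (\dim_Z A)\cdot[Z:F] = 4$. The possibilities are $[Z:F]\in\{1,4\}$, the case $[Z:F]=2$ being excluded because $2$ is not a square. If $Z = A$ were a field, then $A$ would be commutative, contradicting $A \otimes_F F' \cong M_2(F')$. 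Hence $Z = F$, and $A$ is a central simple $F$-algebra of dimension $4$, i.e.\ a quaternion algebra; it is a division algebra by Proposition \ref{prop:schur}.

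The main obstacle is really only the dimension/centrality bookkeeping in case (i); the key observation is that $F' \hookrightarrow A$ is forced by the very definition of $Y = \res_{F'/F} X$, and once one knows $A \otimes_F F'$ is a matrix algebra over $F'$ the classification of central simple algebras finishes the argument. Everything else reduces to a direct computation via Propositions \ref{prop:hombasechange} and \ref{prop:schur}.
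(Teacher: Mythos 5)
Your proof is correct and follows the same overall strategy as the paper: compute $\dim_F\End_{\liea,B}(Y)$ via Proposition \ref{prop:hombasechange} together with the decomposition $Y\otimes_F F'=X\oplus\overline{X}$, identify $\End_{\liea',B'}(X\oplus\overline{X})$ by Schur's Lemma, and use the embedding $F'\hookrightarrow\End_{\liea,B}(Y)$. The one place you go beyond the paper's (quite terse) argument is in verifying that in case (i) the four-dimensional division algebra is actually \emph{central} over $F$; the paper asserts "quaternion division algebra" without spelling this out, while you deduce centrality from the fact that $A\otimes_F F'\cong M_2(F')$ is non-commutative and $\dim_Z A$ must be a perfect square. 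A slightly more direct route to the same conclusion is to observe that the center of $A\otimes_F F'$ is $Z(A)\otimes_F F'$, which must equal the center $F'$ of $M_2(F')$, forcing $Z(A)=F$; but your bookkeeping is equally valid.
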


\begin{proof}
We first observe that by absolute irreducibility
$$
F'\;=\;\End_{\liea',B'}(X)\;\subseteq\;\End_{\liea,B}(Y),
$$
and by Proposition \ref{prop:hombasechange},
$$
\dim_F\End_{\liea,B}(Y)=\dim_{F'}\End_{\liea',B'}(Y')\;\in\;\{2,4\},
$$
by \eqref{eq:xplusxbar}. Therefore case (ii) applies if and only if $X\not\cong\overline{X}$ and $\End_{\liea,B}(Y)$ is a four-dimensional division algebra over $F$ by Proposition \ref{prop:fsreal} and Schur's Lemma otherwise.
\end{proof}

\begin{definition}\label{def:indicator}
We define the {\em Frobenius-Schur indicator} $\FS_F(X)$ of an absolutely irreducible $(\liea',B')$-module $X$ accordingly as
\begin{eqnarray*}
\FS_F(X) =& 1\quad&\Longleftrightarrow\quad \End_{\liea,B}(Y)=F^{2\times 2},\\
\FS_F(X) =& 0\quad&\Longleftrightarrow\quad \End_{\liea,B}(Y)=F',\\
\FS_F(X) =& -1\quad&\Longleftrightarrow\quad \End_{\liea,B}(Y)\;\text{is a quaternion division algebra},
\end{eqnarray*}
where $Y=\res_{F'/F}X$ as before.
\end{definition}

\begin{lemma}\label{lem:fscomplexbase}
Let $X$ be an absolutely irreducible $(\liea',B')$-module and $L'/L$ a quadratic extension dominating $F'/F$, i.e.\ $F'L=L'$ and $F'\cap L=F$. Then
$$
\FS_F(X)\;=\;0\quad\Longleftrightarrow\quad\FS_{L}(X)\;=\;0.
$$
\end{lemma}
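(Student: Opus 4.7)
The plan is to reduce the indicator condition $\FS_F(X) = 0$ to a purely numerical statement about the $F$-dimension of an endomorphism algebra, and then to transport this dimension across the dominating quadratic extension via Proposition \ref{prop:hombasechange}.

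First I would observe that the scalar action of $F'$ on $X$ commutes with the $(\liea, B)$-action on $Y := \res_{F'/F} X$, since the latter factors through $(\liea, B) \hookrightarrow (\liea', B')$ and the $(\liea', B')$-action on $X$ is by construction $F'$-linear. Thus $F' \hookrightarrow \End_{(\liea, B)}(Y)$ as an $F$-subalgebra, and from Definition \ref{def:indicator} the indicator vanishes precisely when this inclusion is an equality, equivalently when $\dim_F \End_{(\liea, B)}(Y) = 2$. The same discussion applies to $X_{L'} := X \otimes_{F'} L'$, which is again absolutely irreducible: writing $Y_{L'} := \res_{L'/L} X_{L'}$, one has $\FS_L(X_{L'}) = 0$ if and only if $\dim_L \End_{(\liea_L, B_L)}(Y_{L'}) = 2$.

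Second, the domination hypothesis $L' = F' L$ with $F = F' \cap L$ yields the canonical identification $F' \otimes_F L \cong L'$, whence
$$Y \otimes_F L \;=\; X \otimes_F L \;\cong\; X \otimes_{F'} (F' \otimes_F L) \;\cong\; X \otimes_{F'} L' \;=\; Y_{L'}$$
as $(\liea_L, B_L)$-modules, both actions tracing through the inclusion $(\liea, B) \hookrightarrow (\liea', B') \hookrightarrow (\liea_{L'}, B_{L'})$ extended $L$-linearly. Applying Proposition \ref{prop:hombasechange} with both arguments equal to $Y$ and the extension $L/F$ gives
$$\End_{(\liea, B)}(Y) \otimes_F L \;\cong\; \End_{(\liea_L, B_L)}(Y \otimes_F L) \;\cong\; \End_{(\liea_L, B_L)}(Y_{L'}),$$
so taking $L$-dimensions yields $\dim_L \End_{(\liea_L, B_L)}(Y_{L'}) = \dim_F \End_{(\liea, B)}(Y)$. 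One side equals $2$ if and only if the other does, which by the first step is exactly the desired equivalence $\FS_F(X) = 0 \iff \FS_L(X) = 0$.

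The only delicate point is the accurate bookkeeping of the various restriction and base change functors through the dominating tower; by phrasing everything in terms of $F$-dimension of an endomorphism algebra, the argument bypasses the trichotomy of Proposition \ref{prop:fscomplexquaternionic} and in particular does not require condition (Q). The identification $F' \otimes_F L \cong L'$ is exactly the content of the linear disjointness inherent in the definition of a dominating extension, so no further hypothesis is needed.
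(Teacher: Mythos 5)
Your proof is correct and takes essentially the same route as the paper: both reduce $\FS_F(X) = 0$ to the dimension of $\End_{\liea,B}(Y)$ being $2$, transport this dimension across the base change $L/F$ via Proposition~\ref{prop:hombasechange}, and identify $Y\otimes_F L$ with $\res_{L'/L}X_{L'}$ using linear disjointness. Your write-up usefully spells out the intermediate identifications $Y\otimes_F L \cong X\otimes_{F'}L' = Y_{L'}$ and the inclusion $F'\hookrightarrow\End_{\liea,B}(Y)$, which the paper's one-line proof leaves implicit; the remark about avoiding condition (Q) is a slight overclaim, since Definition~\ref{def:indicator} itself is only meaningful in the setting where the endomorphism-ring trichotomy (Propositions~\ref{prop:fsreal} and~\ref{prop:fscomplexquaternionic}) holds, but as a statement purely about dimensions the argument is sound.
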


\begin{proof}
It suffices to remark that by Proposition \ref{prop:hombasechange} the dimension of $\End_{\liea,B}(Y)$ over $F$ equals the dimension of $\End_{\liea_L,B_L}(Y_L)$ over $L$ and $Y_L=\res_{L'/L}X_{L'}$.
\end{proof}

If $F'/F$ is a quadratic extension of number fields and $v$ a place of $F$, we denote by $F_v$ the completion of $F$ at $v$ and let $F_v':=F'\cdot F_v$. We let $S_F$ denote the set of places of $F$.

\begin{theorem}\label{thm:localglobal}
Let $F'/F$ be a quadratic extension of number fields, $(\liea,B)$ a pair over $F$ satisfying condition (Q), and $X$ be an absolutely irreducible $(\liea',B')$-module. Then
\begin{itemize}
\item[(a)] If $\FS_F(X)\neq 0$, then for all but an even number of finitely many places $v\in S_F$ the module $X_{F_v'}$ over $F_v'$ admits a model over $F_v$.
\item[(b)] Furthermore, independently of any a priori condition on $\FS_F(X)$, $X$ admits a model over $F$ if and only if for all places $v\in S_F$ the module $X_{F_v'}$ over $F_v'$ admits a model over $F_v$.
\end{itemize}
\end{theorem}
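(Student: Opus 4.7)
The plan is to translate the descent question into Brauer-theoretic language via the central simple algebra $D := \End_{\liea,B}(Y)$, where $Y := \res_{F'/F}X$, and then to invoke the Brauer--Hasse--Noether structure theorem for number fields.

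Suppose first that $\FS_F(X) = 0$. Then $X \not\cong \overline{X}$, so any hypothetical global model $X_0$ over $F$ would force $\overline{X} \cong X_0 \otimes_F F' \cong X$, a contradiction; hence $X$ does not descend to $F$. At any non-split place $v$, the extension $F'_v/F_v$ dominates $F'/F$, so Lemma \ref{lem:fscomplexbase} gives $\FS_{F_v}(X_{F'_v}) = 0$, and Proposition \ref{prop:fsreal} yields that $X_{F'_v}$ is not defined over $F_v$ either. Since $F'/F$ is a proper quadratic extension of number fields, Chebotarev produces infinitely many inert places, so both sides of (ii) fail simultaneously in this case.

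Suppose now that $\FS_F(X) \neq 0$, so that $X \cong \overline{X}$. Combining Propositions \ref{prop:fsreal} and \ref{prop:fscomplexquaternionic}, $D$ is a central simple $F$-algebra of dimension $4$ split by $F'$, with $D \cong M_2(F)$ iff $\FS_F(X) = 1$; in particular $[D] \in \mathrm{Br}(F)[2]$. By Proposition \ref{prop:hombasechange}, $D \otimes_F F_v \cong \End_{(\liea,B) \otimes F_v}(Y \otimes_F F_v)$ at every place $v$. At a split $v$, the subfield $F' \subseteq D$ embeds into $F_v$, so $D \otimes_F F_v$ splits automatically, consistent with the tautological descent at such $v$ (where $F'_v = F_v$). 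At a non-split $v$, Propositions \ref{prop:fsreal}--\ref{prop:fscomplexquaternionic} applied locally identify the splitting of $D \otimes_F F_v$ with the existence of an $F_v$-model of $X_{F'_v}$. Thus local descent at $v$ is exactly equivalent to the local triviality of $[D]$ at $v$.

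Parts (i) and (ii) then follow from standard facts about quaternion algebras over number fields. The Brauer--Hasse--Noether exact sequence
$$
0 \;\to\; \mathrm{Br}(F) \;\to\; \bigoplus_v \mathrm{Br}(F_v) \;\to\; \QQ/\ZZ \;\to\; 0,
$$
together with the fact that local quaternion invariants lie in $\{0,\tfrac12\}$ and sum to $0$ in $\QQ/\ZZ$, forces the ramification locus of $D$ to be finite and of even cardinality, proving (i); the injectivity on the left yields (ii). The main technical obstacle is the careful identification, at each place $v$, of $D \otimes_F F_v$ with the local endomorphism algebra, compatibly with the decomposition $F' \otimes_F F_v = \prod_{v' \mid v} F'_{v'}$ and with the local Frobenius--Schur dichotomy; once this bookkeeping is in place, the Brauer-theoretic dictionary makes the argument essentially formal.
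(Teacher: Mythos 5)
Your proof is correct and takes essentially the same approach as the paper: both translate global descent into the splitting of the central simple algebra $\End_{\liea,B}(Y)$, identify the local algebras via Proposition \ref{prop:hombasechange}, and then apply a local-global principle for quaternion algebras. The paper invokes Hasse--Minkowski together with the Hilbert symbol product formula where you cite the Brauer--Hasse--Noether exact sequence, but for $2$-torsion Brauer classes over a number field these are the same statement, and your separate Chebotarev treatment of the $\FS_F(X)=0$ case only makes explicit what the paper handles implicitly through Lemma \ref{lem:fscomplexbase}.
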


\begin{remark}
Statements (a) and (b) imply that $X$ admits a model over $F$ if for all but one place $v\in S_F$ the module $X_{F_v'}$ over $F_v'$ admits a model over $F_v$.
\end{remark}

Theorem \ref{thm:localglobal} and Lemma \ref{lem:fscomplexbase} may be summarized by the following two formulae:
$$
|\FS_F(X)|\;=\;\prod_{v\in S_F}\FS_{F_v}(X_{F_v'}),
$$
$$
\FS_F(X)\;=\;\min_{v\in S_F}\FS_{F_v}(X_{F_v'}),
$$
with the convention $\FS_{F_v}(X_{F_v'}):=1$ whenever $F_v'=F_v$.

\begin{proof}[Proof of Theorem \ref{thm:localglobal}]
Assume that for all places $v\in S_F$, the module $X_{F_v'}$ over $F_v'$ admits a model over $F_v$. By Lemma \ref{lem:fscomplexbase} this implies $\FS_F(X)\neq 0$.

Let $Y=\res_{F'/F}.$ Then for {\em all places} $v$ of $F$ the quaternion algebra
\begin{equation}
\End_{\liea,B}(Y)\otimes_F F_v\;=\;
\End_{\liea_{F_v},B_{F_v}}(Y_{F_v})
\label{eq:localquaternions}
\end{equation}
is split by Proposition \ref{prop:fsreal}. By the Theorem of Hasse-Minkowski this implies that $\End_{\liea,B}(Y)$ splits, and the claim follows by Propositions \ref{prop:fsreal} and \ref{prop:fscomplexquaternionic}. This proves (b).

In general \eqref{eq:localquaternions} splits for almost all places and number of exceptions is even by the global product formula for the Hilbert symbol, hence (a) follows.
\end{proof}

\begin{remark}\label{rmk:localglobal}
The proof shows that $\End_{\liea,B}(Y)$ is uniquely determined by the collection of local Frobenius-Schur indicators $\FS_{F_v}(X_{F_v'})$.
\end{remark}

\begin{proposition}\label{prop:minimalktypedescent}
Let $(\liea,B)$ be a pair over a field $F$ satisfying condition (Q), $F'/F$ a quadratic extension and let $X$ be an absolutely irreducible $(\liea',B')$-module with $\FS_F(X)\neq 0$. Assume that there exists an absolutely irreducible rational $B'$-module $X_0$ which occurs in $X$ with multiplicity one and for which $\FS_F(X_0)\neq 0$. Then
$$
\FS_F(X)\;=\;\FS_F(X_0).
$$
\end{proposition}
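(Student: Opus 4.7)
The plan is to recast Definition \ref{def:indicator} in Brauer-cohomological language and then transport the Brauer class of $X$ to that of $X_0$ via the multiplicity-one embedding.

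\emph{Step 1: Brauer class reformulation.} Since $\FS_F(X)\neq 0$, Propositions \ref{prop:fsreal}--\ref{prop:fscomplexquaternionic} provide an isomorphism $\phi:X\to\overline{X}$ of $(\liea',B')$-modules. Viewing $Y:=\res_{F'/F}X$ and regarding $\phi$ as an $F$-linear endomorphism $\tilde\phi\in\End_{\liea,B}(Y)$, the tautological relation $\phi(z\cdot x)=\overline z\cdot\phi(x)$ for $z\in F'$ becomes $\tilde\phi\cdot z=\overline z\cdot\tilde\phi$ inside the $4$-dimensional central simple $F$-algebra $\End_{\liea,B}(Y)$ (central simplicity is Propositions \ref{prop:fsreal}--\ref{prop:fscomplexquaternionic} combined with Proposition \ref{prop:hombasechange}). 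Then $\tilde\phi^2$ is central, hence lies in $F^\times$, and equals
$$
c\;:=\;\overline\phi\circ\phi\;\in\;\End_{\liea',B'}(X)\;=\;F',
$$
which actually lies in $F^\times$. One recognises $\End_{\liea,B}(Y)$ as the cyclic (quaternion) algebra $(F'/F,c)$, so its Brauer class in $F^\times/N_{F'/F}(F'^\times)\cong\mathrm{Br}(F'/F)$ is $[c]$, and by Proposition \ref{prop:fscomplexquaternionic}
$$
\FS_F(X)=+1\;\Longleftrightarrow\;c\in N_{F'/F}(F'^\times),
\qquad
\FS_F(X)=-1\;\text{otherwise.}
$$
The identical construction for the $B'$-module $X_0$ yields $\phi_0:X_0\to\overline{X_0}$ and a scalar $c_0=\overline{\phi_0}\circ\phi_0\in F^\times$ whose class $[c_0]\in F^\times/N_{F'/F}(F'^\times)$ represents $\FS_F(X_0)$.

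\emph{Step 2: Comparison via multiplicity one.} Fix an embedding $\iota:X_0\hookrightarrow X$ of $B'$-modules; by the multiplicity-one hypothesis it is unique up to $F'^\times$. Galois conjugation yields $\overline\iota:\overline{X_0}\hookrightarrow\overline X$. Since $X_0$ occurs in $X\cong\overline{X}$ with multiplicity one and $X_0\cong\overline{X_0}$, Schur's lemma (Corollary \ref{cor:schur}) gives a unique $\lambda\in F'^\times$ with
$$
\phi\circ\iota\;=\;\lambda\cdot\overline\iota\circ\phi_0.
$$
Applying Galois conjugation produces $\overline\phi\circ\overline\iota=\overline\lambda\cdot\iota\circ\overline{\phi_0}$; composing yields
$$
c\cdot\iota
\;=\;\overline\phi\circ\phi\circ\iota
\;=\;\lambda\cdot\overline\phi\circ\overline\iota\circ\phi_0
\;=\;\lambda\overline\lambda\cdot\iota\circ\overline{\phi_0}\circ\phi_0
\;=\;N_{F'/F}(\lambda)\,c_0\cdot\iota.
$$
Injectivity of $\iota$ then forces $c=N_{F'/F}(\lambda)\,c_0$ in $F^\times$, so $[c]=[c_0]$ in $F^\times/N_{F'/F}(F'^\times)$, and Step 1 gives $\FS_F(X)=\FS_F(X_0)$.

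\emph{Main obstacle.} The delicate part is Step 1: one has to verify that $\tilde\phi^2$ does land in the center $F$ of $\End_{\liea,B}(Y)$ and that its class modulo norms reproduces exactly the Brauer invariant governing Definition \ref{def:indicator}. Both facts are extracted from the identification $\End_{\liea,B}(Y)\otimes_F F'=\End_{\liea',B'}(X\oplus\overline X)=M_2(F')$ together with the standard presentation of a $4$-dimensional $F$-algebra split by $F'/F$ as a cyclic algebra $(F'/F,c)$. Once this reformulation is established, Step 2 is a routine calculation, and the multiplicity-one hypothesis enters in the single but crucial way of forcing $\phi\circ\iota$ and $\overline\iota\circ\phi_0$ to agree up to an $F'$-scalar.
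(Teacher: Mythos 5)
Your argument is correct, but it takes a genuinely different route from the paper's. The paper splits into two implications and avoids cyclic algebras altogether: if $X$ descends to $F$ then, since the $X_0$-isotypic subspace of $X$ is one-dimensional and $X_0\cong\overline{X_0}$, it is $\Gal(F'/F)$-stable and descends too; conversely, if $\FS_F(X)=-1$ the paper uses multiplicity one to see that $Y_0=\res_{F'/F}X_0\subseteq Y$ is a $B$-isotypic piece of $Y$, so the restriction $r:\End_{\liea,B}(Y)\to\End_B(Y_0)$ is well defined, injective (because $Y$ is irreducible), and therefore an isomorphism by the dimension bound $\dim_F\End_B(Y_0)\le 4$, whence $\End_B(Y_0)$ is the same quaternion division algebra. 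Your proof instead makes explicit the Brauer-theoretic reformulation that the paper only alludes to (in the remark before Definition \ref{def:indicator} and in the Remark after Theorem \ref{thm:fsunitary}): you identify $\End_{\liea,B}(Y)$ with the cyclic algebra $(F'/F,c)$, likewise $\End_B(Y_0)$ with $(F'/F,c_0)$, and obtain the clean relation $c=N_{F'/F}(\lambda)\,c_0$ from the multiplicity-one embedding $\iota$, so the Brauer classes agree without any case split. What the paper's route buys is a short, self-contained argument with no cyclic-algebra machinery; what yours buys is uniformity in the two cases and a transparent interpretation of the obstruction as a norm class, matching Hasse's Norm Principle as invoked later for Theorem \ref{thm:localglobal}. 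One minor point worth spelling out in your Step 1 is the verification that $c=\tilde\phi^2$ actually lies in $F$ (not just in $F'$): applying $\tilde\phi$ to $\tilde\phi^2=c\cdot\mathbf 1$ on both sides and using $\tau$-semilinearity gives $c\tilde\phi=\overline c\tilde\phi$, hence $c=\overline c$; you assert this but the one-line computation is worth including.
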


\begin{proof}
Assume that $X$ is defined over $F$. Since $X_0\cong\overline{X_0}$ by our hypothesis and because $X_0$ occurs with multiplicity one in $X$, the subspace $X_0\subseteq X$ is invariant under the action of $\Gal(F'/F)$, and therefore defined over $F$ by Galois descent. This shows
$$
\FS_F(X_0)\;=\;-1\quad\Longrightarrow\quad
\FS_F(X)\;=\;-1.
$$

Assume conversely that $\FS_F(X)=-1$. Let $Y=\res_{F'/F}X$ and $Y_0=\res_{F'/F}X_0$. Then $Y$ is irreducible by Proposition \ref{prop:fsreal}. Consider the restriction map
$$
r:\quad
\End_{\liea,B}(Y)\;\to\;
\End_{B}(Y_0).
$$
This map is well defined by the characterization of $X_0$ as submodule of $X$ and the identity \eqref{eq:xplusxbar}. The irreducibility of $Y$ implies that $r$ is injective, hence an isomorphism because
$$
\dim_F\End_{B}(Y_0)\;\leq\;4.
$$
Therefore $\End_{B}(Y_0)=\End_{B}(Y)$ is a quaternion division algebra and $\FS_F(X_0)=-1$ as claimed.
\end{proof}


\subsection{Frobenius-Schur indicators in the infinitesimally unitary case}\label{sec:fsunitary}

In this section we show that in the infinitesimally unitary case over $\CC$ the previously defined Frobenius-Schur indicators behave the same way as they do classically \cite{frobeniusschur1906}. This has important consequences on the local indicators at archimedean places (cf.\ Corollary \ref{cor:fsunitary}).

Before we proceed with the complex case, we consider the following situation. Let $F\subseteq\RR$ be a subfield $(\liea,B)$, a pair over $F$ satisfying condition (Q), and choose an \lq{}imaginary\rq{} quadratic extension $F'/F$ in the sense that $F'$ comes with a fixed embedding $F'\subseteq\CC$ and $F'\not\subseteq\RR$. In other words we have a commutative square
\begin{equation}
\begin{CD}
F'@>>>\CC\\
@AAA @AAA\\
F@>>>\RR\\
\end{CD}
\label{eq:squareoffields}
\end{equation}
with $\CC=F'\RR$ and $F=F'\cap \RR$.

Since $F$ and $F'$ come with fixed embeddings into $\RR$ and $\CC$ respectively we consider likewise the pairs $(\liea_\RR,B_\RR)$ and $(\liea_\CC,B_\CC)$ over $\RR$ and $\CC$ and also the complexification $X_\CC$ of $X$, and
$$
Y_\RR\;:=\;Y\otimes_F\RR\;=\;\res_{\CC/\RR}X_\CC.
$$

A Hermitian form on $X$ is an $F'$-linear $(\liea',B')$-equivariant map
\begin{equation}
h:\quad X\otimes_{F'}\overline{X}\to F'.
\label{eq:hermitianform}
\end{equation}
satisfying the usual condition
\begin{equation}
h\circ\varepsilon\;=\;\tau\circ h,
\label{eq:hsymmetry}
\end{equation}
of conjugate symmetry, where $\tau:F'\to F'$ denotes the non-trivial Galois automorphism of the extension $F'/F$ as before, $\overline{X}$ is the $\tau$-twist of $X$ and
$$
\varepsilon:\quad X\otimes_{F'}\overline{X}\;\to\; X\otimes_{F'}\overline{X},
$$
$$
v\otimes w\;\mapsto\;\kappa^{-1}(w)\otimes\kappa(v)
$$
is the conjugate linear transposition of the arguments of $h$, with
$$
\kappa:\quad X\to\overline{X},\quad v\mapsto v\otimes 1
$$
denoting the natural conjugate linear isomorphism of $F'$-vector spaces.

We say that $h$ is posititive definite if the complex form
\begin{equation}
h_\CC:\quad X_\CC\otimes_{\CC}\overline{X}_\CC\to \CC
\label{eq:complexhermitianform}
\end{equation}
deduced from $h$ is positive definite. We call $X$ infinitesimally unitary if the form $h$ is positive definite.

\begin{lemma}\label{lem:hermitianbasechange}
An absolutely irreducible admissible $(\liea',B')$-module $X$ admits a non-zero invariant Hermitian form (resp.\ an invariant (anti-)symmetric bilinear form) if and only if $X_\CC$ is admits a non-zero invariant Hermitian form (resp.\ an invariant (anti-)symmetric bilinear form).
\end{lemma}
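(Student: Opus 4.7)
The plan is to reduce every form statement to a Hom-space dimension count, apply absolute irreducibility together with Schur's Lemma to force these Hom spaces to be at most one-dimensional over the ground field, and then invoke Proposition \ref{prop:hombasechange} to transfer everything between $F'$ and $\CC$.

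First I would translate the problem into Hom spaces. An invariant $F'$-bilinear form on $X$ is, by the tensor-Hom adjunction, an element of $\Hom_{(\liea',B')}(X, X^\vee)$, and an invariant Hermitian form corresponds to an element of $\Hom_{(\liea',B')}(X, \overline{X}^\vee)$ (with an additional conjugate-symmetry condition to be analyzed separately). Since $X$ is absolutely irreducible and admissible, so are $X^\vee$ and $\overline{X}^\vee$; thus by Corollary \ref{cor:schur} each of these Hom spaces has $F'$-dimension $0$ or $1$. The same applies to $X_\CC$ over $\CC$. By Proposition \ref{prop:hombasechange}, base change gives
\[
\Hom_{(\liea',B')}(X, X^\vee) \otimes_{F'} \CC \;\cong\; \Hom_{(\liea_\CC,B_\CC)}(X_\CC, X_\CC^\vee),
\]
and analogously with $X^\vee$ replaced by $\overline{X}^\vee$, noting that the base change of $\overline{X}^\vee$ along $F' \hookrightarrow \CC$ is $\overline{X_\CC}^\vee$ because the Galois involution $\tau$ on $F'/F$ is the restriction of complex conjugation by the imaginary quadratic setup \eqref{eq:squareoffields}. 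Thus the mere existence of an invariant bilinear (resp.\ Hermitian) form is equivalent on both sides.

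Next I would upgrade existence to the correct type (symmetric/antisymmetric, or conjugate-symmetric). The transposition involution $H \mapsto H^t$ on $\Hom_{(\liea',B')}(X \otimes_{F'} X, \mathbf{1}_{F'})$ is $F'$-linear and commutes with base change to $\CC$. On a one-dimensional eigenspace over $F'$, any nonzero invariant bilinear form is necessarily an eigenvector with eigenvalue $\pm 1$, and the same eigenvalue is inherited by $X_\CC$; conversely a non-zero symmetric or antisymmetric invariant form on $X_\CC$ descends (up to scalar) to one on $X$ because the eigenspace decomposition is defined over $F'$. For the Hermitian case, the $F$-antilinear involution $h \mapsto \tau \circ h \circ \varepsilon$ acts on the one-dimensional $F'$-space of invariant forms; picking $\alpha \in F'$ with $\tau(\alpha) = -\alpha$ lets one write any invariant form as a sum $h_+ + h_-$ of a Hermitian and an anti-Hermitian (equivalently, $\alpha h_+$) part, each spanning an $F$-line. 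The analogous decomposition over $\CC$ uses complex conjugation, which restricts to $\tau$ by \eqref{eq:squareoffields}, so the base change of the Hermitian $F$-line is precisely the Hermitian $\RR$-line over $\CC$.

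Combining the two steps, the existence of a non-zero invariant bilinear form of a given symmetry type, resp.\ a non-zero invariant Hermitian form, is equivalent to the non-vanishing of a specific eigenspace (or real line) whose formation commutes with base change to $\CC$. This gives the claimed equivalence in both directions. The main technical point to get right is the compatibility of the involutions under base change, which comes down to the identification $\tau = \mathrm{conj}|_{F'}$ provided by the imaginary quadratic diagram \eqref{eq:squareoffields}; once that is in place, the rest is a formal consequence of Proposition \ref{prop:hombasechange} and Schur's Lemma.
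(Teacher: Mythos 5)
Your argument is correct and follows essentially the same route as the paper: reduce existence of a non-zero invariant sesquilinear/bilinear form to the non-vanishing of a Hom space, use absolute irreducibility (Corollary~\ref{cor:schur}) to make that space one-dimensional, transfer via Proposition~\ref{prop:hombasechange}, and then handle the symmetry upgrade through the natural involution on that line. Your treatment of the Hermitian case via the $F$-antilinear decomposition into $h_+$ and $h_-$ is a little more explicit than the paper's ``$S_2$-action'' phrasing, but it is the same idea.
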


\begin{proof}
As before we write $X^\vee$ for the $K$-finite and hence admissible dual of $X$. Then any non-zero sesquilinear form $h$ on $X$ corresponds uniquely to a non-zero $(\liea',B')$-linear map
\begin{equation}
h^\vee:\quad \overline{X}\to X^\vee,
\label{eq:hermitianiso}
\end{equation}
$$
x\;\mapsto\;h(-\otimes x).
$$
As absolutely irreducible admissible module $X$ is reflexive, hence $h^\vee$ is always an isomorphism. In particular there is an invariant non-zero sesquilinear form on $X$ if and only if
$$
\Hom_{\liea',B'}(\overline{X}, X^\vee)\;\neq\; 0.
$$
By Proposition \ref{prop:hombasechange} the latter property is independent of the base field. Therefore, $X$ admits a non-zero sesquilinear form if and only if $X_\CC$ does.

Assume $X_\CC$ admits an invariant Hermitian form. We need to show that $X$ admits an invariant form $h$ satisfying \eqref{eq:hsymmetry} as well (the converse being tautological).

By the absolute irreducibility and Schur's Lemma, the space of invariant sesquilinear forms is one-dimensional in the cases at hand, and the map
$$
h\;=\;\tau\circ h\circ\varepsilon,
$$
induces an action of the symmetric group $S_2$ on two elements on this space. Therefore there is a non-zero form $h$ satisfying \eqref{eq:hsymmetry}, if and only if the action of $S_2$ is trivial, a property which is independent of the base.

The proof in the case of invariant bilinear forms proceeds mutatis mutandis.
\end{proof}

\begin{corollary}\label{cor:unitaritybasechange}
An absolutely irreducible admissible $(\liea',B')$-module $X$ is infinitesimally unitary if and only if $X_\CC$ is infinitesimally unitary in the usual sense.
\end{corollary}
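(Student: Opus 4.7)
The plan is as follows. The forward implication is immediate from the definitions: if $X$ is infinitesimally unitary, then by definition $X$ carries a non-zero invariant Hermitian form $h$ whose complexification $h_\CC$ is positive definite on $X_\CC$, and this $h_\CC$ itself witnesses the infinitesimal unitarity of $X_\CC$.

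For the converse, suppose $X_\CC$ admits a positive definite invariant Hermitian form $h_0$. By Lemma \ref{lem:hermitianbasechange}, $X$ admits some non-zero invariant Hermitian form $h$, and its complexification $h_\CC$ is a non-zero invariant Hermitian form on $X_\CC$. The plan is to show that $h_\CC$ and $h_0$ must be proportional over $\RR$, so that, after possibly replacing $h$ by $-h$, the form $h_\CC$ is positive definite and $X$ is infinitesimally unitary.

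To establish the real proportionality, I will analyse the space of invariant sesquilinear forms on $X_\CC$ as in the proof of Lemma \ref{lem:hermitianbasechange}. Since $X_\CC$ is absolutely irreducible and admissible, Corollary \ref{cor:schur} gives $\End_{\liea_\CC,B_\CC}(X_\CC)=\CC$, and reflexivity of $X_\CC$ together with the correspondence $h\leftrightarrow h^\vee$ of \eqref{eq:hermitianiso} identifies the space of invariant sesquilinear forms with $\Hom_{\liea_\CC,B_\CC}(\overline{X_\CC},X_\CC^\vee)$. This Hom space is non-zero (it contains $h_0^\vee$) and hence of $\CC$-dimension one by Schur. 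The Hermitian symmetry condition $h=\tau\circ h\circ\varepsilon$ defines a $\CC$-antilinear involution on this $\CC$-line—antilinear because $\varepsilon$ is conjugate-linear in both arguments—whose fixed set is therefore a one-dimensional $\RR$-subspace containing both $h_\CC$ and $h_0$. Consequently $h_\CC=c\cdot h_0$ for some $c\in\RR^\times$, and replacing $h$ by $\sign(c)\cdot h$ makes $h_\CC$ positive definite as required.

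The main subtlety, and indeed the only point requiring care, is the antilinearity of the involution $h\mapsto \tau\circ h\circ\varepsilon$ on the one-dimensional $\CC$-space of invariant sesquilinear forms; this parallels the $S_2$-action argument used in the proof of Lemma \ref{lem:hermitianbasechange}, and merely requires keeping careful track of the conjugate-linear maps $\tau$ and $\varepsilon$ to see that the space of invariant Hermitian forms on $X_\CC$ is one-dimensional over $\RR$ rather than over $\CC$.
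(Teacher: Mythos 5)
Your proof is correct and supplies exactly the argument the paper leaves implicit: the corollary is stated with no separate proof, following directly from Lemma~\ref{lem:hermitianbasechange}, and the intended reasoning is precisely the one you give (forward direction by definition; converse by producing \emph{some} invariant Hermitian form via Lemma~\ref{lem:hermitianbasechange} and then adjusting its sign). Your identification of $h\mapsto\tau\circ h\circ\varepsilon$ as a $\CC$-antilinear involution on the $\CC$-line of invariant sesquilinear forms, whose fixed set is consequently an $\RR$-line through $h_0$, is the right point to be careful about and matches the $S_2$-action argument already used inside the proof of Lemma~\ref{lem:hermitianbasechange}.
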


\begin{corollary}\label{cor:unitarityindicators}
If $F$ is a number field, $X$ an absolutely irreducible admissible $(\liea',B')$-module, and $v$ an archimedean place of $F$ such that $X_{F_v'}$ admits an invariant (anti-)symmetric bilinear form, then for every place $v$ of $F$, $X_{F_v'}$ admits an invariant (anti-)symmetric bilinear form.
\end{corollary}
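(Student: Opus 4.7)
The plan is to exploit absolute irreducibility together with admissibility to reduce the existence of an invariant (anti-)symmetric bilinear form to a Hom condition, then use Proposition~\ref{prop:hombasechange} to show that this condition and its symmetry type are preserved under arbitrary base change. The archimedean hypothesis will first descend to $F'$, and then extend to any other completion. This bypasses Lemma~\ref{lem:hermitianbasechange} (which is tailored to $\CC$) and works uniformly at all places.

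Concretely, invariant bilinear forms on $X$ correspond bijectively to elements of $\Hom_{(\liea',B')}(X, X^\vee)$, where $X^\vee$ is the $B$-finite admissible dual; indeed, $X$ being $B$-finite forces any equivariant map into the full linear dual to land in the $B$-finite part. Since admissibility is geometric by Proposition~\ref{prop:admissible} and the admissible dual of an absolutely irreducible admissible module inherits both properties (via reflexivity and the isotype-by-isotype description of the dual), Corollary~\ref{cor:schur} forces $\dim_{F'} \Hom_{(\liea',B')}(X, X^\vee) \leq 1$. When this dimension equals one, the swap involution $\varepsilon$ acts on the line by $\pm 1$, classifying its generator as symmetric or antisymmetric.

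Next, Proposition~\ref{prop:hombasechange} supplies, for every field extension $E'/F'$, a natural isomorphism
$$
\Hom_{(\liea',B')}(X, X^\vee) \otimes_{F'} E' \;\cong\; \Hom_{(\liea',B')\otimes_{F'}E'}\bigl(X_{E'},\, (X_{E'})^\vee\bigr),
$$
where I also use that forming the admissible dual commutes with base change (isotype by isotype, this reduces to base change of finite-dimensional linear duals). This isomorphism is equivariant with respect to the swap involution on either side. Consequently both the dimension of the space of invariant bilinear forms and, when that dimension equals one, the sign of the $\varepsilon$-action are invariants of $X$ that are preserved under arbitrary base change.

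To conclude the corollary, I apply this principle first with $E' = F_v'$ for the archimedean place $v$ in the hypothesis: the existence of a non-zero invariant (anti-)symmetric bilinear form on $X_{F_v'}$ forces $\dim \Hom_{(\liea',B')}(X, X^\vee) = 1$ over $F'$ with matching symmetry sign, so $X$ itself carries such a form. Applying the same principle in the opposite direction at any other place $w$ of $F$, with $E' = F_w'$, then yields an invariant (anti-)symmetric bilinear form of the same type on $X_{F_w'}$. No genuine obstacle arises; the only point meriting a moment's care is the base-change behaviour of the admissible dual, which is unproblematic in the absolutely irreducible admissible setting.
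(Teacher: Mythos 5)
Your proof is correct and follows essentially the same mechanism as the paper's: the paper's proof is a one-line citation of Lemma~\ref{lem:hermitianbasechange}, whose own proof is precisely what you re-derive—invariant bilinear forms correspond to $\Hom_{(\liea',B')}(X,X^\vee)$, Proposition~\ref{prop:hombasechange} makes this space (and the sign of the swap involution on it) base-change invariant, and absolute irreducibility with Schur's Lemma forces it to be at most one-dimensional. The only difference is that you state the base-change invariance for arbitrary extensions $E'/F'$ rather than only for $\CC$, which is the natural generality already implicit in the lemma's proof and lets you treat the descent from $F_v'$ and the spread to every $F_w'$ uniformly.
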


\begin{proof}
By Lemma \ref{lem:hermitianbasechange} $X$ admits an invariant (anti-)symmetric bilinear form, which implies the existence of invariant (anti-)symmetric forms over all completions $F_v'$.
\end{proof}

For the rest of the section we keep the previous notation with the convention that $F=\RR$ and $F'=\CC$, i.e.\ $(\liea,B)$ is a pair over $\RR$ and the rest retains its meaning as before. We assume $X$ to be an irreducible admissible $(\liea',B')$-module over $\CC$.

\begin{theorem}\label{thm:fsunitary}
Let $(\liea,B)$ be a pair over $\RR$ satisfying condition (Q) and let $X$ be an irreducible admissible infinitesimally unitary $(\liea',B')$-module over $\CC$. Then $\FS_\RR(X)\neq 0$ if and only if $X$ carries a non-zero invariant bilinear form. In this case $X$ is real (quaternionic) if and only if this form is symmetric (resp.\ anti-symmetric).
\end{theorem}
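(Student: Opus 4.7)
The plan is to convert the positive-definite Hermitian form $h$ coming from infinitesimal unitarity, together with any hypothetical non-zero invariant bilinear form $b$ on $X$, into a conjugate-linear $(\liea',B')$-equivariant endomorphism $\sigma:X\to X$ whose squared scalar $\sigma^2\in\RR^\times$ simultaneously encodes the symmetry type of $b$ and the Frobenius-Schur indicator of $X$. The two assertions of the theorem will then follow from the resulting sign analysis.

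For the equivalence $\FS_\RR(X)\neq 0\Leftrightarrow $ existence of a non-zero invariant bilinear form $b:X\otimes_\CC X\to\CC$, I would observe that by absolute irreducibility and Schur (Corollary \ref{cor:schur}) such a $b$ corresponds to an $(\liea',B')$-isomorphism $b^\vee:X\to X^\vee$, while the Hermitian form $h$ gives a canonical $(\liea',B')$-isomorphism $h^\vee:\overline{X}\to X^\vee$ as in the proof of Lemma \ref{lem:hermitianbasechange}. Thus $b$ exists if and only if $X\cong\overline{X}$, which by Propositions \ref{prop:fsreal} and \ref{prop:fscomplexquaternionic} is precisely $\FS_\RR(X)\neq 0$.

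For the symmetry dichotomy, given both $h$ and a non-zero $b$, I would set $\sigma:=(h^\vee)^{-1}\circ b^\vee$, characterized by $h(v,\sigma w)=b(v,w)$. This $\sigma$ is conjugate-linear and $(\liea',B')$-equivariant, so $\sigma^2=c\cdot\id_X$ for some scalar $c$, which is forced to lie in $\RR^\times$ by a short argument comparing $\sigma\circ\sigma^2$ with $\sigma^2\circ\sigma$ using antilinearity of $\sigma$. Next, the invariant Hermitian form $(v,w)\mapsto h(\sigma w,\sigma v)$ equals a positive real multiple $\mu\cdot h$ by Schur combined with positive definiteness, so after rescaling $\sigma$ by $1/\sqrt{\mu}$ (which rescales $b$ compatibly without affecting its symmetry type or the sign of $c$) one arranges the isometry relation $h(\sigma w,\sigma v)=h(v,w)$ and normalizes $c\in\{\pm 1\}$. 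A direct manipulation using this isometry relation then yields
$$b(w,v)\;=\;h(w,\sigma v)\;=\;h(\sigma^2v,\sigma w)\;=\;c\cdot h(v,\sigma w)\;=\;c\cdot b(v,w),$$
identifying $c=1$ with $b$ symmetric and $c=-1$ with $b$ antisymmetric. Finally, $\sigma^2=\id_X$ (resp.\ $\sigma^2=-\id_X$) is precisely the data of a real (resp.\ quaternionic) structure on $X$ generating the endomorphism algebra of $Y=\res_{\CC/\RR}X$ as in Propositions \ref{prop:fsreal} and \ref{prop:fscomplexquaternionic}, completing the identification with $\FS_\RR(X)=\pm 1$.

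The main obstacle will be the careful bookkeeping of complex-linear versus conjugate-linear dependencies in the formulas relating $h$, $b$ and $\sigma$; the normalization step, where the positive scalar $\mu$ from the Schur comparison of $h(\sigma\cdot,\sigma\cdot)$ with $h$ is absorbed into $\sigma$, is the delicate point that makes the final sign of $c$ well-defined. Conceptually the argument is the classical Frobenius-Schur dichotomy of \cite{frobeniusschur1906} transported to the Harish-Chandra setting via the sesquilinear bridge provided by infinitesimal unitarity.
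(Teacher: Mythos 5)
Your proposal is correct and takes essentially the same route as the paper's proof: both are the classical Frobenius--Schur dichotomy implemented via a conjugate-linear $(\liea',B')$-endomorphism whose square is the real scalar $\pm1$ that simultaneously governs $\FS_\RR(X)$ and the symmetry of $b$. The only cosmetic difference is the starting point: the paper fixes the isomorphism $\iota:\overline{X}\to X$ first, sets $b:=h\circ(\mathbf 1_X\otimes\kappa\iota\kappa)$, and computes $b\circ\delta=\lambda\cdot b$ with $\lambda=(\iota\kappa)^2$, whereas you start from $b$ and recover the same conjugate-linear map as $\sigma=(h^\vee)^{-1}\circ b^\vee$ (i.e.\ $\sigma=\iota\kappa$ up to scalar), then read off the symmetry from $\sigma^2$.
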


We remark that in the infinitesimally unitary case ($F=\RR$) Theorem \ref{thm:fsunitary} provides another proof for Proposition \ref{prop:minimalktypedescent}.

\begin{proof}
Recall that $Y=\res_{\CC/\RR}X$ and $Y'$ is the base change of $Y$ to $\CC$. In the infinitesimally unitary case the existence of the isomorphism \eqref{eq:hermitianiso} implies that decomposition \eqref{eq:xplusxbar} reads
\begin{equation}
Y'\;\cong\;X\oplus X^\vee.
\label{eq:xplusxvee}
\end{equation}
Therefore, saying that $X$ is real or quaternionic is, equivalent to saying that $X$ is self-dual, which is the same to say that $X$ carries a non-zero invariant bilinear form. This proves the first part of the theorem.

As to the second part, our proof is a mere categorical reinterpretation of the classical proof in \cite{frobeniusschur1906}. From now on we assume $\FS_\RR(X)\neq 0$, which means that we have a $\CC$-linear isomorphism
$$
\iota:\quad \overline{X}\to X
$$
of $(\liea',B')$-modules.

Recall that the symmetry of $h$ is reflected in the identity \eqref{eq:hsymmetry}. A direct computation shows that
$$
h'\;:=\;\tau\circ h\circ(\iota\kappa\otimes \kappa\iota)
$$
is another positive definite Hermitian form on $X$. Therefore, by Schur's Lemma, we find an $\eta\in\RR_{>0}$ satisfying
$$
h'\;=\;\eta\cdot h.
$$
If we replace $\iota$ by $\lambda\cdot\iota$, $\lambda\in \CC^\times$, we obtain
$$
\tau\circ h\circ((\lambda\iota)\kappa\otimes \kappa(\lambda\iota))\;=\;
(N_{\CC/\RR}(\lambda)\cdot \eta)\cdot h.
$$
Therefore we may assume have $\eta=1$ without loss of generality, and
\begin{equation}
h\;=\;\tau\circ h\circ(\iota\kappa\otimes \kappa\iota).
\label{eq:alpharelation}
\end{equation}

Again by Schur's Lemma, the isomorphism $(\iota\kappa)^2:X\to X$ equals
\begin{equation}
(\iota\kappa)^2\;=\;\lambda\cdot {\bf1}_X
\label{eq:lambdaid}
\end{equation}
for a constant $\lambda\in \CC^\times$, and the relation
$$
(\iota\kappa)^2\circ\iota\;=\;\iota\circ (\kappa\iota)^2
$$
implies likewise that
$$
(\kappa\iota)^2\;=\;\lambda\cdot{\bf1}_{\overline{X}}.
$$
We conclude with \eqref{eq:alpharelation} that
\begin{eqnarray*}
\lambda^2\cdot h
&=&
h\circ ((\iota\kappa)^2\otimes (\kappa\iota)^2)\\
&=&
\tau\circ\tau\circ h\circ(\iota\kappa\otimes\kappa\iota)\circ(\iota\kappa\otimes\kappa\iota)\\
&=&
\tau\circ h\circ (\iota\kappa\circ\kappa\iota)
\\
&=&
h,
\end{eqnarray*}
hence $\lambda^2=1$, i.e.\ $\lambda=\pm1$.

Consider the invariant bilinear form
$$
b\;:=\;h\circ({\bf1}_X\otimes \kappa\iota\kappa)
$$
on $X$. Since $h$ is non-degenerate, so is $b$. A direct calculation, exploiting relations \eqref{eq:hsymmetry}, \eqref{eq:alpharelation} and \eqref{eq:lambdaid}, yields
\begin{eqnarray*}
\lambda\cdot
b(w\otimes v)&=&
h(\iota\kappa\iota\kappa(w)\otimes \kappa\iota\kappa(v))\\
&=&
h(\kappa^{-1}(\kappa\iota\kappa\iota\kappa(w))\otimes \kappa(\iota\kappa(v)))\\
&=&
\tau\circ h(\iota\kappa(v)\otimes \kappa\iota\kappa\iota\kappa(w))\\
&=&
h(v\otimes \kappa\iota\kappa(w))\\
&=&
b(v\otimes w).
\end{eqnarray*}
Hence the explicit relation
\begin{equation}
b\circ\delta\;=\;
\lambda\cdot
b,
\label{eq:bsymmetry}
\end{equation}
where
$$
\delta:\quad V\otimes V\to V\otimes V,\quad v\otimes w\mapsto w\otimes v.
$$

If $X$ is of real type, and $X_0$ is a model of $X$ over $\RR$, we may choose $\iota$ as an $\RR$-multiple of the map
$$
\iota':\quad v\otimes \tau(c)\mapsto v\otimes c,
$$
where $v\in X_0$ and $c\in F'$. We conclude that for
$$
0\neq v=v_0\otimes 1\;\in\;X_0\otimes_FF',
$$
we obtain
$$
(\iota\kappa)^2(v)\;=\;v,
$$
and thus $\lambda=1$.

Assume conversely that $\lambda=1$. We consider $\iota\kappa$ as an endomorphism of $Y=\res_{\CC/\RR}X$. Being conjugate $\CC$-linear, $\iota\kappa$ is not a multiple of the identity. Hence by \eqref{eq:lambdaid} its minimal polynomial is given by
$$
\xi^2-1\;=\;(\xi-1)\cdot(\xi+1).
$$
Therefore the endomorphism
$$
{\bf1}_{Y}-\iota\kappa\;\in\;\End_{\liea,B}(Y)
$$
has a kernel satisfying
$$
0\;\neq\;\ker({\bf1}_{Y}-\iota\kappa)\;\subsetneq\; Y,
$$
which implies that $Y$ is reducible, and with Proposition \ref{prop:fsreal} we conclude that $X$ is real.

We saw that $\FS_\RR(X)=\lambda$, which by \eqref{eq:bsymmetry} concludes the proof.
\end{proof}

\begin{remark}
Replaying the proof of Theorem \ref{thm:fsunitary} over the field $F$ in the situation of \eqref{eq:squareoffields} shows that $\lambda\in F^\times$, $\eta=\pm\lambda$ and $b$ is symmetric if and only if $\eta=\lambda$. Furthermore $X$ admits a model over $F$ if and only if $\lambda\in N_{F'/F}F'^\times$. If $F$ is a number field, the local obstruction for $\lambda$ being a local norm at a place $v$ is $\FS_{F_v}(X_{F_v'})$, which for ramified archimedean places is controlled by the symmetry property of the bilinear form $b$ at hand, and Hasse's Norm Principle (due to Hilbert in the case at hand) provides another approach to Theorem \ref{thm:localglobal}.
\end{remark}

\begin{corollary}\label{cor:fsunitary}
Let $F'/F$ be a quadratic extension of number fields which is ramified at at least one archimedean place $v_\infty$ of $F$. Let $(\liea,B)$ a pair satisfying condition (Q) and assume that $X$ is an absolutely irreducible admissible $(\liea',B')$-module, assumed to be unitary at all archimedean places $v$ ramified in $F'/F$. Then for every ramified archimedean prime $v$ of $F$ we have
$$
\FS_{F_v}(X_{F_v'})\;=\;
\FS_{F_{v_\infty}}(X_{F_{v_\infty}'}).
$$
In particular if $X_{F_{v_\infty}'}$ is real, $X_{F_v'}$ admits a model over $F_v$ for every archimedean place $v$ of $F$.
\end{corollary}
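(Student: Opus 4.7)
The plan is to compute the Frobenius-Schur indicator at each ramified archimedean place via Theorem \ref{thm:fsunitary} and observe that the computation depends only on a global symmetry datum attached to $X$.

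First, at any archimedean place $v$ of $F$ that ramifies in $F'/F$ we have $F_v'/F_v \cong \CC/\RR$, and by hypothesis $X_{F_v'}$ is infinitesimally unitary. Theorem \ref{thm:fsunitary} therefore applies verbatim at $v$ and tells us that $\FS_{F_v}(X_{F_v'})$ equals $+1$, $-1$, or $0$ according as $X_{F_v'}$ carries a non-zero invariant symmetric bilinear form, a non-zero invariant anti-symmetric bilinear form, or no non-zero invariant bilinear form at all. In particular, the same trichotomy applies at $v_\infty$, so the right hand side of the desired identity is encoded entirely in the symmetry type of any non-zero invariant bilinear form on $X_{F_{v_\infty}'}$ (with the convention that ``no form'' gives indicator $0$).

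Second, I invoke Lemma \ref{lem:hermitianbasechange} together with Corollary \ref{cor:unitarityindicators}: the existence of a non-zero invariant symmetric (respectively anti-symmetric) bilinear form on $X$ over $F'$ is a property that transfers under arbitrary base change, so the symmetry type at $v_\infty$ is inherited by $X_{F_v'}$ at every other place $v$ of $F$. Concretely, if $X_{F_{v_\infty}'}$ is self-dual with a symmetric (respectively anti-symmetric) invariant bilinear form, then so is $X_{F_v'}$ for every ramified archimedean $v$, and if $X_{F_{v_\infty}'}$ admits no non-zero invariant bilinear form, then neither does $X$ over $F'$ and hence neither does any completion.

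Combining the two observations, at every ramified archimedean place $v$ Theorem \ref{thm:fsunitary} reads off $\FS_{F_v}(X_{F_v'})$ from the same symmetry datum that determines $\FS_{F_{v_\infty}}(X_{F_{v_\infty}'})$, yielding the asserted equality. For the final clause, if $X_{F_{v_\infty}'}$ is real, the equality gives $\FS_{F_v}(X_{F_v'}) = 1$ at every ramified archimedean place $v$, and Proposition \ref{prop:fsreal} then produces a model of $X_{F_v'}$ over $F_v$; at unramified archimedean places $F_v' = F_v$ and there is nothing to show. There is no real obstacle here beyond keeping track of which symmetry property is preserved by which functoriality; the heart of the argument was already packaged in Theorem \ref{thm:fsunitary} and Lemma \ref{lem:hermitianbasechange}.
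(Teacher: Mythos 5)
Your proof is correct and takes essentially the same approach as the paper: you apply Theorem \ref{thm:fsunitary} at each ramified archimedean place to read off the indicator from the symmetry type of the invariant bilinear form, and transfer that symmetry type between places via Lemma \ref{lem:hermitianbasechange} (the paper also cites Lemma \ref{lem:fscomplexbase} to handle the $\FS=0$ case, but your use of Corollary \ref{cor:unitarityindicators} together with Theorem \ref{thm:fsunitary} covers the same ground).
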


\begin{proof}
The Corollary is an immediate consequence of Lemma \ref{lem:fscomplexbase}, Lemma \ref{lem:hermitianbasechange} and Theorem \ref{thm:fsunitary}.
\end{proof}

\section{Admissible Models of Compact Groups}\label{sec:admissiblegroups}

In order to apply the obtained results on rationality and descent in quadratic extensions we are naturally led to the following question. Given a quadratic extension of number fields $F'/F$, ramified at a fixed archimedean place $v_\infty$, and a compact real Lie group $K_\RR$, identify models $K$ of $K_\RR$ over $F$ (with respect to $v_\infty$) satisfying the following condition:
\begin{itemize}
\item[(A)] All absolutely irreducible rational $K'$-modules $X$ over $F'$ are defined over their field of rationality (i.e.\ $\FS_{F}(X)\neq-1$).
\end{itemize}
We call such an $F$-form $F'/F$-{\em admissible} or simply {\em admissible} if the extension $F'/F$ is clear from the context. We emphasize that the notion of admissibility depends on the real place $v_\infty$.

\begin{remark}
Admissible forms do not exist for all groups $K_\RR$, for admissibility implies that $\FS_{F_{v_\infty}}(X\otimes F_{v_\infty}')\neq -1$ for all absolutely irreducibles $X$ defined over $F'$.
\end{remark}

\begin{remark}\label{rmk:adjointderivedgroup}
All models of compact groups with adjoint derived group are admissible (cf.\ \cite{boreltits1965}).
\end{remark}

For some applications the notion of $F'/F$-admissibility might be too strict. therefore we also introduce the larger class of $F$-forms $K$ subject to the weaker condition
\begin{itemize}
\item[(A')] All absolutely irreducible rational $K'$-modules $X$ over $F'$ satisfy
\begin{equation}
\FS_{F}(X)\;=\;\FS_{F_{v_\infty}}(X\otimes F_{v_\infty}').
\label{eq:archimedeanlocalglobal}
\end{equation}
\end{itemize}
Such a form is called $F'/F$-{\em semi-admissible}.

By our results in section \ref{sec:frobeniusschur} it is clear that every admissible form is semi-admissible. Proposition \ref{prop:semitoadmissible} below gives a sufficient condition for the converse to be true.

\subsection{Groups of class $\mathcal S$}\label{sec:groupsofclasss}

\begin{lemma}\label{lem:extendenboreltits}
Let $K$ be a linear reductive group over a field $F$ of characteristic $0$ with the property that $K$ is a semi-direct product of $K^0$ and $\pi_0(K)$, the latter of order at most $2$. Let $F'/F$ be a quadratic extension and $X$ an absolutely irreducible $K'$-module with $\FS_F(X)\neq 0$, and assume that an irreducible ${K'}^0$-module $X_0$ in $X$ satisfies $\FS_F(X_0)\neq-1$. Then either
\begin{itemize}
\item[(i)] $\FS_F(X)=1$, i.e.\ $X$ admits a model over its field of rationality $F$, or
\item[(ii)] $\FS_F(X)=-1$ and $X$ decomposes into two irreducible Galois conjugate ${K'}^0$-modules $X_i$ (i.e.\ $\FS_F(X_i)=0$).
\end{itemize}
The exceptional case (ii) applies if and only if we have $\FS_L(X_{L'})=-1$ for every quadratic extension $L'/L$ dominating $F'/F$. Case (ii) does not occur if $F$ is a number field.
\end{lemma}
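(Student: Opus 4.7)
The plan is to split into cases based on how $X$ restricts to $(K')^0$. Since $|\pi_0(K)|\leq 2$ and the extension splits, Clifford theory gives two possibilities: (A) $X|_{(K')^0}$ is irreducible, or (B) $X|_{(K')^0}=X_1\oplus X_2$ with $X_2\cong X_1^c$ and $X_1\not\cong X_2$, where $c\in K(F)$ is a section of $\pi_0(K)$ satisfying $c^2=1$. The hypothesis $\FS_F(X)\neq 0$ supplies a $K'$-isomorphism $X\cong X^\tau$ whose restriction to $(K')^0$ constrains the Galois permutation of the summands: in Case B either (B1) $X_i\cong X_i^\tau$, or (B2) $X_1\cong X_2^\tau$ with $X_1\not\cong X_1^\tau$.

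In Case A, $X_0=X|_{(K')^0}$ inherits $X_0\cong X_0^\tau$, so $\FS_F(X_0)\neq 0$ and the hypothesis forces $\FS_F(X_0)=1$. Choosing an $F$-model $Y_0\subseteq X_0$, uniqueness via Proposition \ref{prop:uniquemodels} (using $\End_{(K')^0}(X_0)=F'$ from the absolute irreducibility of $X_0$ implicit in the definition of $\FS_F(X_0)$) gives $c(Y_0)=\lambda Y_0$ for some $\lambda\in F'^\times$. Since $c^2=1$ and $c$ is $F'$-linear, $\lambda^2=1$, so $c(Y_0)=Y_0$ as $F$-subspaces; thus $Y_0$ is a $K$-stable $F$-model, giving alternative (i). Case B1 reduces to Case A applied to $X_1$: $\FS_F(X_1)\in\{1,-1\}$, forced to $1$ by hypothesis, so $X_1$ has an $F$-model $Y_1$, and $Y_1\oplus c(Y_1)$ is a $K$-stable $F$-model of $X$, again yielding (i).

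The genuine obstruction lives in Case B2, where $\FS_F(X_i)=0$, consistent with the hypothesis. Here $Z:=\res_{F'/F}X_1$ is irreducible as a $K^0$-module over $F$ with $\End_{K^0,F}(Z)=F'$ by Proposition \ref{prop:fscomplexquaternionic}(ii), and $Y:=\res_{F'/F}X$ satisfies $Y|_{K^0}\cong Z\oplus Z$. Consequently $\End_{K,F}(Y)$ is a $4$-dimensional central simple $F$-algebra which base-changes to $M_2(F')$ (by Proposition \ref{prop:hombasechange}), hence represents a class in the relative Brauer group $\mathrm{Br}(F'/F)=F^\times/N_{F'/F}(F'^\times)$, whose triviality characterises $\FS_F(X)=1$. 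Concretely, the obstruction is $\lambda:=c^2\in\End_{K^0,F}(Z)=F'$; a direct computation in the base change $Z\otimes F'=X_1\oplus X_2$ yields the commutation $c\mu=\mu^\tau c$ for $\mu\in F'$, forcing $\lambda\in F^\times$, and rescaling $c\mapsto\mu c$ changes $\lambda$ by $N_{F'/F}(\mu)$, so $\FS_F(X)=1$ iff $\lambda\in N_{F'/F}(F'^\times)$.

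The iff characterisation of case (ii) then follows because the Brauer class $[\lambda]$ transforms naturally under base change to any dominating $L'/L$: $\FS_L(X_{L'})=-1$ iff its image in $\mathrm{Br}(L'/L)$ is non-trivial, and case (ii) amounts to this obstruction being universally non-trivial. Finally, when $F$ is a number field, the Albert--Brauer--Hasse--Noether theorem implies that $\End_{K,F}(Y)$, being a quaternion algebra over $F$, is split at all but finitely many places; since infinitely many places of $F$ are inert in $F'/F$, there exists an inert place $v$ at which $[\lambda]_{F_v}$ is trivial, supplying a dominating extension $F_v'/F_v$ with $\FS_{F_v}(X_{F_v'})=1$ and thereby ruling out case (ii). The main obstacle is the careful bookkeeping in Case B2, particularly verifying the commutation relation $c\mu=\mu^\tau c$ and identifying the resulting $\lambda\in F^\times$ with the Brauer class of $\End_{K,F}(Y)$.
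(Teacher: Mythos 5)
Your case division matches the paper's: your Case A is the paper's case of $X$ restricting irreducibly to $(K')^0$, and your B1/B2 are its two subcases when $X|_{(K')^0}$ is reducible. The reformulation of Case B2 in terms of the class of $\End_{K}(\res_{F'/F}X)$ in the relative Brauer group $F^\times/N_{F'/F}(F'^\times)$ is a cleaner conceptual framing than the paper's explicit scalar computation, and is consonant with Remark~\ref{rmk:localglobal}; the closing step for number fields via Albert--Brauer--Hasse--Noether is essentially the appeal to Theorem~\ref{thm:localglobal}.

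There is, however, a genuine gap in Case A. From $c(Y_0)=\lambda Y_0$ with $\lambda\in F'^\times$ and $c^2=1$, one obtains $\lambda^2 Y_0 = Y_0$, hence only $\lambda^2\in F^\times$, \emph{not} $\lambda^2=1$ (and $\lambda$ is in any case well-defined only in $F'^\times/F^\times$). Writing $F'=F(\sqrt{d})$, this leaves two possibilities: $\lambda\in F^\times$, in which case $c(Y_0)=Y_0$; or $\lambda\in\sqrt{d}\,F^\times$, in which case $c(Y_0)=\sqrt{d}\,Y_0\neq Y_0$ for \emph{every} choice of $F$-model $Y_0$ of $X_0$, and no $K$-stable $F$-form is produced. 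Excluding the second alternative cannot be done from $\FS_F(X_0)\neq -1$ alone; it requires the hypothesis $\FS_F(X)\neq 0$, which supplies a $\tau$-semi-linear $K'$-equivariant map $\sigma:X\to X$. Comparing $\sigma$ with the $(K')^0$-descent datum $\sigma_0$ attached to $Y_0$ via Schur's Lemma (Corollary~\ref{cor:schur}) forces $\sigma_0 c=c\sigma_0$, equivalently $\lambda\in F^\times$; this step is missing. A smaller but real confusion appears in Case B2: you set $\lambda:=c^2$, but by your own setup $c\in K(F)$ is a section with $c^2=1$, so this literally returns $\lambda=1$; and the group element $c$ acts $F'$-linearly on $X$, so $c\mu=\mu c$ rather than $c\mu=\mu^\tau c$ — you have implicitly replaced $c$ by a composite with a $\tau$-semi-linear conjugation without saying so. The abstract argument identifying $\End_K(\res_{F'/F}X)$ as a quaternion algebra split by $F'$ stands, but the ``concrete'' identification of its Hilbert-symbol invariant should be repaired. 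Finally, the ``if and only if'' direction ``case (ii) $\Rightarrow$ $\FS_L(X_{L'})=-1$ for every dominating $L'/L$'' is asserted (``transforms naturally under base change'') rather than proved: the restriction map $\mathrm{Br}(F'/F)\to\mathrm{Br}(L'/L)$ can kill classes, so one must argue that the particular obstruction here does not trivialise after base change.
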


\begin{remark}
The hypotheses on $X$ in Lemma \ref{lem:extendenboreltits} are always satisfied if $K^0$ is quasi-split or has adjoint derived group (cf.\ \cite{boreltits1965}).
\end{remark}

\begin{proof}
For $K=K^0$ there is nothing to prove. If $K\neq K^0$ there are two cases to consider:
\begin{itemize}[leftmargin=5.0em]
\item[(I):] $X$ is irreducible as ${K'}^0$-module,
\item[(II):] $X$ is reducible as ${K'}^0$-module.
\end{itemize}

We treat case (I) first. In this case $X\cong\overline{X}$ also as ${K'}^0$-modules, which implies that the field of rationality of the ${K'}^0$-module $X$ is $F$. By our assumption on $X_0$ the ${K'}^0$-module $X$ admits a model $X_0$ over $F$. Fix a section of $K\to \pi_0(K)$ and denote its image by $S_2$. We claim that $X_0$ is stable under the action of $S_2\subseteq K$.

Let $\delta\in S_2$ denote the non-trivial element. We claim that $\delta$ acts as a scalar $\pm 1$ on $X$. Indeed, if $X_\pm\subseteq X$ is the eigen space of the action of $\delta$ for the possible eigen value $\pm 1$, then $X_\pm$ is stable under ${K'}^0$ since $\delta$ normalizes ${K'}^0$. Hence either $X_+=X$ or $X_-=X$, and case (I) falls into case (i).

In case (II) $X$ decomposes into a direct sum
$$
X\;\cong\;X_1\oplus X_2
$$
of irreducible ${K'}^0$-modules $X_i$ and
\begin{equation}
X\;=\;\ind_{{K'}^0}^{K'}(X_i).
\label{eq:inductionidentity}
\end{equation}
There are two subcases.

If $X_1\cong \overline{X_1}$ our hypothesis implies $\FS_F(X_1)=1$, i.e.\ $X_1$ admits a model over $F$, and as an induced module $X$ also admits a model over $F$. This case also falls into (i).

If $X_1\not\cong\overline{X_1}$, we have $X_2\cong\overline{X_1}$ since $X\cong\overline{X}$. Then $Y_i:=\res_{F'/F}X_i$ is a model of the ${K'}^0$-module $X$ over $F$. The modules $Y_i$ become isomorphic over $F'$, and Proposition \ref{prop:hombasechange} implies that $Y_1\cong Y_2$ already over $F$, because every non-zero homomorphism over $F$ is an isomorphism. Therefore, if a model of $X$ over $F$ exists, it is given by $Y_1$.

We need to check when the action of $\delta$ on $X$ commutes with the non-trivial Galois automorphism $\tau\in\Gal(F'/F)$ induced by the model $Y_1$ at hand.

By \eqref{eq:inductionidentity} the action of $\delta$ does not stabilize the spaces $X_i$, but interchanges them. Consider the action of $\tau$ on $X$ defined by the model $Y_1$, i.e.\ we fix an isomorphism
$$
\iota:\quad Y_1\otimes_F F'\to X
$$
and decree that the diagram
$$
\begin{CD}
X@>t>> X\\
@A\iota AA @AA\iota A\\
Y_1\otimes_F F'@>>y\otimes c\mapsto y\otimes \tau(c)>
Y_1\otimes_F F'
\end{CD}
$$
be commutative. Then $t$ is a conjugate-linear isomorphism of ${K'}^0$-modules, which interchanges the submodules $X_i$ as well.

Restricted to $X_1$ and $X_2$ the map $t$ induces conjugate-linear isomorphisms
$$
t_1:\quad X_1\to X_2,\quad t_2:\quad X_2\to X_1,
$$
which are unique up to scalar by the absolute irreducibility of $X_i$ (cf.\ Corollary \ref{cor:schur}). Since $\delta$ is $F$-linear and of order $2$ this shows that there is a constant $\lambda\in F'$ satisfying the two relations
$$
\lambda t_1\;=\;\delta \circ t_2\circ \delta,\quad
\lambda t_2\;=\;\delta \circ t_1\circ \delta.
$$
In particular we deduce $\lambda^2=1$, hence $\lambda=\pm1$ and
$$
\delta \circ t\;=\;\lambda\cdot (t\circ \delta).
$$
Therefore $\delta$ commutes with $\tau$ if and only if $\lambda = 1$, which falls into case (i).

The case $\lambda=-1$ is the exceptional case (ii). If $L'/L$ is a quadratic extension dominating $F'/F$, we know by Lemma \ref{lem:fscomplexbase} that $\FS_L(X_{i,L'})=\FS_F(X_i)=0$ , and the above computation shows that $\lambda$ is independent of the extension $L'/L$. Therefore case (ii) applies for $F'/F$ if and only if it applies for $L'/L$.

If $F$ is a number field, we know by Theorem \ref{thm:localglobal} that there are infinitely many places $v$ of $F$ where $F_v'\neq F_v$ and $\FS_{F_v}(X_{F_v'})=1$, hence case (ii) cannot apply.
\end{proof}

We introduce the following class of groups.
\begin{definition}
A reductive group $K$ over a field $F$ is {\em of class} $\mathcal S$, if $K$ is an isogenous image of a direct product $\prod_{i=1}^rK_i$ where each $K_i$ is a semi-direct product of its identity component $K_i^0$ and a group of order at most two. A pair $(\liea,B)$ is {\em of class} $\mathcal S$ if $B$ is so.
\end{definition}

\begin{remark}Each such pair (or group) satisfies condition (Q).\end{remark}

\begin{corollary}\label{cor:kglobaldescent}
Let $K$ be reductive group over a number field $F$ of class $\mathcal S$. Let $F'/F$ be a quadratic extension and let $X$ be an absolutely irreducible $K'$-module with $\FS_F(X)\neq 0$, Then the following statements are equivalent:
\begin{itemize}
\item[(i)]
$\FS_F(X)=1$.
\item[(ii)] $\FS_{F_v}(X_{F_v})=1$ for every place $v$ of $F$ which is inert or ramified in $F'/F$ and where $K^0$ is not quasi-split, with one possible exception $v_0$.
\end{itemize}
\end{corollary}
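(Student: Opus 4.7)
The plan is to translate both statements into assertions about the local indicators $\FS_{F_v}(X_{F_v'})$ and invoke the local-global principle of Theorem \ref{thm:localglobal} together with the class $\mathcal S$ version of Lemma \ref{lem:extendenboreltits}.

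For the direction (i) $\Rightarrow$ (ii), suppose $\FS_F(X)=1$. Then $X$ admits a model over $F$, which base-changes to an $F_v$-model of $X_{F_v'}$ at every place $v$, so $\FS_{F_v}(X_{F_v'})\in\{0,1\}$ for all $v$. At every place $v$ which is inert or ramified in $F'/F$, the local extension $F_v'/F_v$ dominates $F'/F$; hence Lemma \ref{lem:fscomplexbase} gives $\FS_{F_v}(X_{F_v'})=0\Leftrightarrow\FS_F(X)=0$, and our assumption $\FS_F(X)\neq 0$ forces $\FS_{F_v}(X_{F_v'})=1$ at all such places, with no exceptions needed.

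For (ii) $\Rightarrow$ (i), the same application of Lemma \ref{lem:fscomplexbase} together with $\FS_F(X)\neq 0$ rules out the value $0$, so $\FS_{F_v}(X_{F_v'})\in\{-1,+1\}$ at every inert or ramified $v$. I next eliminate $-1$ at the quasi-split places. Using the class $\mathcal S$ hypothesis choose an isogeny $\phi:\prod_iK_i\to K$ with $|\pi_0(K_i)|\leq 2$; pulling back via $\phi$ expresses $X$ as a tensor product $\tilde X=\bigotimes_i\tilde X_i$ of absolutely irreducible $K_i'$-modules, and an $F_v$-model of $\tilde X$ descends through $\phi$ to an $F_v$-model of $X_{F_v'}$. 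At a place $v$ where $K^0_{F_v}$ is quasi-split (hence each $K_{i,F_v}^0$ is quasi-split), the Borel–Tits remark following Lemma \ref{lem:extendenboreltits} supplies the hypothesis on irreducible ${K_i^0}'$-submodules, so Lemma \ref{lem:extendenboreltits} applies to each $\tilde X_i$ over the number field $F$ and yields $\FS_F(\tilde X_i)=+1$, the exceptional case being excluded over number fields. Thus each $\tilde X_i$ is globally $F$-rational, the tensor product is $F_v$-rational at every place, and we conclude $\FS_{F_v}(X_{F_v'})=+1$ at all inert or ramified quasi-split places.

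Combined with (ii), this shows that at most the one exceptional place $v_0$ can carry $\FS_{F_v}(X_{F_v'})=-1$. Theorem \ref{thm:localglobal}(i) forces the set of places with local indicator $-1$ to have even cardinality, hence to be empty. All local indicators at inert and ramified places thus equal $+1$, and Theorem \ref{thm:localglobal}(ii) produces an $F$-model of $X$; together with $\FS_F(X)\neq 0$ this gives $\FS_F(X)=+1$. The main obstacle is the inert/ramified quasi-split case: to rule out $\FS_{F_v}(X_{F_v'})=-1$ there one must pass through the class $\mathcal S$ decomposition to reduce to a factor $K_i$ satisfying the hypothesis of Lemma \ref{lem:extendenboreltits}, apply that lemma globally over the number field $F$ (which kills its exceptional case), and then descend the $F$-rationality of each $\tilde X_i$ back to $X$ via the tensor product and the isogeny.
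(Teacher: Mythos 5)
Your overall architecture matches the paper's: reduce to the $K_i$ via the class~$\mathcal S$ isogeny, invoke Lemma~\ref{lem:extendenboreltits} at the quasi-split places, combine with hypothesis (ii) at the remaining inert/ramified places, and close with the parity statement and local-global principle of Theorem~\ref{thm:localglobal}. The direction (i)~$\Rightarrow$~(ii) is correct, and your explicit passage through the isogeny $\phi:\prod_iK_i\to K$ is a useful expansion of a step the paper leaves implicit.

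However, there is a genuine gap in the treatment of the inert/ramified places $v$ where $K^0_{F_v}$ is quasi-split. You apply Lemma~\ref{lem:extendenboreltits} to $\tilde X_i$ \emph{over the number field} $F$, but its hypothesis --- that some irreducible ${K_i^0}'$-submodule $\tilde X_{i,0}\subseteq\tilde X_i$ has $\FS_F(\tilde X_{i,0})\neq -1$ --- is a statement over $F$, and the Borel--Tits input you invoke only gives it when $K_i^0$ is quasi-split \emph{over $F$}. You have verified quasi-splitness only over $F_v$, which yields merely $\FS_{F_v}\bigl((\tilde X_{i,0})_{F_v'}\bigr)\neq -1$; the associated quaternion algebra over $F$ can perfectly well be a division algebra even though it splits at a single place $v$. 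The claimed intermediate conclusion ``each $\tilde X_i$ is globally $F$-rational'' is in fact much too strong: were it available from quasi-splitness at one place, it would yield $\FS_F(X)=1$ outright, with no need for hypothesis (ii) or the parity argument --- the corollary would be vacuous. The fix is to apply Lemma~\ref{lem:extendenboreltits} \emph{over $F_v$}, where local quasi-splitness does supply its hypothesis; the lemma then gives either $\FS_{F_v}(\tilde X_{i,F_v'})=1$ or its exceptional case (ii). To rule out case (ii) locally, use the fact (established in the lemma's proof) that the constant $\lambda$ controlling it is independent of the dominating extension; since $F_v'/F_v$ dominates $F'/F$ when $v$ is inert or ramified, case (ii) over $F_v$ would force case (ii) over $F$, which the lemma excludes because $F$ is a number field. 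This produces exactly the local statement $\FS_{F_v}(X_{F_v'})=1$ that the rest of your argument needs.
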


\begin{proof}
Obviously statement (i) implies that statement (ii) holds for {\em all} places $v$. We need to show that the local conditions in (ii) imply (i).

For every finite place $v\neq v_0$ where $K^0$ is quasi-split $X_{F_v'}$ admits a model over its field of rationality $F_v$ by Lemma \ref{lem:extendenboreltits}. At places $v\neq v_0$ which are not inert or ramified we have $F_v'=F_v$ and the existence of models is tautological. By hypothesis (ii), we also find models over $F_v$ at all other places $v\neq v_0$ of $F$. Therefore the parity condition in statement (a) of Theorem \ref{thm:localglobal} shows that we find models locally everywhere and statement (b) of said Theorem proves the existence of a model over $F$.
\end{proof}

\subsection{Criteria of semi-admissibility}\label{sec:semiadmissible}

\begin{proposition}\label{prop:semiadmissibility}
Let $K$ be a reductive group over a number field $F$ and $F'/F$ a quadratic extension ramified at infinity. Assume the following conditions are satisfied:
\begin{itemize}
\item[(A)] $K$ is of class $\mathcal S$.
\item[(B)] For each ramified archimedean place $v$, $K^0_{F_{v}}$ is anisotropic.
\item[(C)] $K^0$ is quasi-split at all finite places of $F$, with one possible exception.
\end{itemize}
Then $K$ is an $F'/F$-semi-admissibile model for $K(F_v)$ for every archimedean prime $v$ of $F$ ramified in $F'/F$.
\end{proposition}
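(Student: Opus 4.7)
The plan is to establish the defining identity $\FS_F(X)=\FS_{F_{v_\infty}}(X_{F_{v_\infty}'})$ of semi-admissibility for an arbitrary absolutely irreducible rational $K'$-module $X$ and a ramified archimedean place $v_\infty$ of $F$. Write $\epsilon_\infty$ for the target value $\FS_{F_{v_\infty}}(X_{F_{v_\infty}'})$.

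The first step is to use condition (B) to trivialize the archimedean analysis at all ramified archimedean places simultaneously. Indeed $K^0(F_v)$ is anisotropic, hence compact, for every such $v$; since $\pi_0(K)$ is finite, $K(F_v)$ is compact, so by the usual averaging argument $X_{F_v'}$ is infinitesimally unitary. Corollary \ref{cor:fsunitary} then delivers the equality $\FS_{F_v}(X_{F_v'})=\epsilon_\infty$ for every ramified archimedean place $v$, compressing all local archimedean obstructions into the single datum $\epsilon_\infty$.

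I next split on the value of $\epsilon_\infty\in\{-1,0,1\}$. If $\epsilon_\infty=0$, Lemma \ref{lem:fscomplexbase} applied to the dominating extension $F_{v_\infty}'/F_{v_\infty}$ gives $\FS_F(X)=0$. If $\epsilon_\infty=-1$, the same lemma yields $\FS_F(X)\neq 0$, and Theorem \ref{thm:localglobal}(ii) forbids $\FS_F(X)=1$ (which would force $\FS_{F_{v_\infty}}=1$), leaving $\FS_F(X)=-1$. The interesting case is $\epsilon_\infty=1$: here I invoke Corollary \ref{cor:kglobaldescent}, which is applicable thanks to condition (A). Archimedean places are never inert, so among the inert or ramified places where $K^0$ is not quasi-split I find the ramified archimedean ones (automatically fine since Step 1 gave $\FS_{F_v}=\epsilon_\infty=1$ there) together with at most the single exceptional finite place permitted by condition (C). The latter is absorbed into the single admissible exception of Corollary \ref{cor:kglobaldescent}(ii), so the criterion applies and yields $\FS_F(X)=1=\epsilon_\infty$.

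The main obstacle is essentially bookkeeping rather than computation: one needs to see that Corollary \ref{cor:fsunitary} synchronises every ramified archimedean contribution while Corollary \ref{cor:kglobaldescent} exactly accommodates the one potentially bad finite place allowed by (C), so that conditions (A), (B), (C) interlock without residue. Once the three cases on $\epsilon_\infty$ are isolated, no further descent computation or case analysis at individual places is required.
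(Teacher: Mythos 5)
Your proof is correct and follows essentially the same route as the paper's: reduce to $\FS_F(X)\neq 0$ via Lemma \ref{lem:fscomplexbase}, use condition (B) together with Corollary \ref{cor:fsunitary} to synchronize all ramified archimedean indicators, and then invoke Corollary \ref{cor:kglobaldescent} using conditions (A) and (C). The only cosmetic difference is that the paper's final sentence compresses the case split on $\epsilon_\infty\in\{\pm1\}$ into a single appeal to Corollary \ref{cor:kglobaldescent}, whereas you unpack the $\epsilon_\infty=-1$ case by appealing directly to Theorem \ref{thm:localglobal}(ii); this is a transparent restatement of the same reasoning, since Corollary \ref{cor:kglobaldescent} is itself proved from Theorem \ref{thm:localglobal}.
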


\begin{proof}
Let $X$ be any absolutely irreducible rational $B'$-module over $F'$ and fix a ramified archimedean prime $v_\infty$.

By Lemma \ref{lem:fscomplexbase} we know that the condition $\FS_F(X)=0$ is independent of the base field. Hence we may assume $\FS_F(X)\neq 0$.

By hypothesis (B), $K(F_v)$ is compact and therefore the complexification of every absolutely irreducible rational $B$-module $X$ over $F'$ is unitary. Hence Corollary \ref{cor:fsunitary} applies and shows that for every ramified archimedean prime $v$,
$$
\FS_{F_{v}}(X_{F_{v}'})\;=\;\FS_{F_{v_\infty}}(X_{0,F_{v_\infty}'}).
$$
Hypothesis (C) and Corollary \ref{cor:kglobaldescent} then imply the claim.
\end{proof}

\subsection{Criteria of admissibility}

\begin{proposition}\label{prop:semitoadmissible}
Let $K_\RR$ be a compact Lie group with the following properties:
\begin{itemize}
\item[(i)] $K_\RR$ is of class $\mathcal S$ (in the obvious sense).
\item[(ii)] Every complex irreducible $K_\RR^0$-module $X$ satisfies $\FS_\RR(X)\neq-1$.
\end{itemize}
Then, given any quadratic extension of number fields $F'/F$ ramified at infinity, any $F'/F$-semi-admissible model $K$ of $K_\RR$ over $F$ with respect to a ramified archimedean place $v_\infty$ is $F'/F$-admissible.
\end{proposition}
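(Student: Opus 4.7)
The plan is to use the semi-admissibility hypothesis to reduce the problem to a purely archimedean statement about complex representations of the compact Lie group $K_\RR$, then settle that statement by applying Lemma \ref{lem:extendenboreltits} over $\RR$ and ruling out its exceptional case via compactness. To start, I would let $X$ be any absolutely irreducible rational $K'$-module over $F'$ and invoke the $F'/F$-semi-admissibility of $K$ to write $\FS_F(X) = \FS_{F_{v_\infty}}(X_{F_{v_\infty}'}) = \FS_\RR(X_\CC)$, where $X_\CC := X \otimes_{F',v_\infty}\CC$ is an irreducible complex representation of $K_\RR$. The proposition is thus equivalent to the claim that every irreducible complex representation of $K_\RR$ has Frobenius--Schur indicator different from $-1$.

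Next I would invoke hypothesis (i): $K_\RR$ is an isogenous image of $\prod_{i=1}^r K_{\RR,i}$ with each $K_{\RR,i} = K_{\RR,i}^0 \rtimes \Gamma_i$ and $|\Gamma_i| \leq 2$. Since FS indicators are invariant under isogeny and multiplicative on external tensor products of irreducible representations of commuting factors (with the convention $0 \cdot (\pm 1) = 0$), a standard reduction shows it is enough to handle a single semi-direct factor $K_\RR^0 \rtimes \Gamma$ with $|\Gamma| \leq 2$. The case $|\Gamma| = 1$ is immediate from hypothesis (ii), so I would assume $|\Gamma| = 2$ henceforth.

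At this point I would apply Lemma \ref{lem:extendenboreltits} to $(K_\RR, \RR, \CC)$ and the module $X_\CC$. Hypothesis (ii) ensures that every irreducible $K_\RR^0$-submodule of $X_\CC$ has FS indicator $\neq -1$, so the lemma applies and produces two alternatives: either $\FS_\RR(X_\CC) = 1$, giving the desired conclusion directly, or the exceptional case, in which $\FS_\RR(X_\CC) = -1$ and $X_\CC$ decomposes over $K_\RR^0$ as $Y_1 \oplus Y_2$ with $Y_1, Y_2$ irreducible Galois-conjugate and $\FS_\RR(Y_i) = 0$.

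The main obstacle and crux of the proof is to rule out this exceptional case; this is precisely where compactness of $K_\RR^0$ is essential. In the exceptional case, $Y_2 = \delta \cdot Y_1 \cong \phi^*Y_1$ as $K_\RR^0$-modules with $\phi := \Inn(\delta)|_{K_\RR^0}$, so Galois-conjugacy yields a $K_\RR^0$-equivariant anti-linear isomorphism $J \colon Y_1 \to \phi^*Y_1$. On the other hand, $\delta|_{Y_1} \colon Y_1 \to \phi^*Y_1$ is a $K_\RR^0$-equivariant complex-linear isomorphism. Composing, the map $\eta := J^{-1} \circ \delta|_{Y_1}$ is a non-zero $K_\RR^0$-equivariant anti-linear endomorphism of $Y_1$, equivalently, a non-zero $K_\RR^0$-linear map $Y_1 \to \overline{Y_1}$. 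But for compact $K_\RR^0$ one has $\overline{Y_1} \cong Y_1^*$, and the assumption $\FS_\RR(Y_1) = 0$ forces $Y_1 \not\cong Y_1^*$, so Schur's lemma gives $\Hom_{K_\RR^0}(Y_1, \overline{Y_1}) = 0$, contradicting non-vanishing of $\eta$. Hence the exceptional case cannot occur, $\FS_\RR(X_\CC) = 1$ always, and the proposition follows.
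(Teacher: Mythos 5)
The decisive flaw is in your final paragraph. You assert that $\delta|_{Y_1}\colon Y_1\to\phi^*Y_1$ is a $K_\RR^0$-equivariant complex-linear isomorphism, but this is not so. Writing $\rho_i$ for the $K_\RR^0$-action on $Y_i$, one computes $\delta(\rho_1(k)v)=\rho_2(\phi(k))\delta(v)$, so $\delta|_{Y_1}$ is an isomorphism $\phi^*Y_1\to Y_2$ (intertwining $\rho_1\circ\phi$ with $\rho_2$), not a $K_\RR^0$-map $Y_1\to\phi^*Y_1$. Carrying this through the composition, $\eta=J^{-1}\circ\delta|_{Y_1}$ satisfies $\eta(\rho_1(k)v)=\rho_1(\phi(k))\eta(v)$ rather than $\eta(\rho_1(k)v)=\rho_1(k)\eta(v)$; equivalently, $\eta$ furnishes a $\CC$-linear $K_\RR^0$-isomorphism $\overline{Y_1}\to\phi^*Y_1$, not $Y_1\to\overline{Y_1}$. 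Since in the exceptional case $\overline{Y_1}\cong\phi^*Y_1$ by construction, this is perfectly consistent with Schur's Lemma, and no contradiction arises. Indeed the exceptional configuration genuinely occurs for compact groups: for $\mathrm{Pin}^-(2)$ the two-dimensional representations $\sigma_n$ with $n$ odd are quaternionic and restrict on $\mathrm{Spin}(2)$ to $\chi_n\oplus\chi_{-n}$ with $\chi_n\not\cong\overline{\chi_n}$. Its exclusion must therefore use both the class-$\mathcal S$ hypothesis (which $\mathrm{Pin}^-(2)$ fails, since every element of $\mathrm{Pin}^-(2)\setminus\mathrm{Spin}(2)$ squares to $-1$) and hypothesis (ii) together, and your Schur-type argument exploits neither.

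There is also a gap in your reduction to a single factor: from an isogeny $\prod_i K_{\RR,i}\to K_\RR$, hypothesis (ii) concerns irreducible $K_\RR^0$-modules, whose pullbacks have Frobenius--Schur indicator equal to a \emph{product} over factors, and a product being $\neq -1$ does not give $\FS\neq -1$ factorwise. For comparison, the paper's own proof applies Lemma \ref{lem:extendenboreltits} directly over the number fields $F'/F$, where the exceptional case (ii) of that lemma is excluded outright by the last sentence of its statement; working instead at the archimedean place, as you do, discards exactly the number-field input that rules it out.
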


\begin{proof}
The claim follows with Lemma \ref{lem:extendenboreltits}.
\end{proof}

\begin{proposition}\label{prop:necessaryadmissiblegroups}
Let $K_\RR$ be a (non-abelian) simple compact Lie group. Then for every irreducible complex representation $X$ of $K_\RR$ we have $\FS_\RR(X)\neq -1$ if and only if $K_\RR$ is one of the following types:
\begin{itemize}
\item[($A_n$)] 
$n\equiv 0,2,3\pmod{4}$, any compact form;\\$n\equiv 1\pmod{4}$, any cover of odd degree of the adjoint compact form;
\item[($B_n$)] 
$n\equiv 0,1\pmod{4}$, any compact form;\\$n\equiv 2,3\pmod{4}$, the adjoint compact form;
\item[($C_n$)] 
the adjoint compact form;
\item[($D_n$)] 
$n\equiv 0,1,3\pmod{4}$, any compact form;\\$n\equiv 2\pmod{4}$, $\SO(2n)$ and the adjoint compact form;
\item[($E_6$)] all compact forms;
\item[($E_7$)] the adjoint compact form;
\item[]\hspace*{-2.5em}($E_8$, $F_4$, $G_2$)\\
  the unique compact form.
\end{itemize}
\end{proposition}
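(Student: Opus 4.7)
The plan is to reduce the proposition to the classical Frobenius--Schur theory and then to carry out a standard case-by-case computation.

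First, I will apply Theorem \ref{thm:fsunitary} to the pair $(0, K_\RR)$: every complex irreducible representation of a compact Lie group is unitary with respect to an averaged Hermitian form, so the indicator $\FS_\RR(X)$ of Definition \ref{def:indicator} coincides with the classical Frobenius--Schur indicator $\nu(X)\in\{-1,0,+1\}$ of \cite{frobeniusschur1906}. The proposition therefore reduces to the classical classification of simple compact Lie groups having no quaternionic irreducible complex representation.

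Second, I will reduce $\nu$ to a central character. Let $\tilde K$ be the simply-connected cover of $K_\RR^0$, write $Z := Z(\tilde K)$, and let $Z_0 \subseteq Z$ be the kernel of $\tilde K \to K_\RR^0$. A representation $V_\lambda$ of $\tilde K$ of highest weight $\lambda$ descends to $K_\RR^0$ iff $\chi_\lambda|_{Z_0}=1$; it is self-dual iff $\lambda = -w_0\lambda$. For such self-dual $V_\lambda$ the classical formula (see for instance \cite{adams2014}) gives
$$
\nu(V_\lambda) \;=\; \chi_\lambda(z_0), \qquad z_0 \,:=\, \exp(2\pi i\,\rho^\vee)\,\in\,Z,
$$
an element of order dividing $2$. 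Thus $K_\RR$ admits no quaternionic irreducible iff $\chi_\lambda(z_0)=+1$ for every self-dual $\lambda$ with $\chi_\lambda|_{Z_0}=1$; this is a condition depending only on the type of $K_\RR^0$ and on the index $[Z:Z_0]$.

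Third, the verification will then proceed by simple type. For each classical type I will identify $z_0$ in terms of fundamental coweights, describe the self-dual weights via the action of $-w_0$ (trivial for $B_n, C_n, D_{2k}, E_7, E_8, F_4, G_2$, a non-trivial diagram involution for $A_n$ with $n\geq 2$, $D_{2k+1}$, and $E_6$) and read off the condition on $\chi_\lambda$ for each cover of the adjoint form. For classical types this reduces to computing $\nu$ on the spin, standard, and half-spin representations, whose values depend on the residue of the underlying dimension modulo $8$, yielding the stated parities of $n$. For exceptional types I will invoke the tabulated indicators of fundamental representations (cf.\ \cite{adams2014}): $E_8$, $F_4$, $G_2$ admit only real irreducibles; $E_6$ has $-w_0\neq\id$, forcing every self-dual representation to have trivial central character and hence, by the above formula, to be real; and $E_7$ admits a unique quaternionic fundamental (the $56$-dimensional) with non-trivial central character, so that it is killed precisely by passing to the adjoint form.

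The main obstacle will be the combinatorial bookkeeping in type $D_n$ when $n$ is even, where $Z \cong (\ZZ/2)^2$ is non-cyclic, so that the four intermediate quotients ($\mathrm{Spin}(2n)$, $\SO(2n)$, and the two half-spin quotients) together with the two inequivalent half-spin representations and the vector representation must be analysed simultaneously against the formula for $z_0$; this is what produces the exceptional list $\SO(2n)$ and the adjoint form in the $n\equiv 2\pmod 4$ case. The remaining type-by-type parity verifications are routine and classical.
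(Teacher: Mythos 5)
Your plan is correct and follows essentially the same route as the paper: both reduce the indicator to a quantity depending only on the class of $\lambda$ in $\Lambda/\Lambda_0$, track behaviour under central quotients, and finish by a type-by-type check against tables of fundamental-weight indicators, with type $D_n$ ($n$ even) correctly singled out as the delicate case. Your explicit formula $\nu(V_\lambda)=\chi_\lambda(z_0)$ with $z_0=\exp(2\pi i\,\rho^\vee)$ is precisely the classical fact underlying the paper's ``easy to see'' assertion that $\lambda\mapsto[\End_{K_\RR}(\res_{\CC/\RR}X)]$ is a well-defined group homomorphism $\beta_{K_\RR}\colon(C_{K_\RR}^*)^{\Gal(\CC/\RR)}\to H^2(\Gal(\CC/\RR);\CC^\times)$, so the two formulations are equivalent.
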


\begin{proof}
Choose a Cartan subalgebra $\lieh$ in the complexified Lie algebra of $K_\RR$. Write $\Lambda$ for the weight lattice of $K_\RR$ and $\Lambda_0$ for the root lattice. Fixing a basis of the root system, the Galois group $\Gal(\CC/\RR)$ acts on $\Lambda$ via the action ${}_\Delta\tau$ introduced by Borel and Tits \cite{boreltits1965} (cf.\ section \ref{sec:galoisactions}), and this action descends to $\Lambda/\Lambda_0$. In the case at hand the action of complex conjugation is given by the negated longest Weyl element $-w_0$. Since $\Lambda/\Lambda_0$ may be canonically identified with the dual $C_{K_\RR}^*$ of the center of $K_\RR$, we obtain a Galois action on the latter.

The Frobenius-Schur indicator of an irreducible representation $X$ of highest weight $\lambda$ is non-zero if and only if $\lambda\in\Lambda^{\Gal(\CC/\RR)}$ and it is easy to see that the map
$$
\beta_{K_\RR}:\quad (C_{K_\RR}^*)^{\Gal(\CC/\RR)}\;\to\;H^2(\Gal(\CC/\RR);\CC^\times),
$$
$$
\lambda\;\mapsto\;[\End_{K_\RR}(\res_{\CC/\RR}X)]
$$
is well defined and a homomorphism of groups. The second cohomology on the right hand side is canonically isomorphic to the Brauer group of $\RR$ and with that identification $[\cdot]$ denotes the class associated to a central simple $\RR$-algebra.

The map $\beta_{K_\RR}$ is the cohomological incarnation of the Frobenius-Schur indicator: We have $\beta_{K_\RR}(\lambda)=1$ if and only if $\FS_\RR(X)=1$. For a generalization of this approach, valid for arbitrary fields and arbitrary Galois extensions, we refer the reader to \cite{tits1971}.

For any subgroup $C\subseteq C_{K_\RR}$ of the center, the center $C_{K_{\RR}/C}$ of $K_\RR/C$ is naturally identified with the group $C_{K_\RR}/C$, and we have a natural Galois equivariant monomorphism
$$
\iota_C:\quad C_{K_\RR/C}^*\;\to\;C_{K_\RR}^*,
$$
rendering the following diagram commutative
\begin{equation}
\begin{CD}
(C_{K_\RR}^*)^{\Gal(\CC/\RR)}@>\beta_{K_\RR}>>H^2(\Gal(\CC/\RR);\CC^\times)\\
@A\iota_CAA @AA{\bf1}A\\
(C_{K_\RR/C}^*)^{\Gal(\CC/\RR)}@>\beta_{K_\RR/C}>>H^2(\Gal(\CC/\RR);\CC^\times)
\end{CD}
\label{eq:betaisogenies}
\end{equation}
On the level of modules the map $\iota_C$ corresponts to pullback along the canonical projection $K_\RR\to K_\RR/C$.

The map $\beta_{K_\RR/C}$ is necessarily trivial if $C_{K_\RR/C}$ is of odd order or more generally if $C_{K_\RR/C}$ is contained in the kernel of $\beta_{K_\RR}$.

With these tools at hand we proceed case by case:\\
$A_n$:
If $n\not\equiv 1\pmod{4}$, then $\beta_{K_\RR}$ is trivial by table 1 of Chap.\ VIII in \cite{book_bourbaki1968}. For $n\equiv1\pmod{4}$ and $K_\RR$ simply connected the same table shows that $\beta_{K_\RR}$ is non-trivial, and since the center $C_{K_\RR}$ is cyclic of order $n+1\equiv2\pmod{4}$ in this case, the map $\beta_{K_\RR/C}$ is trivial if and only if the order of $C$ is even.\\
$B_n$:
If $n\equiv 0,1\pmod{4}$, then $\beta_{K_\RR}$ is trivial by table 1 of loc.\ cit. For $n\equiv2,3\pmod{4}$ and $K_\RR$ simply connected $\beta_{K_\RR}$ is non-trivial, and since the center $C_{K_\RR}$ is of order $2$, the claim follows.\\
$C_n$:
The same reasoning as in type $B_n$ applies.\\
$D_n$:
If $n\equiv 0,1,3\pmod{4}$, then $\beta_{K_\RR}$ is trivial by table 1 of loc.\ cit. For $n\equiv2\pmod{4}$ and $K_\RR$ simply connected $\beta_{K_\RR}$ is non-trivial, the Galois action is trivial, and $(C_{K_\RR}^*)^{\Gal(\CC/\RR)}$ is the Klein four group $V_4$, generated by the two last fundamental weights $\omega_{n-1}$ and $\omega_n$, which both have non-trivial image under $\beta_{K_\FF}$ (cf.\ table 1 of loc.\ cit.). The dual $C_{\SO(2n)}^*$ of the center of $\SO(2n)$ is generated by the image of the first fundamental weight $\omega_1$. As an element of the dual of the center, $\omega_1$ is the sum of $\omega_{n-1}$ and $\omega_n$, hence in the kernel of $\beta_{K_\RR}$.\\
$E_6$: Since the center of the simply connected compact form is of order $3$, the claim follows.\\
$E_7$: Table 1 of loc.\ cit. shows that the simply connected compact form admits quaternionic representations, its center is of order $2$.\\
$E_8,F_4,G_2$: The simply connected compact form is adjoint.
\end{proof}

\begin{corollary}\label{cor:necessaryadmissiblegroups}
Let $K_\RR$ be a compact Lie group with the following properties:
\begin{itemize}
\item[(i)] $K_\RR$ is of class $\mathcal S$ (in the obvious sense).
\item[(ii)] ${\mathscr D}(K_\RR^0)$ is an isogenous image of a product of groups where Proposition \ref{prop:necessaryadmissiblegroups} applies.
\end{itemize}
Then for any quadratic extension $F'/F$ of number fields, ramified at infinity, every $F'/F$-semi-admissible model $K$ of $K_\RR$ with respect to a ramified archimedean place $v_\infty$ is $F'/F$-admissible.
\end{corollary}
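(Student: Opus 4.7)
The plan is to reduce the Corollary to Proposition \ref{prop:semitoadmissible}. Condition (i) of that Proposition is identical to condition (i) of the Corollary, so what remains to check is its condition (ii): every complex irreducible $K_\RR^0$-module $V$ satisfies $\FS_\RR(V)\neq-1$. Once this is in place the Corollary follows by direct appeal to Proposition \ref{prop:semitoadmissible}.

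I will first pass from $K_\RR^0$ to its derived group. Since $K_\RR^0$ is compact and connected, the multiplication map gives a finite central isogeny $Z(K_\RR^0)^0\times\mathscr{D}(K_\RR^0)\twoheadrightarrow K_\RR^0$, so that every irreducible complex $K_\RR^0$-module pulls back to an external tensor product $\chi\boxtimes W$ with $\chi$ a character of the compact torus $Z(K_\RR^0)^0$ and $W$ an irreducible $\mathscr{D}(K_\RR^0)$-module. A non-trivial character of a connected compact torus is never self-dual (since torus characters form a lattice, $\chi^2=1$ forces $\chi=1$), so in that case $V$ fails to be self-dual and $\FS_\RR(V)=0$. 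If $\chi$ is trivial, the torus acts trivially on $V$ and invariant bilinear forms for $K_\RR^0$ coincide with those for $\mathscr{D}(K_\RR^0)$, giving $\FS_\RR(V)=\FS_\RR(W)$. Either way the problem reduces to showing $\FS_\RR(W)\neq-1$ for every irreducible $\mathscr{D}(K_\RR^0)$-module $W$.

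Next I would invoke hypothesis (ii) of the Corollary to fix an isogeny $\pi:\prod_{i=1}^{r}K_i\twoheadrightarrow\mathscr{D}(K_\RR^0)$ where each $K_i$ is one of the compact forms listed in Proposition \ref{prop:necessaryadmissiblegroups}. The pullback $\pi^{*}W$ is irreducible and hence decomposes as an external tensor product $W_1\boxtimes\cdots\boxtimes W_r$ with each $W_i$ irreducible for $K_i$. By Proposition \ref{prop:necessaryadmissiblegroups} every $\FS_\RR(W_i)$ lies in $\{0,1\}$. The Frobenius-Schur indicator is multiplicative on external tensor products --- because for irreducibles of a direct product $W_1\boxtimes W_2\cong(W_1\boxtimes W_2)^{*}$ forces $W_i\cong W_i^{*}$ for both indices, and zero is absorbing in the resulting multiplication rule --- so $\FS_\RR(\pi^{*}W)\in\{0,1\}$. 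Surjectivity of $\pi$ means pullback preserves the space of invariant real and (anti-)symmetric bilinear structures, whence $\FS_\RR(W)=\FS_\RR(\pi^{*}W)\neq-1$, and the verification of Proposition \ref{prop:semitoadmissible}(ii) is complete.

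The main technical point is the careful bookkeeping of indicators across the three successive layers --- $K_\RR\leadsto K_\RR^0$ (handled for us by Proposition \ref{prop:semitoadmissible} via the class $\mathcal{S}$ hypothesis), $K_\RR^0\leadsto\mathscr{D}(K_\RR^0)$, and $\mathscr{D}(K_\RR^0)\leadsto\prod K_i$ --- with the observation at each stage that a value of $-1$ can propagate only if it already appears in a factor. This is precisely the content of the multiplicativity rule for tensor products combined with the fact that pullback along surjections and isogenies preserves invariant bilinear forms; no further delicate input beyond Propositions \ref{prop:necessaryadmissiblegroups} and \ref{prop:semitoadmissible} is required.
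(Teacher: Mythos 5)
Your proof is correct and follows essentially the same route as the paper: Proposition \ref{prop:necessaryadmissiblegroups} for the simple factors, Lemma \ref{lem:extendenboreltits} (here invoked via Proposition \ref{prop:semitoadmissible}, whose proof is exactly that lemma), and multiplicativity of Frobenius--Schur indicators under outer tensor products. Your explicit reduction from $K_\RR^0$ to $\mathscr{D}(K_\RR^0)$ via the central isogeny $Z(K_\RR^0)^0\times\mathscr{D}(K_\RR^0)\twoheadrightarrow K_\RR^0$ --- observing that a non-trivial torus character kills self-duality, while a trivial one leaves the indicator unchanged --- is a useful step the paper's one-line proof leaves implicit, since Proposition \ref{prop:necessaryadmissiblegroups} only covers simple groups.
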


\begin{proof}
This follows from Proposition \ref{prop:necessaryadmissiblegroups}, Lemma \ref{lem:extendenboreltits} and the multiplicativity of the Frobenius-Schur indicators $\FS_{F_{v_\infty}}(-)$ under outer products of representations.
\end{proof}

\begin{proposition}\label{prop:unconditionaladmissiblegroups}
Let $F$ be a number field and $K$ a reductive group over $F$, $F'/F$ a quadratic extension ramified at infinity, subject to the following conditions:
\begin{itemize}
\item[(i)] $K$ is of class $\mathcal S$.
\item[(ii)] For a fixed real place $v_\infty$ ramified in $F'/F$, $K_{F_{v_\infty}}^0$ is anisotropic.
\item[(iii)] $\mathscr D(K(F_{v_\infty})^0)$ is an isogenous image of a product of groups of the types
\begin{itemize}
\item[($A_n$)] 
$2\mid n$, any compact form;\\$2\nmid n$, any cover of odd degree of the adjoint compact form;
\item[]\hspace*{-2.5em}($B_n$, $C_n$, $D_n$, $E_7$)\\ 
the adjoint compact form;
\item[($E_6$)] all compact forms;
\item[]\hspace*{-2.5em}($E_8$, $F_4$, $G_2$)\\
the unique compact form;
\end{itemize}
\end{itemize}
Then $K$ is an $F'/F$-admissible model of $K(F_{v_\infty})$.
\end{proposition}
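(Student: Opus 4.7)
My plan is to show directly that every absolutely irreducible rational $K'$-module $X$ satisfies $\FS_F(X)\neq -1$, by a global Brauer-theoretic argument of Borel--Tits type which bypasses both the local-global machinery of Theorem~\ref{thm:localglobal} and the quasi-split hypothesis~(C) from Proposition~\ref{prop:semiadmissibility}. As a first reduction, condition~(i) allows one to lift $X$ along the isogeny $\prod_i K_i\to K$, where each $K_i=K_i^0\rtimes\pi_0(K_i)$ with $|\pi_0(K_i)|\le 2$. The lift is an absolutely irreducible outer tensor product of $K_i'$-modules $X_i$, and by multiplicativity of Frobenius--Schur indicators under outer products one has $\FS_F(X)=\prod_i\FS_F(X_i)$; it therefore suffices to prove $\FS_F(X_i)\neq -1$ for each~$i$. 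Second, Lemma~\ref{lem:extendenboreltits} applied over the number field $F$ (where its exceptional case~(ii) does not occur) reduces the problem further from $K_i$ to a single irreducible $(K_i')^0$-submodule. Decomposing $K_i^0=Z_i\cdot S_i$ as an almost direct product with $Z_i=Z(K_i^0)^0$ a central torus and $S_i=\mathscr{D}(K_i^0)$ the derived group, the pullback to $Z_i\times S_i$ is an outer product of a character of $Z_i$ (whose indicator is automatically in $\{0,1\}$ since it is one-dimensional) and an absolutely irreducible rational $S_i'$-module $V$; multiplicativity once more reduces the problem to proving $\FS_F(V)\neq -1$.

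The core of the argument is to extend the Brauer-group formalism used in the proof of Proposition~\ref{prop:necessaryadmissiblegroups}, and in greater generality in Tits~\cite{tits1971}, from $\CC/\RR$ to the given extension $F'/F$. Write $\Lambda$ for the weight lattice of the complexification of $S_i$, $\Lambda_0$ for the root lattice, and $C^{\ast}=\Lambda/\Lambda_0$ for the dual of its center. As in the proof of Proposition~\ref{prop:necessaryadmissiblegroups}, the indicator of the irreducible $V=V_\lambda$ of highest weight $\lambda$ vanishes unless $\lambda$ is fixed by the Borel--Tits action of $\Gal(F'/F)$ on weights, and in the non-vanishing case it depends only on $\bar\lambda\in C^{\ast}$, because shifting $\lambda$ by an element of $\Lambda_0$ amounts to tensoring $V_\lambda$ with a representation of the adjoint form of $S_i$, which is admissible over any base by Remark~\ref{rmk:adjointderivedgroup}. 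The resulting map
\[
\beta_F\colon (C^{\ast})^{\Gal(F'/F)}\;\longrightarrow\;\mathrm{Br}(F'/F),\qquad \bar\lambda\;\mapsto\;[\End_{\lies_i,S_i}(\res_{F'/F}V_\lambda)],
\]
is a group homomorphism whose image lies in the $2$-torsion of $\mathrm{Br}(F'/F)$, since its values are classes of four-dimensional central simple algebras. Condition~(iii) is tailored precisely so that, in each of the types listed, the geometric center of $S_i$ has \emph{odd} order or is trivial: explicitly $\ZZ/(n+1)$ for type $A_n$ with $n$ even, $\ZZ/3$ for $E_6$, the odd subquotient of the simply-connected center for the odd covers of the adjoint form of $A_n$ with $n$ odd, and trivial in all remaining entries, which are adjoint forms or the centreless simply-connected types $E_8$, $F_4$, $G_2$. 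Hence $(C^{\ast})^{\Gal(F'/F)}$ is of odd order, so the image of $\beta_F$ lies both in an odd-torsion subgroup and in the $2$-torsion of $\mathrm{Br}(F'/F)$, forcing $\beta_F\equiv 1$ and therefore $\FS_F(V)=1$.

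The main technical hurdle, and the only substantive obstacle, is establishing that $\beta_F$ is well-defined on $(C^{\ast})^{\Gal(F'/F)}$ and is a group homomorphism into $\mathrm{Br}(F'/F)$ in this general setting. This rests on two assertions: first, the descent of $\beta_F$ from Galois-invariant weights to the quotient $C^{\ast}$, which is exactly the admissibility of the adjoint form over an arbitrary base invoked in Remark~\ref{rmk:adjointderivedgroup} and established by Borel--Tits \cite{boreltits1965}; second, the additivity of $\beta_F$, which follows from the Morita-theoretic behaviour $\End(V\boxtimes W)\cong\End(V)\otimes_F\End(W)$ of endomorphism algebras under outer products and the fact that the Brauer class of a tensor product of central simple algebras is the sum of their Brauer classes. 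Once both are in place in the generality of an arbitrary quadratic extension $F'/F$ of number fields, the odd-order cancellation is immediate. Condition~(ii) plays no direct role in the Brauer calculation; it only guarantees that $K(F_{v_\infty})$ is genuinely compact, so that the phrase \lq{}admissible model of $K(F_{v_\infty})$\rq{} in the conclusion is meaningful in the sense of Section~\ref{sec:admissiblegroups}.
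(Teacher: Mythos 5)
Your argument is, at its core, the same Brauer--group calculation as the paper's: define a homomorphism $\beta$ on the Galois-invariant part of the dual of the center with values in a Brauer group, observe that its image lies in the $2$-torsion because the endomorphism algebras are at worst quaternion, and conclude from the fact that in each listed type the geometric center has odd order. The cosmetic differences are that you formulate $\beta_F$ on $(C^{\ast})^{\Gal(F'/F)}$ with values in the relative Brauer group $\mathrm{Br}(F'/F)$, whereas the paper uses $(C_K^{\ast})^{\Gal(\overline F/F)}$ with values in $H^2(\Gal(\overline F/F);\overline F^{\times})$, and that you spell out more carefully the initial reductions via the class-$\mathcal S$ isogeny and the almost-direct product $K_i^0=Z_i\cdot S_i$, where the paper just asserts the reduction to $K$ simple connected. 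Both proofs outsource the hard part --- that $\beta$ is well-defined on $C^{\ast}$ and a group homomorphism --- to Tits \cite{tits1971}.

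One caveat on your heuristic for that hard part: the Morita identity $\End(V\boxtimes W)\cong\End(V)\otimes_F\End(W)$ concerns \emph{outer} tensor products on $S_i\times S_i$, but additivity of $\beta_F$ on $C^{\ast}$ is a statement about $V_{\lambda+\mu}$, which is not $V_\lambda\boxtimes V_\mu$ but rather the Cartan component (with multiplicity one) of the \emph{inner} tensor product $V_\lambda\otimes V_\mu$ of representations of the same group. The passage from endomorphism algebras of $V_\lambda\otimes V_\mu$ to those of its Cartan component is exactly what Tits's argument supplies; the outer-product formula alone does not close this step. Since you cite \cite{tits1971} for the precise statement, this is not a gap in the proof, but the sketch you give is not the argument that actually works, and it would be cleaner to state it as in the paper: $\beta$ factors through the maximal $2$-torsion quotient of $C^{\ast}$, which is trivial for all the listed types.
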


\begin{proof}
Lemma \ref{lem:extendenboreltits} reduces us to the case of connected $K$. Since the case of compact tori is clear, we are reduced to $K$ simple and connected. The remaining argument is a suitable adaption of the proof of Proposition \ref{prop:necessaryadmissiblegroups}.

We fix an algebraic closure $\overline{F}\subseteq F_{v_\infty}'$ of $F$. The map $\beta_{K_\RR}$ in the proof of Proposition \ref{prop:necessaryadmissiblegroups} may be defined relative to the extension $\overline{F}/F$ as follows. We still write $C_K^*$ for the quotient of the weight lattice divided by the root lattice of $K_{\overline{F}}$, with respect to some fixed Borel $B$ and maximal torus $T\subseteq B$, both defined over $\overline{F}$. Then we define
$$
\beta_{K}:\quad (C_{K}^*)^{\Gal(\overline{F}/F)}\;\to\;H^2(\Gal(\overline{F}/F);{\overline{F}}^\times),
$$
$$
\lambda\;\mapsto\;[\End_{K}(\res_{F'/F}X)],
$$
where $X$ is absolutely irreducible of highest weight $\lambda$ and defined over $F'$. This map enjoy the same properties as the map $\beta_{K_\RR}$ (cf.\ \cite{tits1971}).

Since the elements associated to non-split quaternion algebras in the Brauer group are of order $2$, $\beta_K$ factors over the maximal quotient of $C_K^*$ which is $2$-torsion.

We remark that we have a commutative square
\begin{equation}
\begin{CD}
(C_{K(F_{v_\infty})}^*)^{\Gal(F_{v_\infty}'/F_{v_\infty})}@>\beta_{K(F_{v_\infty})}>>H^2(\Gal(F_{v_\infty}'/F_{v_\infty});{F_{v_\infty}'}^\times)\\
@A{\gamma}AA @AAA\\
(C_{K}^*)^{\Gal(\overline{F}/F)}@>\beta_{K}>>H^2(\Gal(\overline{F}/F);{\overline{F}}^\times)
\end{CD}
\label{eq:betabasechange}
\end{equation}
where $\gamma$ is an monomorphism for trivial reasons. In particular if $\beta_{K(F_{v_\infty})}$ is non-trivial, so is $\beta_K$, but the converse needs not be true.

Again we inspect each case individually.\\
$A_n$:
Assume $K(F_{v_\infty})$ simply connected. Then $C_{K(F_{v_\infty})}=C_{K}$ is cyclic of order $n+1$. Hence if $n$ is even, $\beta_K$ must be trivial.\\
$B_n,C_n,D_n$: clear.\\
$E_6,E_7,E_8,F_4,G_2$: as before.
\end{proof}

\begin{remark} It seems that in type $A_n$ with $n$ odd we cannot do better in general: If $K$ splits over $F'$, then $(C_K^*)^{\Gal(\overline{F}/F)}$ is non-trivial of order $2$ whenever $K(F_{v_\infty})$ is simply connected and even if $\beta_{K(F_{v_\infty})}$ is trivial, the map $\beta_K$ could still happen to be non-trivial. Similar remarks apply to types $B_n, C_n,D_n$ and $E_7$.
\end{remark}

For later use we state the following generalization.

\begin{proposition}\label{prop:unitarygroups}
Let $F$ be a number field and $K$ a reductive group over $F$, $F'/F$ a quadratic extension ramified at at least one archimedean place, subject to the following conditions (i) and (ii):
\begin{itemize}
\item[(i)] For a fixed real place $v_\infty$ ramified in $F'/F$, $K_{F_{v_\infty}}^0$ is anisotropic.
\item[(ii)] $K$ is an $F$-form of a quotient of a product
$$
K_U\times K_D
$$
where $K_U$ is a product of unitary groups $\U(n)$, and $K_D$ is a group to which Proposition \ref{prop:unconditionaladmissiblegroups} applies.
\end{itemize}
Then $K$ is an $F'/F$-admissible model of $K(F_{v_\infty})$. The same conclusion is true if condition (ii) is replaced by
\begin{itemize}
\item[(ii')] $K$ is a $F'/F$-semi-admissible model of a quotient of a product
$$
K_U\times K_D
$$
where $K_U$ is a product of unitary groups $\U(n)$, and $K_D$ is a group to which Proposition \ref{prop:necessaryadmissiblegroups} applies.
\end{itemize}
\end{proposition}

\begin{proof}
We claim that under (i) and (ii) $\beta_K$ is trivial. To see this, we may assume without loss of generality that $K$ is an $F$-form of a product $K_U\times K_D$ as in the statement, which reduces us to the case $K=K_U$, since $\beta_K=\beta_{K_U}\times\beta_{K_D}$, and we know that $\beta_{K_D}=1$ by Proposition \ref{prop:unconditionaladmissiblegroups}.

We claim that $(\Lambda/\Lambda^0)^{\Gal(\overline{F}/F)}=0$. By \eqref{eq:betabasechange} it suffices to show
$$
(\Lambda/\Lambda_0)^{\Gal(F_{v_\infty}'/F_{v_\infty})}\;=\;0,
$$
which readily reduces to the case $K(F_{v_\infty})=\U(n)$. Write $e_1,\dots,e_n$ for the standard basis of $\Lambda$, and such that dominance is given by the usual condition
$$
a_1\geq a_2\geq\cdots\geq a_n
$$
for weights $\lambda=a_1e_1+\cdots a_ne_n$. Then complex conjugation acts via
$$
e_i\;\mapsto\;-e_{n+1-i},
$$
which shows that
$$
\Lambda^{\Gal(F_{v_\infty}'/F_{v_\infty})}\;=\;\{\lambda\mid\forall k:a_k=-a_{n+1-k}\}
\;\subseteq\;\Lambda_0,
$$
and the claim follows.

The proof in the case of conditions (i) and (ii') proceeds similarly using Proposition \ref{prop:necessaryadmissiblegroups}.
\end{proof}

\subsection{Explicit construction of models}\label{sec:explicitmodels}

In the cases where Proposition \ref{prop:necessaryadmissiblegroups} but not Proposition \ref{prop:unconditionaladmissiblegroups} (or Proposition \ref{prop:unitarygroups}) applies, we may try to find explicit suitable admissible models as follows.

The local conditions on $K$ in statement (ii) of Corollary \ref{cor:kglobaldescent} are always satisfied if $K^0$ has good reduction at all finite places with one possible exception $v_0$. Indeed, since the reduction of $K^0$ at finite places $v\neq v_0$ is smooth connected and therefore quasi-split by Lang's Theorem \cite{lang1958}, Hensel's Lemma implies that $K_{F_v}^0$ is quasi-split.

\subsubsection*{Orthogonal groups}

Consider the quadratic form
$$
f:\quad\ZZ^n\times\ZZ^n\to \ZZ^n,\quad ((x_i)_i,(y_j)_j)\;\mapsto\;x_1y_1+\cdots+x_ny_n,
$$
and define $\Oo(n)$ as the group scheme over $\ZZ$ whose $A$-valued points are given by
$$
\Oo(n)(A)\;:=\;\{g\in\Aut_A(A^n)\mid \forall x,y\in A^n: f(gx,gy)=f(x,y)\}.
$$
Then $\Oo(n)\otimes_\ZZ\QQ$ is a model of the compact real Lie group $\Oo(n)$, i.e.\ in particular its identity component is anisotropic over $\RR$.

For each prime $p\neq2$ the form $f$ is non-degenerate and the group $\Oo(n)^0_\ZZ\otimes\FF_p$ is smooth connected over $\FF_p$. Therefore $\Oo(n)^0\otimes_\ZZ\QQ$ is quasi-split at all $p\neq 2$ by Lang's Theorem \cite{lang1958}.

For any totally real number field $F/\QQ$, $\Oo(n)_F=\Oo(n)\otimes_\ZZ F$ is a model of $[F:\QQ]$ many copies of the compact group $\Oo(n)$. It is quasi-split at all finite places $v\nmid 2$. Therefore $K=\res_{F/\QQ}(\Oo(n)\otimes_\ZZ F)$ is quasi-split at all primes $p\neq 2$.

Then $K=\res_{F/\QQ}\Oo(n)_F$ is an $F'/\QQ$-admissible model of $\Oo(n)(F\otimes_\QQ\RR)$ for every imaginary quadratic extension $F'/\QQ$. We remark that $K^0$ is quasi-split over $\QQ(\sqrt{-1})/\QQ$.

This model of $\Oo(n)^[F:\QQ]$ comes with a canonical embedding into $\GL_n$, and we see that $G=\res_{F/\QQ}\GL(n)$ admits a subgroup $K=\res_{F/\QQ}(\Oo(n)\otimes_\ZZ F)$ satisfying the hypotheses of Theorem \ref{thm:realstandardmodules} below.

Products of this model produce suitable models $K\subseteq G$ for the maximal compact subgroup of the $\RR$-valued points of the groups $G=\res_{F/\QQ}\Oo(p,q)$, the latter being defined similarly as a subgroup of $\res_{F/\QQ}\GL_n$.

This construction also provides admissible models for $\SO(n)$.

\subsubsection*{Unitary groups}

We consider a CM field $F/F^+$, i.e.\ $F$ is a totally imaginary quadratic extension of a totally real number field $F$. Write $\tau$ for the non-trivial automorphism of $F/F^+$, and consider $F^n$ as an $2n$-dimensional $F^+$-vector space. Then we have the quadratic form
$$
f:\quad\OO_{F}^n\times\OO_{F}^n\to \OO_{F}^n,\quad ((x_i)_i,(y_j)_j)\;\mapsto\;x_1y_1^\tau+\cdots+x_ny_n^\tau,
$$
on the $\OO_{F^+}$-module $\OO_F^n$. Over any finite place $\mathfrak{p}\nmid 2$ of $F^+$ where $F/F^+$ is unramified the form $f$ is non-degenerate modulo $\mathfrak{p}$. Yet it is unclear if we obtain a model quasi-split at finite places where $F/F^+$ ramifies.

However by Proposition \ref{prop:unitarygroups} we know that for each real place $v$ of $F^+$ the group
$$
\U_f(A)\;:=\;\{g\in\Aut_A(F\otimes_{F^+}A^n)\mid \forall x,y\in F\otimes_{F^+}A^n: f(gx,g^\tau y)=f(x,y)\}
$$
is an $F/F^+$-admissible model of $\U(n)$ with respect to $v$, and likewise $\res_{F^+/\QQ}\U_f$ is a semi-admissible model of $[F^+:\QQ]$ many copies of $\U(n)$.

Applying restriction of scalars and Proposition \ref{prop:unitarygroups}, we obtain an $F'/\QQ$-admissible model $K$ over $\QQ$ for every imaginary quadratic extension $F'/\QQ$. This model embeds into $G=\res_{F/\QQ}\GL(n)$. Likewise we obtain models with embeddings into $\res_{F^+/\QQ}\U(p,q)$.

If $F=F^+(\sqrt{-1})$ this construction also provides semi-admissible models for $\res_{F^+/\QQ}\SU(n)$. The existence of semi-admissible models for $\SU(n)$ in general seems to be a delicate question.

\section{Cohomologically Induced Standard Modules}\label{sec:standardmodules}

In this section we specialize the general results obtained thus far to the case of standard modules and prove a number of optimal rationality results. In section \ref{sec:applications} we will apply these results to automorphic representations.

\subsection{Defininition of standard modules}

We depart from a reductive pair $(\lieg,K)$ over a subfield $F\subseteq\RR$ with Cartan involution $\theta$ (also defined over $F$). We assume that $(\lieg,K)$ is a model of a classical reductive pair. In particular $K^0$ is $\RR$-anisotropic.

We fix a quadratic extension $F'/F$ as in \eqref{eq:squareoffields}, subject to the same conditions, i.e.\ $F'\RR=\CC$. We write $(\lieg',K')$ for the base change of $(\lieg,K)$ to $F'$. We write $\tau$ for the non-trivial automorphism of the extension $F'/F$ and denote it also by a bar.

We adopt the following convention to ease notation. If we consider functors such as $H^q(\lieu;-)$, where $\lieu$ is defined over some base field $F$, then we denote canonical functor deduced from $H^q(\lieu;-)$ on the category of modules over any extension $F'/F$ still the same.


Let $\lieq$ be a $\theta$-stable germane parabolic inside of $\lieg'$ (defined as in section \ref{sec:reductivepairs}) with Levi decomposition $\lieq=\liel+\lieu$. Then $\liel$ is defined over $F$ and the complex conjugate of $\lieu$ is the opposite of $\lieu$, i.e.\
$$
\overline{\lieu}={\lieu}^-.
$$
We recall the Cartan decomposition
$$
\lieg\;=\;\liep\oplus\liek,
$$
which is defined over $F$. Then $\lieq\cap\liek'$ is a parabolic subalgebra of $\liek$. We let
$$
L\cap K\;=\;N_{K}(\lieq)
$$
denote the normalizer of $\lieq$ in $K$, which happens to be defined over $F$, as is the subgroup $L\subseteq G$, the normalizer of $\lieq$ in $G$.

Let $Y$ be an $(\liel,L\cap K)$-module over $F'$. As before we consider
$$
Y_{\lieq}\;:=\;Y\otimes_{F'}\wedge^{\dim\lieu}\lieu
$$
as a $(\lieq,L'\cap K')$-module with trivial action of the radical. Then we have the cohomologically induced module
$$
\mathcal R^q(Y)\;=\;R^q\Gamma(\Hom_{U(\lieq)}(U(\lieg'),Y_{\lieq})_{L\cap K}),
$$
where $\Gamma(\cdot)$ denotes the rational Zuckerman functor from section \ref{sec:equizuckerman} for the inclusion
$$
(\lieg',L'\cap K')\;\to\;(\lieg',K').
$$
We remark that we may compute $\mathcal R^q(Y)$ as the $q$-th right derived functor of the composition
$$
\Gamma\circ \Hom_{U(\lieq)}(U(\lieg'),-)_{L\cap K},
$$
which enables us to apply the Homological Base Change Theorem by choosing an admissible standard resolution of $Y_{\lieq}$ whenever we consider possibly infinite field extensions.

In these cases the functor $\mathcal R^q(-)$ preserves infinitesimal characters in the sense that if $L\cap K$ meets every (geometrically) connected component of $K$, and if $Y$, as an $(\liel',L'\cap K')$-module, has infinitesimal character $\lambda$, then $\mathcal R^q(Y)$ has infinitesimal character $\lambda+\rho(\lieu')$. This easily follows via base change from the classical case (cf.\ \cite[Corollary 5.25]{book_knappvogan1995} for example). Classical non-vanishing and irreducibility criteria generalize to our setting without modification and identical proofs.

We remark that the rational construction of the Hecke algebra in \cite{januszewski2015pre2} together with the associated Koszul resolution shows that $\mathcal R^q(-)$ commutes with base change without any hypothesis, which implies that above preservation statements eventually hold in general.

\subsection{The bottom layer}

Consider the commutative square of pairs
$$
\begin{CD}
(\lieg,L\cap K)@>>> (\lieg, K)\\
@AAA @AAA\\
(\liek,L\cap K)@>>> (\liek, K)
\end{CD}
$$
By Proposition \ref{prop:find} the corresponding Zuckerman functors commute with the tautological forgetful functors and we obtain a natural map
$$
\beta:\quad
\mathcal R_K^q(Y\otimes_{F'}\wedge^{\dim\lieu\cap\liep'}\lieu\cap\liep')\;:=\;R^q\Gamma_K(U(\liek')\otimes_{U(\overline{\lieq}\cap\liek')}(Y_{\lieq\cap\liek'}\otimes_{F'}\wedge^{\dim\lieu\cap\liep'}\lieu\cap\liep'))
$$
$$
\;\to\;
R^q\Gamma(U(\lieg')\otimes_{U(\overline{\lieq})}Y_{\lieq})\;=\;\mathcal R^q(Y),
$$
where the first Zuckerman functor $\Gamma_K$ is associated to the inclusion into the compact pair, and the second one to the inclusion into the (possibly) non-compact pair $(\lieg,K)$ as before. Traditionally this map is called the \lq{}bottom layer map\rq{}, even though conceptually it should be defined as a projection rather than an inclusion in the case of Zuckerman functors.

\subsection{The good and the fair range}

We define the middle degree
$$
S_{\lieq}\;:=\;\dim_{F'}\lieu\cap\liep'.
$$
If $\lambda$ is a character of $(\liel,L\cap K)$ defined over $F''/F'$, we set
$$
A_\lieq(\lambda)\;:=\;\mathcal R^{S_\lieq}(\lambda),
$$
which is always of finite length with infinitesimal character $\lambda+\rho(\lieu)$, and is defined over $F''$ by the discussion in section \ref{sec:equizuckerman}. By Theorem \ref{thm:equizuckerman} the $F''$-rational module $A_\lieq(\lambda)$ provides a model for the complex standard module of the same type that we denote by $A_\lieq(\lambda)_\CC$.

We say that $\lambda$ is in the {\em weakly good range} if for some $\theta$-stable Cartan $\lieh\subseteq\liel_\CC$ (defined over $\RR$) and a Borel $\lieb=\lieh\oplus\lien\subseteq\lieg_\CC$ satisfying $\lieu_\CC\subseteq\lien$, we have
\begin{equation}
\forall\alpha\in\Delta(\lien,\lieh):\quad {\rm Re}\langle\lambda+\rho(\lieu),\alpha\rangle\;\geq\;0,
\label{eq:weaklygood}
\end{equation}
for a choice of invariant bilinear form $\langle\cdot,\cdot\rangle$ on the complex space spanned by the roots which is positive definite on the real span. If strict inequality holds for all $\alpha$, then $\lambda$ is said to be in the {\em good range}.

Likewise $\lambda$ is in the {\em weakly fair range} if
\begin{equation}
\forall\alpha\in\Delta(\lieu,\lieh):\quad {\rm Re}\langle\lambda+\rho(\lieu),\alpha\rangle\;\geq\;0,
\label{eq:weaklyfair}
\end{equation}
and in the {\em fair range} if strict inequality holds for all $\alpha$.

Under the assumption
$$
K\;=\;K^0(L\cap K),
$$
we know that for $\lambda$ in the weakly good range the module $A_\lieq(\lambda)$ is absolutely irreducible or zero, and it is non-zero for $\lambda$ in the good range. If $\lambda_\CC$ is unitary and in the weakly fair range, then $A_\lieq(\lambda)_\CC$ is unitarizable. We refer to \cite{book_knappvogan1995} for further details.

\subsection{Rational models}

From now on we assume that $(\lieg,K)$ is attached to a connected reductive group $G$ over $F$. We call a character $\lambda$ of $\liel$ {\em integral} if it lifts to a rational character of the subgroup $L$ of $G$ corresponding to $\liel$. In the sequel dominance and positivity are understood with respect to $\lieb$. For integral dominant $\lambda$ we let $F(\lambda)$ denote a field of definition of the absolutely irreducible rational representation $M$ of $G$ of highest weight $\lambda$.

Recall that the center $Z(\lieg)$ of $U(\lieg)$ is defined over $F$ (cf.\ Proposition \ref{prop:centerrationality}). Therefore any character $\eta=\mu+\rho(\lien)$ of $Z(\lieg)$ is defined over its field of rationality $F(\eta)$, which agrees with the field of rationality of $\mu$ under the Galois action ${}_\Delta-$, cf.\ Proposition \ref{prop:hcgalois}.

 We write $F(\lambda,\eta)$ for the composite of the fields $F(\lambda)$ and $F(\eta)$, and likewise $F_0(\lambda)=F_0F(\lambda)$, etc., and fix an algebraic closure $\overline{F}/F$ containing all these fields whenever they are algebraic over $F$.

\begin{proposition}\label{prop:standardmodules}
With the above notation, we have
\begin{itemize}
\item[(a)] The standard module $A_\lieq(0)$ always admits a model over an $F_0\in\{F,F'\}$.
\item[(b)] For integral dominant $\lambda$, $A_\lieq(\lambda)$ is defined over $F_0(\lambda,\rho(\liel\cap\lieu))$.
\item[(c)] For integral $\lambda$ in the weakly good range, let $w\in W(\lieg_\CC,\lieh)$ rendering $w(\lambda)$ dominant. Then $A_\lieq(\lambda)$ is defined over $F_0(w(\lambda),\lambda+\rho(\lieu))$.
\end{itemize}
\end{proposition}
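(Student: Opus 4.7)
The plan is to prove the three claims sequentially, each leveraging the previous together with the rational theory developed earlier in the paper. For (i), observe that $A_\lieq(0)$ is automatically defined over $F'$ by Theorem \ref{thm:equizuckerman}, since the Zuckerman functor commutes with base change and $\lieq$ itself is defined over $F'$; so the content of the statement is to determine when $A_\lieq(0)$ descends further to $F$. My approach will be to exploit infinitesimal unitarity: since $0$ lies in the weakly fair range, $A_\lieq(0)_\CC$ is infinitesimally unitary, and Theorem \ref{thm:fsunitary} applies to the restriction-of-scalars pair. The bottom layer $K$-type of $A_\lieq(0)$ occurs with multiplicity one (a rational consequence of the classical Vogan theory via Proposition \ref{prop:classicaliso}), so Proposition \ref{prop:minimalktypedescent} reduces the calculation of $\FS_F(A_\lieq(0))$ to the Frobenius-Schur indicator of this single $K$-type. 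Setting $F_0 \in \{F, F'\}$ accordingly concludes (i).

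For (ii), I will construct $A_\lieq(\lambda)$ as the image of $A_\lieq(0)$ under a rational translation functor. Let $M$ be the absolutely irreducible $G$-module of highest weight $\lambda$, defined over $F(\lambda)$. Tensoring the $F_0$-rational model of $A_\lieq(0)$ from (i) with $M$ extends the field of definition to $F_0(\lambda)$. I will then project onto the generalized $Z(\lieg)$-eigenspace for the infinitesimal character $\lambda + \rho(\lieu)$. This projection is rational: by Proposition \ref{prop:centerrationality} the center $Z(\lieg)$ is defined over $F$, and by Proposition \ref{prop:hcgalois} the Galois action on infinitesimal characters is controlled by the Borel--Tits action on weights, so the relevant projector lives over the field of rationality of $\lambda + \rho(\lieu)$, which is contained in $F_0(\lambda, \rho(\liel \cap \lieu))$. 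The identification of the translate of $A_\lieq(0)$ with the standard module $A_\lieq(\lambda)$ in the good range is classical and transfers to our setting via Proposition \ref{prop:classicaliso} together with the Homological Base Change Theorem.

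For (iii) the strategy is parallel, but now $\lambda$ is only in the weakly good range and need not be dominant; let $w \in W(\lieg_\CC, \lieh)$ be chosen so that $w(\lambda)$ is dominant. Part (ii) produces a model of $A_\lieq(w(\lambda))$ over $F_0(w(\lambda))$. Coherent continuation in the weakly good range (which preserves irreducibility and is compatible with base change by Theorem \ref{thm:derivedbasechange}) then carries this to $A_\lieq(\lambda)$ along a path in the infinitesimal character parameter, requiring in addition the field of rationality of the infinitesimal character $\lambda + \rho(\lieu)$. Combining both contributions gives the announced model of $A_\lieq(\lambda)$ over $F_0(w(\lambda), \lambda + \rho(\lieu))$.

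The main obstacle I anticipate is concentrated entirely in (i): once $F_0$ is pinned down, steps (ii) and (iii) are essentially bookkeeping of Galois actions on weights and infinitesimal characters, routine given the base change compatibility of Zuckerman functors, $Z(\lieg)$, and tensor products. The delicate point in (i) is that the descent of the bottom layer $K$-type depends on the chosen $F$-model $K$, so unconditional statements will only follow under the admissibility hypotheses of Section \ref{sec:admissiblegroups}; outside that framework one may have to content oneself with $F_0 = F'$.
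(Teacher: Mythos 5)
Your parts (i) and (ii) are essentially sound, with one caveat for (i). The paper's proof of (i) is just the observation that the rational Zuckerman construction is defined over the field where $\lieq$ and the character live, which is at most $F'$ — so (i) is automatic with $F_0 = F'$, and $F_0$ is simply set to $F$ whenever a descent happens to exist. The Frobenius--Schur/bottom-layer machinery you invoke is correct but is the content of Theorem~\ref{thm:realstandardmodules}, not of Proposition~\ref{prop:standardmodules}(i); importing it here would obscure the logical ordering, since (i) is quoted in the proof of that later theorem. For (ii), your description — tensor the $F_0$-model of $A_\lieq(0)$ with $M$ and project to the $(\lambda+\rho(\lieu))$-primary component, using Propositions~\ref{prop:centerrationality} and \ref{prop:hcgalois} to keep the projector rational — is exactly the paper's rational translation functor $\mathcal T_M^\eta$, and the field bookkeeping via $\rho(\lien)-\rho(\lieu)=\rho(\liel\cap\lien)$ matches.

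Part (iii) has a genuine gap. You propose to first apply (ii) with the dominant weight $w(\lambda)$ to obtain a model of $A_\lieq(w(\lambda))$, and then pass from $A_\lieq(w(\lambda))$ to $A_\lieq(\lambda)$ by ``coherent continuation.'' There are two problems. First, $w\in W(\lieg_\CC,\lieh)$ need not preserve $\liel$, so $w(\lambda)$ need not define a character of $(\liel,L\cap K)$ and the module $A_\lieq(w(\lambda))$ may simply not be defined. Second, even when it is, $A_\lieq(w(\lambda))$ has infinitesimal character $w(\lambda)+\rho(\lieu)$ rather than $\lambda+\rho(\lieu)$; carrying one to the other is a wall-crossing, not a translation inside a fixed coherent family, and it is not covered by the rational translation formalism you have set up. The paper avoids all this with a single translation from $A_\lieq(0)$: take $M$ to be the absolutely irreducible representation of highest weight $w(\lambda)$ (so that $\lambda$ occurs as an \emph{extremal} weight of $M$), and project onto the infinitesimal character $\lambda+\rho(\lieu)$. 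The rational analogue of Theorem~7.237 of \cite{book_knappvogan1995} then identifies the result with $A_\lieq(\lambda)$ directly, over $F_0(w(\lambda),\lambda+\rho(\lieu))$. You should replace the two-step route of your (iii) with this one-step translation.
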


Theorem \ref{thm:realstandardmodules} below contain non-exhaustive lists of cases where $F_0=F$ and where $F_0$ is known to agree with the field of rationality of $A_\lieq(0)$ respectively.

\begin{proof}
Statement (a) is clear from our previous discussion.

Let $\eta$ denote a character of $Z(\lieg)$, assumed to be stable under $K$ and defined over a number field $F(\eta)$, and $M$ an absolutely irreducible rational $G$-module defined over a number field $F(\lambda)$. We consider the translation functor
$$
\mathcal T_M^\eta:\quad X\mapsto \pi_{\eta}(X\otimes_{F(\lambda,\eta)} M),
$$
where $X$ is a $Z(\lieg_{F(\lambda,\eta)})$-finite $(\lieg_{F(\lambda,\eta)},K_{F(\lambda,\eta)})$-module, stable under $K$, and $\pi_{\eta}$ is the projection to the $\eta$-primary component. The resulting functor is well defined, exact, and commutes with base change for obvious reasons.

Theorem 7.237 of \cite{book_knappvogan1995} carries over to this situation and shows that for $\eta=\lambda+\rho(\lieu)$, the resulting module
$$
\mathcal T_M^{\lambda+\rho(\lieu)}(A_\lieq(0)\otimes_{F_0} F_0(\lambda,\lambda+\rho(\lieu)))
$$
is an $F_0(\lambda,\lambda+\rho(\lieu))$-model of the standard module $A_{\lieq}(\lambda)$. Now $\lambda+\rho(\lien)$ is the infinitesimal character of $M$, hence defined over $F(\lambda)$. The identity
$$
\rho(\lien)-\rho(\lieu)\;=\;\rho(\liel\cap\lien)
$$
shows that $F_0(\lambda,\lambda+\rho(\lieu))=F_0(\lambda,\rho(\liel\cap\lien))$, and (b) follows.

As to (c), we may assume that $M$ admits $\lambda$ as an extremal weight, i.e.\ $M$ is of highest weight $w(\lambda)$ (for suitable $w\in W(\lieg_\CC,\lieh)$. Then (c) follows again by Theorem 7.237 of loc.\ cit.
\end{proof}

Consider any reductive group $K\subseteq K_\infty\subseteq G$ defined over $F$ with the property that $K_\infty$ is contained in $Z\cdot K$, where $Z\subseteq G$  denotes the center.
\begin{corollary}\label{cor:standardmodulecohomology}
For each integral dominant weight $\lambda$, and each absolutely irreducible rational $G$-representation $M$ of highest weight $\lambda$, defined over $F_0(\lambda)$, we have for any $q\in\ZZ$ a natural isomorphism
\begin{equation}
H^q(\lieg,K_\infty;A_\lieq(\lambda)\otimes M^\vee)\;\cong\;\Hom_{L\cap K}(\wedge^{q-S_{\lieq}}(\liel\cap\liep)/(\liez\cap\liep\cap\liek_\infty),F_0(\lambda)),
\label{eq:standardmodulecohomology}
\end{equation}
of $F_0(\lambda)$-vector spaces. Conversely every irreducible $(\lieg,K)$-module with non-vanishing $(\lieg,K)$-cohomology with coefficients in $M^\vee$ is isomorphic to $A_\lieq(\lambda)$ for some $\theta$-stable parabolic $\lieq$.
\end{corollary}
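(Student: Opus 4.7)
The strategy is to deduce the formula, and its converse, from the classical Vogan--Zuckerman computation via the base change machinery developed in sections \ref{sec:homology}--\ref{sec:geometric}. Both sides commute with arbitrary extension of scalars: the left-hand side by Proposition \ref{prop:equicohomology} applied to the $(\lieg, K_\infty)$-cohomology functor, once one notes that $A_\lieq(\lambda) \otimes M^\vee$ is defined over $F_0(\lambda)$ by Proposition \ref{prop:standardmodules}(ii) (the $\rho(\liel\cap\lien)$-contribution is absorbed into the highest weight of $M$ after the twist); the right-hand side by Proposition \ref{prop:hombasechange}, since $\wedge^{\bullet}(\liel \cap \liep)/(\liez \cap \liep \cap \liek_\infty)$ is a finite-dimensional $L \cap K$-module defined over $F_0$. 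In particular both sides are naturally $F_0(\lambda)$-rational.

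Next I would exhibit a canonical rational map from the right-hand side to the left. The right-hand side is naturally identified with $H^{q - S_\lieq}\!\bigl(\liel,\, (L\cap K)(\liez \cap \liek_\infty \cap \liel);\, F_0(\lambda)\bigr)$, computed by the standard complex \eqref{eq:standardcohomologycomplex}. Composing the Hochschild--Serre edge morphism for the tower of pairs $(\liel, L\cap K) \subseteq (\lieg, L\cap K) \subseteq (\lieg, K)$ with the bottom-layer map from section \ref{sec:equizuckerman}, and accounting for the degree shift $S_\lieq = \dim_{F'}(\lieu\cap\liep')$, produces the desired canonical map into $H^q(\lieg, K_\infty; A_\lieq(\lambda) \otimes M^\vee)$. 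Galois equivariance is built in by Theorem \ref{thm:equizuckerman} and Proposition \ref{prop:equicohomology}.

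To check that this map is an isomorphism I base change to $\CC$, applying Proposition \ref{prop:classicaliso} to translate the question into classical $(\lieg_\RR, K_\infty(\RR))$-cohomology, where the Vogan--Zuckerman theorem yields the asserted isomorphism; the extra quotient by $\liez\cap\liep\cap\liek_\infty$ simply records the passage from $K$ to the slightly larger $K_\infty \subseteq Z\cdot K$ and is routine. Since both sides are $F_0(\lambda)$-rational and the map is an isomorphism after base change, it is already an isomorphism over $F_0(\lambda)$ by Proposition \ref{prop:hombasechange}.

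For the converse, I would argue as follows. Let $X$ be an irreducible $(\lieg, K)$-module over $F_0(\lambda)$ with $H^\bullet(\lieg, K_\infty; X \otimes M^\vee) \neq 0$. By Corollary \ref{cor:finitelength} and Corollary \ref{cor:restriction}, the complexification $X_\CC$ is of finite length and decomposes into absolutely irreducible $(\lieg_\CC, K_\CC)$-modules, each with non-vanishing cohomology against a Galois twist of $M^\vee_\CC$, and hence, by the classical converse of Vogan--Zuckerman, isomorphic to some $A_{\lieq'}(\lambda')_\CC$. The main obstacle is to descend this description from $\CC$ back to $F_0(\lambda)$, producing a single rationally defined $\theta$-stable parabolic $\lieq$ whose associated standard module is isomorphic to $X$ over $F_0(\lambda)$. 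This I would extract by combining the Galois equivariance of the Zuckerman construction (Theorem \ref{thm:equizuckerman}), the compatibility of infinitesimal characters with the Galois action (Proposition \ref{prop:hcgalois}), and the uniqueness of rational models of absolutely irreducible modules with $F_0(\lambda)$-endomorphism ring (Proposition \ref{prop:uniquemodels}).
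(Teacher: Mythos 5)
Your approach is essentially the paper's: the paper's proof is a two-sentence citation of \cite[Theorem 3.3]{voganzuckerman1984} for the complex computation, the Homological Base Change Theorem (Theorem \ref{thm:derivedbasechange}, in the guise of Propositions \ref{prop:equicohomology} and \ref{prop:hombasechange}) to descend it to $F_0(\lambda)$, and Proposition \ref{prop:uniquemodels} for the converse. You flesh this out by exhibiting an explicit candidate map and then checking it is an isomorphism after complexification, which is a valid way of making the ``immediate consequence'' precise, and your converse argument likewise matches in outline.

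One place where you are working harder than necessary: in the converse you treat the descent of the $\theta$-stable parabolic $\lieq$ to $F_0(\lambda)$ as ``the main obstacle'' and sketch a Galois-equivariance argument to produce a rationally defined $\lieq$. The paper's remark immediately following the corollary states explicitly that the $\lieq$ in the converse may only be defined over a finite extension of $F'$, so no such descent is claimed or needed. The converse is established by complexifying $X$, identifying $X_\CC$ with some $A_{\lieq'}(\lambda')_\CC$ by the classical Vogan--Zuckerman converse, and then invoking uniqueness of models (Proposition \ref{prop:uniquemodels}) to conclude that $X$ is the rational form of that standard module — the parabolic itself stays over whatever extension it lives on. Your added invocations of Theorem \ref{thm:equizuckerman} and Proposition \ref{prop:hcgalois} are harmless but not required for this step. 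One further small caution: Proposition \ref{prop:standardmodules}(ii) gives $A_\lieq(\lambda)$ over $F_0(\lambda,\rho(\liel\cap\lieu))$, not over $F_0(\lambda)$ outright; your parenthetical that the $\rho(\liel\cap\lien)$-contribution ``is absorbed into the highest weight of $M$ after the twist'' deserves to be made explicit if you keep the detailed version, since the corollary's statement of $F_0(\lambda)$-rationality depends on it.
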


We remark that the $\theta$-stable parabolic $\lieq$ in the converse statement my be defined only over a finite extension of $F'$.

\begin{proof}
The identity \eqref{eq:standardmodulecohomology} is an immediate consequence of \cite[Theorem 3.3]{voganzuckerman1984} and the Homological Base Change Theorem. The rest of the claim follows by loc.\ cit.\ and uniqueness of models (Proposition \ref{prop:uniquemodels}).
\end{proof}

\begin{theorem}\label{thm:realstandardmodules}
Let $(\lieg,K)$ be a reductive pair attached to a connected reductive group $G$ over a number field $F$ and $F'/F$ be a quadratic extension, ramified at infinity, $\lieq\subseteq\lieg'$ a $\theta$-stable parabolic subalgebra defined over $F'$, subject to the following conditions:
\begin{itemize}
\item[(I)] For a fixed ramified archimedean place $v_\infty$, $K(F_{v_\infty})\subseteq G(F_{v_\infty})$ is a maximal compact subgroup.
\item[(II)] $K$ is an $F'/F$-semi-admissible model of $K(F_{v_\infty})$.
\item[(III)] $G(F_{v_\infty})$ is a quotient group of a direct product
$$
\prod_{i=1}^r H_{v_\infty,r},
$$
$$
\prod_{i=1}^r M_{v_\infty,r},
$$
accordingly, and $\lieq_{F_{v_\infty}'}$ decomposes compatibly into a direct sum
$$
\lieq_{F_{v_\infty}'}\;=\;\oplus_{i=1}^r \lieq_{v_\infty,i},
$$
and $\lambda_{F_{v_\infty}'}=\otimes_{i=1}^r\lambda_{v_\infty,i}$.
\item[(IV)] For each pair $(H_{v_\infty,i},\lieq_{v_\infty,i})$ in (D) one of the following cases applies:
\begin{itemize}
\item[(i)] 
\begin{equation}
H_{v_\infty,i}\in\{\SL_{2n+1}(\RR),\GL_n(\RR),\GL_n(\CC)\};
\label{eq:glist1}
\end{equation}
\item[(ii)] $H_{v_\infty,i}\in\{\SL_{2n}(\RR),\SL_{2n+1}(\CC),{\rm U}(p,q),\SO^*(2n),\SO(p,q),\Oo(p,q)\}$;
\item[(iii)] $H^{\rm der}$ is one of the following real Lie groups
\begin{itemize}
\item[($A_n$)] $\SO(2,1),\SU(2),\SO(3)$;
\item[($B_n$)] ${\rm Spin}(2p,2q+1)$, $2\mid p$;
\item[($C_n$)] ${\rm Sp}(p,q)$;
\item[($D_{2n}$)] ${\rm Spin}(2p,2q)$, $2\mid p$, $2\mid q$; $\SO(2p,2q)$, $2\mid p+q$; $\overline{\SO}(2p,2q)$, $2\mid p$, $2\mid q$; $\overline{\SO}^*(4n)$ when disconnected; ${\rm PSO}(2p, 2q)$, $2\mid p+q$; ${\rm PSO}^*(4n)$; all groups locally isomorphic to $\SO(2p+1,2q+1)$, $2\nmid p+q$;
\end{itemize}
\item[(iv)] $H_{v_\infty,i}^{\rm der}$ is any real form of the adjoint group of type $B_n,C_n,E_7$;
\item[(v)] $H_{v_\infty,i}^{\rm der}$ is any real form of type $G_2,F_4,E_8$;
\item[(vi)] $H_{v_\infty,i}^{\rm der}$ is a complex group of type $A_1,B_n,C_n,D_{2n},G_2,F_4,E_7,E_8$;
\item[(vii)] $H_{v_\infty,i}=\tilde{H}_{v_\infty,i}\times\tilde{H}_{v_\infty,i}$ for a real reductive group $\tilde{H}_{v_\infty,i}$, $K(F_{v_\infty})$ intersected with each component gives the same maximal compact subgroup,
$$
\lieq_{v_\infty,i}\;=\;
\begin{cases}
\tilde{\lieq}_{v_\infty,i}\times\tilde{\lieq}_{v_\infty,i},\;\text{or}\\
\tilde{\lieq}_{v_\infty,i}\times\overline{\tilde{\lieq}}_{v_\infty,i},
\end{cases}
$$
and
$$
\lambda_{v_\infty,i}\;=\;
\begin{cases}
\tilde{\lambda}_{v_\infty,i}\otimes\tilde{\lambda}_{v_\infty,i},\;\text{or}\\
\tilde{\lambda}_{v_\infty,i}\otimes\overline{\tilde{\lambda}}_{v_\infty,i},
\end{cases}
$$
accordingly.
\item[(viii)] ${\mathscr D}(M_{v_\infty,i}^0)$ is of adjoint type.
\end{itemize}
\end{itemize}
Then
\begin{itemize}
\item[(a)]
$A_\lieq(0)$ is defined over its field of rationality $F_0\in\{F,F'\}$.
\item[(b)] If $G$ is quasi-split and $\lieq$ a Borel subalgebra, then for any integral dominant weight $\lambda$ the standard module $A_\lieq(\lambda)$ is defined over its field of rationality $F_0(\lambda)$ as well.
\item[(c)]
Regardless of whether $G$ is quasi-split or not, if only the cases (i), (iii), (iv), (v) and (vi) occur, and if furthermore $\lieq_{v_\infty,i}$ is a Borel in case (i), then $F_0=F$.
\end{itemize}
\end{theorem}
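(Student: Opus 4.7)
The plan is as follows. Part (a) goes beyond Proposition \ref{prop:standardmodules}(i), which already places $A_\lieq(0)$ over some $F_0 \in \{F, F'\}$; the extra content is to exclude the quaternionic case $\FS_F(A_\lieq(0)) = -1$. So I assume $\FS_F(A_\lieq(0)) \neq 0$ (equivalently, the field of rationality of $A_\lieq(0)$ is $F$) and aim to establish the value $+1$.

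My first step is to reduce to the minimal $K$-type. The standard module $A_\lieq(0)$ carries a distinguished minimal $K'$-type $V_\lieq$ of highest weight $2\rho(\lieu \cap \liep)$; this $V_\lieq$ is absolutely irreducible, occurs in $A_\lieq(0)$ with multiplicity one, and is itself infinitesimally unitary. Proposition \ref{prop:minimalktypedescent} therefore gives $\FS_F(A_\lieq(0)) = \FS_F(V_\lieq)$. Hypothesis (II), the $F'/F$-semi-admissibility of $K$, then forces $\FS_F(V_\lieq) = \FS_{F_{v_\infty}}((V_\lieq)_{F_{v_\infty}'})$, moving the problem to the archimedean place $v_\infty$.

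The second step is the archimedean computation. Hypothesis (I) makes $K(F_{v_\infty})$ compact, so $V_\lieq$ is a unitary representation of a compact group and Theorem \ref{thm:fsunitary} identifies its Frobenius--Schur indicator with the classical one. The decomposition (III) turns this indicator into a product over the factors $(H_{v_\infty,i}, \lieq_{v_\infty,i})$, and each factor is then treated according to the case analysis of (IV): case (i) is handled directly from the explicit highest-weight formula for the minimal $K$-type of a cohomologically induced module on $\GL_n$; case (viii) is covered by Remark \ref{rmk:adjointderivedgroup}; cases (iii)--(vi) follow from Adams' classification of invariant bilinear forms \cite{adams2014} (and its refinements by McLean and Cui) applied to the minimal $K$-type; case (vii) follows from multiplicativity of Frobenius--Schur indicators under outer tensor products together with the observation that both $\tilde V \otimes \tilde V$ and $\tilde V \otimes \overline{\tilde V}$ always carry a natural symmetric invariant form. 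In every listed case the value is $+1$, completing (a).

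Parts (b) and (c) build on (a). For (b), I take $M$ absolutely irreducible of highest weight $\lambda$ defined over $F_0(\lambda)$, which is possible because $G$ quasi-split removes the Brauer-theoretic obstruction to descending $M$ to the field of rationality of $\lambda$, and then apply the rational translation functor $\mathcal{T}_M^{\lambda + \rho(\lieu)}$ (which commutes with base change by Theorem \ref{thm:derivedbasechange}) to the $F_0$-rational model of $A_\lieq(0)$; for $\lieq$ a Borel one has $\liel \cap \lieu = 0$, so the field $F_0(\lambda, \rho(\liel \cap \lieu))$ of Proposition \ref{prop:standardmodules}(ii) collapses to $F_0(\lambda)$. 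For (c), in the restricted list one checks by inspection that $\lieq$ is $K(F)$-conjugate to $\overline{\lieq}$ --- either because $\lieq$ is a Borel in an inner form of a quasi-split group (case (i)), or because the real structure on the relevant classical or complex group in cases (iii)--(vi) already forces the $K(F_{v_\infty})$-conjugacy class of $\lieq$ to be Galois-stable and Proposition \ref{prop:uniquemodels} propagates this to $F$. Hence $A_\lieq(0) \cong A_{\overline{\lieq}}(0) \cong \overline{A_\lieq(0)}$, the field of rationality equals $F$, and combined with (a) we obtain $F_0 = F$.

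The hard part will be the archimedean case analysis in the second step: the equality $\FS = +1$ for the minimal $K$-type must be verified uniformly across all the classical and exceptional real forms and $\theta$-stable parabolics listed in (IV), using Adams' tables and related work, and the outer-product case (vii) has to be phrased in a way fully consistent with the semi-admissibility machinery of Section \ref{sec:admissiblegroups} so that the reduction from global to archimedean $\FS$ really applies factor by factor.
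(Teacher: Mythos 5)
Your proposal for parts (a) and (b) follows essentially the same strategy as the paper: reduce the Frobenius--Schur indicator of $A_\lieq(0)$ to that of the bottom layer $B_0$ via Proposition \ref{prop:minimalktypedescent}, use hypothesis (II) to localize at the ramified archimedean place $v_\infty$, and there carry out a case-by-case analysis of (IV). For (b) the translation-functor argument with the collapse $F_0(\lambda,\rho(\liel\cap\lieu)) = F_0(\lambda)$ when $\lieq$ is a Borel is exactly what Proposition \ref{prop:standardmodules}(ii)--(iii) delivers, and the quasi-split hypothesis handling the descent of $M$ is the same point the paper makes. Two items in your archimedean analysis deviate from the paper's: case (ii) of (IV) is not addressed in your case list at all --- the paper handles (i) and (ii) together via Proposition \ref{prop:unitarygroups}, which establishes triviality of the $\beta$-map for unitary and orthogonal maximal compacts rather than a direct highest-weight computation --- and your appeal to Theorem \ref{thm:fsunitary} plays a role the paper handles implicitly through Corollary \ref{cor:fsunitary}. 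Neither is a serious problem, but the missing case (ii) should be filled in.

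Part (c) is where you genuinely diverge from the paper, and there is a gap. You want to show $A_\lieq(0)\cong\overline{A_\lieq(0)}$ by arguing that $\lieq$ is $K$-conjugate to $\overline{\lieq}$ (equivalently, to its opposite $\lieq^{-}$). This is a stronger geometric claim than what is needed, and it is not obvious for the whole list (iii)--(vi); the assertion that ``the real structure $\ldots$ already forces the $K(F_{v_\infty})$-conjugacy class of $\lieq$ to be Galois-stable'' is not justified, and the invocation of Proposition \ref{prop:uniquemodels} is misplaced, since that result is Hilbert 90 for models satisfying Schur's condition, not a statement about $\theta$-stable parabolics. The paper's route is both cleaner and more robust: since $A_\lieq(0)_{F_{v_\infty}'}$ is infinitesimally unitary, it satisfies $A_\lieq(0)^\vee\cong\overline{A_\lieq(0)}$, so the condition $\FS_{F_{v_\infty}}\neq 0$ is exactly \emph{self-duality} of the module; for case (i) with $\lieq$ a Borel this is classical for $\GL_n$, and for cases (iii)--(vi) it is precisely the content of Corollaries 4.7 and 4.8 of Adams \cite{adams2014}. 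Replacing your geometric conjugacy claim with this self-duality argument closes the gap and is the only substantive change required.
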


For the definition of the groups $\overline{\SO}(2p,2q)$ and $\overline{\SO}^*(4n)$ we refer the reader to \cite{adams2014}. We remark that in the lists (ii) and (iii) the case $q=0$ is allowed (subject to the imposed parity conditions and the exception in case of $\SU(p,q)$).

\begin{proof}
We first remark that since $\lambda=0$ satisfies \eqref{eq:weaklygood} and \eqref{eq:weaklyfair}, the module $A_\lieq(0)_{F_{v_\infty}'}$ is unitarizable, irreducible or zero and has trivial central character. Therefore $A_\lieq(0)$ is absolutely irreducible or zero.

If there is no isomorphism
\begin{equation}
A_\lieq(0)\;\cong\;\overline{A_\lieq(0)},
\label{eq:standardmoduleconjugateiso}
\end{equation}
i.e.\ if $A_\lieq(0)$ is not self-dual, it does not admit a model over $F$ by Proposition \ref{prop:fsreal}, hence $F_0=F'$ in Proposition \ref{prop:standardmodules} and (a) follows in this case. Recall that if $G$ is quasi-split, the field of rationality of absolutely irreducible rational $G$-modules agrees with the field of definition, and (b) follows.

Hence we may assume there is an isomorphism \eqref{eq:standardmoduleconjugateiso}. Then the same is true over $F_{v_\infty}'$.

By construction the bottom layer
$$
B\;\subseteq\;A_{\lieq}(0)_{F'}
$$
is an irreducible $K'$-module, which is generated by an absolutely irreducible ${K'}^0$-module of highest weight $2\rho(\lieu\cap\liep)$. In particular $B$ is a self-dual $K'$-module over $F'$.

By Proposition \ref{prop:minimalktypedescent} and hypothesis (II) we have
$$
\FS_F(A_\lieq(0)_{F'})
\;=\;
\FS_{F_{v_\infty}}(A_\lieq(0)_{F_{v_\infty}'})
\;=\;
\FS_{F_{v_\infty}}(B_{F_{v_\infty}'}).
$$
Therefore, to prove statement (a) it suffices to show that
$$
\FS_{F_{v_\infty}}(A_\lieq(0)_{F_{v_\infty}'})
\;=\;-1
$$
does not occur.

By the multiplicativity of Frobenius-Schur indicators for products of groups and outer products of modules we may assume without loss of generality that $r = 1$, and treat each case in (IV) separately.

In cases (i) and (ii), if $H_{v_\infty,i}\neq\SL_{n}(\RR),\SL_{2n+1}(\CC),\SU(p,q)$, the maximal compact subgroup $M_{v_\infty,i}$ of $H_{v_\infty,i}$ is isomorphic to a product of compact unitary groups, and (full) orthogonal groups. The maximal compact subgroup of $\SL_{n}(\RR)$ is $\SO(n)$, that of $\SL_{2n+1}(\CC)$ is $\SU(2n+1)$. Therefore, the claim follows by Proposition \ref{prop:unitarygroups}.

We remark that for even $n$ in case (i), Theorem A of \cite{prasadramakrishnan2012} together with Theorem \ref{thm:fsunitary} also shows that the indicator is $1$, and the case of odd $n$ may be settled by the same way by consideration of the representation of the local Weil group attached to $A_\lieq(0)_{F_{v_\infty}}$.

The cases (iii), (iv), (v) and (vi), are all included in the list on page 2137 of \cite{adams2014}, hence by Theorem 5.8 of loc.\ cit.\ the Frobenius-Schur indicator $\FS_{F_{v_\infty}}(A_\lieq(0)_{F_{v_\infty}'})$ agrees with the central character evaluated at a specific element. Since the central character is trivial in our case, the claim follows again.

Case (vii) follows trivially in the first subcase, since the indicator is a square, hence equals $1$. The same argument applies to the second subcase: The Frobenius-Schur indicator of a module and its complex conjugate agree by definition, and by the Homological Base Change Theorem the two resulting standard modules on the two direct factors under consideration are complex conjugates of each other.

Case (viii) is a consequence of Remark \ref{rmk:adjointderivedgroup} and Lemma \ref{lem:extendenboreltits}, or Proposition \ref{prop:unconditionaladmissiblegroups}.

Statement (b) is a consequence of Proposition \ref{prop:standardmodules}.

As to statement (c), we claim that the hypothesis imply that $A_\lieq(0)_{F_{v_\infty}'}$ is self-dual.

In case (i), $A_\lieq(0)_{F_{v_\infty}'}$ is be self-dual if $\lieq$ is a Borel, because in this case $\overline{\lieq}$ is $K$-conjugate to $\lieq$. In all other cases (cases (iii), (iv), (v) and (vi)), Corollary 4.7 and Corollary 4.8 of \cite{adams2014} applies and shows that $A_\lieq(0)_{F_{v_\infty}'}$ is again self-dual. This concludes the proof of the Theorem.
\end{proof}

\section{Applications to Automorphic Representations}\label{sec:applications}

In this section we show the existence of global rational structures on spaces of cusp forms for several types of groups. In the case of $\GL_n$ we determine the optimal fields of definition and show that they agree with the fields of rationality in this case. For general reductive groups the situation is more delicate and we determine bounds for the degree of the field of definition over the field of rationality in the general case. We also discuss groups of Hermitian type.

We limit our treatment here to cuspidal representations, even though our methods admit generalizations to residual representations and Eisenstein series.

\subsection{Automorphic representations of reductive groups}

From now on we assume $G$ to be a connected linear reductive group over $\QQ$ and fix a subgroup $K\subseteq G$ defined over a finite extension field $\QQ_K/\QQ$ admitting a real place $v_\infty$ that we suppose to be fixed. Then $v_\infty$ corresponds to a fixed embedding $\QQ_K\to \RR$, and we suppose that $K(\RR)\subseteq G(\RR)$ is maximal compact. Recall that we assume implicitly that the connected component $K^0$ of the identity of $K$ is defined over $\QQ_K$ as well. We assume fixed a quadratic extension $\QQ_K'/\QQ_K$ in which $v_\infty$ ramifies. Write $Z$ for the center of $G$ and we let $K_\infty$ denote a closed subgroup of $G$ between $K$ and $Z\cdot K$ defined over $\QQ_K$.

We write $\Adeles_F$ for the ring of ad\`eles of a number field $F$ (a finite extension of $\QQ$) and let $\Adeles_F^{(\infty)}=F\otimes_\QQ\Adeles_\QQ$ denote the finite part of $\Adeles_F$. For notational simplicity we set $\Adeles:=\Adeles_\QQ$.

An irreducible automorphic representation of $G$ is an irreducible constituent $\Pi$ of the Hilbert space
\begin{equation}
L^2(G(\QQ)\backslash G(\Adeles),\omega)
\label{eq:l2formsomega}
\end{equation}
of forms transforming under $Z(\Adeles)$ according to an id\`ele class character $\omega:Z(\QQ)\backslash Z(\Adeles)\to\CC^\times$ and which are square integrable modulo center. We denote the subspace of cuspidal forms by
\begin{equation}
L_0^2(G(\QQ)\backslash G(\Adeles),\omega).
\label{eq:l2cuspformsomega}
\end{equation}
This space turns out to be a direct summand in \eqref{eq:l2formsomega}.

For such an irreducible $\Pi$ we consider its factorization $\Pi=\Pi_\infty\otimes\Pi_{(\infty)}$ into a representation of $G(\RR)$ and of $G(\Adeles^{(\infty)})$. Since we are only concerned with (essentially) unitary representations, we may pass from $\Pi_\infty$ to the space of smooth vectors $\Pi_\infty^{\rm CW}$ which is an admissible Fr\'echet representation of moderate growth of $G(\RR)$, i.e.\ $\Pi_\infty^{\rm CW}$ is the Casselman-Wallach completion of the underlying $(\lieg,K)$-module $\Pi_\infty^{(K)}$ of $K$-finite vectors in $\Pi_{(\infty)}$ and carries an action of $\lieg$ as well as of $G(\RR)$. Then $\Pi_\infty^{\rm CW}\otimes\Pi_{(\infty)}\subseteq\Pi$ is a dense subspace of $\Pi$, and $\Pi_\infty^{\rm CW}\otimes\Pi_{(\infty)}$ has the virtue of being canonically defined in terms of the subspace of $K$-finite vectors.

Our goal is to show the existence of rational structures on suitable subspaces of \eqref{eq:l2cuspformsomega}, since the full space \eqref{eq:l2cuspformsomega} turns out to be too large, even if $\omega$ is of type \lq{}$A_0$\rq{} in the generalized sense of \cite{buzzardgee2011}.

We consider for each compact open $K_{(\infty)}\subseteq G(\Adeles^{(\infty)})$ the locally symmetric space
$$
\mathscr
 X_G(K_{(\infty)})\;:=\;G(\QQ)\backslash G(\Adeles)/K_{(\infty)}\cdot K_\infty(\RR).
$$
Given any rational representation $M$ of $G$, defined over a field $\QQ_M\subseteq\CC$, we have for any intermediate field $\QQ_M\subseteq E\subseteq\CC$ an associated sheaf $\underline{M}(E)$ of $E$-vector spaces on $\mathscr X_G(K_{(\infty)})$. Whenever $K_{(\infty)}$ is sufficiently small and gives rise to a torsion free arithmetic group (modulo the central subgroup $K_\infty(\RR)\cap Z(\RR)$), $\mathscr X_G(K_{(\infty)})$ is a manifold. We henceforth always assume that this condition is satisfied.

Several cohomology theories are of interest to us. We have the standard sheaf cohomology
\begin{equation}
H^\bullet(\mathscr X_G(K_{(\infty)});\underline{M^\vee}(E)),
\label{eq:singularcohomology}
\end{equation}
which agrees with the singular cohomology for the local system associated to the representation $M(E)$. Similarly we have cohomology with compact supports
$$
H_{\rm c}^\bullet(\mathscr X_G(K_{(\infty)});\underline{M^\vee}(E)),
$$
and its canonical image in \eqref{eq:singularcohomology} is known as inner cohomology
\begin{equation}
H_{\rm !}^\bullet(\mathscr X_G(K_{(\infty)});\underline{M^\vee}(E)).
\label{eq:innercohomology}
\end{equation}
The existence of the Borel-Serre compactification implies that we have a distinguished triangle
$$
H_{\rm !}^\bullet(\mathscr X_G(K_{(\infty)});\underline{M^\vee}(E))\to
H^\bullet(\mathscr X_G(K_{(\infty)});\underline{M^\vee}(E))\to
H^\bullet(\partial\mathscr X_G(K_{(\infty)});\underline{M^\vee}(E)).
$$
This characterizes inner cohomology as the kernel of the restriction map to the boundary $\partial\mathscr X_G(K_{(\infty)})$ of the Borel-Serre compactification.

For $E=\CC$ the space \eqref{eq:singularcohomology} admits a description via automorphic representations of $G$. By Theorem 18 in \cite{franke1998} and Theorem 2.3 in \cite{frankeschwermer1998}, significantly generalizing pioneering work of Harder \cite{harder1975,harder1987}, we have a decomposition
\begin{equation}
\varinjlim_{K_{(\infty)}}H^\bullet(\mathscr X_G(K_{(\infty)});\underline{M^\vee}(\CC))\;\cong\;
\widehat{\bigoplus_{\{P\}}}
\widehat{\bigoplus_{\varphi\in\Phi_{M,\{P\}}}}
H^\bullet(\lieg,K_\infty;\mathcal A_{M,\{P\},\varphi}\otimes M^\vee)(\chi_{M}),
\label{eq:cohomologyspectraldata}
\end{equation}
where the outer sum ranges over associate classes $\{P\}$ of $\QQ$-rational parabolic subgroups $P\subseteq G$ and in the inner sum $\varphi=(\varphi_Q)_{Q\in\{P\}}$ ranges over the set $\Phi_{M,\{P\}}$ of classes $\varphi_Q$ of associate irreducible cuspidal automorphic representations of Levi components of parabolics $Q\in\{P\}$, subject to certain elementary conditions, most importantly that the infinitesimal character of $\varphi_Q$ occurs in $M^\vee$. Likewise $(\chi_{M})$ denotes a suitable twist of the Hecke action depending on $M$, in order to render the isomorphism \eqref{eq:cohomologyspectraldata} $G(\Adeles^{(\infty)})$-equivariant.

Finally $\mathcal A_{M,\{P\},\varphi}$ is the subspace of the space of smooth automorphic forms on $G(\Adeles)$ with constant terms supported at the various $Q\in\{P\}$ and contained in the sum of the elements of the finite set $\varphi_Q$. Equivalently the space $\mathcal A_{M,\{P\},\varphi}$ is spanned by all residues and derivatives of cuspidal Eisenstein series associated to the cuspidal datum $(\{P\},\varphi)$.

The isomorphism \eqref{eq:cohomologyspectraldata} is a topological incarnation of Langlands' decomposition of the space of automorphic forms \eqref{eq:l2formsomega} into the Hilbert direct sum of spaces $\mathcal A_{M,\{P\},\varphi}$ (cf.\ \cite{langlands1976,moeglinwaldspurger1994}).

By \eqref{eq:cohomologyspectraldata} an irreducible cuspidal automorphic representation $\Pi$ of $G$ contributes to \eqref{eq:singularcohomology} in degree $q$ for some sufficiently small $K_{(\infty)}$ if and only if
\begin{equation}
H^q(\lieg_\CC,K_\infty(\CC);\Pi_\infty^{\rm CW}\otimes M^\vee(\CC))\neq 0.
\label{eq:gkcohomology}
\end{equation}
We remark that the space \eqref{eq:gkcohomology} does not change if we replace $\Pi_\infty^{\rm CW}$ by the subspace of $K$-finite vectors $\Pi_\infty^{(K)}$. If condition \eqref{eq:gkcohomology} is satisfied, we call $\Pi$ {\em cohomological with respect to $M$}, or simply {\em cohomological}.

As outlined in \eqref{eq:cohomologyspectraldata}, in general the cuspidal, residual and continuous spectrum of $G(\Adeles)$ contribute to \eqref{eq:singularcohomology}, whereas only the discrete spectrum (i.e.\ residual and cuspidal) contributes to \eqref{eq:innercohomology}.

The contribution coming from cuspidal automorphic representations (the summand $\{P\}=\{G\}$ on the right hand side of \eqref{eq:cohomologyspectraldata}) gives rise to the cuspidal cohomology
\begin{equation}
H_{\rm cusp}^\bullet(\mathscr X_G(K_{(\infty)});\underline{M^\vee}(\CC)).
\label{eq:cuspidalcohomology}
\end{equation}
Cuspidal cohomology is a subspace of inner cohomology \eqref{eq:innercohomology}.

In general it is unclear if the individual summands of \eqref{eq:cohomologyspectraldata} are rational subspaces of \eqref{eq:singularcohomology}, i.e.\ preserved under the natural action of $\Aut(\CC/\QQ)$. Currently this is only known for $G=\res_{F/\QQ}\GL_n$ (cf.\ Theorem 20 in \cite{franke1998}), which is a significant generalization of Clozel's rationality result \cite{clozel1990} for cuspidal cohomology in this case. For a rationality statement valid for cuspidal cohomology for general reductive $G$ we refer to Proposition \ref{prop:cuspidalrationality} below.

\subsection{Rationality considerations for Hecke modules}

We remark that by definition inner cohomology is a $\QQ_M$-rational subspace of the full cohomology \eqref{eq:singularcohomology}. As outlined above, the same is expected to be the case for the cuspidal cohomology \eqref{eq:cuspidalcohomology}. In general this is unknown and intimately related to rationality properties of cohomological cusp forms. In general we have

\begin{proposition}[Li-Schwermer]\label{prop:cuspidalrationality}
Assume that every absolutely irreducible constituent of $M$ is of {\em regular} highest weight. Then \eqref{eq:cuspidalcohomology} is a $\QQ_M$-rational subspace of \eqref{eq:singularcohomology} and in particular carries a natural $\QQ_M$-structure. Furthermore, any cuspidal representation contributing to \eqref{eq:singularcohomology} is tempered at infinity.
\end{proposition}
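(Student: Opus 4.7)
The plan is to handle the two assertions separately, deducing each from the Franke--Schwermer decomposition \eqref{eq:cohomologyspectraldata} together with one key vanishing input.

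For the temperedness statement, fix a cuspidal $\Pi$ contributing to \eqref{eq:cuspidalcohomology}. Its archimedean component $\Pi_\infty$ is unitarisable and cohomological against $M^\vee(\CC)$. Matching of infinitesimal characters forces the infinitesimal character of $\Pi_\infty$ to equal that of $M^\vee$, which by the regularity of each absolutely irreducible constituent of $M$ is regular. By Vogan--Zuckerman classification $\Pi_\infty^{(K)} \cong A_\lieq(\mu)_\CC$ for some $\theta$-stable $\lieq = \liel + \lieu$ and weight $\mu$; regularity of the infinitesimal character places $\mu+\rho(\lieu)$ in the good range. The combination of unitarity and regular integral infinitesimal character then puts us in the scope of the Salamanca-Riba classification, which identifies $A_\lieq(\mu)_\CC$ as a fundamental series representation, in particular as tempered.

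For the rationality statement, I would invoke the Li--Schwermer vanishing theorem: when the coefficient system has regular highest weight, no residual Eisenstein class survives in inner cohomology. In the Franke--Schwermer decomposition \eqref{eq:cohomologyspectraldata} of $H^\bullet(\mathscr X_G(K_{(\infty)}); \underline{M^\vee}(\CC))$, only the cuspidal summand (the $\{P\}=\{G\}$ term) can lie in the kernel of the restriction to the Borel--Serre boundary; all other $\{P\}$-components map injectively to boundary cohomology by Franke--Schwermer, and under regularity Li--Schwermer show that the residual summands contributing to those components extend non-trivially to the boundary. Hence
\[
H^\bullet_!(\mathscr X_G(K_{(\infty)}); \underline{M^\vee}(\CC)) \;=\; H^\bullet_{\rm cusp}(\mathscr X_G(K_{(\infty)}); \underline{M^\vee}(\CC)).
\]
Since inner cohomology, being the image of $H^\bullet_{\rm c}$ in $H^\bullet$, inherits a canonical $\QQ_M$-structure from the Betti theory of $\underline{M^\vee}(\QQ_M)$, this identification transfers a $\QQ_M$-structure onto cuspidal cohomology.

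Finally I would reduce from the absolutely irreducible case treated by Li--Schwermer to the general case stated in the proposition by writing $M\otimes_{\QQ_M}\overline{\QQ_M} \;\cong\; \bigoplus_j M_j$ as a sum of absolutely irreducible $G$-modules (each still of regular highest weight by hypothesis) and descending the cohomological identifications along the natural Galois action that permutes the $M_j$, using the Galois equivariance of the Hecke action on cohomology and of the Franke--Schwermer decomposition, together with Proposition \ref{prop:hombasechange}. The main obstacle is the Li--Schwermer vanishing itself, whose proof requires delicate spectral estimates to rule out contributions from the residual spectrum of Levi subgroups in the presence of a regular coefficient system; once that is granted, the identification with the $\QQ_M$-rational subspace $H^\bullet_!$ and the Vogan--Zuckerman/Salamanca-Riba input for temperedness are rather formal.
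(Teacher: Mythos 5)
You have the correct overall shape: reduce the rationality assertion to the identity between inner and cuspidal cohomology, and obtain that identity from temperedness at infinity, which is forced by regularity of the highest weight of $M$. This is also the paper's strategy — it cites Li--Schwermer Proposition~5.2 directly for the temperedness statement, and for rationality it combines the Vogan--Zuckerman classification with a theorem of Wallach. However, two of your intermediate steps are off.

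First, Salamanca-Riba is the wrong input for temperedness. Her theorem identifies any irreducible unitary Harish-Chandra module with strongly regular integral infinitesimal character as an $A_\lieq(\lambda)$, but such modules need not be fundamental series representations: $\lieq$ can be an arbitrary $\theta$-stable parabolic, and $A_\lieq(\lambda)$ for a non-Borel $\lieq$ is generally not tempered. What regularity of the highest weight of the \emph{coefficient system} actually buys you — already through the Vogan--Zuckerman cohomological classification, with no recourse to Salamanca-Riba — is that the only $\theta$-stable parabolics which can produce nonzero $(\lieg,K_\infty)$-cohomology against $M^\vee$ are $\theta$-stable Borels, so that $\liel\subseteq\liek$ and $A_\lieq(\mu)$ is a (limit of) discrete series, hence tempered. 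This is precisely the content of Li--Schwermer, Proposition~4.2, the reference the paper gives parenthetically.

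Second, the passage from temperedness to ``inner $=$ cuspidal'' is achieved in the paper by Wallach's theorem: a tempered representation occurring in the discrete automorphic spectrum is already cuspidal. That single statement excludes the residual spectrum from inner cohomology once temperedness is in hand, with no analysis of restriction to the Borel--Serre boundary required. The claim in your sketch that ``all other $\{P\}$-components map injectively to boundary cohomology by Franke--Schwermer'' is false in general — residual Eisenstein classes can very well land in inner cohomology (e.g.\ Speh representations for $\GL(n)$) — and what rescues the statement under a regular coefficient system is exactly the temperedness-plus-Wallach argument you circle around without using. Your concluding Galois descent for reducible $M$ is harmless but unnecessary: once the argument for an absolutely irreducible constituent is in place, it applies verbatim to the direct sum.
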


\begin{remark}
The hypothesis of Proposition \ref{prop:cuspidalrationality} implies that Franke's Eisenstein spectral sequence in Theorem 19 of \cite{franke1998} degenerates.
\end{remark}

\begin{proof}
The last statement is contained in Proposition 5.2 in \cite{lischwermer2004}. For the first part, since any highest weight of $M$ is regular, an elementary consequence of the Vogan-Zuckerman classification of the representations $\Pi_\infty^{\rm CW}$ of $G(\RR)$ satisfying \eqref{eq:gkcohomology} (cf.\ \cite{voganzuckerman1984}) is that $\Pi_\infty^{\rm CW}$ must be tempered (i.e.\ cohomologically induced from a $\theta$-stable Borel, cf.\ Proposition 4.2 in \cite{lischwermer2004}). Therefore, any representation $\Pi$ contributing to inner cohomology must be cuspidal by \cite{wallach1984}, since it contributes to the discrete spectrum and is tempered. Therefore cuspidal cohomology agrees with inner cohomology in this case and the claim follows.
\end{proof}

We remark that if we consider the space
\begin{equation}
L_0^2(G(\QQ)\backslash G(\Adeles);M(\CC))
\label{eq:gcuspforms}
\end{equation}
generated by cusp forms of $G$ which generate irreducible representations contributing to cohomology with coefficients in $M^\vee(\CC)$, then we have an isomorphism
$$
H^\bullet(\lieg_\CC,K_\infty(\CC);L_0^2(G(\QQ)\backslash G(\Adeles)^{(K)};M(\CC))\otimes M^\vee(\CC))
$$
\begin{equation}
\to
\varinjlim_{K_{(\infty)}}H_{\rm cusp}^\bullet(\mathscr X_G(K_{(\infty)});\underline{M^\vee}(\CC))
\label{eq:cuspidalcohomologyiso}
\end{equation}
of $G(\Adeles^{(\infty)})$-modules. Under the assumptions of Proposition \ref{prop:cuspidalrationality}, the right hand side is defined over $\QQ_M$, and we obtain a $\QQ_M$-structure on the $(\lieg,K)$-cohomology of \eqref{eq:gcuspforms}. One of our aims is to extend this rational structure to the entire space of $K$-finite cusp forms in \eqref{eq:gcuspforms}.

Due to the possible variation of the infinity component of $\Pi$ contributing to \eqref{eq:gcuspforms}, we are led to consider finer multiplicities and to prove the existence of rational structures on irreducible representations. The latter will be achieved in Theorems \ref{thm:finitedefinition} and \ref{thm:gglobalrational} below.

For each irreducible admissible representation $\Pi$ of $G(\Adeles)$ we consider its finite multiplicity $m_{L^2_0}(\Pi)$ in \eqref{eq:gcuspforms}. In each degree $q$ we have a cohomological multiplicity
$$
m_{\rm coh}^{M,q}(\Pi_\infty)\;:=\;\dim
H^q(\lieg_\CC,K_\infty(\CC);\Pi_\infty^{\rm CW}\otimes M^\vee(\CC)).
$$
Then, by the above isomorphism, the multiplicity $m_{\rm cusp}(\Pi_{(\infty)})$ of the finite part contributing to the inductive limit of the cuspidal cohomology spaces \eqref{eq:cuspidalcohomology} of finite level in degree $q$ is explicitly given by
\begin{equation}
m_{\rm cusp}^{M,q}(\Pi_{(\infty)})\;=\;
\sum_{\Pi_\infty^{\rm CW}}
m_{\rm coh}^{M,q}(\Pi_\infty^{\rm CW})\cdot
m_{L^2_0}(\Pi_\infty^{\rm CW}\otimes\Pi_{(\infty)}),
\label{eq:cuspidalmultiplicityformula}
\end{equation}
where $\Pi_{\infty}^{\rm CW}$ runs over isomorphism classes of irreducible Casselman-Wallach representations of $G(\RR)$. In this optic we may attach to $\Pi_{(\infty)}$ a set of isomorphism classes
\begin{equation}
\Phi_{\infty}^{\rm cusp}(\Pi_{(\infty)})\;:=\;
\{\Pi_\infty^{\rm CW}\mid
\exists q:
m_{\rm coh}^{M,q}(\Pi_\infty^{\rm CW})\cdot
m_{L^2_0}(\Pi_\infty^{\rm CW}\otimes\Pi_{(\infty)})\neq 0\}/\sim.
\label{eq:cohomologicallpacket}
\end{equation}

For each $\Pi_\infty^{\rm CW}$ occuring in \eqref{eq:cohomologicallpacket}, there is a minimal $q_{\rm min}$ and a maximal $q_{\rm max}$ degree in which $\Pi_\infty^{\rm CW}$ contributes. By Vogan-Zuckerman \cite{voganzuckerman1984} (cf.\ Corollary \ref{cor:standardmodulecohomology}) we have
$$
m_{\rm coh}^{M,q_{\rm min}}(\Pi_\infty^{\rm CW})\;=\;
m_{\rm coh}^{M,q_{\rm max}}(\Pi_\infty^{\rm CW})\;=\;1,
$$
and for each $q_{\rm min}\leq q\leq q_{\rm max}$,
$$
m_{\rm coh}^{M,q}(\Pi_\infty^{\rm CW})\;=\;
{{q_{\rm max}-q_{\rm min}}\choose{q-q_{\rm min}}}\;>\;0,
$$
since the $(\lieg,K_\infty)$-cohomology is an exterior algebra. Furthermore the length of the range of degrees where $\Pi_\infty^{\rm CW}$ contributes is minimal if $\Pi_\infty^{\rm CW}$ is tempered.

We remark that formula \eqref{eq:cuspidalmultiplicityformula} does in general not extend to inner cohomology, since the discrete spectrum fails to inject into cohomology in general.

\subsection{Rational structures on irreducible representations}

The previous discussion shows that, in order to extend the natural rational structure on cuspidal cohomology to a global rational structure, we are led to consider rational structures on irreducible $G(\Adeles^{(\infty)})$-modules. The only known case where we have such a rational structure (and even an optimal one) is $G=\res_{F/\QQ}\GL(n)$ for a number field $F$ due to Clozel \cite{clozel1990}.

Shin and Templier showed in Proposition 2.15 in \cite{shintemplier2014} that for general $G$ and general cohomological cuspidal $\Pi$ the field of rationality $\QQ(\Pi)$ of $\Pi^{(\infty)}$ is a number field. However They did not construct a rational structure on the latter space defined over a number field. The authors of loc.\ cit.\ exhibit only a $\overline{\QQ}$-rational structure. Our first task is to prove the existence of a rational structure defined over a finite extension of the field of rationality $\QQ(\Pi)$.

For any field extension $E/\QQ$ we consider the Hecke algebra $\mathcal H_E(G(\Adeles^{(\infty)});K_{(\infty)})$ of $E$-valued bi-invariant compactly supported functions. For any simple Hecke module $S$ over $E$ and any finite-dimensional Hecke module $H$ we write $H[S]$ for the $S$-isotypic component of $H$, which is the linear span of all homomorphic images of $S$ inside $H$. We let $m(S;H)$ denote the {\em length} of $H[S]$, which is the multiplicity of $S$ in $H$ as a {\em submodule}. We remark that this notion of multiplicity is not additive in short exact sequences since we do not suppose $H$ to be semisimple.

\begin{remark}\label{rmk:qminqpi}
For $\GL(n)$ multiplicity one implies $\QQ_M\subseteq\QQ(\Pi)$ whenever $M$ is absolutely irreducible. For general $G$ the field $\QQ_M$ is not a subfield of $\QQ(\Pi)$.
\end{remark}

We write $\QQ_M(\Pi)$ for the composite $\QQ_M\cdot\QQ(\Pi)$, and likewise for a field of definition $\QQ(\Pi)^{\rm rat}$ of $\Pi_{(\infty)}$.

\begin{theorem}\label{thm:finitedefinition}
Let $\Pi$ be any irreducible cohomological cuspidal automorphic representation of $G(\Adeles)$. Then
\begin{itemize}
\item[(a)] The finite part $\Pi_{(\infty)}$ is defined over a finite extension $\QQ(\Pi)^{\rm rat}/\QQ(\Pi)$.
\item[(b)] Any $\QQ(\Pi)^{\rm rat}$-rational structure on $\Pi_{(\infty)}$ is unique up to complex homotheties.
\item[(c)] For any rational $G$-representation $M$ defined over $\QQ_M$, if we assume that $[\QQ_M(\Pi)^{\rm rat}:\QQ_M(\Pi)]$ is minimal, then
\begin{equation}
[\QQ_M(\Pi)^{\rm rat}:\QQ_M(\Pi)]\;\mid\;
m\left(\Pi_{(\infty)}^{K_{(\infty)}};H_{\bullet}^q(\mathscr X_G(K_{(\infty)});\underline{M^\vee}(\CC))\right)
\label{eq:degreemultiplicity}
\end{equation}
for every degree $q$ and any sufficiently small compact open $K_{(\infty)}$, and $\bullet\in\{-,!\}$ and also $\bullet={\rm cusp}$ if cuspidal cohomology \eqref{eq:cuspidalcohomology} admits a $\QQ_M$-structure.
\item[(d)] Under the hypotheses of (c), if $\Pi_{(\infty),\QQ(\Pi)^{\rm rat}}$ is a model for $\Pi_{(\infty)}$ over $\QQ(\Pi)^{\rm rat}$, then for any subfield $\QQ_M(\Pi)\subseteq E\subseteq\QQ_M(\Pi)^{\rm rat}$ or $\QQ_M\subseteq E\subseteq\QQ_M(\Pi)$
$$
\res_{\QQ_M(\Pi)^{\rm rat}/E}\Pi_{(\infty),\QQ_M(\Pi)^{\rm rat}}
$$
is an irreducible $G(\Adeles_{(\infty)})$-module over $E$ which embeds into
\begin{equation}
H_{\bullet}^q(\mathscr X_G(K_{(\infty)});\underline{M^\vee}(E)).
\label{eq:hbullet}
\end{equation}
\end{itemize}
\end{theorem}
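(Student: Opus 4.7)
The plan is to construct a model of $\Pi_{(\infty)}$ over a number field by extracting one from the rational structure carried by the $\Pi_{(\infty)}$-isotypic component of cohomology. Fix a sufficiently small compact open $K_{(\infty)}\subseteq G(\Adeles^{(\infty)})$ with $\Pi_{(\infty)}^{K_{(\infty)}}\neq 0$. By the standard Bernstein-type equivalence between admissible smooth $G(\Adeles^{(\infty)})$-representations generated by their $K_{(\infty)}$-fixed vectors and modules over $\mathcal H:=\mathcal H(G(\Adeles^{(\infty)}),K_{(\infty)})$---an equivalence compatible with base change---it suffices to construct a model of the finite-dimensional Hecke module $\Pi_{(\infty)}^{K_{(\infty)}}$. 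Consider the $\QQ_M$-rational Hecke module
\[
V \;:=\; H_{\bullet}^q(\mathscr X_G(K_{(\infty)});\underline{M^\vee}(\QQ_M))
\]
for $\bullet\in\{-,!\}$, and under the hypothesis in (iii) also for $\bullet=\mathrm{cusp}$. By the definition of the field of rationality together with Shin--Templier's result \cite{shintemplier2014} that $\QQ(\Pi)$ is a number field, the $\Pi_{(\infty)}^{K_{(\infty)}}$-isotypic subspace of $V\otimes_{\QQ_M}\CC$ is stable under $\Aut(\CC/\QQ_M(\Pi))$; Galois descent then yields a $\QQ_M(\Pi)$-rational Hecke submodule $W\subseteq V\otimes_{\QQ_M}\QQ_M(\Pi)$ whose complexification recovers the isotypic component.

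The action of $\mathcal H$ on $W$ factors through a finite-dimensional $\QQ_M(\Pi)$-subalgebra $A\subseteq\End_{\QQ_M(\Pi)}(W)$ whose complexification, by the Jacobson density theorem applied to the simple $\CC$-module $\Pi_{(\infty)}^{K_{(\infty)}}$, equals $\End_\CC(\Pi_{(\infty)}^{K_{(\infty)}})\cong M_n(\CC)$. Hence $A$ is a central simple $\QQ_M(\Pi)$-algebra, of the form $M_k(D)$ for a central division algebra $D/\QQ_M(\Pi)$ of index $e$. Any maximal subfield $L\subseteq D$ is a number field of degree $e$ over $\QQ_M(\Pi)$ that splits $A$, so $W\otimes_{\QQ_M(\Pi)}L$ decomposes into copies of the unique simple $A\otimes L$-module $S$, and $S$ furnishes a model of $\Pi_{(\infty)}^{K_{(\infty)}}$ over $L$; this proves (i) and identifies $\QQ_M(\Pi)^{\rm rat}$ with $L$ under the minimality hypothesis. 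If $r$ denotes the $A$-length of $W$, the $\CC$-length of the isotypic component is $re=m$, so $[\QQ_M(\Pi)^{\rm rat}:\QQ_M(\Pi)]=e$ divides $m$, proving (iii). For (ii), the Hecke endomorphism ring of $S$ is $L$ (since $A\otimes L\cong M_{ke}(L)$), so $S$ is absolutely irreducible over $L$ and satisfies the Schur condition \eqref{eq:schurcondition}; Proposition \ref{prop:uniquemodels} together with Corollary \ref{cor:schur} then imply that any two $L$-rational models of $\Pi_{(\infty)}^{K_{(\infty)}}$ are isomorphic by an isomorphism unique up to a complex scalar.

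For (iv), irreducibility of $\res_{\QQ_M(\Pi)^{\rm rat}/E}\Pi_{(\infty),\QQ_M(\Pi)^{\rm rat}}$ follows from the minimality of $[\QQ_M(\Pi)^{\rm rat}:\QQ_M(\Pi)]$: a proper $E$-rational submodule would, upon base change, exhibit $\Pi_{(\infty)}^{K_{(\infty)}}$ as defined over a proper subfield of $\QQ_M(\Pi)^{\rm rat}$ strictly smaller in degree over $\QQ_M(\Pi)$ (when $\QQ_M(\Pi)\subseteq E$), contradicting minimality. In the range $\QQ_M\subseteq E\subseteq\QQ_M(\Pi)$ the same conclusion applies after first extending scalars to $\QQ_M(\Pi)$ and invoking the fact that the Galois conjugates $\Pi_{(\infty)}^\sigma$ for $\sigma\in\Gal(\QQ_M(\Pi)/E)\setminus\{1\}$ are pairwise non-isomorphic, so any $E$-rational decomposition would break Galois symmetry and again produce a model over a smaller field contradicting minimality. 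The embedding into \eqref{eq:hbullet} is inherited from the construction of the model as a subspace of $V\otimes_{\QQ_M}E$. The principal obstacle is the identification of the central simple algebra $A$ and the equality of its index with $[\QQ_M(\Pi)^{\rm rat}:\QQ_M(\Pi)]$; once $A$ is pinned down, the remaining arguments are formal consequences of standard results on central simple algebras together with the descent framework developed in sections \ref{sec:homology} and \ref{sec:geometric}.
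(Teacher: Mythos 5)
Your proof takes essentially the same route as the paper, but through the Morita-dual lens: where the paper works with the division algebra $\End_{\mathcal H}(\Pi_{\QQ_M(\Pi)})$ of a simple $\QQ_M(\Pi)$-rational Hecke submodule and takes a maximal subfield of it, you work with the central simple algebra $A$ equal to the image of the Hecke algebra in $\End_{\QQ_M(\Pi)}(W)$ and take a maximal subfield of its underlying division algebra $D$. Since $\End_A(S)\cong D^{\mathrm{op}}$ for the simple $A$-module $S$, the two division algebras are opposite, so the splitting fields (hence the field $\QQ_M(\Pi)^{\rm rat}$ and the divisibility relation) coincide. Your argument does carry one genuine streamlining: passing through the image $A$ of the Hecke algebra and invoking Jacobson density forces $A\otimes\CC\cong M_n(\CC)$, hence $Z(A)=\QQ_M(\Pi)$ directly, whereas the paper explicitly entertains the possibility that the center $C$ of $\End(\Pi_{\QQ_M(\Pi)})$ is a proper extension of $\QQ_M(\Pi)$ and carries the factor $[C:\QQ_M(\Pi)]$ through the computation; your perspective shows that factor is always $1$.

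Two spots in (iv) are argued too loosely. In the range $\QQ_M(\Pi)\subseteq E\subseteq \QQ_M(\Pi)^{\rm rat}$ you claim a proper $E$-submodule $N$ would ``exhibit $\Pi_{(\infty)}^{K_{(\infty)}}$ as defined over a proper subfield'' — this is not immediate, since $N\otimes\CC\cong(\Pi_{(\infty)}^{K_{(\infty)}})^j$ for some $1\le j<[L:E]$ and such an isotypic model need not itself descend. One must re-run the CSA analysis over $E$: the image of $\mathcal H_E$ in $\End_E(N)$ is again a CSA over $E$ with index $e'\mid j<[L:E]$, and a maximal subfield of it furnishes a model of $\Pi_{(\infty)}^{K_{(\infty)}}$ of degree $e'[E:\QQ_M(\Pi)]<[L:\QQ_M(\Pi)]$, contradicting minimality. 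The paper takes the cleaner route: from the dimension count \eqref{eq:degreemultiplicityrelation} it follows that $\res_{L/\QQ_M(\Pi)}\Pi_{(\infty),L}^{K_{(\infty)}}$ is exactly one copy of $\Pi_{\QQ_M(\Pi)}$, hence already irreducible, so no intermediate restriction can be reducible. In the range $\QQ_M\subseteq E\subseteq\QQ_M(\Pi)$ the contradiction you want is not one with minimality of $[\QQ_M(\Pi)^{\rm rat}:\QQ_M(\Pi)]$; it is that $\res_{\QQ_M(\Pi)/E}\Pi_{\QQ_M(\Pi)}$ base-changes to a direct sum of pairwise non-isomorphic Galois conjugates which $\Gal(\QQ_M(\Pi)/E)$ permutes transitively, so there is no proper Galois-stable subsum and hence no proper $E$-rational submodule. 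With these two steps made precise, your argument matches the paper's.
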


We remark that cohomological cuspidal representations $\Pi$ as in Theorem \ref{thm:finitedefinition} are known to be $C$-algebraic in the terminology of \cite{buzzardgee2011}, cf.\ Lemma 2.14 in \cite{shintemplier2014}.

\begin{remark}
The relation \eqref{eq:degreemultiplicity} is a non-empty statement if and only if condition \eqref{eq:finitekinvariants} below is satisfied and if furthermore $\Pi$ contributes to inner (resp.\ cuspidal) cohomology in degree $q$ with coefficients in $M$.
\end{remark}

\begin{remark}
In (c) and (d) we do not assume $M$ to be irreducible. By passing from an absolutely irreducible $M$ for which $\Pi$ contributes to an irreducible submodule of $\res_{\QQ_M/\QQ}M$, we obtain a local system defined over $\QQ$ and the contribution of $M$ in (c) and (d) disappears.
\end{remark}

\begin{remark}
It is not clear that
\begin{equation}
\QQ(\Pi)^{\rm rat}\;=\;\QQ(\Pi)
\label{eq:defratequality}
\end{equation}
in general. Such a statement cannot be deduced from Proposition \ref{prop:cuspidalrationality} for regular cohomological weights by Galois descent, since multiplicity one fails in general, and we currently do not have enough control over preservation of automorphic representations under Galois twists.
\end{remark}

\begin{remark}
For representations which are unramified everywhere \eqref{eq:defratequality} can be achieved (cf.\ Lemma 2.2.3 and Corollary 2.2.4 in \cite{buzzardgee2011}). In particular any possible failure of the equality \eqref{eq:defratequality} would be attributable to ramification of $\Pi_{(\infty)}$.
\end{remark}

\begin{remark}
For $G=\res_{F/\QQ}\GL(n)$, equality \eqref{eq:defratequality} is known and was proven by Clozel who relied on a rational classification of admissible irreducibles at the non-archimedean places, which Clozel achieved in \cite{clozel1990}. We remark that in this case multiplicity one is known for $G$, and therefore Theorem \ref{thm:finitedefinition} provides a direct proof for Clozel's result that the field of definition agrees with the field of rationality in this case without recourse to a rational classification of local representations at finite primes. Furthermore we have $\QQ_M\subseteq\QQ(\Pi)$ in this case.
\end{remark}

\begin{proof}[Proof of Theorem \ref{thm:finitedefinition}]
The proof is a refinement of the argument in the proofs of \cite[Proposition 3.13]{clozel1990} and \cite[Proposition 2.15, (ii)]{shintemplier2014}.

Given $\Pi$ as in the theorem, we choose a sufficiently small compact open $K_{(\infty)}\subseteq G(\Adeles^{(\infty)})$ satisfying
\begin{equation}
\Pi^{K_{(\infty)}}\neq 0.
\label{eq:finitekinvariants}
\end{equation}
Then $\Pi_{(\infty)}^{K_{(\infty)}}$ is an irreducible module for the complex Hecke algebra $\mathcal H_\CC(G(\Adeles^{(\infty)}),K_{(\infty)})$ of compactly supported bi-$K_{(\infty)}$-invariant $\CC$-valued functions on $G(\Adeles^{(\infty)})$. By our hypothesis it occurs as a direct summand in \eqref{eq:hbullet} for some rational $G$-representation $M$ and $E=\CC$. We do not assume that $M$ is absolutely irreducible. In case $\bullet=\lq{}!\rq{}$ we remark that the $\QQ_M$-structure on \eqref{eq:singularcohomology} descends to inner cohomology which therefore inherits a $\QQ_M$-structure as Hecke module for the $\QQ_M$-rational Hecke algebra of $\QQ_M$-rational bi-invariant functions.

We consider the $\Pi_{(\infty)}^{K_{(\infty)}}$-isotypic component
\begin{equation}
H_{\bullet}^q(\mathscr X_G(K_{(\infty)});\underline{M^\vee}(\CC))[
\Pi_{(\infty)}^{K_{(\infty)}}],
\label{eq:bulletisotypiccomponent}
\end{equation}
in a degree $q$ where $\Pi$ contributes non-trivially. Then the space \eqref{eq:bulletisotypiccomponent} is stable under the Galois action of $\Aut(\CC/\QQ_M(\Pi))$ and stable under the Hecke action which commutes with the Galois action because \eqref{eq:hbullet} is a $\QQ_M$-rational Hecke module by our hypothesis. Therefore \eqref{eq:bulletisotypiccomponent} is a $\QQ_M(\Pi)$-rational subspace of \eqref{eq:hbullet} with $\QQ_M(\Pi)$-rational Hecke action.

This claim is true for any compact open $K_{(\infty)}$ satisfying \eqref{eq:finitekinvariants}. Since these rational stuctures are compatible with the restriction maps, passing to the direct limit provides us with a $\QQ_M(\Pi)$-rational structure on the $G(\Adeles^{(\infty)})$-module
\begin{equation}
\varinjlim_{K_{(\infty)}}
H_{\bullet}^q(\mathscr X_G(K_{(\infty)});\underline{M^\vee}(\CC))[
\Pi_{(\infty)}^{K_{(\infty)}}]\;\cong\;
\left(\varinjlim_{K_{(\infty)}}H_{\bullet}^q(\mathscr X_G(K_{(\infty)});\underline{M^\vee}(\CC))\right)[
\Pi_{(\infty)}].
\label{eq:limbulletisotypiccomponent}
\end{equation}
We fix a $\QQ_M(\Pi)$-rational model $H_{\QQ_M(\Pi)}$ of \eqref{eq:limbulletisotypiccomponent}.

The irreducible finite-dimensional Hecke module $\Pi_{(\infty)}^{K_{(\infty)}}$ admits a model $\Pi_{(\infty),\QQ(\Pi)^{\rm rat}}^{K_{(\infty)}}$ over a finite extension $\QQ_M(\Pi)^{\rm rat}$ of $\QQ_M(\Pi)$, a priori possibly depending on $K_{(\infty)}$, and a $\QQ(\Pi)^{\rm rat}$-rational Hecke equivariant embedding
$$
i:\quad \Pi_{(\infty),\QQ(\Pi)^{\rm rat}}^{K_{(\infty)}}\;\to\;
H_{\bullet}^q(\mathscr X_G(K_{(\infty)});\underline{M^\vee}(\QQ(\Pi)^{\rm rat}))[\Pi_{(\infty)}^{K_{(\infty)}}],
$$
for a fixed $K_{(\infty)}$ satisfying \eqref{eq:finitekinvariants}. The map
$$
M\;\mapsto\;M^{K_{(\infty)}}
$$
induces a Galois equivariant lattice isomorphism between the lattice of $G(\Adeles^{(\infty)})$-submodules of \eqref{eq:limbulletisotypiccomponent} and the lattice of Hecke submodules of \eqref{eq:bulletisotypiccomponent}. Therefore there is a unique submodule $M$ of \eqref{eq:limbulletisotypiccomponent} satisfying
$$
M^{K_{(\infty)}}\;=\;i\left(\Pi_{(\infty)}^{K_{(\infty)}}\right).
$$
By Galois descent for vector spaces, this shows that $\Pi_{(\infty)}$ admits a model over $\QQ_M(\Pi)^{\rm rat}$ and (a) follows.

This argument reduces the statements (b) to (d) to equivalent statements about finite-dimensional Hecke modules of level $K_{(\infty)}$.

Uniqueness of the rational structure thus obtained up to complex homotheties follows from Schur's Lemma for irreducible complex $G(\Adeles^{(\infty)})$-modules (or equivalently finite-dimensional Hecke modules) as in the proof of Proposition \ref{prop:uniquemodels}. This proves (b).

As to (c), let $\Pi_{\QQ_M(\Pi)}$ denote a simple $\QQ_M(\Pi)$-rational $\mathcal H_{\QQ_M(\Pi)}(G(\Adeles^{(\infty)}),K_{(\infty)})$-module occuring in $H_{\QQ_M(\Pi)}$. We consider the division algebra $A_{\QQ_M(\Pi)}=\End(\Pi_{\QQ_M(\Pi)})$ over $\QQ_M(\Pi)$. Let
$$
C\;\subseteq\;A_{\QQ_M(\Pi)}
$$
denote the center, and fix an embedding $C\to\CC$ extending the given embedding $\QQ_M(\Pi)\to\CC$. By restriction of the codomain this also fixes an embedding $C\to\overline{\QQ}\subseteq\CC$. On the one hand, by Burnside's Theorem,
$$
A_{\QQ_M(\Pi)}\otimes_{C}\overline{\QQ}\;\cong\;M_n(\overline{\QQ}),
$$
for some $n\geq 1$. On the other hand,
$$
A_{\QQ_M(\Pi)}\otimes_{\QQ_M(\Pi)}\overline{\QQ}\;=
(A_{\QQ_M(\Pi)}\otimes_{\QQ_M(\Pi)}C)
\otimes_{C}\overline{\QQ},
$$
and, as $C$-algebras,
$$
A_{\QQ_M(\Pi)}\otimes_{\QQ_M(\Pi)}C\;\cong\;A_{\QQ_M(\Pi)}^{[C:\QQ_M(\Pi)]}.
$$
Therefore, by the analog of Proposition \ref{prop:hombasechange} for Hecke modules,
$$
\End(\Pi_{\QQ_M(\Pi)}\otimes_{\QQ_M(\Pi)}\overline{\QQ})\;=\;A_{\QQ_M(\Pi)}\otimes_{\QQ_M(\Pi)}\overline{\QQ}\;\cong\;(M_n(\overline{\QQ}))^{[C:\QQ_M(\Pi)]},
$$
where
\begin{equation}
n\cdot[C:\QQ_M(\Pi)]\;=\;m(\Pi_{(\infty)}^{K_{(\infty)}};\Pi_{\QQ_M(\Pi)}).
\label{eq:firstmultiplicityrelation}
\end{equation}
A finite field extension $F/\QQ_M(\Pi)$ is a minimal spliting field for $A_{\QQ_M(\Pi)}$ if and only if there is an embedding $F\to A_{\QQ_M(\Pi)}$ of $\QQ_M(\Pi)$-algebras identifying $F$ with a maximal (commutative) subfield of $A_{\QQ_M(\Pi)}$. Then $F$ contains the center $C$ of $A_{\QQ_M(\Pi)}$ and
$$
\dim_C F\;=\;n,
$$
hence \eqref{eq:firstmultiplicityrelation} gives
\begin{equation}
[F:\QQ_M(\Pi)]\;=\;m(\Pi_{(\infty)}^{K_{(\infty)}};\Pi_{\QQ_M(\Pi)}).
\label{eq:degreemultiplicityrelation}
\end{equation}
Therefore,
$$
[F:\QQ_M(\Pi)]\cdot
m(\Pi_{\QQ_M(\Pi)};H_{\bullet}^q(\mathscr X_G(K_{(\infty)});\underline{M^\vee}(\QQ_M(\Pi))))\;=\;
$$
\begin{equation}
m(\Pi_{(\infty)}^{K_{(\infty)}};H_{\bullet}^q(\mathscr X_G(K_{(\infty)});\underline{M^\vee}(\CC))).
\label{eq:secondmultiplicityrelation}
\end{equation}
This proves (c).

This argument also shows that for any $F$-rational model $\Pi_{(\infty),F}^{K_{(\infty)}}$ of $\Pi_{(\infty)}^{K_{(\infty)}}$ and any subfield $\QQ_M(\Pi)\subseteq E\subseteq F$,
$$
\res_{F/E}\Pi_{(\infty),F}^{K_{(\infty)}}
$$
is an irreducible Hecke module over $E$, for \eqref{eq:degreemultiplicityrelation} implies that
$$
\res_{E/\QQ_M(\Pi)}(
\res_{F/E}\Pi_{(\infty),F}^{K_{(\infty)}}
)\;=\;
\res_{F/\QQ_M(\Pi)}\Pi_{(\infty),F}^{K_{(\infty)}}\;=\;\Pi_{\QQ_M(\Pi)}
$$
is irreducible over $\QQ_M(\Pi)$.

If $\QQ_M\subseteq E\subseteq\QQ_M(\Pi)$, then
$$
\res_{F/E}\Pi_{(\infty),F}^{K_{(\infty)}}\;=\;
\res_{\QQ_M(\Pi)/E}\Pi_{\QQ_M(\Pi)}
$$
is defined over $E$ and decomposes over $\QQ_M(\Pi)$ into $[\QQ_M(\Pi):E]$ pairwise non-isomorphic Galois conjugates of $\Pi_{\QQ_M(\Pi)}$, and therefore is irreducible over $E$. This proves the irreducibility claims in (d).

By the universal property of restriction of scalars and the irreducibility we just proved, every embedding of $\Pi_{(\infty),F}^{K_{(\infty)}}$ into \eqref{eq:hbullet} (for $F$-rational coefficients) naturally induces an embedding of $\res_{F/E}\Pi_{(\infty),F}^{K_{(\infty)}}$ over the subfield $E$.
\end{proof}

\begin{corollary}\label{cor:cuspidalrationaldecomposition}
Let $M$ be a $G$-module defined over $\QQ_M$ and assume that cuspidal cohomology \eqref{eq:cuspidalcohomology} admits a $\QQ_M$-rational structure. Then we have a $\QQ_M$-rational decomposition
\begin{equation}
\varinjlim_{K_{(\infty)}}H_{\rm cusp}^q(\mathscr X_G(K_{(\infty)});\underline{M^\vee}(\QQ_M))\;\cong\;
\bigoplus_{\Pi_{(\infty)}}
m_{\Pi_{(\infty)}}^{M,q}
\res_{\QQ_M(\Pi)^{\rm rat}/\QQ_M}
\Pi_{(\infty),\QQ_M(\Pi)^{\rm rat}}
\label{eq:cuspidalrationaldecomposition}
\end{equation}
into irreducible $G(\Adeles^{(\infty)})$-modules. The sum ranges over irreducible $G(\Adeles^{(\infty)})$-modules contributing to inner cohomology in degree $q$ over $\CC$, $\QQ_M(\Pi)^{\rm rat}$ denotes a minimal field of definition for $\Pi_{(\infty)}$ and the multiplicities are given by
$$
m_{\Pi_{(\infty)}}^{M,q}\;=\;\frac{m_{\rm cusp}^{M,q}(\Pi_{(\infty)})}{[\QQ_M(\Pi)^{\rm rat}:\QQ_M]}.
$$
\end{corollary}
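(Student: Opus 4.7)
The plan is to combine Theorem~\ref{thm:finitedefinition} with Galois descent applied to the hypothesized $\QQ_M$-rational structure on cuspidal cohomology. First I would observe that, since the Hecke algebra $\mathcal H_{\QQ_M}(G(\Adeles^{(\infty)}),K_{(\infty)})$ is defined over $\QQ_M$, the natural action of $\Aut(\CC/\QQ_M)$ on the complexification of cuspidal cohomology commutes with the Hecke action and therefore permutes the complex isotypic components, sending the $\Pi_{(\infty)}$-isotypic component to the $\Pi_{(\infty)}^\sigma$-isotypic component. The stabilizer of $\Pi_{(\infty)}$ is $\Aut(\CC/\QQ_M(\Pi))$, so its orbit has size $[\QQ_M(\Pi):\QQ_M]$, and the multiplicity $m_{\rm cusp}^{M,q}(\Pi_{(\infty)})$ is constant along the orbit.

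Second, I would invoke Theorem~\ref{thm:finitedefinition}(i) to produce, for each orbit representative, a model $\Pi_{(\infty),\QQ_M(\Pi)^{\rm rat}}$ over the minimal field of definition $\QQ_M(\Pi)^{\rm rat}$, and apply part~(iv) to conclude that
\[
W_{[\Pi]}\;:=\;\res_{\QQ_M(\Pi)^{\rm rat}/\QQ_M}\Pi_{(\infty),\QQ_M(\Pi)^{\rm rat}}
\]
is an irreducible $G(\Adeles^{(\infty)})$-module over $\QQ_M$. The complexification $W_{[\Pi]}\otimes_{\QQ_M}\CC$ decomposes according to the $[\QQ_M(\Pi)^{\rm rat}:\QQ_M]$ embeddings $\sigma\colon\QQ_M(\Pi)^{\rm rat}\to\CC$ extending the identity on $\QQ_M$. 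Since any two embeddings that agree on $\QQ_M(\Pi)$ yield isomorphic abstract twists of $\Pi_{(\infty)}$, the $[\QQ_M(\Pi):\QQ_M]$ distinct twists each appear with multiplicity $[\QQ_M(\Pi)^{\rm rat}:\QQ_M(\Pi)]$.

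Third, Galois descent applied to the $\QQ_M$-structure shows that the sum of complex isotypic components along the Galois orbit of $\Pi_{(\infty)}$ descends to a $\QQ_M$-rational Hecke-stable subspace of cuspidal cohomology, which by uniqueness (Theorem~\ref{thm:finitedefinition}(ii)) must decompose as a direct sum of copies of $W_{[\Pi]}$. Matching complex dimensions, the multiplicity of $W_{[\Pi]}$ in cuspidal cohomology over $\QQ_M$ equals $m_{\rm cusp}^{M,q}(\Pi_{(\infty)})/[\QQ_M(\Pi)^{\rm rat}:\QQ_M(\Pi)]$, an integer by Theorem~\ref{thm:finitedefinition}(iii) under the minimality hypothesis on $[\QQ_M(\Pi)^{\rm rat}:\QQ_M(\Pi)]$. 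Since the sum in the statement is indexed by individual complex irreducibles rather than Galois orbits, each $W_{[\Pi]}$ is counted $[\QQ_M(\Pi):\QQ_M]$ times, producing the claimed value
\[
m_{\Pi_{(\infty)}}^{M,q}\;=\;\frac{m_{\rm cusp}^{M,q}(\Pi_{(\infty)})}{[\QQ_M(\Pi)^{\rm rat}:\QQ_M(\Pi)]\cdot[\QQ_M(\Pi):\QQ_M]}\;=\;\frac{m_{\rm cusp}^{M,q}(\Pi_{(\infty)})}{[\QQ_M(\Pi)^{\rm rat}:\QQ_M]}.
\]

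The main obstacle is purely bookkeeping: one must carefully separate the field-of-rationality factor $[\QQ_M(\Pi):\QQ_M]$, which governs the Galois orbit of abstract complex representations, from the field-of-definition factor $[\QQ_M(\Pi)^{\rm rat}:\QQ_M(\Pi)]$, which measures the Brauer-type obstruction to realizing $\Pi_{(\infty)}$ already over its field of rationality. The $\QQ_M$-rationality hypothesis on cuspidal cohomology, combined with the $\QQ_M$-irreducibility of restriction of scalars from Theorem~\ref{thm:finitedefinition}(iv), funnels both factors into the single multiplicity formula, with integrality supplied by the divisibility statement of Theorem~\ref{thm:finitedefinition}(iii).
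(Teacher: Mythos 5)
Your proof is correct and rests on the same inputs as the paper's (Theorem~\ref{thm:finitedefinition}(ii)--(iv) and the multiplicity relation established in the proof of \eqref{eq:degreemultiplicity}), though you organize the decomposition directly via Galois orbits of isotypic components rather than by the paper's iterative embedding-and-induct argument. The explicit separation of the field-of-rationality degree $[\QQ_M(\Pi):\QQ_M]$ from the Brauer-obstruction degree $[\QQ_M(\Pi)^{\rm rat}:\QQ_M(\Pi)]$ is a worthwhile clarification of the paper's rather terse reference to ``the proof of \eqref{eq:degreemultiplicity}''.
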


\begin{proof}
The irreducibility of each module $\res_{\QQ_M(\Pi)^{\rm rat}/\QQ_M}\Pi_{(\infty),\QQ_M(\Pi)^{\rm rat}}$ is clear by statement (d) of the Theorem, and also that we obtain for every monomorphism
$$
\Pi_{(\infty),\QQ_M(\Pi)^{\rm rat}}\;\to\;
\varinjlim_{K_{(\infty)}}H_{\rm cusp}^q(\mathscr X_G(K_{(\infty)});\underline{M^\vee}(\QQ_M(\Pi)^{\rm rat}))
$$
an embedding
$$
\res_{\QQ_M(\Pi)^{\rm rat}/\QQ_M}
\Pi_{(\infty),\QQ_M(\Pi)^{\rm rat}}\;\to\;
\varinjlim_{K_{(\infty)}}H_{\rm cusp}^q(\mathscr X_G(K_{(\infty)});\underline{M^\vee}(\QQ_M)).
$$
The existence of a decomposition now follows by induction. The multiplicity formula is a consequence of the proof of \eqref{eq:degreemultiplicity}.
\end{proof}

\begin{remark}
Without the hypothesis that cuspidal cohomology admits a $\QQ_M$-structure the same proof shows that the right hand side of \eqref{eq:cuspidalrationaldecomposition} embeds into the colimits of inner and singular cohomology.
\end{remark}

\begin{corollary}\label{cor:qbarcuspidalcohomology}
For any finite-dimensional rational representation $M$ of $G$, cuspidal cohomology \eqref{eq:cuspidalcohomology} carries a $\overline{\QQ}$-rational structure and is defined over a number field.
\end{corollary}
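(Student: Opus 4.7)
The plan is to combine Theorem \ref{thm:finitedefinition}(i) for the finite part with Corollary \ref{cor:standardmodulecohomology} for the archimedean part, via the spectral decomposition \eqref{eq:cuspidalcohomologyiso}. I would fix a sufficiently small level $K_{(\infty)}$ and degree $q$. Since the cuspidal cohomology at this level is finite-dimensional, only finitely many cuspidal automorphic representations $\Pi$ contribute, and one obtains a Hecke-equivariant decomposition
\begin{equation*}
H^q_{\rm cusp}(\mathscr X_G(K_{(\infty)}); \underline{M^\vee}(\CC)) \;\cong\; \bigoplus_{\Pi} m_{L^2_0}(\Pi) \cdot H^q(\lieg_\CC, K_\infty(\CC); \Pi_\infty^{(K)} \otimes M^\vee(\CC)) \otimes \Pi_{(\infty)}^{K_{(\infty)}}
\end{equation*}
as $(\lieg_\CC, K_\infty(\CC)) \times \mathcal H_\CC(G(\Adeles^{(\infty)});K_{(\infty)})$-modules.

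For each contributing $\Pi$, I would show both tensor factors carry natural models over number fields. The finite factor $\Pi_{(\infty)}^{K_{(\infty)}}$ admits a model over the number field $\QQ(\Pi)^{\rm rat}$ by Theorem \ref{thm:finitedefinition}(i). For the archimedean factor, Vogan-Zuckerman \cite{voganzuckerman1984} gives $\Pi_\infty^{(K)} \cong A_\lieq(\lambda_\Pi)_\CC$ for some $\theta$-stable parabolic $\lieq$ and integral dominant weight $\lambda_\Pi$, and Corollary \ref{cor:standardmodulecohomology} then yields a canonical model of $H^q(\lieg_\CC, K_\infty(\CC); \Pi_\infty^{(K)} \otimes M^\vee(\CC))$ over the number field $F_0(\lambda_\Pi)$. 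Taking the compositum $F$ of the finitely many number fields $\QQ(\Pi)^{\rm rat} F_0(\lambda_\Pi)$ as $\Pi$ varies, which is again a number field, each summand acquires an $F$-rational model, and assembling these yields an $F$-rational structure on the entire cuspidal cohomology. Base change to $\overline{\QQ}$ produces the $\overline{\QQ}$-rational structure, while $F$ itself witnesses definition over a number field.

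The main obstacle is realizing each abstract $F$-rational model as a genuine $F$-subspace of $H^q_{\rm cusp}(\mathscr X_G(K_{(\infty)}); \underline{M^\vee}(\CC))$ compatibly with the joint action. For each $\Pi$ one must select a Hecke-equivariant and $(\lieg_\CC, K_\infty(\CC))$-equivariant embedding of $H^q(\lieg, K_\infty; A_\lieq(\lambda_\Pi) \otimes M^\vee)_{F_0(\lambda_\Pi)} \otimes \Pi_{(\infty),\QQ(\Pi)^{\rm rat}}^{K_{(\infty)}}$ into the corresponding complex isotypic component. Such embeddings exist because the isotypic component is precisely the complexification of the candidate tensor product, and their essential uniqueness up to complex homotheties, guaranteed by Proposition \ref{prop:uniquemodels} together with Theorem \ref{thm:finitedefinition}(ii), ensures that the resulting $F$-rational subspace is intrinsic. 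The argument closely parallels the proof of Theorem \ref{thm:finitedefinition} itself, now applied to the joint $(\lieg, K_\infty) \times \mathcal H$-module structure rather than to the Hecke module alone.
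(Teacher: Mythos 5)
Your overall plan reaches the right conclusion, but it contains a conceptual misstep and is considerably heavier than what is needed. After applying the $(\lieg,K_\infty)$-cohomology functor, cuspidal cohomology no longer carries any $(\lieg,K_\infty)$-action: it is just a Hecke module (equivalently, a smooth $G(\Adeles^{(\infty)})$-module after passing to the colimit). There is no ``joint $(\lieg,K_\infty)\times\mathcal{H}$-module'' structure to be matched, so the requirement that your embeddings be ``$(\lieg_\CC,K_\infty(\CC))$-equivariant'' is vacuous. For the same reason, the appeal to Corollary \ref{cor:standardmodulecohomology} is unnecessary: the factor $H^q(\lieg_\CC,K_\infty(\CC);\Pi_\infty^{(K)}\otimes M^\vee(\CC))$ is just a multiplicity space — a finite-dimensional $\CC$-vector space with no further structure — and any basis of it yields a rational model; no rationality input about standard modules is required. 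The paper's proof is accordingly leaner: cuspidal cohomology at a fixed finite level $K_{(\infty)}$ is a finite-length Hecke module, hence decomposes into finitely many irreducibles; by (the argument of) Theorem \ref{thm:finitedefinition} the analogous decomposition already exists over $\overline{\QQ}$; and each irreducible constituent $\Pi_{(\infty)}^{K_{(\infty)}}$ is defined over the number field $\QQ_M(\Pi)^{\rm rat}$ by Theorem \ref{thm:finitedefinition}(i). The compositum of the finitely many such number fields then provides a number field of definition, exactly as in your final step.

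One more thing to be careful about: your claim that the resulting $F$-rational subspace is ``intrinsic,'' backed by Proposition \ref{prop:uniquemodels} and Theorem \ref{thm:finitedefinition}(ii), is false when a multiplicity $m_q>1$ occurs. Schur's Lemma gives uniqueness up to homothety only for the irreducible $\Pi_{(\infty)}$ itself; the rational structures on $\Pi_{(\infty)}^{\oplus m_q}$ are parametrized by $\GL_{m_q}(\CC)/\GL_{m_q}(F)$, so they are far from unique — this ambiguity is precisely the source of the period matrices in section \ref{sec:periods}. Since the corollary asserts only the \emph{existence} of a $\overline{\QQ}$-rational structure and of a number field of definition, neither your conclusion nor the paper's is affected, but the intrinsicness claim should be removed.
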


\begin{remark}
The field of definition of cuspidal cohomology of finite level may depend on the level $K_{(\infty)}$. Therefore, passing to the colimits over all $K_{(\infty)}$, we only obtain a $\overline{\QQ}$-rational structure for $G(\Adeles^{(\infty)})$-modules in Corollary \ref{cor:qbarcuspidalcohomology}.
\end{remark}

\begin{remark}
It is still unclear if cuspidal cohomology is a $\overline{\QQ}$-rational or even $\QQ_M$-rational subspace of inner cohomology \eqref{eq:innercohomology}.
\end{remark}

\begin{proof}
Over $\CC$ cuspidal cohomology decomposes into a direct sum of irreducibles, and by Theorem \ref{thm:finitedefinition} the same is true over $\overline{\QQ}$ (by the argument in the proof of Theorem \ref{thm:finitedefinition} we know that both decompositions are equivalent). Since cuspidal cohomology of finite level is of finite length, and each irreducible admits a model over a number field, the decomposition into absolutely irreducibles is already defined over a number field.
\end{proof}

As global analog of Theorem \ref{thm:finitedefinition} we obtain

\begin{theorem}\label{thm:gglobalrational}
Assume that every $K(\RR)$-conjugacy class of $\theta$-stable parabolic subalgebras of $\lieg_\CC$ admits a representative defined over $\QQ_K'$. If $\Pi$ is an irreducible cuspidal automorphic representation of $G(\Adeles)$ contributing to \eqref{eq:cuspidalcohomology} for an absolutely irreducible rational $G$-module $M$, then
\begin{itemize}
\item[(a)] The $(\lieg,K)$-module $\Pi_\infty^{(K)}$ admits a model over the composite field $\QQ_K'\QQ_M$.
\item[(b)] The $(\lieg,K)\times G(\Adeles^{(\infty)})$-module $\Pi^{(K)}$ admits a model over the number field $\QQ_K'\QQ_M(\Pi)^{\rm rat}$.
\end{itemize}
\end{theorem}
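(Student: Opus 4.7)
The plan is to combine the local archimedean descent results from Section \ref{sec:standardmodules} with the global finite-part descent from Theorem \ref{thm:finitedefinition}, gluing the two via an abstract tensor product.

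\medskip

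First I would handle (i). Since $\Pi$ contributes to cuspidal cohomology with coefficients in $M^\vee$, by Corollary \ref{cor:standardmodulecohomology} the underlying Harish-Chandra module $\Pi_\infty^{(K)}$ is isomorphic (over $\CC$) to some cohomologically induced standard module $A_\lieq(\lambda)_\CC$, where $\lieq\subseteq\lieg_\CC$ is a $\theta$-stable parabolic and $\lambda$ is the character of the Levi factor determined by $M$ via the matching of infinitesimal characters $\lambda+\rho(\lieu)\leftrightarrow$ infinitesimal character of $M^\vee$. By the hypothesis on $\QQ_K'$, I may replace $\lieq$ by a $K(\RR)$-conjugate that is defined over $\QQ_K'$, which does not alter the isomorphism class of $A_\lieq(\lambda)$. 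Since $\lambda$ comes from the rational $G$-module $M$ defined over $\QQ_M$, both $\lambda$ and the infinitesimal character $\lambda+\rho(\lieu)$ are defined over $\QQ_K'\QQ_M$. Proposition \ref{prop:standardmodules}(ii)--(iii), applied over the base field $\QQ_K'$ (where $A_\lieq(0)$ exists by Proposition \ref{prop:standardmodules}(i)) together with the rational translation functor construction used in its proof, then produces a model of $A_\lieq(\lambda)$ over $\QQ_K'\QQ_M$. This yields (i).

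\medskip

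For (ii), I would combine (i) with Theorem \ref{thm:finitedefinition}(i): the finite part $\Pi_{(\infty)}$ admits a $G(\Adeles^{(\infty)})$-stable model $\Pi_{(\infty),\QQ_M(\Pi)^{\rm rat}}$ over $\QQ_M(\Pi)^{\rm rat}$. Set $E:=\QQ_K'\QQ_M(\Pi)^{\rm rat}$, so that both the model $(\Pi_\infty^{(K)})_{\QQ_K'\QQ_M}$ from (i) and the model $\Pi_{(\infty),\QQ_M(\Pi)^{\rm rat}}$ become defined over $E$ after base change (noting $\QQ_M\subseteq\QQ_M(\Pi)^{\rm rat}$). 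Since as abstract $(\lieg,K)\times G(\Adeles^{(\infty)})$-modules we have
\[
\Pi^{(K)}\;\cong\;\Pi_\infty^{(K)}\otimes_{\CC}\Pi_{(\infty)},
\]
the external tensor product
\[
\bigl((\Pi_\infty^{(K)})_{\QQ_K'\QQ_M}\otimes_{\QQ_K'\QQ_M}E\bigr)
\otimes_E
\bigl(\Pi_{(\infty),\QQ_M(\Pi)^{\rm rat}}\otimes_{\QQ_M(\Pi)^{\rm rat}}E\bigr)
\]
is an $E$-rational $(\lieg,K)\times G(\Adeles^{(\infty)})$-module whose complexification is canonically $\Pi^{(K)}$. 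This provides the required $E$-model.

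\medskip

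The main obstacle is the first step: producing a model of $A_\lieq(\lambda)$ over $\QQ_K'\QQ_M$ rather than over some possibly larger field. The key point (already contained in Proposition \ref{prop:standardmodules}) is that although $A_\lieq(0)$ a priori only descends to $\QQ_K'$, the translation functor $\mathcal T_M^{\lambda+\rho(\lieu)}$ commutes with base change (Theorem \ref{thm:derivedbasechange} applied to the Zuckerman functor, plus Proposition \ref{prop:centerrationality} for the rationality of central characters), so the resulting $A_\lieq(\lambda)$ is defined over any field containing both $\QQ_K'$ and the field of rationality of the pair $(\lambda,\lambda+\rho(\lieu))$; the latter sits inside $\QQ_M$ because $\lambda$ is an extremal weight of the $\QQ_M$-rational module $M$. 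Everything else is formal manipulation of models under extension of scalars.
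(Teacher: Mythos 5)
Your proof is correct and takes essentially the same route as the paper's: identify $\Pi_\infty^{(K)}$ as a cohomologically induced standard module $A_\lieq(\lambda)_\CC$ via Vogan--Zuckerman, descend it to $\QQ_K'\QQ_M$ using Proposition \ref{prop:standardmodules} (via translation functors from $A_\lieq(0)$), and glue with the model of $\Pi_{(\infty)}$ from Theorem \ref{thm:finitedefinition} by an external tensor product. The paper merely states this more tersely, leaving the translation-functor and tensor-product gluing steps implicit; your write-up supplies exactly the details the paper omits.
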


\begin{remark}
We may replace $\QQ_K'$ by $\QQ_K$ in the statement of the Theorem if the standard module $A_\lieq(0)$ with trivial infinitesimal character occuring in the same coherent family as $\Pi_\infty^{(K)}$ admits a model over $\QQ_K$, cf.\ Theorem \ref{thm:realstandardmodules} for a list of known cases. We also emphasize that $\QQ_M$ is not necessarily contained in $\QQ(\Pi)^{\rm rat}$.
\end{remark}

\begin{proof}
Since by Vogan-Zuckerman \cite{voganzuckerman1984} the Harish-Chandra module $\Pi_\infty^{(K)}$ is a cohomologically induced standard module $A_\lieq(\lambda)_\CC$ where $\lambda$ is the highest weight of $M$, the claim about the field of definition of $\Pi_\infty^{(K)}$ follows from Proposition \ref{prop:standardmodules}.

The existence of the global rational structure then is a direct consequence of the above and Theorem \ref{thm:finitedefinition}.
\end{proof}

\subsection{A global rational structure on cusp forms}\label{sec:globalrationality}

Given a rational $G$-module $M$ defined over $\QQ_M$, we define the Hilbert space sum 
\begin{equation}
L_0^2(G(\QQ)\backslash G(\Adeles);M)\;:=\;
\widehat{\bigoplus_{\begin{subarray}c\omega\\\omega_\infty\subseteq M\end{subarray}}}L_0^2(G(\QQ)\backslash G(\Adeles);\omega)
\label{eq:l2centralcharactersum}
\end{equation}
of the spaces of cusp forms \eqref{eq:l2cuspformsomega} where $\omega$ ranges over all quasi-characters of the center $Z(\Adeles)$ of $G$, whose infinity component occurs in $M$. We remark that these $\omega$ are $C$-algebraic in the sense of \cite{buzzardgee2011}.

We consider the subspace
$$
L_0^2(G(\QQ)\backslash G(\Adeles);M)_{\rm coh}\;\subseteq\;
L_0^2(G(\QQ)\backslash G(\Adeles);M)
$$
which is given by the closure of the span of the sum of the irreducible cuspidal automorphic representations $\Pi$ which are cohomological with respect to $M$ (i.e.\ \eqref{eq:gkcohomology} is satisfied). All these representations occur in the right hand side, therefore the left hand side is well defined. By Schur's Lemma each such $\Pi$ occurs in a uniquely determined summand of the right hand side of \eqref{eq:l2centralcharactersum}.

We have the following globalization of Corollary \ref{cor:qbarcuspidalcohomology}.

\begin{theorem}\label{thm:l2qbarstructure}
For any rational $G$-representation $M$ defined over a number field $\QQ_M$ the $G(\Adeles)$-representation
\begin{equation}
L_0^2(G(\QQ)\backslash G(\Adeles);M)_{\rm coh}
\label{eq:l2coh}
\end{equation}
admits a rational structure over $\overline{\QQ}$ in the following sense: There is a dense subspace
$$
L_0^2(G(\QQ)\backslash G(\Adeles);M)_{\rm coh}^{K}
$$
of \eqref{eq:l2coh} contained in the smooth $K(\CC)$-finite vectors which admits a $\overline{\QQ}$-rational model
\begin{equation}
L_0^2(G(\QQ)\backslash G(\Adeles);M)_{\rm coh,\overline{\QQ}}
\label{eq:l2qbarstructure}
\end{equation}
as $(\lieg,K)\times G(\Adeles^{(\infty)})$-module. It enjoys the following properties:
\begin{itemize}
\item[(a)] Topologically irreducible subquotients of \eqref{eq:l2qbarstructure} correspond bijectively to irreducible subquotients of \eqref{eq:l2coh}, the relation is given by taking closures.
\item[(b)] Taking $(\lieg,K_\infty)$-cohomology in degree $q$ over $\overline{\QQ}$ produces a $G(\Adeles^{(\infty)})$-module isomorphic to the inductive limit of the $\overline{\QQ}$-rational structure on cuspidal cohomology in Corollary \ref{cor:qbarcuspidalcohomology}.
\end{itemize}
\end{theorem}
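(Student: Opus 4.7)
The strategy is to reduce to Theorem \ref{thm:gglobalrational} via the discrete decomposition of the cuspidal spectrum. Concretely, I would first define
\[
L_0^2(G(\QQ)\backslash G(\Adeles_\QQ);M)_{\rm coh}^{K}
\]
to be the space of smooth $K(\CC)$-finite vectors inside \eqref{eq:l2coh}. Since the cuspidal spectrum decomposes discretely with finite multiplicities, this is precisely the algebraic direct sum
\[
\bigoplus_{\Pi} m_{L_0^2}(\Pi)\cdot \Pi^{(K)},
\]
where $\Pi$ runs over isomorphism classes of irreducible cuspidal automorphic representations cohomological with respect to $M$. This subspace is dense in the Hilbert sum and is naturally a $(\lieg,K)\times G(\Adeles^{(\infty)})$-module.

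Next I would apply Theorem \ref{thm:gglobalrational} to each $\Pi$ appearing in the decomposition: it furnishes a number field $E_\Pi := \QQ_K'\QQ_M(\Pi)^{\rm rat}$ and an $E_\Pi$-rational model $\Pi^{(K)}_{E_\Pi}$ of $\Pi^{(K)}$. Base-changing to $\overline{\QQ}$ through any chosen embedding $E_\Pi\hookrightarrow\overline{\QQ}$ yields a $\overline{\QQ}$-rational $(\lieg,K)\times G(\Adeles^{(\infty)})$-module $\Pi^{(K)}_{\overline{\QQ}}$. After choosing, for each $\Pi$, a $\overline{\QQ}$-basis of the finite-dimensional multiplicity space (equivalently, an isomorphism $m_{L_0^2}(\Pi)\cdot\Pi^{(K)}\cong \Pi^{(K)}\otimes_\CC \CC^{m_{L_0^2}(\Pi)}$), the algebraic direct sum
\[
L_0^2(G(\QQ)\backslash G(\Adeles_\QQ);M)_{\rm coh,\overline{\QQ}}
\;:=\;
\bigoplus_{\Pi} m_{L_0^2}(\Pi)\cdot \Pi^{(K)}_{\overline{\QQ}}
\]
provides the desired $\overline{\QQ}$-rational model; its complexification recovers the $K$-finite smooth vectors by Proposition \ref{prop:hombasechange}.

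For property (i), topologically irreducible subquotients of the closure correspond, by the discrete decomposition, to the summands $\Pi$; the irreducibility of $\Pi^{(K)}_{\overline{\QQ}}$ as a $(\lieg,K)\times G(\Adeles^{(\infty)})$-module follows from Proposition \ref{prop:uniquemodels} applied to the absolutely irreducible module $\Pi^{(K)}$, and the correspondence by taking closures is then a formal statement. For property (ii), I would apply the Homological Base Change Theorem \ref{thm:derivedbasechange} to the $(\lieg,K_\infty)$-cohomology functor, computed via the standard complex \eqref{eq:standardcohomologycomplex}, to conclude that
\[
H^q(\lieg,K_\infty; L_0^2(\cdots)_{\rm coh,\overline{\QQ}}^{K}\otimes M^\vee(\overline{\QQ}))
\]
is a $\overline{\QQ}$-model of the left hand side of \eqref{eq:cuspidalcohomologyiso} over $\CC$. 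Under this isomorphism, the image identifies summand-by-summand with the $\Pi_{(\infty)}$-isotypic decomposition of cuspidal cohomology given in Corollary \ref{cor:cuspidalrationaldecomposition} and Corollary \ref{cor:qbarcuspidalcohomology}, because the models produced by Theorem \ref{thm:gglobalrational} arise, by construction, precisely by matching the finite part against the rational structure on the isotypic component of cohomology (Theorem \ref{thm:finitedefinition}).

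The principal technical point — and the step I expect to require the most care — is exactly this compatibility in (ii): one must verify that the $\overline{\QQ}$-structure assembled from the local-archimedean models of Theorem \ref{thm:gglobalrational} is the same, summand by summand, as the one cut out on cuspidal cohomology by the Hecke-isotypic decomposition over $\QQ_M(\Pi)^{\rm rat}$. This amounts to tracking the identifications in the proof of Theorem \ref{thm:finitedefinition}, where the model $\Pi^{(K)}_{\overline{\QQ}}$ was defined via the pullback of the isotypic rational structure along $M\mapsto M^{K_{(\infty)}}$, together with the naturality of the isomorphism \eqref{eq:cuspidalcohomologyiso}. Everything else — denseness, irreducibility of summands, and the bijective correspondence in (i) — reduces to repeated application of Propositions \ref{prop:hombasechange} and \ref{prop:uniquemodels}.
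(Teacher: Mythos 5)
Your proof follows essentially the same route as the paper: decompose the discrete cuspidal spectrum into irreducibles, obtain a number-field model of each $\Pi^{(K)}$ (you correctly invoke Theorem \ref{thm:gglobalrational}; the paper cites Theorem \ref{thm:finitedefinition} but implicitly uses the global statement), and assemble the algebraic direct sum of base changes to $\overline{\QQ}$; property (ii) then follows from \eqref{eq:cuspidalcohomologyiso} together with Theorem \ref{thm:derivedbasechange}. One small remark: for (i) the paper appeals to Corollary \ref{cor:extbasechange} for control of subquotients, whereas your appeal to Proposition \ref{prop:uniquemodels} is slightly off-target — irreducibility of $\Pi^{(K)}_{\overline{\QQ}}$ is automatic from irreducibility of its complexification, and for (ii) one only needs an isomorphism of $\overline{\QQ}$-structures (supplied by uniqueness of models of absolutely irreducibles), not the canonical summand-by-summand identification you describe as the hard point.
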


\begin{proof}
By Theorem \ref{thm:finitedefinition} the underlying $(\lieg,K)$-module of each irreducible $\Pi$ contributing to \eqref{eq:l2coh} admits a model over a number field. Since \eqref{eq:l2coh} decomposes into a direct Hilbert sum of such $\Pi$, the existence of a $\overline{\QQ}$-rational structure as claimed follows if we define \eqref{eq:l2qbarstructure} as the algebraic direct sum of the $\overline{\QQ}$-rational models of the underling $(\lieg,K)$-modules of the irreducibles $\Pi$.

The claim about preservation of irreducible subquotients (direct summands in the case at hand) is clear and the rest of the argument is standard.

The last statement is a consequence of the isomorphism \eqref{eq:cuspidalcohomologyiso} and the Homological Base Change Theorem.
\end{proof}

We expect that the field of rationality of the rational structure \eqref{eq:l2qbarstructure} from Theorem \ref{thm:l2qbarstructure} is a subfield of $\QQ_{K'}\QQ_M$, see Proposition \ref{prop:twistingmultiplicities} below.

If $\Pi$ is a factorizable automorphic representation of $G(\Adeles)$, we define for $\sigma\in\Aut(\CC/\QQ_K)$ the smooth $G(\Adeles)$-representation
$$
\Pi^\sigma\;:=\;\Pi_\infty^\sigma
\hat\otimes\Pi_{(\infty)}^\sigma,
$$
where $\Pi_\infty^\sigma$ is the Casselman-Wallach completion of $\left(\Pi_\infty^{(K)}\right)^\sigma$ (we refer the reader to \cite{casselman1989,bernsteinkrotz2014} for the notion of Casselman-Wallach completion).

\begin{conjecture}\label{conj:automorphictwisting}
For any irreducible cuspidal automorphic representation $\Pi$ of $G(\Adeles)$ which is cohomological with respect to a rational $G$-representation $M$, the representation $\Pi^\sigma$ of $G(\Adeles)$ is cuspidal automorphic whenever $\sigma\in\Aut(\CC/\QQ_K)$.
\end{conjecture}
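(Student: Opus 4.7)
The plan is to reduce the statement to a matching question inside cohomology, where the Galois action is canonically defined, and then to extract the archimedean component. Fix a rational $G$-module $M$, defined over $\QQ_M$, for which $\Pi$ is cohomological, so that $\Pi^{K_{(\infty)}}_{(\infty)}$ contributes to $H^q_{\rm cusp}(\mathscr X_G(K_{(\infty)});\underline{M^\vee}(\CC))$ for some degree $q$ and some sufficiently small $K_{(\infty)}$. The candidate representation $\Pi^\sigma$ is the abstract $(\lieg,K)\times G(\Adeles^{(\infty)})$-module defined via the rational model of Theorem \ref{thm:gglobalrational}: its Harish-Chandra module is $(\Pi_\infty^{(K)})^\sigma\cong A_{\lieq^\sigma}(\lambda^\sigma)_{\CC}$ by Proposition \ref{prop:standardmodules} and the Homological Base Change Theorem, and its finite part is $\Pi_{(\infty)}^\sigma$.

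The first main step is to realize $\Pi_{(\infty)}^\sigma$ inside the $\sigma$-twisted cuspidal cohomology. Working in a degree $q$ where $\Pi$ contributes non-trivially, the argument in the proof of Theorem \ref{thm:finitedefinition} shows that the $\Pi^{K_{(\infty)}}_{(\infty)}$-isotypic component of $H^q_{\rm cusp}(\mathscr X_G(K_{(\infty)});\underline{M^\vee}(\CC))$ is defined over $\QQ_M(\Pi)$, provided cuspidal cohomology itself is $\QQ_M$-rational (as in Proposition \ref{prop:cuspidalrationality}); in general one falls back to the $\overline{\QQ}$-structure of Corollary \ref{cor:qbarcuspidalcohomology}. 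Transporting this isotypic component by $\sigma$ yields a Hecke submodule of $H^q_{\rm cusp}(\mathscr X_G(K_{(\infty)});\underline{(M^\sigma)^\vee}(\CC))$ isotypic for $\Pi_{(\infty)}^{\sigma,K_{(\infty)}}$. By the cuspidal analogue of Franke's decomposition this subspace is accounted for by finitely many cuspidal automorphic representations $\Pi'$ with $\Pi'_{(\infty)}\cong\Pi_{(\infty)}^\sigma$, each cohomological with respect to $M^\sigma$.

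The second step is to match infinity components. Each candidate $\Pi'_\infty$ is by Vogan-Zuckerman of the form $A_{\lieq'}(\lambda')$ with the same infinitesimal character as $\Pi_\infty^\sigma\cong A_{\lieq^\sigma}(\lambda^\sigma)_\CC$; the cohomological degree range and minimal $K$-type cut down the candidates to a finite set, one of which is the distinguished Galois twist $A_{\lieq^\sigma}(\lambda^\sigma)_\CC$ itself. Combined with formula \eqref{eq:cuspidalmultiplicityformula} and the equivariance of the rational decomposition in Corollary \ref{cor:cuspidalrationaldecomposition} under $\sigma$, this produces a well-defined candidate for $\Pi^\sigma$ realized inside the space of cusp forms.

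The main obstacle is precisely this matching at infinity. In the absence of multiplicity one and of a Galois-equivariant archimedean Langlands classification, there is no purely representation-theoretic device forcing the Hecke-isotypic piece of $\sigma$-twisted cuspidal cohomology to be realized with archimedean component $A_{\lieq^\sigma}(\lambda^\sigma)$ rather than some other cohomological standard module $A_{\lieq'}(\lambda')$ with $K(\RR)\lieq'\neq K(\RR)\lieq^\sigma$. Resolving this appears to require either a Galois-equivariant refinement of the local Langlands correspondence at infinity -- which would propagate the Galois action from infinitesimal characters (Proposition \ref{prop:hcgalois}) and $\theta$-stable parabolics to archimedean $L$-packets -- or the finer multiplicity-preservation statement of Conjecture \ref{conj:twistingmultiplicities}, whose natural home is the Arthur-Selberg trace formula. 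Either approach would transport Galois invariance from the finite part to the full automorphic representation and close the gap; neither is within the scope of the purely $(\lieg,K)$-module-theoretic framework developed in this paper.
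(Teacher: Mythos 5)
The statement you are trying to prove is Conjecture~\ref{conj:automorphictwisting} of the paper, not a theorem; the paper offers no proof, only evidence (Clozel for $\GL(n)$, Blasius--Harris--Ramakrishnan for Hermitian groups, and the partial consequence that $\Phi_{\infty}^{\rm cusp}(\Pi_{(\infty)}^\sigma)\neq\emptyset$ for regular highest weight via Proposition~\ref{prop:cuspidalrationality}). Your proposal is thus not a proof --- and you say so explicitly at the end --- but an analysis of the obstructions, and the diagnosis you give agrees with the paper's own commentary after the conjecture. You correctly identify the decisive gap as the matching at infinity: even granting that $\Pi_{(\infty)}^\sigma$ is the finite part of some cuspidal automorphic representation contributing to cohomology with coefficients in $M^\sigma$, nothing in the available framework forces its archimedean component to be $A_{\lieq^\sigma}(\lambda^\sigma)_\CC$ rather than another cohomological standard module with the same infinitesimal character. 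Resolving this would require either a Galois-equivariant refinement of the archimedean Langlands classification (extending the action on infinitesimal characters in Proposition~\ref{prop:hcgalois} to $L$-packets) or the multiplicity-preserving Conjecture~\ref{conj:twistingmultiplicities}; this is exactly what the paper suggests.

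One point you underweight: even the weaker claim, that $\Pi_{(\infty)}^\sigma$ occurs in cuspidal cohomology with coefficients in $M^\sigma$, is not automatic. It requires cuspidal cohomology to be a Galois-stable subspace of singular (equivalently, inner) cohomology, which Proposition~\ref{prop:cuspidalrationality} guarantees only when every highest weight of $M$ is regular. Your fallback to ``the $\overline{\QQ}$-structure of Corollary~\ref{cor:qbarcuspidalcohomology}'' does not close this: that corollary endows cuspidal cohomology with a $\overline{\QQ}$-structure as an abstract $G(\Adeles^{(\infty)})$-module, but the paper's remark immediately after it points out that it is \emph{unknown} whether cuspidal cohomology is a $\overline{\QQ}$-rational (or $\QQ_M$-rational) subspace of inner cohomology. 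Thus $\sigma$-twisting a Hecke-isotypic component of cuspidal cohomology may a priori leave cuspidal cohomology entirely. Outside the regular-weight case, already the non-emptiness of $\Phi_{\infty}^{\rm cusp}(\Pi_{(\infty)}^\sigma)$ is therefore open --- consistent with the paper's qualification ``for sufficiently regular weights'' --- so the gap in your argument appears one step earlier than you indicate.
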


We remark that $\Pi^\sigma$ is necessarily irreducible and if it is cuspidal automorphic, it is automatically cohomological with respect to $M^\sigma$.

We also expect Conjecture \ref{conj:automorphictwisting} to be true for $C$-algebraic representations in the sense of Buzzard and Gee \cite{buzzardgee2011}. However for our purpose at hand (Proposition \ref{prop:twistingmultiplicities} below) the statement for cohomological representations suffices.

\begin{remark}
Theorems 4.2.3 and 4.3.1 of \cite{blasiusharrisramakrishnan1994} provide evidence for Conjecture \ref{conj:automorphictwisting} for a larger class of representations of groups of Hermitian type.
\end{remark}

The statement of Conjecture \ref{conj:automorphictwisting} is equivalent to saying that the map
$$
\Pi_\infty^{\rm CW}\;\mapsto\;\Pi_\infty^\sigma
$$
induces a bijection between $\Phi_{\infty}^{\rm cusp}(\Pi_{(\infty)})$ and $\Phi_{\infty}^{\rm cusp}(\Pi_{(\infty)}^\sigma)$ (cf.\ \eqref{eq:cohomologicallpacket}).

In particular it implies that $\Phi_{\infty}^{\rm cusp}(\Pi_{(\infty)}^\sigma)$ is non-empty, which in turn is equivalent to saying that $\Pi_{(\infty)}^\sigma$ is the finite part of a cuspidal automorphic representation. This statement is implied by Proposition \ref{prop:cuspidalrationality} for sufficiently regular weights, because if $M$ satisfies the hypothesis of Proposition \ref{prop:cuspidalrationality}, then $M^\sigma$ does so as well.

We conclude that if Conjecture \eqref{conj:automorphictwisting} is true, then for every $\Pi$ which occurs in \eqref{eq:l2coh} and every $\sigma\in\Aut(\CC/\QQ_K\QQ_M)$, the representation $\Pi^\sigma$ occurs in \eqref{eq:l2coh}. However it is not clear that the multiplicities agree. Therefore we are naturally led to the stronger

\begin{conjecture}\label{conj:twistingmultiplicities}
Let $\Pi$ be an irreducible cohomological cuspidal automorphic representation of $G(\Adeles)$. Then we have for every $\sigma\in\Aut(\CC/\QQ_K)$ an identity
\begin{equation}
m_{L^2_0}(\Pi_\infty^{\rm CW}\otimes\Pi_{(\infty)})\;=\;
m_{L^2_0}(\Pi_\infty^{\sigma}\otimes\Pi_{(\infty)}^\sigma).
\label{eq:twistingmultiplicities}
\end{equation}
of multiplicities.
\end{conjecture}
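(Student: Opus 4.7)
The plan is to combine the Galois-equivariant $\overline{\QQ}$-rational structure on cuspidal cohomology (Corollary \ref{cor:qbarcuspidalcohomology} and Theorem \ref{thm:l2qbarstructure}) with an input from the Arthur-Selberg invariant trace formula. First I would exploit the multiplicity formula \eqref{eq:cuspidalmultiplicityformula} at a sufficiently small level $K_{(\infty)}$ fixing a non-zero vector in $\Pi_{(\infty)}$: Galois equivariance of cuspidal cohomology matches the $\Pi_{(\infty)}$-isotypic component in $H^q_{\rm cusp}(\mathscr X_G(K_{(\infty)});\underline{M^\vee})$ with the $\Pi_{(\infty)}^\sigma$-isotypic component of $H^q_{\rm cusp}(\mathscr X_G(K_{(\infty)});\underline{M^{\vee,\sigma}})$, and the Homological Base Change Theorem \ref{thm:derivedbasechange} applied to the Vogan-Zuckerman standard modules gives $m_{\rm coh}^{M^\sigma,q}(\Pi_\infty^{{\rm CW},\sigma})=m_{\rm coh}^{M,q}(\Pi_\infty^{\rm CW})$. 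Comparing \eqref{eq:cuspidalmultiplicityformula} for $\Pi_{(\infty)}$ and $\Pi_{(\infty)}^\sigma$ then yields, in every degree $q$, the weighted identity
\begin{equation*}
\sum_{\Pi_\infty^{\rm CW}} m_{\rm coh}^{M,q}(\Pi_\infty^{\rm CW})\bigl(m_{L^2_0}(\Pi_\infty^{\rm CW}\otimes\Pi_{(\infty)})-m_{L^2_0}(\Pi_\infty^{{\rm CW},\sigma}\otimes\Pi_{(\infty)}^\sigma)\bigr)=0.
\end{equation*}

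If the cohomological profiles across the archimedean $L$-packet $\Phi_\infty^{\rm cusp}(\Pi_{(\infty)})$ are linearly independent in $q$ (which happens in many cases because of the binomial pattern of Vogan-Zuckerman), varying $q$ already separates the individual terms. In general I would appeal to the invariant trace formula. For a $\overline{\QQ}$-valued test function $f=f_\infty\otimes f^{(\infty)}$ with $f^{(\infty)}$ bi-$K_{(\infty)}$-invariant and supported on a compact set, the geometric side, being a sum of (weighted) orbital integrals indexed by $\QQ$-rational conjugacy classes, is $\Aut(\CC/\overline{\QQ})$-fixed. On the spectral side $\sigma$ sends each automorphic $\Pi$ to $\Pi^\sigma$; coupled with Conjecture \ref{conj:automorphictwisting} this reindexes the spectral sum and, after subtracting, yields a vanishing character identity. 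Varying $f^{(\infty)}$ and using linear independence of unramified Hecke characters localizes the identity at a fixed $\Pi_{(\infty)}$, after which varying $f_\infty$ over pseudocoefficients of the standard modules $A_\lieq(\lambda)$ at infinity isolates \eqref{eq:twistingmultiplicities}.

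The hard part will be the spectral-geometric interface at the ramified places. Galois equivariance of the \emph{full} geometric side requires control over Arthur's invariant distributions beyond elliptic orbital integrals, whose weighting factors involve transcendental ingredients; and the archimedean pseudocoefficient step must isolate the cuspidal contribution from the residual and endoscopic terms, which on a general reductive $G$ is only available through Arthur's stabilization. There is a circularity as well: the spectral reindexing already presupposes Conjecture \ref{conj:automorphictwisting}. A clean attack would therefore first establish a Galois-equivariant refinement of Arthur's decomposition of the discrete spectrum; Conjecture \ref{conj:automorphictwisting} would then fall out, and the weighted identity from the first paragraph plus the trace-formula disentanglement would deliver Conjecture \ref{conj:twistingmultiplicities}. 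This is consistent with the author's observation that the conjectures lie within reach of the trace formula rather than of purely algebraic methods.
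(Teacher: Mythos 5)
You have been asked about Conjecture \ref{conj:twistingmultiplicities}, which the paper states \emph{without proof}: it is an open conjecture, and the author remarks only that it ``may be within reach of the Arthur-Selberg trace formula.'' There is therefore no paper proof to compare against, and the correct response to the exercise is to recognize the statement as open rather than to attempt an argument.

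Your proposal is in fact a speculative research sketch, not a proof, and you acknowledge several of its gaps yourself: the circular dependence on Conjecture \ref{conj:automorphictwisting}, the unestablished Galois-equivariance of the full geometric side of the invariant trace formula, and the difficulty of isolating the cuspidal contribution for general $G$. Beyond those admitted gaps, your opening ``weighted identity'' already rests on an unproven hypothesis: you assume the $\Pi_{(\infty)}$-isotypic summand of cuspidal cohomology is $\Aut(\CC/\QQ_M)$-stable inside singular cohomology, i.e.\ that cuspidal cohomology carries a natural $\QQ_M$-rational structure. The paper explicitly records that this is known only for $G=\res_{F/\QQ}\GL_n$ (Franke) or under regularity hypotheses (Proposition \ref{prop:cuspidalrationality}); and, more damagingly, the remark following Proposition \ref{prop:twistingcohomologicalmultiplicities} notes that even granting such a $\QQ_M$-structure one obtains \emph{only} the weighted identity $m^{M,q}_{\rm cusp}(\Pi_{(\infty)})=m^{M^\sigma,q}_{\rm cusp}(\Pi^\sigma_{(\infty)})$, and that refining this to the individual $L^2_0$-multiplicities ``seems impossible to prove, even admitting Conjecture \ref{conj:automorphictwisting}.'' Your first paragraph thus terminates precisely at the obstruction the author has already identified as impassable by cohomological means, and the trace-formula continuation you sketch is the author's own speculation, not a technique developed in this paper.
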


\begin{proposition}\label{prop:twistingmultiplicities}
  Assume Conjecture \ref{conj:twistingmultiplicities}. Then for every rational $G$-representation $M$ the field of rationality of \eqref{eq:l2qbarstructure} is $\QQ_K\QQ(M)$, where $\QQ(M)$ denotes the field of rationality of $M$.
\end{proposition}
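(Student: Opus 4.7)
The proposition asserts that the field of rationality $F$ of \eqref{eq:l2qbarstructure} equals $\QQ_K\QQ(M)$, and I would prove this via two inclusions. For $F \subseteq \QQ_K\QQ(M)$, fix $\sigma \in \Aut(\overline{\QQ}/\QQ_K\QQ(M))$; I need to show the Galois twist of \eqref{eq:l2qbarstructure} is isomorphic to itself as a $(\lieg, K) \times G(\Adeles^{(\infty)})$-module. Writing this structure as $\bigoplus_\Pi m_{L^2_0}(\Pi)\cdot \Pi^{(K)}_{\overline{\QQ}}$ and invoking Theorem \ref{thm:derivedbasechange}, the twist equals $\bigoplus_\Pi m_{L^2_0}(\Pi) \cdot (\Pi^\sigma)^{(K)}_{\overline{\QQ}}$. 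Since $\sigma$ fixes $\QQ_K$ the pair $(\lieg, K)$ is preserved, and since $\sigma$ fixes $\QQ(M)$ one has $M^\sigma \cong M$, so $\Pi^\sigma$ is cohomological with respect to $M$. Conjecture \ref{conj:automorphictwisting} then ensures $\Pi^\sigma$ is again cuspidal automorphic, so $\sigma$ permutes the index set of summands. What remains is the multiplicity equality $m_{L^2_0}(\Pi^\sigma) = m_{L^2_0}(\Pi)$, which I would deduce by combining the rational structure on cuspidal cohomology (Corollary \ref{cor:qbarcuspidalcohomology}, reinforced by Proposition \ref{prop:cuspidalrationality} in regular weight), the isomorphism \eqref{eq:cuspidalcohomologyiso} transferring rationality to the $(\lieg, K_\infty)$-cohomology of $L^2_0$, the Galois-equivariant rational algebraic character theory of Section \ref{sec:algebraiccharacters} (especially Proposition \ref{prop:cqgallois}), and the multiplicity formula \eqref{eq:cuspidalmultiplicityformula} used to separate the archimedean contributions from the non-archimedean ones.

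For the reverse containment $\QQ_K\QQ(M) \subseteq F$, I would show that Galois automorphisms moving either factor necessarily move the rational structure up to isomorphism. If $\sigma$ fails to fix $\QQ(M)$, then $M^\sigma \not\cong M$, and any twist of a cohomological cusp form for $M$ becomes a cohomological cusp form for $M^\sigma$, placing the twisted rational structure in a different component than the original inside \eqref{eq:l2coh}. If $\sigma$ fixes $\QQ(M)$ but not $\QQ_K$, then $(\lieg, K)^\sigma$ differs from $(\lieg, K)$ over $\overline{\QQ}$, and the $\sigma$-twist is a module for a different pair, ruling out any $(\lieg, K) \times G(\Adeles^{(\infty)})$-equivariant isomorphism with the original. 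Hence every field of definition must contain $\QQ_K\QQ(M)$.

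The \emph{main obstacle} is the multiplicity preservation step in the first direction. Conjecture \ref{conj:automorphictwisting} provides only the automorphy of the twist, while what one really wants is the identity \eqref{eq:twistingmultiplicities} from the stronger Conjecture \ref{conj:twistingmultiplicities}. The argument must therefore leverage the rationality of cuspidal cohomology as a Hecke module, together with the rational algebraic character theory of Section \ref{sec:algebraiccharacters}, to reconstruct the multiplicities $m_{L^2_0}(\Pi)$ from cohomological data that are demonstrably Galois-equivariant — effectively proving a $\sigma$-restricted version of Conjecture \ref{conj:twistingmultiplicities} for $\sigma \in \Aut(\overline{\QQ}/\QQ_K\QQ(M))$. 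This is the technical heart of the proposition and where most of the effort should lie.
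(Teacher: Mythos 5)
Your diagnosis of the crux is correct, and in fact more careful than the paper itself: the paper's own proof of this proposition is a three-sentence argument that invokes the identity \eqref{eq:twistingmultiplicities} — i.e.\ the content of the \emph{stronger} Conjecture \ref{conj:twistingmultiplicities}, not of Conjecture \ref{conj:automorphictwisting}, which is what the statement nominally assumes. In the paper the argument is simply: \eqref{eq:l2qbarstructure} is a direct sum of irreducibles, and for $\sigma\in\Aut(\CC/\QQ_K\QQ(M))$ each summand $X$ of multiplicity $m$ has its twist $X^\sigma$ also occurring with multiplicity $m$ by \eqref{eq:twistingmultiplicities}, so the $\sigma$-twist of the whole rational structure is isomorphic to itself. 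Given the discussion in the paper right before Conjecture \ref{conj:twistingmultiplicities} — which explicitly says that it is ``not clear that the multiplicities agree'' under Conjecture \ref{conj:automorphictwisting} alone — this looks like a misreference in the hypothesis of the proposition rather than an intended derivation. You should have simply quoted \eqref{eq:twistingmultiplicities} and stopped, rather than constructing an apparatus around Conjecture \ref{conj:automorphictwisting}.

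The route you propose to bridge the gap — reconstruct the multiplicities $m_{L^2_0}(\Pi_\infty^{\rm CW}\otimes\Pi_{(\infty)})$ from the cohomological multiplicities $m_{\rm cusp}^{M,q}$ via \eqref{eq:cuspidalmultiplicityformula}, the $\QQ_M$-rationality of cuspidal cohomology, and the character theory of Section \ref{sec:algebraiccharacters} — is precisely the inference the paper says it cannot carry out, and for a concrete reason: the formula \eqref{eq:cuspidalmultiplicityformula} does not determine the individual $m_{L^2_0}$'s when several non-isomorphic archimedean packet members $\Pi_\infty^{\rm CW}$ contribute in overlapping ranges of degrees. The binomial coefficients $m_{\rm coh}^{M,q}$ form a system that is generally not invertible across packet members, so Galois-equivariance of the $m_{\rm cusp}^{M,q}$ does not force Galois-equivariance of the $m_{L^2_0}$'s. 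Since this is exactly why the paper introduces the stronger conjecture as a separate hypothesis, you should not present this route as ``the technical heart'' to be worked out — it is a dead end according to the paper's own discussion, not merely a piece of unfinished effort.

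Finally, your reverse inclusion $\QQ_K\QQ(M)\subseteq F$ is additional material not present in the paper's proof. The idea that $\sigma\notin\Aut(\CC/\QQ(M))$ changes the infinitesimal-character support and hence moves \eqref{eq:l2qbarstructure} is fine, but the argument for the $\QQ_K$ factor is only heuristic as written — ``different pair'' needs to be turned into a genuine non-isomorphism of module categories before concluding anything about fields of rationality, and since $K$ is not unique as a $\QQ_K$-form this would require more care. Given that the paper omits this direction, it is not wrong to suppress it, but if you include it you need to make it precise.
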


\begin{proof}
The space \eqref{eq:l2qbarstructure} is a direct sum of irreducibles, and for each irreducible summand $X$ occuring the multiplicity $m>0$, the identity\eqref{eq:twistingmultiplicities} implies that for $\sigma\in\Aut(\CC/\QQ_K\QQ(M))$, $X^\sigma$ occurs in \eqref{eq:l2qbarstructure} with multiplicity $m$ as well. Therefore the $\sigma$-twist of \eqref{eq:l2qbarstructure} is isomorphic to \eqref{eq:l2qbarstructure}.
\end{proof}

Despite the fact that each irreducible constituent $X$ of \eqref{eq:l2qbarstructure} is defined over a number field, the lack of control over the field of definition as a possibly non-trivial extension of the field of rationality prevents us from deducing the existence of a rational structure defined over a number field.

In the case of $\GL(n)$ we will prove an optimal result for the field of definition in Theorem \ref{thm:globalglnrationality} below. In general we have

\begin{proposition}\label{prop:twistingcohomologicalmultiplicities}
Assume Conjecture \ref{conj:twistingmultiplicities} is true. Then for any irreducible cuspidal automorphic representation $\Pi$ of $G(\Adeles)$, every $\sigma\in\Aut(\CC/\QQ_K)$ and every degree $q$,
$$
m_{\rm cusp}^{M,q}(\Pi_{(\infty)})\;=\;
m_{\rm cusp}^{M^\sigma,q}(\Pi_{(\infty)}^\sigma).
$$
\end{proposition}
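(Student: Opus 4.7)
The plan is to apply the cohomological multiplicity formula \eqref{eq:cuspidalmultiplicityformula} to both sides of the asserted equality and then match the summands one by one under the bijection $\Pi_\infty^{\rm CW} \mapsto \Pi_\infty^\sigma$ on isomorphism classes of irreducible Casselman-Wallach representations of $G(\RR)$. By the formula we have
\begin{equation*}
m_{\rm cusp}^{M,q}(\Pi_{(\infty)}) \;=\; \sum_{\Pi_\infty^{\rm CW}} m_{\rm coh}^{M,q}(\Pi_\infty^{\rm CW}) \cdot m_{L^2_0}(\Pi_\infty^{\rm CW} \otimes \Pi_{(\infty)}),
\end{equation*}
and analogously for $(\Pi^\sigma, M^\sigma)$; so it suffices to establish the two identities
\begin{equation*}
m_{\rm coh}^{M,q}(\Pi_\infty^{\rm CW}) \;=\; m_{\rm coh}^{M^\sigma,q}(\Pi_\infty^\sigma)
\quad\text{and}\quad
m_{L^2_0}(\Pi_\infty^{\rm CW} \otimes \Pi_{(\infty)}) \;=\; m_{L^2_0}(\Pi_\infty^\sigma \otimes \Pi_{(\infty)}^\sigma)
\end{equation*}
for every choice of $\Pi_\infty^{\rm CW}$, since the twist is a bijection on the indexing set.

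The second identity is precisely the content of Conjecture \ref{conj:twistingmultiplicities}, which we are assuming. The first identity is the genuinely structural point, and it is a direct consequence of the Homological Base Change Theorem. Indeed, $(\lieg,K_\infty)$-cohomology of the $(\lieg,K)$-module $\Pi_\infty^{(K)} \otimes M^\vee$ may be computed over the field $\QQ_K\QQ_M$ (or any field over which the relevant data are defined) via the standard complex of section \ref{sec:homologyandcohomology}; applying $\sigma$, which is the identity on $\QQ_K$, amounts to base change along $\sigma$ on the coefficient field, and Proposition \ref{prop:equicohomology} together with Theorem \ref{thm:derivedbasechange} gives a natural isomorphism
\begin{equation*}
H^q(\lieg, K_\infty;\, \Pi_\infty^{(K)} \otimes M^\vee) \otimes_{\CC,\sigma} \CC \;\cong\; H^q(\lieg, K_\infty;\, (\Pi_\infty^{(K)})^\sigma \otimes (M^\vee)^\sigma).
\end{equation*}
In particular the complex dimensions agree, so $m_{\rm coh}^{M,q}(\Pi_\infty^{\rm CW}) = m_{\rm coh}^{M^\sigma,q}(\Pi_\infty^\sigma)$.

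Combining the two identities term by term gives the equality of sums and hence the proposition. The main obstacle hidden in this argument is really encoded in Conjecture \ref{conj:twistingmultiplicities} itself: the preservation of $L^2$-multiplicities under the Galois twist is the deep statement, while the cohomological multiplicity identity is automatic from the formalism of Harish-Chandra modules over arbitrary fields developed in the first sections of the paper. No further input is needed, and in particular no hypothesis on $K$ beyond that $\sigma$ fixes $\QQ_K$ (which is what makes $\Pi^\sigma$ well-defined as a factorizable representation) enters the argument.
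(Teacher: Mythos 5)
Your proposal is correct and follows exactly the same route as the paper: invoke the multiplicity formula \eqref{eq:cuspidalmultiplicityformula}, establish $m_{\rm coh}^{M,q}(\Pi_\infty^{\rm CW}) = m_{\rm coh}^{M^\sigma,q}(\Pi_\infty^\sigma)$ via the Homological Base Change Theorem, and combine with Conjecture~\ref{conj:twistingmultiplicities} term by term. The only difference is that you spell out the summand-matching and the base-change mechanism more explicitly than the paper's two-line argument, but the substance is identical.
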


\begin{proof}
We begin with the observation that we have an identity
$$
m_{\rm coh}^{M,q}(\Pi_\infty^{\rm CW})\;=\;
m_{\rm coh}^{M^\sigma,q}(\Pi_\infty^{\sigma}),
$$
which is an immediate consequence of the Homological Base Change Theorem. Therefore formula \eqref{eq:cuspidalmultiplicityformula} implies the claim.
\end{proof}

The conclusion of Proposition \ref{prop:twistingcohomologicalmultiplicities} is also implied by the conjectural statement that cuspidal cohomology admits a model over $\QQ_K\QQ_M$ (or even $\QQ_M)$. Unfortunately, without improving Proposition \ref{prop:twistingmultiplicities} to a statement about a field of definition finite over $\QQ$, the converse seems currently out of reach, even admitting Conjecture \ref{conj:automorphictwisting}.

\subsection{Automorphic representations in the Hermitian case}\label{sec:hermitiancase}

In this section we put ourselves in the context of \cite{blasiusharrisramakrishnan1994}, i.e.\ $G$ is a connected reductive group over $\QQ$ of Hermitian type, together with a $G(\RR)$ conjugacy class $\mathscr X$ of morphisms
$$
h:\quad \res_{\CC/\RR}\GL_1\to G_\RR
$$
over $\RR$ satisfying Deligne's axioms \cite{deligne1979b} of a Shimura datum. We also suppose that condition (1.1.3) of \cite{blasiusharrisramakrishnan1994} is satisfied: The maximal $\QQ$-split torus in the center $Z$ of $G$ is also a maximal $\RR$-split torus in $Z_\RR$. Then
$$
\mathscr X\;\cong\;G(\RR)/K_\infty
$$
where $K_\infty$ is the centralizer of $h(\CC^\times)$ in $G(\RR)$, and for neat compact open $K_{(\infty)}$ the locally symmetric spaces $\mathscr X_G(G_{(\infty)})$ are the complex points of quasi-projective algebraic varieties defined over the reflex field $E$, the so-called \lq{}canonical models\rq{}. It is known that $E$ is a number field and is characterized by the property that $\Aut(\CC/E)$ is the stabilizer of the restriction of the complexification of $h$ to $\GL_1(\CC)$ (the latter naturally embeds into $\res_{\CC/\RR}\GL_1(\CC)$). For arbitrary $\sigma\in\Aut(\CC/\QQ)$ there is another canonically defined twisted Shimura datum, which gives rise to the $\sigma$ twisted canonical models. We refer to section 1 of \cite{blasiusharrisramakrishnan1994} for this and further details.

Deligne's axioms imply that we have a Hodge decomposition
\begin{equation}
\lieg_\CC\;=\;\liek_{\infty,\CC}\oplus\liep^+\oplus\liep^-
\label{eq:ghodgedecomposition}
\end{equation}
into isotypic components of the action of $h(\res_{\CC/\RR}\GL_1(\CC))$ for the characters $1$, $z\overline{z}^{-1}$, and $\overline{z}z^{-1}$ respectively. The spaces $\liep^\pm$ are abelian Lie subalgebras of $\lieg_\CC$ and canonically identified with the (anti-)holomorphic tangent space of $\mathscr X$.

Via the decomposition \eqref{eq:ghodgedecomposition} we define the complex parabolic subalgebra
$$
Q\;:=\;K_{\infty,\CC}\cdot\exp(\liep^-)
$$
in the complexification $G(\CC)$.

We assume additionally that $G(\RR)$ has the following property: Let $G^\CC$ denote the analytic group of matrices with Lie algebra the complexification $\lieg_\CC$ of the Lie algebra of $G$. Then $G(\RR)\subseteq G^{\CC}\cdot Z(G(\RR))$ where $Z(G(\RR))$ denotes the centralizer of $G(\RR)$ in the total general linear group of matrices under consideration.

Let $H\subseteq K_\infty^0$ denote a maximally compact Cartan subgroup. Then $H$ is also a Cartan subgroup of $G(\RR)^0$. Consider the root system $\Phi=\Delta(\lieg_\CC,\lieh_\CC)$. Write $\Phi_{\rm c}$ (resp.\ $\Phi_{\rm n}$) for the subsets of (non-)compact roots in $\Phi$. Choose a positive system $\Phi^+\subseteq\Phi$ in such a way that
$$
\Phi^+\cap\Phi_{\rm n}\;=\;\Delta(\liep^+,\lieh_\CC).
$$


Following section 3 of loc.\ cit.\ we consider the following class of representations of $G(\RR)$. Let $\lambda\in\lieh_\RR^*$ denote an analytically integral linear form, which is not orthogonal to any member of $\Phi_{\rm c}$. Let $\Psi\subseteq\Phi$ a positive system rendering $\lambda$ dominant, and write $\pi(\lambda,\Psi)$ for the nontrivial irreducible tempered representation of ${\mathscr D}(G(\RR)^0)$ with Harish-Chandra parameter $\lambda$ and positive system $\Psi$. Then for regular $\lambda$ the representation $\pi(\lambda,\Psi)$ is a member of the {\em discrete series} of ${\mathscr D}(G(\RR)^0)$, and otherwise it is a {\em nondegenerate limits of discrete series} representation.

By the work of Harish-Chandra we know that $\pi(\lambda,\Psi)\cong\pi(\lambda',\Psi')$ if and only if the pairs $(\lambda,\Psi)$ and $(\lambda',\Psi')$ are conjugate by a member of the compact Weyl group $W(\liek_\CC,\lieh_\CC)$. In particular we may assume without loss of generality that
$$
\Phi^+\cap\Phi_{\rm c}\;\subseteq\;\Psi.
$$
Then $\lambda-\rho(\Phi^+)$ is dominant with respect to $\Phi^+\cap\Phi_{\rm c}$ and integral.

The representation $\pi(\lambda,\Psi)$ extends to a representation of $G(\RR)$ as follows. Set
$$
Z_\infty\;\;:=\;\;Z(\RR)\cap K_\infty.
$$
Then
$$
G(\RR)\;=\;G_\pm(\RR)\cdot Z_\infty^0
$$
where the two groups on the right hand side have trivial intersection and ${\mathscr D}(G(\RR)^0)$ is of finite index in $G_\pm(\RR)$. Choose any irreducible direct summand
$$
\pi_\pm(\lambda,\Psi)\;\subseteq\;\ind_{{\mathscr D}(G(\RR)^0)}^{G_\pm(\RR)}\pi(\lambda,\Psi).
$$
Then, as a submodule of the induced representation, the isomorphism class of $\pi_\pm(\lambda,\Psi)$ is characterized by its central character $\omega_\pm(\lambda,\Psi)$.

For every character $\beta:Z_\infty\to\CC^\times$, which occurs in the restriction of a rational representation of $K_\infty$, and which agrees on $Z_\infty\cap G_\pm(\RR)$ with $\omega_\pm(\lambda,\Psi)$, we have a unique extension of $\pi_\pm(\lambda,\Psi)$ to a representation $\pi_\pm(\lambda,\Psi)\otimes\beta$ of $G(\RR)$.

Following \cite{blasiusharrisramakrishnan1994}, we define the sets
$$
\Sigma_{\bullet}\;:=\;
\{
\pi_\pm(\lambda,\Psi)\otimes\beta\;\mid\;
\lambda,\Psi,\;\text{as above and}\;\pi(\lambda,\Psi)\;\text{in the}\;
$$
$$
%
\text{(limits) of discrete series of ${\mathscr D}(G(\RR)^0)$ for $\bullet=\lq{\rm d}\rq$ ($\bullet=\lq{\rm ld}\rq$).}
\}.
$$


Assume that $K\subseteq G$ is a semi-admissible model of a maximal compact subgroup of $G$, defined over a number field $\QQ_K$ with a distinguished real place $v_\infty$.

\begin{theorem}\label{thm:hermitianrationality}
If $\Pi$ is an irreducible cuspidal automorphic representation of $G(\Adeles)$ with infinity component $\Pi_\infty\in\Sigma_{\rm d}\cup\Sigma_{\rm ld}$. Then the $(\lieg,K)\times G(\Adeles^{(\infty)})$-module $\Pi^{(K)}$ admits a model over a number field $F$.
\end{theorem}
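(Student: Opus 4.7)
The strategy is the same as for Theorems \ref{introthm:gglobalrational} and \ref{thm:gglobalrational}: split $\Pi^{(K)} = \Pi_\infty^{(K)} \otimes \Pi_{(\infty)}$ and construct a number field model of each tensor factor separately, then combine via an external tensor product.

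For the archimedean factor, I would exploit that Hermitian type forces $G(\RR)$ to be of equal rank, so the maximally compact Cartan $H \subseteq K_\infty^0$ is in fact a Cartan of $G(\RR)^0$. By the Vogan-Zuckerman classification in the equal-rank case every $\pi(\lambda,\Psi) \in \Sigma_{\rm d} \cup \Sigma_{\rm ld}$ is realized as a cohomologically induced standard module $A_\lieb(\lambda-\rho(\Psi))$, where $\lieb \subseteq \lieg_\CC$ is the $\theta$-stable Borel with root system $\Psi$. The extension by $\beta$ to $G(\RR)$ merely adjusts the integral character on $L \cap K$, producing an integral $\mu$; the non-degeneracy hypothesis (together with $\Phi^+ \cap \Phi_c \subseteq \Psi$) places $\mu$ in the weakly good range of Proposition \ref{prop:standardmodules}(iii). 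Hence $\Pi_\infty^{(K)} \cong A_\lieb(\mu)$ as $(\lieg,K)$-modules.

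Next I would produce a rational form of $\lieb$. After conjugating $\lieb$ by an element of $K_\infty(\RR)^0$ if necessary, the semi-admissibility of the model $K/\QQ_K$ together with standard facts on rational tori in reductive groups allows one to arrange $\lieh$ and the positive system $\Psi$ defined over an imaginary quadratic extension $\QQ_K'/\QQ_K$, so that $\lieb$ is $\QQ_K'$-rational. Proposition \ref{prop:standardmodules}(iii) then furnishes a model of $A_\lieb(\mu)$ over a number field $F_\infty$ depending on $\QQ_K'$, $\mu$, and $\rho(\lien)$. For the non-archimedean factor I would invoke the theorem of Blasius-Harris-Ramakrishnan \cite{blasiusharrisramakrishnan1994}, which under precisely our hypothesis on $\Pi_\infty$ provides a number field $F_{(\infty)}$ over which $\Pi_{(\infty)}$ is defined. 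Setting $F := F_\infty \cdot F_{(\infty)}$, the external tensor product of the two models yields the desired $(\lieg,K) \times G(\Adeles^{(\infty)})$-model of $\Pi^{(K)}$ over the number field $F$ (that this tensor product remains irreducible and that no cross terms appear follows from Proposition \ref{prop:hombasechange}).

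The main obstacle is the first step, namely realizing the infinity type $\Pi_\infty^{(K)}$ as a cohomologically induced module $A_\lieb(\mu)$ with all parameters rational over a number field. This is where the rigidity intrinsic to the Hermitian situation is used twice: once in the equal-rank property that forces the presence of a compact Cartan and the Vogan-Zuckerman realization, and a second time through the semi-admissibility of $K$ which, in conjunction with Proposition \ref{prop:standardmodules} and the results of section \ref{sec:standardmodules}, descends the na\"ive complex model of $A_\lieb(\mu)$ to a number field. Once this archimedean descent is in place, Blasius-Harris-Ramakrishnan handles the finite part and the global assembly is purely formal.
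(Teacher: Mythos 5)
Your proposal is correct and follows essentially the same route as the paper's proof: invoke Blasius--Harris--Ramakrishnan (Theorem 3.2.2) for the finite part, identify $\Pi_\infty^{(K)}$ as a cohomologically induced standard module $A_\lieq(\lambda_\beta)$ for a $\theta$-stable Borel defined over a finite extension of $\QQ_K$, apply Proposition~\ref{prop:standardmodules} to descend the archimedean factor to a number field, and then assemble the two via a tensor product. The extra detail you supply — the equal-rank argument, the verification of the weakly good range, and the role of Proposition~\ref{prop:hombasechange} in the final tensor-product step — are exactly the points the paper leaves implicit, so the proposal is a faithful (and slightly more explicit) rendition of the same argument.
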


\begin{proof}
We know by Theorem 3.2.2 of \cite{blasiusharrisramakrishnan1994} that the finite part $\Pi_{(\infty)}$ of $\Pi$ admits a model over a number field.

By the hypothesis we have $\Pi_\infty\cong\pi_\pm(\lambda,\Psi)\otimes\beta$. Therefore we find a $\theta$-stable Borel $\lieq\subseteq\lieg'$, depending on the positive system $\Psi$, and defined over a finite extension $\QQ_K'$ of $\QQ_K$ with the property that $\Pi_\infty^{(K)}=A_\lieq(\lambda_\beta)$ for a suitable character $\lambda_\beta$ of  of the Levi pair $(\liel,L\cap K)$ of $(\lieq,L\cap K)$, depending on the characters $\lambda$ and $\beta$. Proposition \ref{prop:standardmodules} applies and shows that $A_\lieq(\lambda_\beta)$ admits a model over a number field, and the claim follows.
\end{proof}

\begin{remark}
By Theorem 4.4.1 in \cite{blasiusharrisramakrishnan1994} the field of rationality $\QQ(\Pi)$ of the finite part of $\Pi$ is known to be totally real or a CM field. Applying our argument from the proof of equation \eqref{eq:degreemultiplicity} in Theorem \ref{thm:finitedefinition} to coherent cohomology one may deduce a similar bound for the degree of the field of definition of $\Pi_{(\infty)}$ over $\QQ(\Pi)$. Likewise Theorem \ref{thm:realstandardmodules} implies the existence of optimal fields of definition for $\Pi_\infty$ in the cases where it applies.
\end{remark}

\subsection{Automorphic representations of $\GL(n)$}\label{sec:glncase}

In this section we specialize to the following situation. We let $F$ denote a number field and $G:=\res_{F/\QQ}\GL(n)$. We have
$$
G(\RR)\;=\;\prod_{v\;\text{real}}\GL_n(\RR)\times \prod_{v\;\text{complex}}\GL_n(\CC),
$$
where in the first product $v$ runs over the real places of $F$ and in the second $v$ runs through the complex places, i.e.\ pairs of complex conjugate embeddings $v,\overline{v}:F\to\CC$.

The cohomological representations in the unitary duals of $\GL_n(\RR)$ and $\GL_n(\CC)$ have been identified by Speh \cite{speh1983} and Enright \cite{enright1979} respectively, predating the general approach of Vogan-Zuckerman \cite{voganzuckerman1984}, which at the time was based on certain unitarity assumptions.

Fix an admissible model $K\subseteq G$ of a maximal compact subgroup $G(\RR)$ in the sense of section \ref{sec:admissiblegroups}, defined over a number field $\QQ_K$. The field $\QQ_K$ comes with a distinguished real place $v_\infty$. We also assume that we find models of $\theta$-stable Borel subalgebras over a quadratic extension $\QQ_K'$ of $\QQ_K$. In such an extension $v_\infty$ ramifies. Write $\lieg$ for $\QQ_K$-Lie algebra of $G$.

\begin{remark}
For $F$ totally real or a CM field we know from the explicit constructions in section \ref{sec:explicitmodels} that we may choose $\QQ_K=\QQ$ and $\QQ_K'$ is an imaginary quadratic field. We may arrange $\QQ_K'=\QQ(\sqrt{-1})$ for totally real $F$.
\end{remark}

In this section we prove

\begin{theorem}\label{thm:glnrationality}
Keep the notation as before. Let $M$ be an absolutely irreducible finite-dimensional $G$-module, which is defined over a number field $\QQ_M$. Then the non-degenerate complex infinitesimally unitary irreducible $(\lieg,K)$-module $V_\CC$ satisfying
$$
H^\bullet(\lieg,K_\infty^0;V_\CC\otimes M)\;\neq\;0,
$$
has a unique model $V_{\QQ_M}$ over $\QQ_K\QQ_M$.
\end{theorem}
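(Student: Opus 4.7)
The plan is to proceed in four steps, working out an explicit description of $V_\CC$ and then descending step by step.

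\textbf{Step 1: Identifying $V_\CC$ as a standard module.} By the Vogan--Zuckerman classification, in the form of Corollary \ref{cor:standardmodulecohomology}, we have $V_\CC \cong A_\lieq(\lambda)_\CC$ for some $\theta$-stable parabolic $\lieq \subseteq \lieg_\CC$ (defined over $\QQ_K'$ by our hypothesis) with Levi decomposition $\lieq = \liel + \lieu$, and some integral character $\lambda$ of $(\liel, L \cap K)$ in the good range, whose associated infinitesimal character $\lambda + \rho(\lieu) = \mu + \rho$ matches that of $M$; here $\mu$ is the highest weight of $M^\vee$. Since $M$ is defined over $E$, the weights $\mu$, $\lambda + \rho(\lieu)$, and any Weyl translate $w(\lambda)$ of $\lambda$ to a dominant weight are all defined over subfields of $E$.

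\textbf{Step 2: Descending $A_\lieq(0)$ to $\QQ_K$.} By Proposition \ref{prop:standardmodules}(i) and Theorem \ref{thm:realstandardmodules}(a), $A_\lieq(0)$ has a model over its field of rationality $F_0 \in \{\QQ_K, \QQ_K'\}$, and $F_0 = \QQ_K$ is equivalent to $A_\lieq(0)_\CC$ being self-conjugate under $\Gal(\QQ_K'/\QQ_K)$. I would prove self-conjugacy using the combinatorics special to $G = \res_{F/\QQ}\GL(n)$: the nontrivial Galois automorphism carries $\lieq$ to another $\theta$-stable parabolic $\overline\lieq$; at each archimedean factor of $G(\RR)$ (a copy of $\GL_n(\RR)$ or $\GL_n(\CC)$) complex conjugation preserves the Levi type $\prod_i \GL_{n_i}$ of any $\theta$-stable parabolic; and for these groups any two $\theta$-stable parabolics sharing a Levi type constitute a single $K$-conjugacy class and yield isomorphic cohomologically induced standard modules. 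Hence $A_{\overline\lieq}(0)_\CC \cong A_\lieq(0)_\CC$, i.e.\ $\FS_{\QQ_K}(A_\lieq(0)) \neq 0$. The admissibility of $K$ rules out the quaternionic case via the bottom-layer reduction of Proposition \ref{prop:minimalktypedescent}: the bottom layer of $A_\lieq(0)$ is an irreducible $K$-type of multiplicity one with $\FS_{\QQ_K} \neq -1$ by admissibility. Consequently $\FS_{\QQ_K}(A_\lieq(0)) = +1$ and $F_0 = \QQ_K$.

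\textbf{Step 3: Translation to $A_\lieq(\lambda)$.} With $F_0 = \QQ_K$ in hand, Proposition \ref{prop:standardmodules}(iii) yields a model of $A_\lieq(\lambda)$ over $\QQ_K(w(\lambda), \lambda + \rho(\lieu))$, which by Step 1 is contained in $\QQ_K E$. This produces the desired model $V_E$ of $V_\CC$ over $\QQ_K E$.

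\textbf{Step 4: Uniqueness.} The pair $(\lieg, K)$ is of class $\mathcal S$ by the explicit construction of $K$ in section \ref{sec:explicitmodels}, hence satisfies condition (Q). Since $V_\CC$ is absolutely irreducible, Corollary \ref{cor:schur} gives $\End_{\lieg, K}(V_\CC) = \CC$, and Proposition \ref{prop:uniquemodels} then implies that any two $\QQ_K E$-rational models of $V_\CC$ are isomorphic.

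The main obstacle will be the self-conjugacy verification in Step 2: Theorem \ref{thm:realstandardmodules}(c) does not apply directly because $\lieq$ is generally not a Borel, so one has to exploit properties peculiar to $\GL(n)$ -- namely, that for $\GL_n(\RR)$ and $\GL_n(\CC)$ the $K$-conjugacy class of a $\theta$-stable parabolic is determined by its Levi type, and that this Levi type is stable under complex conjugation. For other classical groups these statements can fail, which is why Theorems \ref{introthm:finitedefinition} and \ref{introthm:gglobalrational} settle for weaker conclusions.
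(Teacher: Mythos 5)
Your overall framework — identify $V_\CC$ as a cohomologically induced standard module via Corollary \ref{cor:standardmodulecohomology}, descend $A_\lieq(0)$ to $\QQ_K$, then translate via Proposition \ref{prop:standardmodules}(iii), with uniqueness from Schur's Lemma and Proposition \ref{prop:uniquemodels} — is the right one, and Steps 1, 3 and 4 match the paper. However, your Step 2 starts from a misconception. You assert that Theorem \ref{thm:realstandardmodules}(c) ``does not apply directly because $\lieq$ is generally not a Borel,'' and then substitute an ad hoc argument about $K$-conjugacy of $\theta$-stable parabolics with a fixed Levi type. But the theorem concerns the \emph{non-degenerate} (i.e.\ generic) infinitesimally unitary cohomological module: for $\GL_n$ this is tempered, and the tempered cohomological representations of $\GL_n(\RR)$ and $\GL_n(\CC)$ with regular infinitesimal character are exactly the $A_\lieq(\lambda)$ for $\lieq$ a \emph{minimal} $\theta$-stable parabolic, whose Levi is the fundamental Cartan — i.e.\ $\lieq$ is a $\theta$-stable Borel of $\lieg_\CC$. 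So case (i) with $\lieq$ a Borel applies and Theorem \ref{thm:realstandardmodules}(c) gives $F_0=\QQ_K$ directly; there is no need for, and the paper does not invoke, the combinatorial claim that $\theta$-stable parabolics with the same Levi type form a single $K$-conjugacy class. That claim is also stated without justification, and it is exactly the kind of assertion the Frobenius-Schur machinery is designed to avoid having to verify case by case; the paper instead uses the \emph{known self-duality} of $A_\lieq(0)$ for $\GL_n$ when $\lieq$ is a Borel (this is precisely what the proof of Theorem \ref{thm:realstandardmodules}(c) cites for case (i)).

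A second, smaller omission: the theorem asserts non-vanishing of $H^\bullet(\lieg,K_\infty^0;V_\CC\otimes M)$ with respect to the \emph{connected} group $K_\infty^0$, while Corollary \ref{cor:standardmodulecohomology} and the Vogan--Zuckerman identification produce an $A_\lieq(\lambda)$ for the full (disconnected) $K$. These agree only up to a quadratic character of $\pi_0(K_\infty)$, pulled back from $\mathrm{sgn}\circ\det$. The paper's own proof flags this explicitly (``Up to a quadratic twist, this follows from Theorem \ref{thm:realstandardmodules}\dots''); since such a character is defined over $\QQ$, it does not affect the field of definition, but the identification in your Step 1 is slightly imprecise without this remark.
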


\begin{proof}
Up to a quadratic twist, this follows from Theorem \ref{thm:realstandardmodules}, (a).
\end{proof}

\begin{theorem}\label{thm:globalglnrationality}
Let $\Pi$ be an irreducible cuspidal regular algebraic representation of $G(\Adeles_\QQ)=\GL_n(\Adeles_F)$ with field of rationality $\QQ(\Pi)$ in the sense of Clozel \cite{clozel1990}. Then $\Pi$ is defined over $\QQ_K(\Pi)$ in the following sense:
\begin{itemize}
\item[(i)] There exists a unique $(\lieg,K)\times G(\Adeles_\QQ^{(\infty)})$-module $\Pi_{\QQ_K(\Pi)}$ over $\QQ_K(\Pi)$ which is a model of $\Pi^{(K)}$.
\item[(ii)] The module $\Pi_{\QQ_K(\Pi)}$ is irreducible and its image in $\Pi^{(K)}$ is unique up to homotheties.
\end{itemize}
\end{theorem}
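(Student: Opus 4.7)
The plan is to decompose the problem into the archimedean and non-archimedean parts, descend each separately to its optimal field of definition, and then recombine via outer tensor product, using the absolute irreducibility of the resulting $(\lieg,K)\times G(\Adeles_\QQ^{(\infty)})$-module together with strong multiplicity one for $\GL(n)$ to obtain both the descent and the uniqueness.

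First I would establish existence. By Clozel's theorem (cf.\ \cite{clozel1990}), the finite part $\Pi_{(\infty)}$ admits a model $\Pi_{(\infty),\QQ(\Pi)}$ over its field of rationality $\QQ(\Pi)$, which is a number field. Since $\Pi$ is regular algebraic, its infinitesimal character is the Harish-Chandra parameter of the contragredient of an absolutely irreducible rational $G$-module $M$ whose field of definition $E$ is contained in $\QQ(\Pi)$ (the infinitesimal character is determined by $\Pi_{(\infty)}$ via the compatibility of local and global Langlands parameters, and is rational over $\QQ(\Pi)$ by Proposition \ref{prop:centerrationality} and Proposition \ref{prop:hcgalois}). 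Theorem \ref{thm:glnrationality} then furnishes a model $\Pi_{\infty,\QQ_K E}^{(K)}$ of $\Pi_\infty^{(K)}$ over $\QQ_K E \subseteq \QQ_K(\Pi)$. The outer tensor product
\[
\Pi_{\QQ_K(\Pi)} \;:=\; (\Pi_{\infty,\QQ_K E}^{(K)} \otimes_{\QQ_K E} \QQ_K(\Pi)) \otimes_{\QQ_K(\Pi)} (\Pi_{(\infty),\QQ(\Pi)} \otimes_{\QQ(\Pi)} \QQ_K(\Pi))
\]
is then a $\QQ_K(\Pi)$-rational $(\lieg,K)\times G(\Adeles_\QQ^{(\infty)})$-module whose complexification is isomorphic to $\Pi^{(K)}$, by Proposition \ref{prop:hombasechange} applied to both tensor factors.

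Next I would address irreducibility and uniqueness as an abstract module. Both tensor factors are absolutely irreducible over $\QQ_K(\Pi)$: for $\Pi_{\infty,\QQ_K E}^{(K)}$ this is the content of Theorem \ref{thm:glnrationality} (the uniqueness statement there rests on Proposition \ref{prop:uniquemodels}), and for $\Pi_{(\infty),\QQ(\Pi)}$ it is the absolute irreducibility of the finite part over $\QQ(\Pi)$ (again a consequence of Clozel's construction together with Proposition \ref{prop:uniquemodels}). Consequently $\End_{(\lieg,K)\times G(\Adeles_\QQ^{(\infty)})}(\Pi^{(K)}) = \CC$ by the analog of Proposition \ref{prop:hombasechange} for the product pair, so that $\Pi_{\QQ_K(\Pi)}$ is absolutely irreducible, and Proposition \ref{prop:uniquemodels} gives uniqueness up to isomorphism of this abstract model over $\QQ_K(\Pi)$.

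Finally I would pin down the concrete image inside $\Pi^{(K)}$. Any two embeddings $\iota_1, \iota_2 : \Pi_{\QQ_K(\Pi)} \hookrightarrow \Pi^{(K)}$ yield, after complexification, two $(\lieg,K)\times G(\Adeles_\QQ^{(\infty)})$-equivariant embeddings of $\Pi^{(K)}$ into the space of $K$-finite cusp forms; by strong multiplicity one for $\GL(n)$ (Jacquet--Shalika) these differ by a scalar in $\CC^\times$, so that $\iota_1(\Pi_{\QQ_K(\Pi)})$ and $\iota_2(\Pi_{\QQ_K(\Pi)})$ are homothetic subspaces of $\Pi^{(K)}$. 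This furnishes the uniqueness up to homothety in (ii). The subtlest point will be verifying that $\QQ_M \subseteq \QQ(\Pi)$ in our situation (so that no spurious extension intrudes beyond $\QQ_K(\Pi)$); I would handle this by inspecting the Galois action ${}_\Delta\tau$ of Proposition \ref{prop:hcgalois} on Harish-Chandra parameters and matching it against Clozel's Galois action on $\Pi_{(\infty)}$ via the fact that infinitesimal characters are determined by the finite part through the local Langlands correspondence for $\GL(n)$ at unramified places together with the Chebotarev density theorem.
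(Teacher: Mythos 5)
Your proof has the same skeleton as the paper's: reduce to the two tensor factors, invoke Clozel's Th\'eor\`eme~3.13 for $\Pi_{(\infty)}$, invoke Theorem~\ref{thm:glnrationality} for $\Pi_\infty^{(K)}$, take the outer tensor product, and pin down uniqueness via Proposition~\ref{prop:uniquemodels}. The two places where you depart from the paper are both heavier than needed. For the containment $\QQ_M\subseteq\QQ(\Pi)$, which you flag as the subtlest point, there is no need for Chebotarev or unramified local Langlands: this inclusion is already part of Clozel's theorem (it is exactly Remark~\ref{rmk:qminqpi}, a direct consequence of multiplicity one, since $\tau$ fixing $\QQ(\Pi)$ forces $\Pi^\tau_\infty\cong\Pi_\infty$ and hence $M^\tau\cong M$), and the paper then passes from $\QQ(M)$ to $\QQ_M$ by Borel--Tits because $G$ is quasi-split. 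Your route re-derives a piece of Clozel's theorem through a compatibility of $L$-parameters that is itself established in \cite{clozel1990}, so it is harmless but circular in spirit. Second, for part~(ii) you invoke strong multiplicity one to compare two embeddings into the cusp form space; this is more than the statement asks for. Two embeddings $\iota_1,\iota_2:\Pi_{\QQ_K(\Pi)}\to\Pi^{(K)}$ complexify to two automorphisms of the abstract module $\Pi^{(K)}$, and Schur's Lemma (Corollary~\ref{cor:schur}) already gives the scalar; no reference to the realization in cusp forms, and hence no appeal to multiplicity one of any kind, is required for the uniqueness-up-to-homothety assertion. With these simplifications your argument coincides with the paper's.
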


\begin{proof}
By Clozel's Th\'eor\`eme 3.13 in \cite{clozel1990}, we know that the field of definition of the finite part $\Pi_{(\infty)}$ agrees with $\QQ(\Pi)$. Furthermore the field $\QQ(\Pi)$ contains the field of rationality $\QQ(M)$ of the absolutely irreducible $G$-module $M$ for which $\Pi$ contributes to cuspidal cohomology \eqref{eq:cuspidalcohomology}. Since $G$ is quasi-split, the field of definition $\QQ_M$ of $M$ agrees with $\QQ(M)$ (cf.\ \cite{boreltits1965}). By Theorem \ref{thm:glnrationality} the $(\lieg,K)$-module $\Pi^{(K)}$ admits a model over $\QQ_K\QQ_M\subseteq\QQ_K(\Pi)$, and (i) follows.

Statement (ii) is a consequence of Proposition \ref{prop:uniquemodels}.
\end{proof}

\begin{remark}
Considering an algebraic but not necessarily regular algebraic representation $\Pi$, part (iii) of Proposition \ref{prop:standardmodules} shows that the statement and proof of Theorem \ref{thm:globalglnrationality} generalizes to this case, whenever the finite part of $\Pi$ is rational over a number field. This is known for $1\leq n\leq 2$ and $F$ totally real, since the results from section \ref{sec:hermitiancase} apply, but unkown in general.
\end{remark}

Theorem \ref{thm:globalglnrationality} allows us to improve Theorem \ref{thm:l2qbarstructure}: For $F$ totally real or a CM field it implies the existence of a global $\QQ$-structure on regular algebraic automorphic cusp forms for appropriate choices of $K$. In general we obtain a $\QQ_K$-structure.

In the case of $\GL(n)$ the space \eqref{eq:l2coh} of cohomological cusp forms has an intrinsic description. For any character $\omega$ of the center $Z(\lieg)$ of the universal enveloping algebra of $G$, which we assume regular algebraic in the sense that it occurs as the infinitesimal character of an absolutely irreducible rational $G$-representation $M$, which we assume to be essentially conjugate self-dual over $\QQ$, we consider the space
$$
L_0^2(\GL_n(F)Z(\RR)^0\backslash{}\GL_n(\Adeles_F);\omega)
$$
of automorphic cusp forms as in the introduction, and similarly
\begin{equation}
L_0^2(\GL_n(F)Z(\RR)^0\backslash{}\GL_n(\Adeles_F);\res_{\QQ_K(\omega)/\QQ_K}\omega)
\;=\;
\label{eq:l2cuspformst}
\end{equation}
$$
\quad\bigoplus_{\tau:\QQ_K(\omega)\to\CC}
L_0^2(\GL_n(F)Z(\RR)^0\backslash{}\GL_n(\Adeles_F);\omega^\tau).
$$
Here $\tau$ ranges over the embeddings extending the fixed embedding $v_\infty:\QQ_K\to\CC$. The left hand side may be interpreted as a space of vector valued cusp forms. By construction this space should be defined over $\QQ_K$. This is indeed the case.

\begin{theorem}\label{thm:globalqrationality}
As a representation of $\GL_n(\Adeles_F)$, the space \eqref{eq:l2cuspformst} is defined over $\QQ_K$, i.e.\ there is a  basis of the space \eqref{eq:l2cuspformst} which generates a $\QQ_K$-subspace
\begin{equation}
L_0^2(\GL_n(F)Z(\RR)^0\backslash{}\GL_n(\Adeles_F);\res_{\QQ_K(\omega)/\QQ_K}\omega)_{\QQ_K}
\label{eq:l2cuspformsqbart}
\end{equation}
This rational structure has the following properties:
\begin{itemize}
\item[(a)] The complexification
$$
L_0^2(\GL_n(F)Z(\RR)^0\backslash{}\GL_n(\Adeles_F);\res_{\QQ_K(\omega)/\QQ_K}\omega)_{\QQ_K}\otimes\CC
$$
is naturally identified with the subspace of smooth $K$-finite vectors in \eqref{eq:l2cuspformst}.
\item[(b)] The space \eqref{eq:l2cuspformsqbart} is stable under the natural action of $(\lieg,K)\times\GL_n(\Adeles_F^{(\infty)})$.
\item[(c)] For any field extension $F'/F/\QQ_K$ the extension of scalars functor $-\otimes_F F'$ as in (a) is faithful and preserves extension classes.
\item[(d)] To each irreducible subquotient $\Pi$ of the space \eqref{eq:l2cuspformst} corresponds a unique irreducible subquotient $\Pi_{\overline{\QQ}}$ of the $\overline{\QQ}$-rational structure induced by \eqref{eq:l2cuspformsqbart}, and vice versa.
\item[(e)] The module $\Pi_{\overline{\QQ}}$ is defined over its field of rationality $\QQ(\Pi_{\overline{\QQ}})$, which agrees with Clozel's field of rationality $\QQ_K(\Pi)$ over $\QQ_K$ and is a number field.
\item[(f)] As an abstract $\QQ_K(\Pi)$-rational structure on $\Pi$, $\Pi_{\QQ_K(\Pi)}\subseteq\Pi$ from (d) is unique up to complex homotheties.
\item[(g)] For each $\tau\in\Aut(\CC/\QQ_K)$ and $\Pi$ as in (d),
$$
(\Pi_{\QQ_K(\Pi)})^\tau\;=\;\Pi_{\QQ_K(\Pi)}\otimes_{\QQ_K(\Pi),\tau}\QQ_K(\Pi^\tau)
$$
is the unique rational structure on $\Pi^\tau$.
\item[(h)] For every finite order character $\xi:F^\times\backslash\Adeles_F^\times\to\CC^\times$ and every $\tau\in\Aut(\CC/\QQ_K)$,
$$
(\Pi_{\QQ_K(\Pi)}\otimes\xi)^\tau=\Pi_{\QQ_K(\Pi)}^\tau\otimes\xi^\tau.
$$
\item[(i)] Taking $(\lieg,K)$-cohomology sends the $\QQ$-structure \eqref{eq:l2cuspformsqbart} to the natural $\QQ_K$-structure on cuspidal cohomology
\begin{equation}
H_{\rm cusp}^{\bullet}(\GL_n(F)\backslash{}\GL_n(\Adeles_F)/K_\infty;\res_{\QQ_K(\omega)/\QQ_K}M^\vee).
\label{eq:hcuspt}
\end{equation}
\end{itemize}
Moreover (a), (h) and (i), for a fixed degree within the cuspidal range, characterize \eqref{eq:l2cuspformsqbart} as a $\QQ_K$-subspace of \eqref{eq:l2cuspformst} uniquely. If $F$ is totally real or a CM field we may arrange $\QQ_K=\QQ$.
\end{theorem}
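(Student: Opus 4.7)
The starting point is that the complex space \eqref{eq:l2cuspformst} decomposes under $\GL_n(\Adeles_F)$ as a multiplicity-one direct sum of irreducible regular algebraic cuspidal representations $\Pi$, and these $\Pi$ are permuted by the $\Aut(\CC/\QQ_K)$-action induced by Galois on $\QQ_K(\omega)$. For each cohomological $\Pi$, Theorem~\ref{thm:globalglnrationality} furnishes a canonical $\QQ_K(\Pi)$-model $\Pi_{\QQ_K(\Pi)}$ of $\Pi^{(K)}$ which is unique up to complex homotheties. For those $\Pi$ which are not cohomological (the case $n$ odd and $F$ admitting a real embedding), Arthur--Clozel \cite{arthurclozel1989} yields a finite-order Hecke character $\xi$ and a cohomological $\Pi'$ with $\Pi \cong \Pi'\otimes\xi$, and I would take the model to be $\Pi'_{\QQ_K(\Pi')}\otimes\xi_{\QQ(\xi)}$ over $\QQ_K(\Pi')\QQ(\xi) \supseteq \QQ_K(\Pi)$, again unique up to homotheties by Proposition~\ref{prop:uniquemodels}. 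I would then define \eqref{eq:l2cuspformsqbart} to be the algebraic direct sum, running over Galois orbits, of the restrictions of scalars $\res_{\QQ_K(\Pi)/\QQ_K}\Pi_{\QQ_K(\Pi)}$ mapped into the $K$-finite vectors of \eqref{eq:l2cuspformst}.

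The verification of (a)--(c) is then essentially formal: the complexification of each summand reconstructs the full Galois orbit of $\Pi$ inside \eqref{eq:l2cuspformst} by the identity $F'\otimes_F F' = \bigoplus_{\sigma\in\Gal(F'/F)} F'\otimes_{F',\sigma}F'$ already used in Proposition~\ref{prop:galoisrestriction}, the $(\lieg,K)\times\GL_n(\Adeles_F^{(\infty)})$-stability is built into the construction, and (c)~follows from Corollary~\ref{cor:extbasechange} applied summand by summand. Properties (d)--(e) follow because each $\Pi_{\QQ_K(\Pi)}\otimes_{\QQ_K(\Pi)}\overline{\QQ}$ is a direct sum of Galois conjugates of $\Pi$ that match the complex decomposition, and the field of rationality of $\Pi_{(\infty)}$ is Clozel's $\QQ(\Pi)$ by Théorème~3.13 of \cite{clozel1990}, whence $\QQ_K(\Pi)$ is the field of rationality of the full model. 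Property (f) is Proposition~\ref{prop:uniquemodels} applied to the absolutely irreducible module $\Pi_{\QQ_K(\Pi)}$, (g)~expresses the tautological Galois-equivariance of the restriction-of-scalars construction, and (h)~follows because the tensor product $\Pi_{\QQ_K(\Pi)}\otimes\xi$ is, up to isomorphism, the unique $\QQ_K(\Pi)$-model of $\Pi\otimes\xi$ to which (g) applies.

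Statement (i) is the Homological Base Change Theorem~\ref{thm:derivedbasechange} applied to the functor $H^\bullet(\lieg,K_\infty;{-}\otimes M^\vee)$: under the isomorphism \eqref{eq:cuspidalcohomologyiso} this sends the proposed $\QQ_K$-structure on $K$-finite cusp forms to the natural $\QQ_K$-rational structure on \eqref{eq:hcuspt}, because the latter is built from the sum over Galois orbits of the $\Pi$-isotypic components of cuspidal cohomology, which by Proposition~\ref{prop:cuspidalrationality} are $\QQ_M$-rational, and $\QQ_M \subseteq \QQ_K(\Pi)$ for the $G=\res_{F/\QQ}\GL(n)$ at hand.

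The main obstacle is the final uniqueness clause. Here I would argue as follows. Fix a degree $q$ in the cuspidal range and suppose a second $\QQ_K$-subspace $L'$ satisfies (a), (h) and (i). For any cohomological $\Pi$ contributing to \eqref{eq:hcuspt} in degree $q$, the $\Pi$-isotypic component of $L'$ furnishes, via (i), a $\QQ_K(\Pi)$-model of $\Pi^{(K)}$ compatible with the chosen $\QQ_K$-structure on the corresponding cohomology isotype; by Proposition~\ref{prop:uniquemodels} such a compatible model is unique, so $L'$ agrees with \eqref{eq:l2cuspformsqbart} on the cohomological part. For a non-cohomological $\Pi$, Arthur--Clozel provides $\xi$ with $\Pi\otimes\xi$ cohomological (and still in the cuspidal range at the fixed $q$ after a suitable choice of $\xi$); then (h) transports the already-established uniqueness from $\Pi\otimes\xi$ back to $\Pi$, forcing agreement everywhere. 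If $F$ is totally real or CM, the explicit models of Section~\ref{sec:explicitmodels} allow us to take $\QQ_K=\QQ$, yielding the final assertion.
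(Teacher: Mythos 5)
Your proposal follows essentially the same route as the paper's proof: decompose the $K$-finite cusp forms via multiplicity one into Galois orbits of irreducible $\Pi$, apply Theorem~\ref{thm:globalglnrationality} to each cohomological $\Pi$, reduce the non-cohomological case to the cohomological one via an Arthur--Clozel twist, form the $\QQ_K$-structure as the algebraic direct sum of $\res_{\QQ_K(\Pi)/\QQ_K}\Pi_{\QQ_K(\Pi)}$ over orbit representatives, and verify (a)--(i) by invoking the Homological Base Change Theorem, Corollary~\ref{cor:extbasechange}, Proposition~\ref{prop:uniquemodels}, Matsushima's formula, and the cuspidal-cohomology rational structure.

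One place where you deviate is worth flagging. You choose a general finite-order Hecke character $\xi$ with $\Pi\cong\Pi'\otimes\xi$ and take the model over $\QQ_K(\Pi')\QQ(\xi)\supseteq\QQ_K(\Pi)$. The paper instead observes that the required twist at infinity is a sign character, so $\xi$ can always be taken \emph{quadratic}; then $\QQ(\xi)=\QQ$, one gets $\QQ_K(\Pi')=\QQ_K(\Pi)$, and the model lands directly over $\QQ_K(\Pi)$. With your general $\xi$ you would have to add a descent argument to recover (e), since your a priori field of definition is potentially strictly larger than the field of rationality $\QQ_K(\Pi)$ that (e) asserts. Sharpening $\xi$ to quadratic eliminates this gap cheaply.

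Your final uniqueness argument has the same resolution as the paper's (and the same level of implicitness about the multiplicity $m_q>1$ in interior degrees, which the paper itself only addresses via the period-matrix normalization discussed in the introduction). However, your appeal to Proposition~\ref{prop:uniquemodels} as proving that a "compatible model is unique" is a little imprecise: that proposition gives uniqueness of the model \emph{up to isomorphism}, whereas what you need is uniqueness of the \emph{subspace} inside $\Pi^{(K)}$. The missing step is: an isomorphism between two models embedded in $\Pi^{(K)}$ is by Schur a complex scalar $c$, and constraint (i) forces the induced scalar on the $\Pi$-isotype of cuspidal cohomology to carry the fixed $\QQ_K(\Pi)$-structure to itself, whence $c\in\QQ_K(\Pi)^\times$ and the two subspaces coincide. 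You should say this; the appeal to Proposition~\ref{prop:uniquemodels} alone is not enough.
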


\begin{proof}
We first remark that $\res_{\QQ_K(\omega)/\QQ_K}M$ is defined over $\QQ_K$ and we have
$$
(\res_{\QQ_K(\omega)/\QQ_K}M)\otimes\CC\;=\;\bigoplus_{\tau:\QQ_K(\omega)\to\CC}M\otimes_{\QQ_K(\omega),\tau}\CC.
$$
Therefore every irreducible automorphic representation $\Pi$ occuring in \eqref{eq:l2cuspformst} is regular algebraic up to a twist by a finite order Hecke character. The twist is only necessary when $n$ is odd and $F$ admits a real embedding (cf.\ \cite{clozel1990}).

Let us assume first that $\Pi$ is regular algebraic. Then its finite part occurs in
$$
H_{\rm cusp}^{\bullet}(\GL_n(F)\backslash{}\GL_n(\Adeles_F)/K_\infty^0;M^{\vee}\otimes_{\QQ_M,\tau}\CC)
$$
for a uniquely determined embedding $\tau:\QQ_M\to\CC$, where for notational simplicity we assume $\QQ_K\subseteq\QQ_M$ in the sequel.

We observe that as $\Pi$ is regular algebraic so is $\Pi^\tau$ and the latter also occurs in \eqref{eq:l2cuspformst} for any $\tau\in\Aut(\CC/\QQ_K)$ by \cite{clozel1990}.

Furthermore, by Theorem \ref{thm:globalglnrationality} the subspace $\Pi^{(K)}$ of $K$-finite vectors is defined over the number field $\QQ_K(\Pi)$. Denote a $\QQ_K(\Pi)$-rational model for the latter by $\Pi_{\QQ_K(\Pi)}$. Then the $(\lieg,K)\times \GL_n(\Adeles_F^{(\infty)})$-module $\res_{\QQ_K(\Pi/\QQ_K)}\Pi_{\QQ_K(\Pi)}$ is defined over $\QQ_K$. Complexification gives
$$
(\res_{\QQ_K(\Pi/\QQ_K)}\Pi_{\QQ_K(\Pi)})\otimes\CC\;=\;
\bigoplus_{\tau:\QQ_K(\tau)\to\CC}\left(\Pi_{\QQ_K(\Pi)}\otimes_{\QQ_K(\Pi),\tau}\CC\right).
$$
If $\Pi$ is not cohomological, we find a quadratic Hecke character
$$
\xi:\quad F^\times\backslash\Adeles_F^\times\to\CC^\times
$$
with the property that
$$
\Pi\;\cong\;\Pi^0\otimes\xi
$$
and $\Pi^0$ cohomological, as $\Pi\otimes\xi^{-1}$ is again an automorphic representation of $\GL(n)$ by \cite{arthurclozel1989}. Since $\xi$ is defined over $\QQ$, we again conclude that $\QQ_K(\Pi)/\QQ_K$ is finite and $\Pi$ is defined over this field.

Summing up, this shows for the subspace of $K$-finite cusp forms, by multiplicity one for $\GL(n)$ \cite{piatetskishapiro1979,shalika1974},
$$
L_0^2(\GL_n(F)Z(\RR)^0\backslash{}\GL_n(\Adeles_F);\res_{\QQ_K(\omega)/\QQ_K}\omega)^{(K)}
\;=
\quad\bigoplus_{\Pi}
(\res_{\QQ_K(\Pi/\QQ_K)}\Pi_{\QQ_K(\Pi)})\otimes\CC,
$$
where on the right hand side $\Pi$ runs through a system of representatives for the Galois orbits
$$
\{\Pi^\tau\mid\tau\in\Aut(\CC/\QQ_K)\}
$$
occuring in \eqref{eq:l2cuspformst}. This proves the existence of a $\QQ_K$-rational model for the space of cusp forms.

By our previous results, the justification of the statements (a) to (i) is straightforward. Statements (a) and (b) follow form Theorem \ref{thm:globalglnrationality}. By (a) the left hand side of \eqref{eq:l2cuspformsqbart} is dense in \eqref{eq:l2cuspformst}. Statement (c) is a consequence of Corollary \ref{cor:extbasechange}, and (d) is a consequence of (b) and Theorem \ref{thm:globalglnrationality}, as is (e). Statement (f) follows from Hilbert's Satz 90, cf.\ Proposition \ref{prop:uniquemodels}. By construction (g) and (h) are true. Statement (i) follows from Matsushima's Formula and our Proposition \ref{prop:equicohomology}, a consequence of the Homological Base Change Theorem, and the fact that we may associate to an even finite order character $\xi$ a cohomology class of degree 0, such that cup product with this class realizes the twist with $\xi$. Fruthermore, if $\xi$ is odd, or if $\phi$ is a non-cohomological cusp form in \eqref{eq:l2cuspformst} generating an irreducible representation, the form
$$
\phi_\xi(g)\;:=\;\xi(\det(g))\cdot \phi(g)
$$
lies again in \eqref{eq:l2cuspformst} by Arthur-Clozel \cite{arthurclozel1989}, is $K$-finite if and only if $\phi$ is $K$-finite, and thus this correspondence may serve as the bridge to transport any normalization of the rational structure on the subspace of cohomoligcal forms to the space of non-cohomological forms. For an explicit exposition of the Eichler-Shimura map realizing Matsushima's formula in the case of totally real $F$ (yet valid in general), we refer to Section 6.2 in \cite{januszewski2014}. Finally the uniqueness statement is a consequence of the construction and the uniqueness statement (f).
\end{proof}

\subsection{Construction of periods}\label{sec:periods}

We keep the notation of the previous section and assume $\Pi$ is a regular algebraic representation of $\GL_n(\Adeles_F)$ with same infinitesimal character as the rational representation $M$ of $G$. By Theorem \ref{thm:globalglnrationality} we know the existence of a $\QQ_K(\Pi)$-rational structure
$$
\iota:\quad \Pi_{\QQ_K(\Pi)}\;\to\;\Pi^{(K)},
$$
which we interpret as a $\QQ_K(\Pi)$-rational intertwining map, where the right hand side denotes the complex $(\lieg,K)$-module unterlying the global representation. Matsushima's Formula states that for each degree $t_0\leq t\leq q_0$ in the cuspidal range we have a natural isomorphism
$$
\iota_q:\;\;
H^q(\lieg,K_\infty;\Pi\otimes M^\vee)\;\cong\;
H_{\rm cusp}^{q}(\GL_n(F)\backslash{}\GL_n(\Adeles_F)/K_\infty;M^{\vee}\otimes_{\QQ_M,\tau}\CC)[\Pi_{(\infty)}]
$$
The right hand side carries a natural $\QQ_K(\Pi)$-rational structure, and by the rationality of $(\lieg,K_\infty)$-cohomology, $\iota$ induces a natural $\QQ_K(\Pi)$-structure on the left hand side.

For notational simplicity we write
$$
m_q\;:=\;\dim H^q(\lieg,K_\infty;\Pi_\infty)
$$
for the multiplicity of $\Pi$ in cuspidal cohomology of degree $q$.

By Schur's Lemma, the $\QQ(\Pi)$-rational structures on $\left(\Pi^{(\infty)}\right)^{m_q}$ are parametrized by cosets $[g]$ in $\GL_{m_q}(\CC)/\GL_{m_q}(\QQ_K(\Pi))$. In particular, choosing a $\QQ_K(\Pi)$-rational basis of the relative Lie algebra cohomology of $\Pi\otimes M^\vee$ and a $\QQ_K(\Pi)$-rational basis on the $\Pi^{(\infty)}$-isotypical component in cuspidal cohomology, the transformation matrix $\Omega_{q}(\Pi,\iota)\in\GL_{m_q}(\CC)$ of $\iota_q$ with respect to these bases, may be interpreted as a {\em period matrix} associated to the pair $(\Pi,\iota)$. It enjoys the following properties.

\begin{theorem}\label{thm:periods}
For each irreducible cuspidal regular algebraic $\Pi$ as above and each $t_0\leq q\leq q_0$ in the cuspidal range and each rational structure $\iota:\Pi_{\QQ_K(\Pi)}\to\Pi$ on $\Pi$ there is a period matrix $\Omega_q(\Pi,\iota)\in\GL_{m_q}(\CC)$ with the following properties:
\begin{itemize}
\item[(a)] $\Omega_q(\Pi,\iota)$ is the transformation matrix transforming the rational structure $H^q(\lieg,K_\infty; \iota)$ on $(\lieg,K_\infty)$-cohomology into the natural $\QQ_K(\Pi)$-structure on cuspidal cohomology.
\item[(b)] The double coset $\GL_{m_q}(\QQ_K(\Pi))\Omega_q(\Pi,\iota)\GL_{m_q}(\QQ_K(\Pi))$ depends only on the pair $(\Pi,\iota)$ and the degree $q$.
\item[(c)] For each $c\in\CC^\times$ we have the relation
$$
\Omega_q(\Pi,c\cdot\iota)\;=\;c\cdot\Omega_q(\Pi,\iota).
$$
\item[(d)] The ratio
$$
\frac{1}
{\Omega_{t_0}(\Pi,\iota)}\cdot
\Omega_q(\Pi,\iota)\;\in\;\GL_{m_q}(\CC)
$$
is independent of $\iota$.
\end{itemize}
\end{theorem}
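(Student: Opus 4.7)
The plan is to construct the period matrices as transition matrices between two independently defined $\QQ_K(\Pi)$-rational structures on the source and target of Matsushima's isomorphism in degree $q$, and then extract the stated properties from the uniqueness-up-to-scalars result of Theorem \ref{thm:globalqrationality}(f).

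First, I would produce the two rational structures. On the source $H^q(\lieg,K_\infty;\Pi\otimes M^\vee)$, note that $M^\vee$ is defined over $\QQ_K(\Pi)$ (since Clozel's field $\QQ_K(\Pi)$ contains $\QQ_M$), so $\iota\otimes \id_{M^\vee}$ is a rational structure on $\Pi\otimes M^\vee$, and Proposition \ref{prop:equicohomology} applied to the pair $(\lieg,K_\infty)$ promotes this to a functorial $\QQ_K(\Pi)$-rational structure on $H^q(\lieg,K_\infty;\Pi\otimes M^\vee)$; fix a $\QQ_K(\Pi)$-rational basis thereof. On the target, the cuspidal cohomology \eqref{eq:hcuspt} carries a natural $\QQ_K$-rational structure by Theorem \ref{thm:globalqrationality}(i); extracting the $\Pi_{(\infty)}$-isotypic component over $\QQ_K(\Pi)$ exactly as in the proof of Theorem \ref{thm:finitedefinition}, and using multiplicity one for $\GL(n)$, yields a canonical $\QQ_K(\Pi)$-rational structure on $H^q_{\rm cusp}(\cdots)[\Pi_{(\infty)}]$, on which I fix another $\QQ_K(\Pi)$-rational basis.

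Next, I would define $\Omega_q(\Pi,\iota) \in \GL_{m_q}(\CC)$ to be the matrix of the Matsushima isomorphism $\iota_q$ relative to these two bases. Statement (i) is then the definition. For (ii), changing the $\QQ_K(\Pi)$-rational basis on the source (resp.\ the target) multiplies $\Omega_q(\Pi,\iota)$ on the right (resp.\ on the left) by an element of $\GL_{m_q}(\QQ_K(\Pi))$, so the double coset $\GL_{m_q}(\QQ_K(\Pi))\,\Omega_q(\Pi,\iota)\,\GL_{m_q}(\QQ_K(\Pi))$ depends only on $(\Pi,\iota,q)$. For (iii), rescaling $\iota$ by $c\in\CC^\times$ rescales the induced map on $(\lieg,K_\infty)$-cohomology, hence the matrix $\Omega_q$, by the same factor $c$. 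For (iv), Theorem \ref{thm:globalqrationality}(f) tells us that any two rational structures $\iota,\iota'$ differ by a complex scalar, so both $\Omega_{t_0}(\Pi,\iota)$ (which is a $1\times 1$ matrix since $m_{t_0}=1$) and $\Omega_q(\Pi,\iota)$ scale by the same $c$ under $\iota\mapsto c\iota$, making the ratio $\Omega_{t_0}(\Pi,\iota)^{-1}\cdot\Omega_q(\Pi,\iota)$ invariant.

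The one delicate point to verify carefully will be the canonicity of the $\QQ_K(\Pi)$-structure on the isotypic component appearing as target: one must check that the natural $\QQ_K$-structure from Theorem \ref{thm:globalqrationality}(i), restricted to the $\Pi_{(\infty)}$-isotypic component and base changed to $\QQ_K(\Pi)$, is compatible with the transition maps in the inductive limit over levels and does not depend on auxiliary choices. This compatibility, however, is precisely what was established in the proof of Theorem \ref{thm:finitedefinition}(iii)–(iv) and Theorem \ref{thm:globalqrationality}(d),(e), so no new ingredient is required beyond a careful bookkeeping.
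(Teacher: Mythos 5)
Your proposal is correct and takes essentially the same route as the paper: the period matrix is defined as the transition matrix of Matsushima's isomorphism between the $\QQ_K(\Pi)$-structure on $(\lieg,K_\infty)$-cohomology induced by $\iota$ (via homological base change) and the natural $\QQ_K(\Pi)$-structure on the $\Pi_{(\infty)}$-isotypic component of cuspidal cohomology, after which (i) is the definition, (ii) is the basis-change bookkeeping, (iii) is linearity of the derived functor, and (iv) follows from uniqueness of rational structures up to homotheties together with $m_{t_0}=1$. The only difference is cosmetic: you spell out the canonicity check at the end that the paper leaves implicit.
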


\begin{remark}
In (d) we may as well divide by the top degree period and obtain the same conclusion.
\end{remark}

In certain cases there are natural choices for $\iota$. If $n$ is even and admits a Shalika model, then we may use a natural {\em global} $\QQ(\Pi)$-structure on the Shalika model of $\Pi$ to normalize $\iota$ accordingly, i.e.\ in such a way that a $\QQ(\Pi)$-rational vector produces the correctly normalized $L$-function via the Friedberg-Jacquet integral representation. That this is indeed possible is shown in \cite{januszewski2015pre2}. This then should be used as normalization in the top degree $q=q_0$. A similar approach is available for Rankin-Selberg $L$-functions, where one may normalize the period in bottom degree inductively, cf.\ \cite{januszewski2015pre} (the results for $n\geq 4$ are conditional in this case, cf.\ loc.\ cit.\ Conjecture 5.5).

These normalizations are then be compatible with Deligne's Conjecture \cite{deligne1979}. Any motivic interpretation of the period matrices $\Omega_t(\Pi,\iota)$ for $t_0<q<q_0$ remains a challenge.

\addcontentsline{toc}{section}{References}
\bibliographystyle{plain}

\end{document}